\newtheorem*{thma}{Theorem A}
\newtheorem*{thmb}{Theorem B}
\newtheorem*{claim}{Claim}
\newcommand{\hli}{\hbox{H}_{\al}}
\newcommand{\hLi}{\hbox{H}_{\all}}
\newcommand{\hhh}{\bigwedge^{r-1}\textbf{\textup{Hom}}(\pmb{\mathcal{L}_s},\oo[[\Gamma\times\pmb{\Delta}]])}
\newcommand{\all}{\mathbb{L}}
\newcommand{\LL}{\Lambda}
\newcommand{\TT}{\mathbb{T}}
\newcommand{\QQ}{\mathbb{Q}}
\newcommand{\FF}{\mathcal{F}}
\newcommand{\lra}{\longrightarrow}
\newcommand{\ZZ}{\mathbb{Z}}
\newcommand{\PP}{\mathcal{P}}
\newcommand{\MM}{\mathcal{M}}
\newcommand{\Gal}{\textup{Gal}}
\newcommand{\KS}{\textbf{\textup{KS}}}
\newcommand{\ES}{\textbf{\textup{ES}}}
\newcommand{\NN}{\mathcal{N}}
\newcommand{\ra}{\rightarrow}
\newcommand{\xx}{\mathbf{X}}
\newcommand{\be}{\begin{equation}}
\newcommand{\ee}{\end{equation}}
\newcommand{\XX}{\mathcal{X}}
\newcommand{\kk}{\mathcal{K}}
\newcommand{\al}{\mathcal{L}}
\newcommand{\oo}{\mathcal{O}}
\newcommand{\qq}{\hbox{\frakfamily q}}
\newcommand{\vv}{\mathbb{V}}
\newcommand{\mm}{\hbox{\frakfamily m}}
\newcommand{\FFc}{\mathcal{F}_{\textup{\lowercase{can}}}}
\newcommand{\Hom}{\textup{Hom}}
\newcommand{\BK}{\textup{BK}}
\newcommand{\Gr}{\textup{Gr}}
\newcommand{\hiwasawa}{\mathbf{H.Iw.}}
\newcommand{\hord}{\mathbf{H.P}}
\newcommand{\hntz}{\mathbf{H.TZ}}
\newcommand{\htam}{\mathbf{H.T}}
\newcommand{\hne}{\mathbf{H.nE}}
\newcommand{\hone}{\mathbf{H.1}}
\newcommand{\hthree}{\mathbf{H.3}}
\newcommand{\hfive}{\mathbf{H.5}}
\newcommand{\hwd}{\mathbf{H.D}}
\newcommand{\hpss}{\mathbf{H.pS}}
\numberwithin{equation}{section}
\newtheorem{thm}{Theorem}[section]
\newtheorem{lemma}[thm]{Lemma}
\newenvironment{define}{\par\medskip\noindent\refstepcounter{thm}
\bgroup{\hspace*{-0.15 cm}\bf{Definition}
\thethm.}\bgroup}{\egroup \egroup\par\medskip}\newtheorem{prop}[thm]{Proposition}
\newtheorem{cor}[thm]{Corollary}
\newenvironment{rem}{\par\medskip\noindent\refstepcounter{thm}
\bgroup{\hspace*{-0.15 cm}\bf{Remark} \thethm.}\bgroup}{\egroup
\egroup\par\medskip} \parskip 2pt
\newenvironment{example}{\par\medskip\noindent\refstepcounter{thm}
\bgroup{\hspace*{-0.15 cm}\bf{Example}
\thethm.}\bgroup}{\egroup \egroup\par\medskip}
\newtheorem{conj}{Conjecture}
\newcounter{Athm}[section]\setcounter{Athm}{1}
\renewcommand{\theAthm} {\arabic{Athm}}
\begin{document}
\title{{O}\lowercase{n} {E}\lowercase{uler systems of rank $r$ and their} {K}\lowercase{olyvagin systems}}

\author{K\^az\i m B\"uy\"ukboduk}

\address{K\^az\i m B\"uy\"ukboduk \hfill\break\indent Max Planck Institut f\"ur Mathematik  \hfill\break\indent Vivatsgasse 7 \hfill\break\indent 53111 Bonn
\hfill\break\indent Germany}
\keywords{Euler systems, Kolyvagin systems, Iwasawa Theory, $p$-adic $L$-functions, Bloch-Kato conjecture}
\curraddr{
\hfill\break\indent Ko\c{c} University, Mathematics \hfill\break\indent Rumeli Feneri Yolu \hfill\break\indent 34450 Sar\i yer / \.Istanbul
\hfill\break\indent Turkey}
\subjclass[2000]{11G05; 11G10; 11G40; 11R23; 14G10}

\begin{abstract}
In this paper we set up a general Kolyvagin system machinery for
Euler systems of rank $r$ (in the sense of Perrin-Riou) associated
to a large class of Galois representations, building on our previous
work on Kolyvagin systems of Rubin-Stark units and generalizing the
results of Kato, Rubin and Perrin-Riou. Our machinery produces a
bound on the size of the classical Selmer group attached to a Galoýs
representation $T$ (that satisfies certain technical hypotheses) in
terms of a certain $r\times r$ determinant; a bound which remarkably
goes hand in hand with Bloch-Kato conjectures. At the end, we
present an application based on a conjecture of Perrin-Riou on
$p$-adic $L$-functions, which lends further evidence to Bloch-Kato
conjectures.

\end{abstract}

\maketitle
\tableofcontents
\section{Introduction}
\label{sec:intro}
Fix once and for all an odd prime $p$, a totally real number field $k$ and an algebraic closure $\overline{k}$ of $k$. Let
$\oo$ be the ring of integers of a finite extension $\Phi$ of $\QQ_p$ with $\mm$ being its maximal ideal
and $\mathbb{F}=\oo/\mm$ its residue field. Suppose $T$ is a geometric, continuous representation of $G_k=\textup{Gal}(\overline{k}/k)$ (i.e., a free $\oo$-module of finite rank which is endowed with a continuous action of $G_k$, unramified outside a finite set of primes of $k$ and De Rham at $p$). Set $V=T\otimes\Phi$ and $T^{\mathcal{D}}=\textup{Hom}_{\oo}(T,\oo)(1)$. Following Bloch and Kato~\cite{bk}, one may define Selmer groups $H^1_f(k,T)$ and $H^1_f(k,T^{\mathcal{D}})$ attached to $T$ and $T^{\mathcal{D}}$. When $V$ is the $p$-adic realization of a motive $\mathcal{M}$, one can also define an $L$-function $L(\mathcal{M},s)$ attached to $\mathcal{M}$, which is (conjecturally) the analytic counterpart of the Selmer groups $H^1_f(k,T)$ and $H^1_f(k,T^{\mathcal{D}})$. The exact relation between the special value of $L(\mathcal{M},s)$ at $s=0$ and the Selmer groups $H^1_f(k,T)$ and $H^1_f(k,T^{\mathcal{D}})$ is the subject of the Bloch-Kato conjecture, which is a vast generalization of, on the one hand, the Birch and Swinnerton-Dyer conjecture for elliptic curves and on the other  hand, the class number formulas. 

In this context, Euler system/Kolyvagin system machinery (as developed, following Kolyvagin's~\cite{kol} original ideas, by Kato~\cite{katoes}, Rubin~\cite{r00}, Perrin-Riou~\cite{pr-es} and later enhanced by Mazur and Rubin~\cite{mr02}) has been used to prove important results towards the Bloch-Kato conjecture in various different settings. For instance:
\begin{itemize}
\item[(i)] When  $k=\QQ$ and $T=\oo(1)\otimes\chi^{-1}$ (where $\chi: G_\QQ\ra \oo^\times$ is an even Dirichlet character and $\oo(1)=\oo\otimes_{\ZZ_p}\ZZ_p(1)$), then cyclotomic unit Euler system may be used to prove Gras' conjecture and a refined class number formula (see~\cite[\S III.2.1]{r00}).
\item[(ii)] When $k=\QQ$ and $T=T_p(E)$  is the $p$-adic Tate module of an elliptic curve $E_{/\QQ}$, then Kato~\cite{ka1} has constructed an Euler system in order to obtain strong evidence towards the Birch and Swinnerton-Dyer conjecture.
\end{itemize}
One common feature of the $G_k$-representations $T$ mentioned in the examples (i) and (ii) above is that
$$r=r(T):=\textup{dim}_{\Phi} (\textup{Ind}_{k/\QQ} V)^-=1,$$
where $(\textup{Ind}_{k/\QQ} V)^-$ is the $-1$-eigenspace for a
complex conjugation acting on $\textup{Ind}_{k/\QQ} V$. For a
general Galois representation $T$ for which $r(T)=1$, Mazur and
Rubin~\cite{mr02} developed an Euler system/Kolyvagin system
machinery so as to determine the structure of the relevant Selmer
group completely in terms of an Euler system\footnote{In his works
prior to~\cite{mr02}, Kolyvagin also describes the structure of the
Selmer group of an elliptic curve in terms of the relevant Euler
system, c.f., \cite[Theorems C and E]{Kol89}, \cite{kol} and
\cite[Theorem 1]{Kol91}.}. When $r(T)=1$, they prove that the module
of Kolyvagin systems is often cyclic (Theorem 5.2.10 of loc.cit.)
and it is therefore possible to choose `the best' Kolyvagin system
(which they call a \emph{primitive Kolyvagin system}) that may be
used to obtain the best possible bound on the associated Selmer
group. Furthermore, in the setting of the examples (i) and (ii)
above, a primitive Kolyvagin system is expected to be obtained from
an Euler system via Kolyvagin's descent (c.f.,~\cite[Theorem 3.2.4,
Remark 6.1.8 and Remark 6.2.5]{mr02}).

When $r>1$ the whole picture is more complicated. First of all, the Selmer groups in which the Kolyvagin system classes (that descended from an Euler system) live in are too large to be controlled by a single Kolyvagin system. Secondly, in contrast with~\cite[Theorem 5.2.10]{mr02}, the module of Kolyvagin systems is no longer cyclic. The purpose of this article is to overcome these difficulties and develop a satisfactory Euler system/Kolyvagin system machinery adapted to the case $r>1$. In this case, one should  start with an \emph{Euler system of rank $r$} in the sense of~\cite[Definition 1.2.2]{pr-es} (rather than an Euler system in the sense of~\cite[Definition II.1.1]{r00}), which in turn may be used as follows:
\begin{enumerate}
\item[(1)] Following the recipe of~\cite[\S1.2.3]{pr-es}, first obtain \emph{many} Euler systems (in the sense of~\cite[Definition II.1.1]{r00}; these correspond to \emph{Euler systems of rank $one$} in the terminology of~\cite[Definition 1.2.1]{pr-es}).
\item[(2)] Then apply Kolyvagin's descent on these Euler systems of rank one to obtain sufficiently many Kolyvagin systems which will control the Selmer groups in question.
\end{enumerate}

 As we remarked above, the basic technical problem we face is that the module of Kolyvagin systems is no longer cyclic\footnote{Howard~\cite[Appendix B]{mr02} shows that the $\mathbb{F}$-vector space of Kolyvagin systems for the residual representation $T/\mm T$ is infinite dimensional.} and the procedure above leaves us with many choices of Kolyvagin systems (whereas when $r=1$, there is a `best' Kolyvagin system).  We now briefly outline our method to tackle this issue. We first define a \emph{modified Selmer structure} (\S\ref{subsub:lock} and \S\ref{subsub:localconditionatpoverkinfty}), which is coarser than the Bloch-Kato Selmer structure but finer than what Mazur and Rubin call the canonical Selmer structure (see Example \ref{example:canonical selmer} and Definition~\ref{def:line} below). Our modified Selmer structure has the property that the module of Kolyvagin systems for it is a free $\oo$-module of rank one.  Next, we construct (\S\ref{subsec:choosehoms}) these Kolyvagin systems (for the modified Selmer structure we defined) from an Euler system of rank $r$ by refining the arguments of~\cite[\S1.2.3]{pr-es} and~\cite[Theorem 3.2.4]{mr02}. The point is that, the argument of Perrin-Riou~\cite[\S1.2.3]{pr-es} applied to an Euler system of rank $r$ does not a priori yield a Kolyvagin system for the modified Selmer structure, but only for the coarser canonical Selmer structure. Finally, we use these Kolyvagin systems for the modified Selmer structure to bound the modified Selmer group, which we combine with a global duality argument to obtain a bound for the Bloch-Kato Selmer groups in terms of an $r\times r$ determinant (\S\ref{sec:applications}).

Only in this paragraph, let $T$ denote the rank one $G_k$-representation $T=\oo(1)\otimes\chi^{-1}$, where $\chi$ is a totally even character $\chi:G_k \ra \oo^\times$ of finite prime-to-$p$ order.
In this case, 
 we have $r(T)=[k:\QQ]$. When $k\neq \QQ$, the machinery of~\cite{mr02} is not sufficient as it is to treat this example. An Euler system of rank $r=r(T)$ in this setting is obtained from the (conjectural) Rubin-Stark elements~\cite{ru96}. The author has studied this example extensively in~\cite{kbbstark,kbbiwasawa} and has developed a Kolyvagin system machinery in order to utilize this example of an Euler system of rank $r>1$ (which he used to prove, for example, Gras-type conjectures for totally real fields). Note that the representation $T=\oo(1)\otimes\chi^{-1}$ is \emph{totally odd} in the sense that
$$ \left(\textup{Ind}_{k/\QQ}T\right)^- =  \textup{Ind}_{k/\QQ}T.$$
This property is essential for the treatment of~\cite{kbbstark,kbbiwasawa,kbbstick}. In this article, we generalize the  methods of~\cite{kbbstark,kbbiwasawa,kbbstick} in order to develop an appropriate Kolyvagin system machinery for a Galois representations $T$ that satisfies a certain \emph{Pan\v{c}i\v{s}kin's Condition} (see hypotheses $\hpss$(b) and $\mathbf{{H}.P}$ in \S\ref{subsec:notationhypo}). Many important Galois representations fall in this category (c.f., Remark~\ref{rem:selfdualarepanciskin}):
\begin{enumerate}
\item $T=T_p(E)$ is the $p$-adic Tate module of an elliptic curve $E/k$; $k\neq \QQ$, 
\item $A/\QQ$ is an abelian variety of dimension $g>1$ and $T=T_p(A)$.
\end{enumerate}
(The two examples of $T$ above satisfy the hypothesis $\hpss$ below, and when $E$ (resp., $A$) has good ordinary reduction at all primes of $k$ above $p$ (resp., at $p$), they both satisfy $\mathbf{{H}.P}$.)

Before we state the main results of this paper, we fix our notation and set the hypotheses which we will refer to in the main body of our article.

\subsection{Notation and Hypotheses}
\label{subsec:notationhypo}
For any field $K$, let $G_K$ be the Galois group of a fixed separable closure of $K$. Throughout, $k$ is a fixed totally real number field and $k_\infty$ is the cyclotomic $\ZZ_p$-extension of $k$. Set $\Gamma=\Gal(k_\infty/k)$.  We write $k_n$ for the unique sub-extension of $k_\infty/k$ of degree $p^n$, and set $\Gamma_n=\Gal(k_n/k)$. Our first hypothesis which we will assume for our Iwasawa theoretic results is the following:

$(\hiwasawa)$ Every prime $\wp$ of  $k$ above $p$ totally ramifies in $k_\infty/k$.

For any prime $\lambda $ of $ k$, we fix a decomposition group
$\mathcal{D}_{\lambda} \subset G_k$. We will occasionally identify
$\mathcal{D}_\lambda$ with the absolute Galois group of the
completion $k_\lambda$. We denote the inertia subgroup inside
$\mathcal{D}_\lambda$ by $\mathcal{I}_\lambda$. We write
$\textup{Fr}_\lambda \in \mathcal{D}_\lambda/\mathcal{I}_{\lambda}$
for the arithmetic Frobenius element.

Let $\oo$ be the ring of integers of a finite extension $\Phi$ of $\QQ_p$ with $\mm$ being its maximal ideal and $\mathbb{F}=\oo/\mm$ its residue field. We define  $\LL=\oo[[\Gamma]]$ to be the cyclotomic Iwasawa algebra. Write $\pmb{\mu}_{p^n}$ for the (Galois module of) $p^n$-th roots of unity, and set $\ZZ_p(1)=\varprojlim \pmb{\mu}_{p^n}$ and $\pmb{\mu}_{p^\infty}=\varinjlim  \pmb{\mu}_{p^n}$. We define $\oo(1):=\oo \otimes_{\ZZ_p}\ZZ_p(1)$, and for any $\oo[[G_k]]$-module $M$, we write $M(1):=M\otimes_{\oo}\oo(1)$ (allowing $G_k$ act both on $M$ and $\oo(1)$). We also define $M^*=\Hom(M,\Phi/\oo)(1)$, the Cartier dual of $M$; and $M^{\vee}=\Hom(M,\Phi/\oo)$, the Pontryagin dual of $M$; and $M^{\mathcal{D}}=\Hom_{\oo}(M,\oo)(1)$. 

For any field $K$ and a topological abelian group $A$ which is
endowed with a continuous action of $G_K$, we write $H^i(K,A)$ for
the $i$-th group cohomology $H^i(G_K,A)$ computed with continuous
cochains. We also define
$$A^{\wedge}:=\Hom(\Hom(A,\QQ_p/\ZZ_p),\QQ_p/\ZZ_p)$$ (with continuous homomorphisms, when $A$ is as above) to be the $p$-adic completion of $A$.

For any commutative ring $R$, an ideal $I \subset R$ and an $R$-module $A$, we write $A[I]$ for the submodule of $A$ consisting of elements that are killed by all $I$. For $x\in R$, we write $A[x]$ for $A[Rx]$.

Let $T$ be a free $\oo$-module of finite rank, endowed with a continuous action of $G_k$. 
Let 
$$d=\textup{rank}_\oo(\textup{Ind}_{k/\QQ}T)=[k:\QQ]\cdot \textup{rank}_\oo\,T,$$
$$d_+=\textup{rank}_\oo(\textup{Ind}_{k/\QQ}T)^+=\sum_{v|\infty}\textup{rank}_{\oo}\,H^0(k_v,T),$$
 where $k_v$ stands for the completion of $k$ at the infinite place $v$. 
 We define $$r=d-d_+=\textup{rank}_\oo(\textup{Ind}_{k/\QQ}T)^-.$$ Note that $r$ above is exactly what Perrin-Riou~\cite{pr-es} calls $d_-$.

Write $ \TT=T\otimes_{\oo}\LL$, where we allow $G_k$ act on both $T$
and $\LL$. (The action of $G_k$ on $\LL$ is induced from the
canonical surjection $G_k\twoheadrightarrow \Gamma$.) Define
$V=T\otimes_{\oo} \Phi$, and $V^*=\Hom(V,\Phi)(1)$.

Fix a set $\PP$ of (non-archimedean) primes of $k$ which does \emph{not} contain any prime above $p$ and any prime at which $T$ is ramified. Following~\cite[Definition 3.1.6]{mr02}, we define $\PP_s$ ($s \in \ZZ^+$) as the set of primes $\lambda$ of $k$ at which $T$ is unramified, which do not lie above $p$ and which satisfy:
\begin{enumerate}
\item $T/(\mm^s T + (\textup{Fr}_{\lambda}-1)T)$ is a free $\oo/\mm^s$-module of rank one,
\item $I_\lambda:=\textup{span}_{\oo}\left\{\mathbf{N}\lambda-1, \textup{det}(1-\textup{Fr}_\lambda|T)\right\} \subset \mm^s$.

\end{enumerate}

For any group $\Delta$, and a $\oo[\Delta]$-module $M$, we write $\wedge^s M$ for the $s$th exterior power of $M$ computed in the category of $\oo[\Delta]$-modules. For example, we will be dealing below  with exterior powers of the form $\wedge^s H^i(K,M)$, where $K$ is a finite extension of $k$ with Galois group $\Delta$, and $M$ is an $\oo[[G_k]]$-module. This naturally  makes $H^i(K,M)$ an $\oo[\Delta]$-module and $\wedge^s H^i(K,M)$ is calculated in the category of $\oo[\Delta]$-modules.

Let $B_{\textup{cris}}$ (resp., $B_{\textup{dR}}$) be Fontaine's crystalline (resp., de Rham) period ring; see~\ref{subsec:PRpadic} below and references therein for further details on the basics of $p$-adic Hodge theory. For a finite extension $K$ of $\QQ_p$ and a $\QQ_p[[G_K]]$-module $W$ of finite $\QQ_p$-dimension, we define 
\begin{itemize}
\item $H^1_f(K,W)=\ker (H^1(K,W)\lra H^1(K,B_{\textup{cris}}\otimes W)),$
\item $D_{\textup{dR}}(W)=D_{\textup{dR}}(K, W)=H^0(K,W\otimes_{\QQ_p} B_{\textup{dR}})$ to be the filtered $K$-vector space associated to $W$, with associated filtration $\dots\subset D_{\textup{dR}}^{i+1}(W)\subset D_{\textup{dR}}^i(W) \subset \dots $  
\end{itemize}
We set $H^1_f(K,T)=\ker (H^1(K,T)\ra H^1(K,B_{\textup{cris}}\otimes V))$.

Below we record a list of properties which will play a role in what follows:

\begin{itemize}
\item[($\mathbf{H.1}$)] $T/\mm T$ is an absolutely irreducible $\mathbb{F} [[G_k]]$-representation.
\item[($\mathbf{H.2}$)] There is a $\tau \in G_k$ such that $\tau=1 \hbox{ on } \pmb{\mu} _{p ^{\infty}}$ and the $\oo$-module $T/(\tau -1)T$ is free of rank one.
\item[($\mathbf{H.3}$)] $H^{1}(k(T  , \pmb{\mu} _{p ^{\infty}})  , T/\mm T)$ = $H^{1}(k (T ,\pmb{\mu} _{p ^{\infty}})  , T ^{*}[\mm])$ =  0, where $k(T, \pmb{\mu} _{p ^{\infty}})=k(T)(\pmb{\mu} _{p ^{\infty}}) \subset \overline{k}$, and $k(T)$ is the smallest extension of $k$ such that  the $G_k$-action on $T$ factors through $\textup{Gal}(k(T)/k)$.

\item[($\mathbf{H.4}$)] $p>4$.
\item[($\mathbf{H.5}$)] The set of primes $\PP$ satisfies $\PP_t \subset \PP \subset \PP_1$ for some $t \in \ZZ^+$.
\item[($\mathbf{{H}.T}$)] (\emph{Tamagawa Condition}) $(T\otimes \Phi/\oo) ^{\mathcal{I}_{\lambda}}$ is $\oo$-divisible for any prime $\lambda$ of $k$ prime to $p$.
\item[($\mathbf{{H}.nE}$)] (\emph{Non-exceptionality}) $H^0(k_p,T^*):=\oplus_{\wp|p}H^0(k_\wp,T^*)=0.$
\item[($\hwd$)](\emph{A condition on `denominators'}) $H^0(k_{\wp,\infty},T)=0$ for every $\wp|p$.
\item[($\hpss$)] \begin{itemize}
\item[(a)]The representation $V$ is potentially semistable (in the sense of~\cite[\S I.2]{FPR91}) at any place $\wp$ dividing $p$.
\item[(b)] ${\displaystyle \textup{rank}_{\oo}\, \left(\displaystyle{\oplus_{\wp|p}} \,H^1_f(k_\wp,T)\right) = d_+.}$
\end{itemize}
\item[($\mathbf{{H}.P}$)] (\emph{strong Pan\v{c}i\v{s}kin's Condition}) The Galois representation $T$ satisfies a strong Pan\v{c}i\v{s}kin Condition at all primes $\wp \subset k$ above $p$ in the following sense: 
\begin{itemize}
\item[(i)] There exists an exact sequence of $\oo[G_{k_\wp}]$-modules (that are free as $\oo$-modules)
$$0\lra\textup{F}_\wp^+T\lra T \lra \textup{F}_\wp^-T \lra 0$$  
such that $\textup{F}_\wp^{\pm}V:=\Phi\otimes_{\oo}\textup{F}_\wp^\pm T$ are potentially semistable, and 
$$D^0_{\textup{dR}}(\textup{F}_\wp^{+}V)=0=D_{\textup{dR}}(\textup{F}_\wp^{-}V)/D^0_{\textup{dR}}(\textup{F}_\wp^{-}V).$$  
\item[(ii)] $\displaystyle \sum_{\wp|p}\, [k_\wp:\QQ_p]\cdot \textup{rank}_{\oo}\, \textup {F}_\wp^+T = d_+.$
\end{itemize}
We set $\textup{F}_\wp^+\TT:= \textup {F} _\wp ^+T\otimes_{\oo}\LL\,\,\,;\,\,\, \textup{F} _\wp ^-\TT:=\TT/\textup {F} _\wp ^+\TT=\textup{F} _\wp ^-T\otimes_{\oo}\LL.$
\end{itemize}
\begin{rem}
 \label{rem:hpsb}
 Suppose that $H^0(k_p,T)=0$ (which follows from $\hwd$, when assumed) and suppose $\hpss$(a) holds. It follows from~\cite[Corollary 3.8.4]{bk} that $\hpss$(b) is equivalent to the assertion that 
 $$\sum_{\wp|p} \dim_{\QQ_p} D_{\textup{dR}}(k_\wp,V)/D_{\textup{dR}}^0(k_\wp,V)=d_+.$$ 
 \end{rem}

 \begin{rem}
 \label{rem:selfdualarepanciskin}
 In this paragraph, let $T$ be a self-dual $G_k$-representation, in the sense that there is a skew-symetric isomorphism
  \be\label{eqn:defselfdual}T\stackrel{\sim}{\lra} T^{\mathcal{D}}= \Hom_{\oo}(T,\oo)(1).\ee
  When $T$ is self-dual, $d$ is even and $d_+=d/2$.
  The aim of this Remark is to explain how the hypotheses $\hpss$(b) and $\mathbf{{H}.P}$(ii) may be verified for self-dual Galois representations.
  \begin{itemize}
  \item[(a)] In this case, $\hpss$(b) follows from $\hpss$(a) under the hypothesis $\mathbf{{H}.nE}$, using Lemma \ref{lemma:free for k}, as the Bloch-Kato subgroups $H^1_f(k_\wp,V)$ and $H^1_f(k_\wp,V^*)$ are orthogonal compliments of each other under the local Tate pairing (see~\cite[Proposition 3.8]{bk}). 
  \item[(b)] Assume now that the $G_k$ is \emph{nearly ordinary} at every prime $\wp$ of $k$ above $p$, in the sense that there is an exact sequence of $\oo[G_{k_\wp}]$-modules (which are free as $\oo$-modules)
  $$0\lra\textup{F}_\wp^+T\lra T \lra \textup{F}_\wp^-T \lra 0,$$
  such that,  
  $$\textup{F}_\wp^{\pm}T \stackrel{\sim}{\lra} \Hom_{\oo}(\textup{F}_\wp^\mp T,\oo)(1)$$
as $G_{k_\wp}$-modules (under the isomorphism induced from~(\ref{eqn:defselfdual}) above). In this case, one may check easily that $T$ satisfies $\mathbf{{H}.P}$(ii) above. 
  \end{itemize}
 \end{rem}

 \begin{rem}
 \label{rem:onpanciskin}
 In this Remark, we record further properties of a representation $T$ that satisfies the hypothesis $\mathbf{{H}.P}$(i).
 \begin{itemize}
 \item[(i)] The modules $\textup{F}_\wp^\pm T$ are uniquely determined (\cite[6.7]{nekheights1}).
 \item[(ii)] The $G_{k_\wp}$-representation $V$ is potentially semistable (\cite[1.28]{nekheights1}).  
 \item[(iii)] The dual representation $T^{\mathcal{D}}=\Hom_{\oo}(T,\oo)(1)$ also satisfies Pan\v{c}i\v{s}kin's condition, with $\textup{F}_\wp^\pm T^{\mathcal{D}}=(\textup{F}_\wp^\mp T)^{\mathcal{D}}$.
 \item[(iv)] $H^0(k_\wp, \textup{F}_\wp^- V)=D_{\textup{cris}}(\textup{F}_\wp^- V)^{\varphi=1}$, where $\varphi$ is the crystalline Frobenius (\cite[Proposition 3.3.2(1)]{nekheights2}). 
 \end{itemize}
 \end{rem}
  The final hypothesis we record here is:
 \begin{itemize}
 \item[($\mathbf{H.TZ}$)] (\emph{Trivial zero condition}) Under $\mathbf{{H.P}}$(i),  
$$\bigoplus_{\wp|p}H^0(k_\wp, \textup{F}_\wp^\pm T^{\mathcal{D}}\otimes \Phi/\oo)=0=\bigoplus_{\wp|p} H^0(k_\wp, \textup{F}_\wp^\pm T\otimes\Phi/\oo)$$
\end{itemize}
We will not need the truth of \emph{all} of these hypotheses which we recorded above for all of our results. We will carefully state which of these hypotheses are in effect before stating each claim.

Let us continue to comment on these hypotheses. The hypotheses  $\mathbf{H.1}$-$\mathbf{5}$ are already present in~\cite[\S3.5]{mr02}. A variant of the hypotheses $\mathbf{H.6}$ of loc.cit. will appear shortly (in fact, we will show that it holds for the cases of interest, as long as we assume $\mathbf{{H}.T}$, $\mathbf{{H}.nE}$ and $\mathbf{H.TZ}$ for the Iwasawa theoretic results). Hypothesis $\hpss$(a) is needed in this work to ensure that the Bloch-Kato local conditions for $V$ and $V^*=\textup{Hom}(V,\Phi(1))$ are orthogonal compliments of each other. It is equivalent to asking that $V$ is de Rham at $p$ (thanks to the work of Berger~\cite{berger}), and any Galois representation `coming from geometry' is de Rham at $p$. For applications, $\hpss$(a) should therefore pose no condition. Hypothesis $\mathbf{{H}.T}$ is closely related to the assertion that the Tamagawa factor at $\lambda$ is prime to $p$. This assumption is only required to prove~Theorem \ref{prop:modifedKSinftyrk1}, i.e., the existence of $\LL$-adic Kolyvagin systems. The hypotheses $\mathbf{{H}.P}$ and $\mathbf{H.TZ}$ are required for the Iwasawa theoretic applications, and  $\mathbf{H.TZ}$ ensures that the associated (conjectural) $p$-adic $L$-function has no trivial zeros at the characters of finite order of $\Gamma$. 
 Hypotheses $\mathbf{{H}.nE}$ and $\hwd$ are present so that the
structure of the local cohomology groups is `nice' (c.f.,
\S\ref{sub:locp}). $\hwd$ has to do with the denominators of
$L$-functions (c.f., \cite[Remark VII.2.5]{r00}) and
$\mathbf{{H}.nE}$ is a generalization of Mazur's~\cite{mazur-anom}
notion of $p$ being a \emph{non-anomalous prime for an abelian
variety} $A$: $\mathbf{{H}.nE}$ for the $p$-adic Tate module
$T=T_p(A)$ of an abelian variety $A_{/\QQ}$ is the condition that
$p$ is not an anomalous prime for $A$. It is expected that
$\mathbf{{H}.nE}$ and  $\hwd$ are often (but  not always) true, for
instance, when $T=T_p(A)$ is as above, Mazur~\cite{mazur-anom}
explains that anomalous primes should be sparse.
 
 Before we go on with the statements of our main results, we record the following facts which may be used to compare the `Bloch-Kato local conditions' to `Greenberg local conditions':  
\begin{rem}
\label{rem:greenbergvsBK}
\begin{itemize}
\item[(i)] If one assumes $\mathbf{{H}.P}$(i) and $H^0(k_p,T)=0$, then the conditions $\hpss$(b) and $\mathbf{{H}.P}$(ii) are equivalent to each other: It follows from~\cite[Corollary 3.8.4]{bk} that the $\QQ_p$-vector spaces $H^1_f(k_\wp,V)$ and $D_{\textup{dR}}(k_\wp,V)/D^0_{\textup{dR}}(k_\wp,V)=D_{\textup{dR}}(k_\wp,\textup{F}_\wp^+V)$ have the same dimension, and the dimension of the latter is equal to $[k_\wp:\QQ_p]\,\textup{rank}_{\oo}\textup{F}_\wp^+T$. 

\item[(ii)] The following facts are proved in~\cite[1.32]{nekheights1} and \cite[3.3.2]{nekheights2}: Suppose $K$ is a finite extension of $\QQ_p$ and $V$ is a $\Phi$-vector space equipped with a continuous $G_K$ action, that satisfies Pan\v{c}i\v{s}kin's condition. If $K^\prime/K$ is an extension over which $V$ becomes semistable and is such that 
$$D_{\textup{cris}}(V\mid_{G_{K^\prime}})^{\varphi=1}=D_{\textup{cris}}(V^*\mid_{G_{K^\prime}})^{\varphi=1}=0,$$  
then there is an exact sequence
$$0\lra H^0(K, \textup{F}_\wp^-V)\lra H^1(K,\textup{F}_\wp^+V)\lra H^1_f(K,V)\lra 0.$$
If in addition we have 
\be
\label{eqn:trivialzero}
H^0(K, \textup{F}_\wp^-V)=0,
\ee
then $H^1(K,\textup{F}_\wp^+V)\stackrel{\sim}{\lra}H^1_f(K,V)$. 
\end{itemize}
\end{rem} 
\subsection{Statements of the Main Results}
For a Galois representation $T$ as above, assume that the hypotheses $\hone$-$\hfive$, $\hne$ and $\hwd$ hold true. Suppose that $\mathbf{c}^{(r)}=\{c^{(r)}_K\}$ is an Euler system of rank $r$, in the sense of Definition~\ref{def:weaklyintegralESofrankr} below. For any number field $F$, define
$$\textup{loc}_p^s: H^1(F,T)\lra H^1_s(F_p,T)$$
by fixing embeddings $\iota_\wp: \overline{F} \hookrightarrow \overline{F_\wp}$ for every $\wp$ above $p$. Here $H^1_s(F_p,T)$ is the \emph{singular quotient} $H^1(F_p,T)/H^1_f(F_p,T)$ (see~\S\ref{subsub:lock}) of $H^1(F_p,T)$. We write $\textup{loc}_p^s$ also for the induced map $\wedge^r H^1(F,T)\ra\wedge^r H^1_s(F_p,T).$
Let $H^1_{\FF_{\textup{BK}}^*}(k,T^*)$ denote the Bloch-Kato Selmer group attached to $T^*$ (see~\S\ref{subsubsec:compareselmer1}).
\begin{thma}[Corollary~\ref{cor:maink}]
In addition to the hypotheses above, suppose that ${\hpss}$ holds for $T$. Then
$$\#H^1_{\FF_{\textup{BK}}^*}(k,T^*)\leq [\wedge^r H^1(k,T): \oo\cdot\textup{loc}_p^s(c^{(r)}_k)].$$
\end{thma}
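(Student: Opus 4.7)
My approach is to pass through an intermediate ``modified'' Selmer structure $\mathcal{F}$, coarser than $\FF_{\textup{BK}}$ but strictly finer than Mazur--Rubin's canonical structure $\FFc$. At each $\wp\mid p$, I would define the local condition of $\mathcal{F}$ as the image of $H^1(k_\wp,\textup{F}^+_\wp T) \to H^1(k_\wp,T)$ coming from the Pan\v{c}i\v{s}kin filtration supplied by $\mathbf{H.P}$(i); at the remaining primes, take the standard Bloch--Kato (equivalently unramified) conditions, together with the usual transverse conditions at primes in $\PP$. Under $\mathbf{H.P}$, $\hpss$ and $\mathbf{H.TZ}$, the resulting $\mathcal{F}$ satisfies $H^1_{\mathcal{F}^*}(k,T^*)\subseteq H^1_{\FF_{\textup{BK}}^*}(k,T^*)$ (since $\mathcal{F}^*\subseteq \FF_{\textup{BK}}^*$ by local duality), and -- the crucial design feature -- the Kolyvagin system module for $(T,\mathcal{F})$ is free of $\oo$-rank one, in contrast to the non-cyclic module associated with $\FFc$ when $r>1$.

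The second step is to construct a Kolyvagin system $\boldsymbol{\kappa}^{(r)}$ for $(T,\mathcal{F})$ out of the rank-$r$ Euler system $\mathbf{c}^{(r)}$. Following the Perrin-Riou recipe, for each $\psi \in \bigwedge^{r-1}\textup{Hom}(\mathcal{L}_s,\oo[[\Gamma\times\Delta]])$ I would ``contract'' $\mathbf{c}^{(r)}$ against $\psi$ to obtain a rank-one Euler system, then apply Kolyvagin's descent as in~\cite[Theorem 3.2.4]{mr02} to produce a Kolyvagin system $\boldsymbol{\kappa}^{\psi}$ for $T$; letting $\psi$ vary and taking $r$-fold exterior powers assembles these into $\boldsymbol{\kappa}^{(r)}$ with values in $\wedge^r H^1_\mathcal{F}(-,T/I_nT)$. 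The leading term $\kappa^{(r)}_1$ is designed to correspond, under the singular localization at $p$ identifying $\wedge^r H^1_\mathcal{F}(k,T)$ with its image in $\wedge^r H^1(k,T)$, precisely to $\textup{loc}_p^s(c^{(r)}_k)$. The main obstacle is exactly this descent step: \emph{a priori} Perrin-Riou's procedure only yields Kolyvagin system classes for the canonical structure $\FFc$, and one must verify that they actually satisfy the finer local condition defining $\mathcal{F}$ at every $\wp\mid p$. Here the Pan\v{c}i\v{s}kin decomposition from $\mathbf{H.P}$(i) together with Remark~\ref{rem:greenbergvsBK}(ii) forces the relevant universal norms from the cyclotomic tower to factor through $H^1(k_\wp,\textup{F}^+_\wp T)$, while $\mathbf{H.TZ}$ rules out the trivial-zero obstructions that could spoil this factorization.

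With $\boldsymbol{\kappa}^{(r)}$ in hand, the Kolyvagin system bound (the extension to $\mathcal{F}$ of~\cite[Theorem 5.2.2]{mr02}, valid because the Kolyvagin system module for $(T,\mathcal{F})$ is cyclic of rank one) yields
$$\#H^1_{\mathcal{F}^*}(k,T^*) \leq \bigl[\wedge^r H^1_\mathcal{F}(k,T) : \oo\cdot\kappa^{(r)}_1\bigr].$$
A Poitou--Tate global duality argument in the style of Wiles, exploiting the fact that the local condition for $\mathcal{F}$ at $p$ is transverse (in the appropriate sense) to the singular quotient, bounds the index $\bigl[H^1_{\FF_{\textup{BK}}^*}(k,T^*):H^1_{\mathcal{F}^*}(k,T^*)\bigr]$ by the cokernel of the induced localization on $H^1_{\FF_{\textup{BK}}}(k,T)$. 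Combining these two inequalities and rewriting the right-hand side by identifying $\wedge^r H^1_\mathcal{F}(k,T)\hookrightarrow \wedge^r H^1(k,T)$ via $\textup{loc}_p^s$ collapses the various factors into the single clean index $[\wedge^r H^1(k,T):\oo\cdot\textup{loc}_p^s(c^{(r)}_k)]$ stated in the theorem.
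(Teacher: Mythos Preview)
Your proposal has a genuine structural gap in the choice of the modified Selmer structure and in how you attempt to recover a rank-one Kolyvagin system theory.

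First, the Selmer structure $\mathcal{F}$ you define --- with local condition at $\wp\mid p$ given by the image of $H^1(k_\wp,\textup{F}^+_\wp T)$ --- is exactly the Greenberg structure $\FF_{\textup{Gr}}$, and by the paper's Proposition~\ref{prop:BKGrcoreselmerrank} it has core Selmer rank \emph{zero}, not one. By \cite[Theorem~5.2.10(i)]{mr02} the module $\KS(T,\mathcal{F},\PP)$ therefore vanishes, so no non-trivial Kolyvagin system exists for your $\mathcal{F}$ and the bound you write down is vacuous. The paper's key idea, which your plan misses, is to enlarge the Bloch--Kato local condition at $p$ by a single free rank-one summand $\al\subset H^1_s(k_p,T)$, setting $H^1_{\FF_\al}(k_p,T)=H^1_f(k_p,T)\oplus\al$. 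This raises the core rank from $0$ to exactly $1$ (Proposition~\ref{modifiedcorerank}), and only then is $\KS(T,\FF_\al,\PP)$ free of rank one.

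Second, and relatedly, your ``$\boldsymbol{\kappa}^{(r)}$ with values in $\wedge^r H^1_\mathcal{F}(-,T/I_nT)$'' is not a Kolyvagin system in the Mazur--Rubin sense, and the bound \cite[Theorem~5.2.2]{mr02} does not apply to such an object. The paper does \emph{not} build a rank-$r$ Kolyvagin system; instead it fixes a \emph{single} carefully chosen homomorphism $\Psi_0\in\bigwedge^{r-1}\textup{Hom}(\pmb{\al_s},\oo[[\Gamma\times\pmb{\Delta}]])$ satisfying property $\textup{H}_\al$ (Definition~\ref{hli}, Proposition~\ref{homs-hli-1}), contracts $\mathbf{c}^{(r)}$ against $\Psi_0$ to obtain one rank-one Euler system, and descends to an ordinary Kolyvagin system $\boldsymbol{\kappa}^{\Psi_0}\in\overline{\KS}(T,\FF_\al,\PP)$ (Theorem~\ref{mainkolsys}). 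The exterior power on the right-hand side of the theorem enters only at the very end, through the isomorphism $\varphi_k:\wedge^r H^1_s(k_p,T)\stackrel{\sim}{\to}\al$ sending $\textup{loc}_p^s(c_k^{(r)})$ to $\textup{loc}_p^s(\kappa_1^{\Psi_0})$; this converts the rank-one bound $\#H^1_{\FF_{\BK}^*}(k,T^*)\leq |\al/\oo\cdot\textup{loc}_p^s(\kappa_1^{\Psi_0})|$ (Theorem~\ref{thm:maink}, proved via Poitou--Tate after showing $H^1_{\FF_{\BK}}(k,T)=0$) into the stated index in $\wedge^r H^1_s(k_p,T)$. Finally, note that Theorem~A assumes only $\hpss$, not $\hord$ or $\hntz$; your reliance on the Pan\v{c}i\v{s}kin filtration and trivial-zero hypotheses is already a mismatch with the statement.
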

See Theorem~\ref{thm:mainkinfty}  and Theorem~\ref{thm:cotorsion} below for our Iwasawa theoretic main result, which proves that the characteristic ideal of an appropriately defined Greenberg Selmer group divides the characteristic ideal of a certain $\LL$-module determined by the Euler system $\mathbf{c}^{(r)}$.

We illustrate one concrete application of our technical results, which relies on Perrin-Riou's conjectures~\cite{pr-ast} on $p$-adic $L$-functions (see Conjectures~\ref{conj:prpadicL} and \ref{conj:specialelts} below). Suppose that $V=T\otimes\Phi$ is the $p$-adic realization of a pure, self-dual motive $\MM$ defined over $k$. Assume in addition that $V$ is crystalline at $p$, and that $1$ is {not} an eigenvalue for the Frobenius acting on $D_{\textup{cris}}(V)$. Let $L(\MM,s)$ denote the complex $L$-function associated to $\mathcal{M}$.
\begin{thmb}[Theorem~\ref{thm:mainconjappl}]
Assume Conjectures~\ref{conj:prpadicL} and \ref{conj:specialelts} of Perrin-Riou, as stated in \S\ref{subsubsec:PRlog} and \S\ref{subsubsec:PRpadicES} below. If $L(\MM,0)\neq 0$, then the Bloch-Kato Selmer group $H^1_{\FF_{\BK}^*}(k,T^*)$ is finite.
\end{thmb}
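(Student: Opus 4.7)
The plan is to apply Theorem A with the Euler system of rank $r$ supplied by Conjecture~\ref{conj:specialelts}, and to deduce finiteness of the Bloch--Kato Selmer group from the non-vanishing of $L(\MM,0)$ via the interpolation property of Perrin-Riou's $p$-adic $L$-function. Concretely, Conjecture~\ref{conj:specialelts} furnishes a $\LL$-adic element in $\bigwedge^r H^1(k,\TT)$ whose image under Perrin-Riou's large exponential/logarithm map $\LLL$ realises the conjectural $p$-adic $L$-function of $\MM$; its specialization at the trivial character of $\Gamma$ produces the Euler-system value $c_k^{(r)} \in \bigwedge^r H^1(k,T)$ to be fed into Theorem A.

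First I would check that the technical hypotheses of Theorem A are in force for the motivic representation $T$. Crystallinity of $V$ at $p$ gives $\hpss$(a); the assumption that $1$ is not a crystalline Frobenius eigenvalue on $D_{\textup{cris}}(V)$ yields $\hne$ and, combined with the self-duality of $\MM$, also $\hntz$ for $\textup{F}_\wp^{\pm}T$. Self-duality together with Remark~\ref{rem:selfdualarepanciskin} reduces $\hpss$(b) to $\hpss$(a) and simultaneously verifies $\mathbf{H.P}$(ii) once a Pan\v{c}i\v{s}kin filtration is fixed at each $\wp\mid p$. The residual-level hypotheses $\hone$--$\hthree$ and the denominator hypothesis $\hwd$ have to be imposed as standing assumptions on $\MM$, but in the motivic setting envisioned they are expected and require no new ideas.

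Next I would compare the image of $\textup{loc}_p^s(c_k^{(r)})$ under the Bloch--Kato dual exponential with the classical $L$-value by means of the interpolation formula of Conjecture~\ref{conj:prpadicL}: at the trivial character of $\Gamma$, the conjectural $p$-adic $L$-function computes $L(\MM,0)$ up to an archimedean period and the local Euler factor $\det(1-\varphi\mid D_{\textup{cris}}(V))$ together with its dual, both of which are non-zero by the Frobenius-eigenvalue hypothesis (for $V$ and, by self-duality, for $V^*$). Hence $L(\MM,0)\neq 0$ forces the $p$-adic value to be non-zero, and the same Frobenius-eigenvalue assumption ensures that $\LLL$ is an isomorphism at the trivial character. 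Consequently $\textup{loc}_p^s(c_k^{(r)})$ is non-zero in $\bigwedge^r H^1_s(k_p,T)\otimes\Phi$.

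Under $\hpss$ the $\oo$-module $\bigwedge^r H^1_s(k_p,T)$ has generic rank one, so this non-vanishing forces the generalized index appearing on the right-hand side of Theorem A to be finite. Invoking Theorem A then delivers $\#H^1_{\FF_{\BK}^*}(k,T^*)<\infty$. The main obstacle is the explicit matching between Perrin-Riou's $p$-adic $L$-function at the trivial character and the dual-exponential image of $\textup{loc}_p^s(c_k^{(r)})$: this is where the Frobenius-eigenvalue hypothesis eliminates trivial zeros (via Remark~\ref{rem:greenbergvsBK}(ii) and the vanishing~(\ref{eqn:trivialzero})), and where the bulk of the $p$-adic Hodge-theoretic content is concentrated. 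Everything else is a translation between Theorem A and the conjectural special-value input.
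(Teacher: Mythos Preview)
Your overall strategy is right---feed the Euler system of Conjecture~\ref{conj:specialelts} into Theorem~A (more precisely Corollary~\ref{cor:maink}) and deduce non-vanishing of $\textup{loc}_p^s(c_k^{(r)})$ from $L(\MM,0)\neq 0$ via Perrin-Riou's interpolation---but you are missing the key technical maneuver that makes this work, and this is a genuine gap.

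You propose to apply Conjecture~\ref{conj:specialelts} directly to $\MM$ and specialize the $p$-adic $L$-function at the \emph{trivial} character of $\Gamma$. The paper does not do this, and for good reason: Perrin-Riou's logarithm $\frak{Log}$ in Theorem~\ref{thm:PRslog} is only constructed under the hypothesis $D_{\textup{cris}}(W^*)\subset D^0_{\textup{dR}}(W^*)$, which fails for $W=V$ when $V$ is self-dual of motivic weight $-1$ (then $D^0_{\textup{dR}}(V^*)$ has dimension $d_+=d/2<d$). Moreover, the explicit interpolation formula Theorem~\ref{thm:PRslog}(ii) is stated for $j\geq 1$, not at the trivial character. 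The paper therefore first fixes a large even $j$ with $D^0_{\textup{dR}}(V(j)^*)=D_{\textup{dR}}(V(j)^*)$, applies Conjecture~\ref{conj:specialelts} to $M=\MM(j)$ to get an Euler system for $T(j)$, and twists back to $T$ via Rubin's formalism~\cite[\S VI]{r00}. One then evaluates $\mathbf{L}^{(p)}(\MM(j))$ at $\rho_{\textup{cyc}}^{j}$---which by the interpolation property~(\ref{eq:interpolatiponprop}) recovers $L(\MM,0)$ up to periods and the Euler factor $\mathcal{E}_p$---and Theorem~\ref{thm:PRslog}(ii) identifies this specialization with $\alpha^{-1}\beta\cdot\overline{\eta}\bigl((\textup{exp}^*)^{\otimes r}(\textup{loc}_p(c_\QQ^{(r)}))\bigr)$, where $\alpha,\beta$ are non-zero precisely by the Frobenius-eigenvalue hypothesis (Remark~\ref{rem:froboncrisexceptional}). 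Since $\textup{exp}^*$ factors through the singular quotient, this forces $\textup{loc}_p^s(c_\QQ^{(r)})\neq 0$.

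A smaller point: your discussion of $\hntz$, $\hord$, and Pan\v{c}i\v{s}kin filtrations is off-target here. Those hypotheses enter only in the Iwasawa-theoretic statements (\S\ref{subsec:mainthmkinfty}); for Theorem~B one works over $k$ with the Bloch--Kato local condition and only needs $\hone$--$\hfive$, $\hne$, $\hwd$, $\hpss$, all of which are simply taken as standing assumptions in the paper rather than verified from self-duality.
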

Furthermore, the proof of~Theorem~\ref{thm:mainconjappl} gives a bound on the size $H^1_{\FF_{\BK}^*}(k,T^*)$ that is explicitly related to the $L$-value, which goes hand in hand with the Bloch-Kato conjectures.

Although the existence of the Euler system of rank $r$ which is used to prove Corollary~\ref{cor:maink}, Theorems~\ref{thm:mainkinfty} and~\ref{thm:mainconjappl}  is conjectural, the existence of the derived classes (which play an essential role in the proofs) is \emph{not}, thanks to~\cite[Theorem 5.2.10]{mr02} (resp.,~\cite[Theorem 3.23]{kbb} in the Iwasawa theoretic setting). Note however that the Kolyvagin system classes whose existence is proved abstractly have a priori no link to $L$-values. This is one reason why one desires to descend from an Euler system of rank $r$ (which is conjecturally related to $L$-values yet we do not know if it exists) to a Kolyvagin system (resp., to a $\LL$-adic Kolyvagin system), whose existence was proved by Howard, Mazur and Rubin (resp., by the author in the Iwasawa theoretic setting).
 \section{Preliminaries: Local conditions and Selmer groups}
 \subsection{Selmer structures on $T$}
The notation that we have set above is in effect.

We first recall Mazur and Rubin's definition of a \emph{Selmer structure}, in particular the \emph{canonical Selmer structure} on $T$ and $\TT$.

 Let $R$ be a complete local noetherian $\oo$-algebra, and let $M$ be a $R[[G_k]]$-module which is free of finite rank over $R$. In this paper, we will be interested in the case when $R=\LL$ or its certain quotients, and $M$ is $\TT$ or its relevant quotients by an ideal of $\LL$. (For example, taking the quotient by the augmentation ideal of $\LL$ will give us $\oo$ and the representation $T$.)

\subsubsection{Selmer structures and Selmer groups}
\label{sec:selmer structure}
Notation from \S\ref{subsec:notationhypo} is in effect in this section.
\begin{define}
\label{selmer structure}
A \emph{Selmer structure} $\FF$ on $M$ is a collection of the following data:
\begin{itemize}
\item a finite set $\Sigma(\FF)$ of places of $k$, including all infinite places and primes above $p$, and all primes where $M$ is ramified.
\item for every $\lambda \in \Sigma(\FF)$ a local condition on $M$ (which we view now as a $R[[\mathcal{D}_{\lambda}]]$-module), i.e., a choice of $R$-submodule $$H^1_{\FF}(k_{\lambda},M) \subset H^1(k_{\lambda},M).$$
 \end{itemize}
If $\lambda \notin \Sigma(\FF)$ we will also write
$H^1_{\FF}(k_{\lambda},M)=H^1_{f}(k_{\lambda},M)$, where the module
$H^1_{{f}}(k_{\lambda},M)$ is the \emph{finite} part of
$H^1(k_{\lambda},M)$, defined as in~\cite[Definition 1.1.6]{mr02}.
\end{define}

\begin{define}
 The \emph{semi-local cohomology group} at a rational
prime $\ell$ is defined by setting
$$H^i(k_\ell,M):=\bigoplus_{\lambda|\ell} H^i(k_\lambda,M),$$
where the direct sum is over all primes $\lambda$ of $k$ lying above
$\ell$.
\end{define}
 Let $\lambda$ be a finite prime of $k$. There is the perfect local
Tate pairing $$<\,,\,>_\lambda\,:H^1(k_\lambda,M) \times
H^1(k_\lambda,M^*) \lra H^2(k_\lambda,\Phi/\oo(1))
\stackrel{\sim}{\lra}\Phi/\oo,$$ where we recall that $M^*$ stands
for the Cartier dual of $M$. For a Selmer structure $\FF$ on $M$,
define $H^1_{\FF^*}(k_\lambda,M^*):=H^1_\FF(k_\lambda,M)^\perp$ as
the orthogonal complement of $H^1_\FF(k_\lambda,M)$ with respect to
the local Tate pairing. The Selmer structure $\FF^*$ on $M^*$ (with
$\Sigma(\FF)=\Sigma(\FF^*)$) defined in this way will be called the
\emph{dual Selmer structure}.

For examples of local conditions see~\cite[Definitions 1.1.6 and 3.2.1]{mr02}.
\begin{define}
\label{selmer group}
If $\FF$ is a Selmer structure on $M$, we define the \emph{Selmer module} $H^1_{\FF}(k,M)$ as
 $$H^1_{\FF}(k,M):=\ker\left(H^1(\textup{Gal}(k_{\Sigma(\FF)}/k),M) \lra \bigoplus_{\lambda \in \Sigma(\FF)}H^1(k_{\lambda},M)/H^1_{\FF}(k_{\lambda},M)\right),$$
 where $k_{\Sigma(\FF)}$ is the maximal extension of $k$ which is unramified outside $\Sigma(\FF)$. We also define the dual Selmer structure in a similar fashion; just replace $M$ by $M^*$ and $\FF$ by $\FF^*$ above.
\end{define}

\begin{example}
\label{example:canonical selmer}
In this example we recall~\cite[Definitions 3.2.1 and  5.3.2]{mr02} which we use quite frequently in this paper.
\begin{itemize}
\item[(i)] Let $R=\oo$ and let ${M}$ be a free $R$-module endowed with a continuous action of $G_k$, which is unramified outside a finite set of places of $k$.  We define a Selmer structure $\FFc$ on ${M}$ by setting $\Sigma(\FFc)=\{\lambda: M \hbox{ is ramified at } \lambda\}\cup\{\wp|p\}\cup\{v|\infty\}$, and
\begin{itemize}
\item if $\lambda \in \Sigma(\FFc)$, $\lambda\nmid p\infty$, we define the local condition at $\lambda$ to be
$$H^1_{\FFc}(k_\lambda, {M})=\ker(H^1(k_\lambda, {M}) \lra H^1(k_\lambda^{\textup{unr}},M\otimes \Phi)),$$
where $k_\lambda^{\textup{unr}}$ is the maximal unramified extension of $k_\lambda$,
\item if $\wp|p$, we define the local condition at $\wp$ to be
$$H^1_{\FFc}(k_\wp, {M})=H^1(k_\wp,M).$$
\end{itemize}
The Selmer structure $\FFc$ is called the \emph{canonical Selmer structure} on ${M}$.
\item[(ii)] Let now $R=\LL$ be the cyclotomic Iwasawa algebra, and let $\mathbb{M}$ be a free $R$-module endowed with a continuous action of $G_k$, which is unramified outside a finite set of places of $k$.  We define a Selmer structure $\FF_\LL$ on $\mathbb {M}$ by setting $$\Sigma(\FF_\LL)=\{\lambda: \mathbb{M} \hbox{ is ramified at } \lambda\}\cup\{\wp\subset k: \wp|p\}\cup\{v|\infty\},$$
 and $H^1_{\FF_\LL}(k_\lambda, \mathbb {M})=H^1(k_\lambda, \mathbb{M})$ for every $\lambda \in \Sigma(\FF_\LL)$. The Selmer structure $\FF_{\Lambda}$ is called the \emph{canonical Selmer structure} on $\mathbb{M}$.
 \end{itemize}
We still denote the induced Selmer structure on the quotients
$\mathbb{M}/I\mathbb{M}$ by $\FF_\LL$, which is obtained by
\emph{propagating} $\FF_\LL$ on $\mathbb{M}$ (see~\cite[Example
1.1.2]{mr02}). Note for $\lambda\in \FF_{\LL}$ that
$H^1_{\FF_\LL}(k_{\lambda}, \mathbb{M}/I\mathbb{M})$ will not always
be the same as $H^1(k_{\lambda}, \mathbb{M}/I\mathbb{M})$. In
particular, when $I$ is the augmentation ideal inside $\Lambda$, the
Selmer structure $\FF_\LL$ on $\mathbb{M}$ will not always propagate
to $\FFc$ on $M:=\mathbb{M}\otimes_{\LL}\LL/I.$ However, when $M=T$
and $\mathbb{T}=T\otimes_{\oo}\LL$ as in~\ref{sec:intro}, $\FF_\LL$
on $\TT$ \emph{does} propagate  to $\FFc$ on $T$, under the
hypotheses $\htam$ and $\hne$.

\end{example}
\begin{rem}
\label{rem:canonical selmer} We say that an element $f\in \LL$ is
distinguished if $\LL/(f)$ is a free $\oo$-module of finite rank.
When $R=\LL$ and $\ \mathbb{T}=T\otimes_{\oo}\LL$ (which is one of
the cases of interest), the Selmer structure $\FFc$ defined
in~\cite[ \S2.1]{kbb} on the quotients $T\otimes_{\oo}\LL/(f)$ may
be identified, under the hypotheses $\htam$ and $\hne$, by the
propagation of $\FF_\LL$  to the quotients $T\otimes_{\oo}\LL/(f)$,
for every distinguished $f\in \LL$. Indeed, for every prime
$\lambda\subset k$, the submodule
$$H^1_{\FFc}(k_\lambda,T\otimes_{\oo}\LL/(f))\subset
H^1(k_\lambda,T\otimes_{\oo}\LL/(f))$$ is the image of the canonical
map $H^1(k_\lambda,T\otimes_{\oo}\LL) \ra
H^1(k_\lambda,T\otimes_{\oo}\LL/(f))$, by the proofs
of~\cite[Proposition 2.10 and 2.12]{kbb}. By definition,
$H^1_{\FF_\LL}(k_\lambda,T\otimes_{\oo}\LL/(f))$ is exactly the same
thing.
\end{rem}

\begin{define}
\label{def:selmer triple}
A \emph{Selmer triple} is a triple $(M,\FF,\PP)$, where $\FF$ is a Selmer structure on $M$ and $\PP$ is a set of primes as in~\S\ref{subsec:notationhypo}, i.e., a set of non-archimedean primes of $k$ disjoint from $\Sigma(\FF)$.
\end{define}

 \subsection{Modifying local conditions at $p$}
\label{sub:locp} When the core Selmer rank of a Selmer structure (in
the sense of~\cite[Definition 4.1.11]{mr02}, see also
\S\ref{subsec:KS1} below) is greater than one, it produces a Selmer
group which is difficult to control using the Kolyvagin system
machinery of Mazur and Rubin. We will see in~\S\ref{subsec:KS1} that
$\FFc$ on $T$ ({resp.}, $\FF_{\LL}$ on $\TT=T\otimes_{\oo}\LL$) has
core Selmer rank $r:=d-d_+$ (under the hypotheses $\hne$).
Hence, to be able to utilize the Kolyvagin system machinery of Mazur
and Rubin, we need to modify the Selmer structures $\FFc$ and
$\FF_\LL$ appropriately. This is what we do in this section.

\subsubsection{Local conditions at $p$ over $k$}
\label{subsub:lock}
 \begin{lemma}
 \label{lemma:free for k}
If ${\hne}$ (both for $T$ and $T^\mathcal{D}$) and ${\hwd}$ hold, then the $\oo$-module $H^1(k_p,T):=\oplus_{\wp|p}H^1(k_\wp,T)$
 is free  of rank $d$.
 \end{lemma}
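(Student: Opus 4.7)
The plan is to prove the statement one prime at a time: for each $\wp\mid p$, I would show that $H^1(k_\wp,T)$ is $\oo$-free of rank $[k_\wp:\QQ_p]\cdot \textup{rank}_{\oo}T$, and then sum over the (finitely many) primes of $k$ above $p$ to obtain total rank $\sum_{\wp|p}[k_\wp:\QQ_p]\cdot \textup{rank}_{\oo}T=[k:\QQ]\cdot \textup{rank}_{\oo}T=d$, as required. The work at each $\wp$ splits into a rank computation and a torsion-freeness verification.

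For the rank, I would note that $\hwd$ together with the inclusion $k_\wp\subset k_{\wp,\infty}$ forces $H^0(k_\wp,T)=0$, while $\hne$ for $T$ (which gives $H^0(k_\wp,T^*)=0$ at each $\wp\mid p$, since the semi-local group is a direct sum of non-negative terms) combined with local Tate duality yields $H^2(k_\wp,T)=0$. Tate's local Euler--Poincar\'e characteristic formula for finitely generated $\oo$-modules then collapses to $\textup{rank}_{\oo}H^1(k_\wp,T)=[k_\wp:\QQ_p]\cdot \textup{rank}_{\oo}T$. For torsion-freeness, I would invoke the tautological short exact sequence $0\to T\to V\to V/T\to 0$: since $H^1(k_\wp,V)$ is a $\Phi$-vector space, the $\oo$-torsion of $H^1(k_\wp,T)$ is contained in the cokernel of $H^0(k_\wp,V)\to H^0(k_\wp,V/T)$ extracted from the associated long exact sequence. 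Identifying $V/T\cong T\otimes_{\oo}\Phi/\oo\cong (T^{\mathcal{D}})^*$ via the standard duality $\Hom_{\oo}(\Hom_{\oo}(T,\oo),\Phi/\oo)\cong T\otimes_{\oo}\Phi/\oo$ (valid because $T$ is $\oo$-free), I can apply $\hne$ for $T^{\mathcal{D}}$ to deduce $H^0(k_\wp,V/T)=0$, so that $H^1(k_\wp,T)$ injects into $H^1(k_\wp,V)$ and is therefore $\oo$-torsion-free.

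The only delicate point is the bookkeeping that pairs the two instances of $\hne$ with the correct cohomology groups: $\hne$ for $T$ governs the vanishing of $H^2(k_\wp,T)$ via local Tate duality, while $\hne$ for $T^{\mathcal{D}}$ governs the torsion in $H^1(k_\wp,T)$ through the identification $V/T\cong (T^{\mathcal{D}})^*$. Both are short verifications, but they are easy to conflate, so the proof should be careful to distinguish them.
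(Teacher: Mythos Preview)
Your argument is correct and takes a genuinely different route from the paper. The paper's proof proceeds via Iwasawa theory: it first shows that the $\LL$-module $H^1(k_p,\TT)$ is free of rank $d$ (invoking structure results of Perrin-Riou, quoted from the appendix of \cite{kbbiwasawa}, together with $\hwd$ to kill the $\LL$-torsion), and then uses $\hne$ to deduce that $H^2(k_p,\TT)=0$, whence the reduction map $H^1(k_p,\TT)\to H^1(k_p,T)$ is surjective and the rank descends. Your approach bypasses the cyclotomic tower entirely, computing the rank directly from the local Euler--Poincar\'e characteristic formula (after using $\hwd$ and $\hne$ for $T$ to kill $H^0$ and $H^2$ respectively) and handling torsion-freeness via the exact sequence $0\to T\to V\to V/T\to 0$ and $\hne$ for $T^{\mathcal D}$. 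What you gain is self-containment: no appeal to $\LL$-module structure theorems is needed. What the paper gains is economy later on: its argument simultaneously establishes Lemma~\ref{lemma:free for k_infty} (the $\LL$-analogue), which is required shortly afterward and for which your direct method does not immediately apply.
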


 \begin{proof}
 We start with the remark that, thanks to ${\hne}$ for $T^\mathcal{D}$ we have $H^0(k_p,T\otimes\Phi/\oo)=H^1(k_p,T)_{\textup{tors}}=0$, and thus the $\oo$-module $H^1(k_p,T)$ is free.
 
All the references below are to~\cite[Appendix]{kbbiwasawa} and the results quoted here are originally due to Perrin-Riou.

By Theorem A.8(i),  $\LL$-torsion submodule $H^1(k_p,\TT)_{\textup{tors}}$ is isomorphic to $\oplus_{\wp|p}T^{H_{k_\wp}}$, where $H_{k_\wp}=\Gal(\overline{k_\wp}/k_{\wp,\infty})$, and this module is trivial thanks to $\hwd$. Theorem A.8(ii) now concludes that  the $\LL$-module  $H^1(k_p,\TT)$ is free rank $d$. Furthermore,
$$\textup{coker}[H^1(k_p,\TT)\lra H^1(k_p,T)]=H^2(k_p,\TT)[\gamma-1],$$
 where $\gamma$ is any topological generator of $\Gamma$. Since we assumed $\hne$\, holds, it follows from~\cite[Lemma 2.11]{kbb} that $H^2(k_p,\TT)=0,$ hence the map
 $$H^1(k_p,\TT)\lra H^1(k_p,T)$$ is surjective. Lemma now follows.
 \end{proof}
\begin{rem}
\label{ref:referee} Let $R$ be any complete local noetherian ring
with a finite residue field, and let $M$ be a free $R$-module of
finite rank which is endowed with a continuous action of
$G_{k_{\wp}}$. Suppose that
$H^2(k_\wp,M)=0=H^2(k_\wp,\textup{Hom}_{R}(M,R)(1))=0$. The
anonymous referee has kindly pointed out that the freeness of
$H^1(k_{\wp},M)$ in this very general setting may be deduced
following~\cite[Prop. 4.2.9]{nekovar06}: The cohomology
$H^{\bullet}(k_\wp,M)$ is represented by a perfect complex of
$R$-modules (i.e., projective, hence free, $R$-modules of finite
type) concentrated in degrees $0,1$ and $2$. In particular, since we
assume that $H^2(k_\wp,M)=0$, then this complex may be taken in
degrees $0$ and $1$. Similarly, the cohomology
$H^{\bullet}(k_\wp,\textup{Hom}_{R}(M,R)(1))$ is represented by a
perfect complex of $R$-modules concentrated in degrees $0$ and $1$.
When the coefficient ring $R$ is Gorenstein, the two complexes
$H^{\bullet}(k_\wp,M)$ and
$H^{\bullet}(k_\wp,\textup{Hom}_{R}(M,R)(1))$ are related by the
duality functor $\textup{RHom}_{R}(-,R)[-2]$ (c.f., \cite[Prop.
5.2.4]{nekovar06}). As a result, each of these two complexes is also
represented by a perfect complex concentrated in degrees $2-1=1$ and
$2-0=2$, hence by a single projective (hence free) $R$-module of
finite type in degree $1$.
\end{rem}
 Bloch and Kato~\cite[\S3]{bk} define a subspace $H^1_f(k_\wp,V)\subset H^1(k_\wp,V)$ by letting
 $$H^1_f(k_\wp,V):=\ker\left(H^1(k_\wp,V) \lra H^1(k_\wp,V\otimes B_{\textup{cris}}) \right),$$
where $B_{\textup{cris}}$ is Fontaine's crystalline period ring. We propagate the \emph{Bloch-Kato local condition} $H^1_f(k_\wp,V)$ on $V$ to $T$:
  \begin{align*}
 H^1_f(k_\wp,T)&:=\ker\left(H^1(k_\wp,T)\lra \frac{H^1(k_\wp,V)}{H^1_f(k_\wp,V)}\right)\\&=\ker\left(H^1(k_\wp,T) \lra H^1(k_\wp,V\otimes B_{\textup{cris}})\right)
 \end{align*}
 We define the singular quotient as $H^1_s(k_\wp,T):=H^1(k_\wp,T)/H^1_f(k_\wp,T)$. Note that $H^1_s(k_\wp,T)$ is a free $\oo$-module as it injects, by definition, into $H^1(k_\wp,V)/H^1_f(k_\wp,V)$.

Assume until the end of \S\ref{subsub:lock} that $V$ satisfies
$\hne$ (both for $T$ and $T^\mathcal{D}$), $\hwd$ and  $\hpss$.  We then immediately have: 

\begin{prop}
\label{prop:finiteconditionhalfrank}
The $\oo$-module $H^1_f(k_p,T):=\oplus_{\wp|p}H^1_f(k_\wp,T)$ (resp., the module $H^1_s(k_p,T):=\oplus_{\wp|p} H^1_s(k_\wp,T)$) is free are free of rank $d_+$ (resp., of rank $r$).
\end{prop}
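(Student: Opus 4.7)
The plan is to combine Lemma \ref{lemma:free for k} (which gives that $H^1(k_p,T)$ is $\oo$-free of rank $d$) with a splitting argument for the defining short exact sequence of the finite/singular decomposition, and then read off the ranks from $\hpss$(b).

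First I would exploit the short exact sequence
\[
0\lra H^1_f(k_p,T)\lra H^1(k_p,T)\lra H^1_s(k_p,T)\lra 0,
\]
which is the direct sum over $\wp\mid p$ of the corresponding local sequences. By Lemma \ref{lemma:free for k}, the middle term is $\oo$-free of rank $d$ (this is where $\hne$ and $\hwd$ enter). The key observation is the torsion-freeness of the right-hand term: by its very definition $H^1_s(k_\wp,T)$ injects into $H^1(k_\wp,V)/H^1_f(k_\wp,V)$, which is a $\Phi$-vector space, and so $H^1_s(k_p,T)$ is a finitely generated torsion-free $\oo$-module. Since $\oo$ is a discrete valuation ring, torsion-freeness upgrades to freeness, and the displayed sequence then splits. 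Consequently $H^1_f(k_p,T)$ is a direct summand of the free $\oo$-module $H^1(k_p,T)$, hence is itself $\oo$-free.

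It remains to determine the ranks. The rank of $H^1_f(k_p,T)$ is $d_+$ directly from $\hpss$(b). By additivity of ranks in the short exact sequence,
\[
\textup{rank}_\oo H^1_s(k_p,T)=d-d_+=r,
\]
which is precisely the asserted rank for the singular quotient. This completes the proposal; none of the steps is really a main obstacle, since the nontrivial content has been packaged into Lemma \ref{lemma:free for k} and the hypothesis $\hpss$(b), and the only subtlety worth double-checking is that the singular quotient is torsion-free — which is immediate from the definition of $H^1_f(k_\wp,T)$ via propagation from $V$.
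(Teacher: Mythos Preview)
Your proposal is correct and matches the paper's reasoning. The paper does not write out a proof at all: it records just before the statement that $H^1_s(k_\wp,T)$ is $\oo$-free because it injects into $H^1(k_\wp,V)/H^1_f(k_\wp,V)$, assumes $\hne$, $\hwd$ and $\hpss$, and then asserts the proposition as immediate; your argument simply spells out this ``immediate'' step via the short exact sequence, Lemma~\ref{lemma:free for k}, and $\hpss$(b).
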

Fix an $\oo$-rank one direct summand $\mathcal{L} \subset H^1(k_p,T)$ such that $\al \cap H^1_f(k_p,T)=\{0\}.$ We will also write $\al$ for the (isomorphic) image of $\al$ inside $H^1_s(k_p,T)$ under the surjection $$H^1(k_p,T)\lra H^1_s(k_p,T).$$

 \begin{define}
 \label{def:line}
 Define the \emph{$\al$-modified Selmer structure} $\FF_\al$ on $T$ as follows:

 \begin{itemize}
 \item $\Sigma(\FF_\al)=\Sigma(\FFc)$,
 \item if $\lambda \nmid p$, then $H^1_{\FF_\al}(k_\lambda, T)=H^1_{\FFc}(k_\lambda,T)$,
 \item $H^1_{\FF_\al}(k_p,T):=H^1_f(k_p,T)\oplus\al \subset H^1(k_p,T)=H^1_{\FFc}(k_p,T)$.
 \end{itemize}
 \end{define}

 \subsubsection{Local conditions at $p$ over $k_\infty$}
 \label{subsub:localconditionatpoverkinfty}
 Recall that $k_{\infty}$ denotes the cyclotomic $\ZZ_p$-extension of $k$, and $\Gamma=\Gal(k_{\infty}/k)$. Assume that the hypothesis $\mathbf{H.Iw.}$ holds in this section. Let $k_{\wp}$ denote the completion of $k$ at $\wp$, and let  $k_{\wp,\infty}$ denote the cyclotomic $\ZZ_p$-extension of $k_{\wp}$. We may therefore identify $\Gal(k_{\wp,\infty}/k_{\wp})$ with $\Gamma$ for all $\wp|p$ and henceforth $\Gamma$ will stand for any of these Galois groups. Let $\LL=\oo[[\Gamma]]$ be the cyclotomic Iwasawa algebra, as usual. We also fix a topological generator $\gamma$ of $\Gamma$, and set $\xx=\gamma-1$ (and we occasionally identify $\LL$ with the power series ring $\oo[[\xx]]$).
 \begin{lemma}
 \label{lemma:free for k_infty}
 Suppose ${\hne}$  (both for $T$ and $T^\mathcal{D}$)  and ${\hwd}$\, hold. Then $H^1(k_p,\TT):=\oplus_{\wp|p}H^1(k_\wp,\TT) $ is a free $\LL$-module of rank $d$.
 \end{lemma}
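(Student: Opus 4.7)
The plan is to observe that the $\LL$-adic assertion we want is precisely the intermediate ingredient that drove the proof of Lemma~\ref{lemma:free for k} above; one just has to stop one level earlier and collect what was already shown. First I would invoke Theorem A.8(i) from the appendix of~\cite{kbbiwasawa} (essentially due to Perrin-Riou) to identify the $\LL$-torsion submodule of $H^1(k_\wp,\TT)$ with $T^{H_{k_\wp}}=H^0(k_{\wp,\infty},T)$, where $H_{k_\wp}=\Gal(\overline{k_\wp}/k_{\wp,\infty})$. Hypothesis $\hwd$ says exactly that this module vanishes for every $\wp\mid p$, so each $H^1(k_\wp,\TT)$ is $\LL$-torsion-free.

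Next I would apply Theorem A.8(ii) of loc.\,cit.\ to upgrade torsion-freeness to genuine $\LL$-freeness and to read off $\textup{rank}_{\LL}H^1(k_\wp,\TT)=[k_\wp:\QQ_p]\cdot\textup{rank}_{\oo}T$. Summing over primes $\wp$ of $k$ above $p$ will then yield
$$\textup{rank}_{\LL}H^1(k_p,\TT)=\sum_{\wp\mid p}[k_\wp:\QQ_p]\cdot\textup{rank}_{\oo}T=[k:\QQ]\cdot\textup{rank}_{\oo}T=d,$$
which is the claim.

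I do not expect a real obstacle here; the subtleties are purely bookkeeping. The hypothesis $\hiwasawa$ (total ramification at $p$ in $k_\infty/k$) is what allows one to identify each $\Gal(k_{\wp,\infty}/k_\wp)$ canonically with $\Gamma$, so that the semi-local direct sum genuinely is a module over the single Iwasawa algebra $\LL=\oo[[\Gamma]]$; without this the statement could not even be phrased as cleanly. The hypothesis $\hne$, although retained in the statement for uniformity with Lemma~\ref{lemma:free for k}, plays no role at this stage: it intervened in the preceding lemma only to force $H^2(k_p,\TT)=0$ and thereby surjectivity of $H^1(k_p,\TT)\twoheadrightarrow H^1(k_p,T)$, which was needed solely to descend $\LL$-freeness to $\oo$-freeness. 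Here we remain at the Iwasawa-theoretic level, so that descent step is avoided entirely.
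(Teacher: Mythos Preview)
Your proposal is correct and follows exactly the paper's approach: the paper's proof simply says ``This is already proved in the first part of the proof of Lemma~\ref{lemma:free for k},'' and that first part is precisely the two-step argument via Theorem~A.8(i)--(ii) of~\cite{kbbiwasawa} that you spell out. Your additional remarks on the roles of $\hiwasawa$ and $\hne$ are accurate commentary but not part of the paper's proof.
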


 \begin{proof}
This is already proved in the first part of the proof of Lemma~\ref{lemma:free for k}.
 \end{proof}
 Assume $\hord$\, and $\hntz$\, until the end of \S\ref{subsub:localconditionatpoverkinfty}. We define the \emph{Greenberg local conditions} at $p$ by setting 
 $$H^1_{\textup{Gr}}(k_\wp,\TT):=\ker\left(H^1(k_\wp,\TT)\lra H^1(k_\wp,\textup{F}_\wp^-\TT)\right).$$
  By definition, there is an exact sequence of $\LL$-modules
 \be\label{eqn:minussequence} 0\lra \textup{F}_\wp^-\TT \stackrel{\gamma-1}{\lra} \textup{F}_\wp^-\TT \lra \textup{F}_\wp^-T \lra 0.\ee
  Taking $G_{k_\wp}$-invariance of the sequence (\ref{eqn:minussequence}) and using $\hntz$\, and Nakayama's lemma, we conclude that $H^0(k_{\wp},\textup{F}^-\TT )=0$. This in turn implies that the map
  $$H^1(k_\wp,\textup{F}^+\TT ) \lra H^1(k_\wp,\TT)$$ (induced from the $G_{k_\wp}$-cohomology of the sequence $0\ra \textup{F}_\wp^+\TT \ra \TT \ra \textup{F}_\wp^-\TT \ra 0)$ is injective and the image of $H^1(k_\wp,\textup{F}^+\TT )$ is exactly $H^1_\textup{Gr}(k_\wp,\TT)$.

 \begin{prop}\label{prop:greenbergrank} Let $r_\wp:=[k_{\wp}:\QQ_p]\cdot\textup{rank}_{\oo}\,\textup{F}_\wp^+T$.
\begin{itemize}  \item[(i)]$H^1(k_\wp,\textup{F}^+\TT)$ is a free $\LL$-module of rank $r_\wp$.
\item[(ii)] The natural map $$H^1(k_\wp,\textup{F}^+\TT) \lra H^1(k_\wp,\textup{F}^+T)$$ is surjective.
\item[(iii)] $H^1(k_\wp,\textup{F}^+T)$ is a free $\oo$-module of rank $r_\wp$.
\end{itemize}
 \end{prop}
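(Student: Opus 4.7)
The plan is to deduce all three parts together from two ingredients: a single $\LL$-adic vanishing theorem proved by Nakayama, and the freeness criterion of Remark \ref{ref:referee}. The central technical input is the strengthening of $\hntz$ from $k_\wp$ to the full tower $k_{\wp,\infty}$:
$$H^0\left(k_{\wp,\infty},\, \textup{F}_\wp^{\pm}T\otimes \Phi/\oo\right) \,=\, 0 \,=\, H^0\left(k_{\wp,\infty},\, \textup{F}_\wp^{\pm}T^{\mathcal{D}}\otimes \Phi/\oo\right).$$
I would prove these vanishings by passing to Pontryagin duals. Each dual is a finitely generated $\oo$-module, hence \emph{a fortiori} a finitely generated $\LL$-module, whose $\Gamma$-coinvariants are the Pontryagin duals of the corresponding $k_\wp$-groups supplied by $\hntz$. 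Since $\hiwasawa$ guarantees that $\gamma-1$ lies in the maximal ideal of $\LL$, Nakayama's lemma then forces each dual module itself to vanish.

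With these vanishings in hand, part (i) follows from the freeness criterion of Remark \ref{ref:referee}: since $\LL$ is regular local, hence Gorenstein, it suffices to verify the two $H^2$-vanishings $H^2(k_\wp,\textup{F}_\wp^+\TT)=0$ and $H^2(k_\wp,\Hom_\LL(\textup{F}_\wp^+\TT,\LL)(1))=0$. Both are now immediate from $\LL$-adic local Tate duality (taking inverse limits of the finite-level dualities along $k_{\wp,\infty}/k_\wp$ in the style of the proof of Lemma \ref{lemma:free for k}), combined with the Pan\v{c}i\v{s}kin-type identification $(\textup{F}_\wp^+T)^{\mathcal{D}}=\textup{F}_\wp^-T^{\mathcal{D}}$ from Remark \ref{rem:onpanciskin}(iii) and the central vanishings above, applied to both $T$ and $T^{\mathcal{D}}$ (the latter also satisfies $\hord$ and $\hntz$ by the same Remark). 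To pin the $\LL$-rank of $H^1(k_\wp,\textup{F}_\wp^+\TT)$ down to $r_\wp$, I would invoke the $\LL$-adic local Euler--Poincar\'e characteristic, together with the vanishing $H^0(k_\wp,\textup{F}_\wp^+\TT)=0$. This last vanishing is obtained by exactly the Nakayama-plus-$\hntz$ argument already spelled out for $\textup{F}_\wp^-\TT$ in the paragraph preceding the proposition, where one uses $\hord$ to ensure that $H^0(k_\wp,\textup{F}_\wp^+V)\subset D^0_{\textup{dR}}(\textup{F}_\wp^+V)=0$.

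Parts (ii) and (iii) then fall out almost mechanically. The long exact sequence of $G_{k_\wp}$-cohomology attached to
$$0 \lra \textup{F}_\wp^+\TT \xrightarrow{\gamma-1} \textup{F}_\wp^+\TT \lra \textup{F}_\wp^+T \lra 0$$
identifies the cokernel of $H^1(k_\wp,\textup{F}_\wp^+\TT) \to H^1(k_\wp,\textup{F}_\wp^+T)$ with a submodule of $H^2(k_\wp,\textup{F}_\wp^+\TT)=0$, settling (ii). For (iii), $H^1(k_\wp,\textup{F}_\wp^+T)$ thereby realises as the quotient of the free $\LL$-module $H^1(k_\wp,\textup{F}_\wp^+\TT)$ of rank $r_\wp$ by $\gamma-1$, and so it is free of rank $r_\wp$ over $\LL/(\gamma-1)\cong\oo$.

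The step I expect to be the main obstacle is the upgrade of $\hntz$ from $k_\wp$ to $k_{\wp,\infty}$ described in the opening paragraph; every conclusion of the proposition ultimately rests on this single Nakayama-over-$\LL$ input, and it is precisely here that $\hiwasawa$ enters in an essential way.
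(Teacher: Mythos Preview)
Your approach is correct and takes a genuinely different route from the paper's for part~(i). The paper establishes $\LL$-torsion-freeness of $H^1(k_\wp,\textup{F}_\wp^+\TT)$ by embedding it into $H^1(k_\wp,\TT)$ (via the vanishing $H^0(k_\wp,\textup{F}_\wp^-\TT)=0$ proved just before the proposition) and then invokes \cite[Theorem~A.8(ii)]{kbbiwasawa} as a black box to upgrade torsion-freeness to freeness of the stated rank. You instead lift $\hntz$ to the whole tower $k_{\wp,\infty}$ by a clean Nakayama argument, use this to kill both $H^2(k_\wp,\textup{F}_\wp^+\TT)$ and $H^2(k_\wp,\Hom_\LL(\textup{F}_\wp^+\TT,\LL)(1))$, and then appeal to Remark~\ref{ref:referee} for freeness and to the $\LL$-adic Euler--Poincar\'e formula for the rank. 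Your argument is more self-contained (everything it uses is already in the paper), at the price of having to identify $\Hom_\LL(\textup{F}_\wp^+\TT,\LL)(1)$ with $\textup{F}_\wp^-T^{\mathcal D}\otimes\LL$ up to the involution on $\LL$ and to track the dualities with some care. For~(ii) the paper only needs $H^2(k_\wp,\textup{F}_\wp^+\TT)[\gamma-1]=0$, which it obtains directly from $\hntz$ at the level of $k_\wp$ without passing to the tower; your stronger input $H^2=0$ of course also works. Part~(iii) is then handled identically in both arguments.

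Two minor inaccuracies worth fixing. First, $\gamma-1\in\mm_\LL$ is automatic; the genuine role of $\hiwasawa$ in your opening paragraph is to identify $\Gal(k_{\wp,\infty}/k_\wp)$ with all of $\Gamma$, so that the $\Gamma$-invariants of $H^0(k_{\wp,\infty},-)$ really coincide with $H^0(k_\wp,-)$. Second, your detour through $D^0_{\textup{dR}}(\textup{F}_\wp^+V)=0$ to get $H^0(k_\wp,\textup{F}_\wp^+T)=0$ is unnecessary: since $\hntz$ is symmetric in the sign $\pm$, the very Nakayama-plus-$\hntz$ argument you cite for $\textup{F}_\wp^-$ applies verbatim to $\textup{F}_\wp^+$.
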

 \begin{proof}
 The long exact sequence of the $G_{k_\wp}$-cohomology yields an exact sequence
 $$H^0(k_\wp,\textup{F}_\wp^-\TT)\lra H^1(k_\wp,\textup{F}_\wp^+\TT)\lra H^1(k_\wp,\TT).$$
 As explained above, one may deduce from $\hntz$\, that $H^0(k_\wp,\textup{F}_\wp^-\TT)=0$, so it follows from  Lemma~\ref{lemma:free for k_infty} that $H^1(k_\wp,\textup{F}_\wp^+\TT)$ is $\LL$-torsion free. (i) now follows from \cite[Theorem A.8(ii)]{kbbiwasawa}.

Long exact sequence of the $G_{k_\wp}$-cohomology of the sequence
$$0\lra \textup{F}_\wp^+\TT \stackrel{\gamma-1}{\lra}  \textup{F}_\wp^+\TT  \lra \textup{F}_\wp^+T \lra 0$$
 gives
 $$\textup{coker}\left(H^1(k_\wp,\textup{F}_\wp^+\TT) \lra H^1(k_\wp,\textup{F}_\wp^+T)\right)=H^2(k_\wp,\textup{F}_\wp^+\TT)[\gamma-1].$$
  As in the proof of Lemma~\ref{lemma:free for k},
  $$H^2(k_\wp,\textup{F}_\wp^+\TT)[\gamma-1]=0 \iff H^0(k_\wp,(\textup{F}_\wp^+T)^*)=H^0(k_\wp,\textup{F}_\wp^-T^{\mathcal D}\otimes\Phi/\oo)=0,$$
   and the latter vanishing follows from the hypotheses $\hntz$. This completes the proof of (ii). (iii) follows at once from (i) and (ii).
    \end{proof}

 \begin{cor} The $\LL$-module $H^1_{\textup{Gr}}(k_p,\TT):=\bigoplus_{\wp|p} H^1_{\textup{Gr}}(k_\wp,\TT)$ is free of rank $d_+$.
 \end{cor}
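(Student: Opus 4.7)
The plan is to deduce the corollary directly from Proposition~\ref{prop:greenbergrank} together with the hypothesis $\mathbf{H.P}$(ii). The key observation, already recorded in the paragraph preceding Proposition~\ref{prop:greenbergrank}, is that the vanishing $H^0(k_\wp,\textup{F}_\wp^-\TT)=0$ (which was derived from $\hntz$\ via Nakayama applied to the sequence~\eqref{eqn:minussequence}) makes the natural map $H^1(k_\wp,\textup{F}^+_\wp\TT)\to H^1(k_\wp,\TT)$ injective, with image equal to $H^1_{\textup{Gr}}(k_\wp,\TT)$. Thus I would first record the resulting $\LL$-module isomorphism
$$H^1_{\textup{Gr}}(k_\wp,\TT)\;\stackrel{\sim}{\longrightarrow}\;H^1(k_\wp,\textup{F}^+_\wp\TT).$$

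Next I would apply Proposition~\ref{prop:greenbergrank}(i) to conclude that each $H^1(k_\wp,\textup{F}^+_\wp\TT)$ is free of rank $r_\wp=[k_\wp:\QQ_p]\cdot\textup{rank}_{\oo}\textup{F}^+_\wp T$. Since direct sums of finite-rank free $\LL$-modules are free, summing the above isomorphism over all $\wp\mid p$ gives
$$H^1_{\textup{Gr}}(k_p,\TT)\;\cong\;\bigoplus_{\wp|p}H^1(k_\wp,\textup{F}^+_\wp\TT),$$
a free $\LL$-module of rank $\sum_{\wp|p}r_\wp$.

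Finally, to match this rank with $d_+$, I would invoke hypothesis $\mathbf{H.P}$(ii), which asserts exactly $\sum_{\wp|p}[k_\wp:\QQ_p]\cdot\textup{rank}_{\oo}\textup{F}^+_\wp T=d_+$. Assembling the three steps completes the proof. There is no genuine obstacle here: all the nontrivial work (namely the freeness of $H^1(k_\wp,\textup{F}^+_\wp\TT)$ and the injectivity of $H^1(k_\wp,\textup{F}^+_\wp\TT)\hookrightarrow H^1(k_\wp,\TT)$, both ultimately consequences of $\hord$, $\hntz$\ and $\hne$) is already packaged in Proposition~\ref{prop:greenbergrank}, so the corollary is a one-line consequence once the rank identity $\mathbf{H.P}$(ii) is invoked.
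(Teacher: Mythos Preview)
Your proposal is correct and follows essentially the same approach as the paper. The paper's proof is a one-liner (``Since we assumed $\hord$(ii), $\sum_{\wp|p}r_\wp=d_+$''), which tacitly relies on exactly the ingredients you have spelled out: the identification $H^1_{\textup{Gr}}(k_\wp,\TT)\cong H^1(k_\wp,\textup{F}^+_\wp\TT)$ from the paragraph preceding Proposition~\ref{prop:greenbergrank}, the freeness result Proposition~\ref{prop:greenbergrank}(i), and the rank identity $\mathbf{H.P}$(ii).
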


 \begin{proof} Since we assumed $\hord$(ii),  $\sum_{\wp|p}r_\wp=d_+.$
 \end{proof}
 \begin{define}
 \label{def:line over k_infty}
 Fix a $\LL$-rank one direct summand $\mathbb{L} \subset H^1(k_p,\TT)$ such that $\all\cap H^1_{\textup{Gr}}(k_p,\TT)=\{0\}$. Define the \emph{$\all$-modified Selmer structure} $\FF_\all$ on $\TT$ as follows:

 \begin{itemize}
 \item $\Sigma(\FF_\all)=\Sigma(\FF_\LL)$,
 \item if $\lambda \nmid p$, define $H^1_{\FF_\all}(k_\lambda, \TT)=H^1_{\FF_\LL}(k_\lambda,\TT)$,
 \item $H^1_{\FF_\all}(k_p,\TT):=H^1_{\textup{Gr}}(k_p,\TT)\oplus\all \subset H^1(k_p,\TT)=H^1_{\FF_\LL}(k_p,\TT)$.
 \end{itemize}
 \end{define}

\begin{rem}
\label{rem:BKvsGr}
Note that we used two different approaches to choose local conditions in \S\ref{subsub:lock} (over $k$) and in \S\ref{subsub:localconditionatpoverkinfty} (over $k_\infty$). Starting from $H^1_{\textup{Gr}}(k_p,\TT)$, we may consider the image of $H^1_{\textup{Gr}}(k_p,\TT)$ under the canonical map
$$H^1(k_p,\TT) \lra H^1(k_p,T)$$
 and denote this image by $H^1_{\textup{Gr}}(k_p,T)\subset H^1(k_p,T)$. The choice $H^1_{\textup{Gr}}(k_p,T)\subset H^1(k_p,T)$ will be called the \emph{Greenberg local condition} on $T$. It is easy to see (thanks to Proposition~\ref{prop:greenbergrank}(ii) and (iii)) that $H^1_{\textup{Gr}}(k_p,T)$ coincides with the image of
 $ H^1(k_p,\textup{F}^+T) \hookrightarrow H^1(k_p,T).$
  In several cases of interest, the Selmer group determined by the Bloch-Kato definition agrees with the Selmer group determined by the Greenberg definition; see Remark~\ref{rem:greenbergvsBK} above.

\end{rem}

\subsection{Global duality and a comparison of Selmer groups}
In this section, we compare classical Selmer groups (which we wish to relate to the $L$-values) to modified Selmer groups (for which we are able to apply the Kolyvagin system machinery and compute in terms of $L$-values). The necessary tool to accomplish this comparison is the Poitou-Tate global duality.
\subsubsection{Comparison over $k$}
\label{subsubsec:compareselmer1}
We first define the classical (Bloch-Kato) Selmer structure and Selmer group for $T$ ({resp.}, for $T^*$). Let $\FF_{\BK}$ denote the Selmer structure on $T$ given by
\begin{itemize}
 \item $\Sigma(\FF_{\BK})=\Sigma(\FFc)=\Sigma(\FF_\al)$,
 \item For $\lambda \nmid p$, $H^1_{\FF_{\BK}}(k_\lambda, T)=H^1_{\FFc}(k_\lambda,T)=H^1_{\FF_\al}(k_\lambda, T)$,
 \item $H^1_{\FF_{\BK}}(k_p,T)=H^1_f(k_p,T)\subset H^1(k_p,T)=H^1_{\FFc}(k_p,T)$.
 \end{itemize}

This induces the dual Selmer structure $\FF_{\BK}^*$ on $T^*$. Then, by definition, we have the following exact sequences:
$$\xymatrix@R=.3cm{0\ar[r]&H^1_{\FF_{\BK}}(k,T)\ar[r]& H^1_{\FF_{\al}}(k,T)\ar[r]^(.6){ \textup{loc}_p^{s}}&\al\\
0\ar[r]&H^1_{\FF_{\al}^*}(k,T^*)\ar[r]& H^1_{\FF_{\BK}^*}(k,T^*)\ar[r]^(.5){\textup{loc}_p^*}&\frac{H^1_{\FF_{\BK}^*}(k_p,T^*)}{H^1_{\FF_{\al}^*}(k_p,T^*)}}$$
where $\textup{loc}_p^{s}$ is the compositum $\textup{loc}_p^{s}: H^1(k,T)\ra H^1(k_p,T)\ra H^1_s(k_p,T).$
The Poitou-Tate global duality theorem (c.f., \cite[Theorem I.7.3]{r00}, \cite[Theorem I.4.10]{milne}, \cite[Theorem 2.3.4]{mr02}) allows us to compare the image of $\textup{loc}_p^{s}$ to the image of $\textup{loc}_p^*$:

\begin{prop}
\label{prop:compareselmeroverk}
There is an exact sequence
$$0\ra \frac{ H^1_{\FF_{\al}}(k,T)}{H^1_{\FF_{\BK}}(k,T)}\stackrel{\textup{loc}_p^{s}}{\lra}\al\stackrel{(\textup{loc}_p^*)^{\vee}}{\lra}  \left( H^1_{\FF_{\BK}^*}(k,T^*)\right)^{\vee}\ra\left(H^1_{\FF_{\al}^*}(k,T^*)\right)^{\vee}\ra 0,$$ where the map $(\textup{loc}_p^*)^{\vee}$ is induced from localization at $p$ and the local Tate pairing between $H^1(k_p,T)$ and $H^1(k_p,T^*)$.
\end{prop}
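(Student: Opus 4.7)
The plan is to deduce this from the standard Poitou--Tate comparison sequence for a pair of Selmer structures that differ only at a finite set of primes, applied with $\FF_{\BK} \leq \FF_{\al}$ (inclusion of local conditions at every place). By definition of the two structures (Example~\ref{example:canonical selmer} together with Definition~\ref{def:line}), the local conditions agree outside the primes above $p$, and at $p$ one has
$$H^1_{\FF_{\al}}(k_p,T)/H^1_{\FF_{\BK}}(k_p,T)=\bigl(H^1_f(k_p,T)\oplus \al\bigr)/H^1_f(k_p,T)\cong \al.$$
Dually, since $\hpss$(a) (via Berger's theorem) ensures that the Bloch--Kato local conditions at $p$ for $V$ and $V^{*}$ are exact annihilators under local Tate duality, one has $H^1_{\FF_{\BK}^{*}}(k_p,T^{*})=H^1_f(k_p,T)^{\perp}$ and $H^1_{\FF_{\al}^{*}}(k_p,T^{*})=(H^1_f(k_p,T)\oplus\al)^{\perp}$; so the local Tate pairing induces an isomorphism
$$H^1_{\FF_{\BK}^{*}}(k_p,T^{*})\big/H^1_{\FF_{\al}^{*}}(k_p,T^{*})\ \stackrel{\sim}{\lra}\ \al^{\vee}.$$

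The first step is to invoke the global duality exact sequence in the form given, for example, in~\cite[Theorem 2.3.4]{mr02} (compare~\cite[Theorem I.7.3]{r00}): for Selmer structures $\FF_{\BK}\leq \FF_{\al}$ on $T$ (with dual structures $\FF_{\al}^{*}\leq \FF_{\BK}^{*}$ on $T^{*}$), there is a five-term exact sequence
$$0\ra H^1_{\FF_{\BK}}(k,T)\ra H^1_{\FF_{\al}}(k,T)\ra \bigoplus_{\wp|p}\frac{H^1_{\FF_{\al}}(k_{\wp},T)}{H^1_{\FF_{\BK}}(k_{\wp},T)}\ra H^1_{\FF_{\BK}^{*}}(k,T^{*})^{\vee}\ra H^1_{\FF_{\al}^{*}}(k,T^{*})^{\vee}\ra 0.$$
Substituting the local identification $\bigoplus_{\wp|p}H^1_{\FF_{\al}}(k_{\wp},T)/H^1_{\FF_{\BK}}(k_{\wp},T)\cong\al$ from the previous paragraph (whose realization is precisely the composition of global localization with the projection to the singular quotient, namely $\loc_p^{s}$), and quotienting the initial injection, one obtains the sequence claimed.

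It only remains to identify the middle connecting map with $(\loc_p^{*})^{\vee}$. This is formal: in the derivation of the Poitou--Tate five-term sequence, the connecting map $\al\to H^1_{\FF_{\BK}^{*}}(k,T^{*})^{\vee}$ is by construction the Pontryagin dual of the composition
$$H^1_{\FF_{\BK}^{*}}(k,T^{*})\xrightarrow{\loc_p} H^1_{\FF_{\BK}^{*}}(k_p,T^{*})\twoheadrightarrow H^1_{\FF_{\BK}^{*}}(k_p,T^{*})/H^1_{\FF_{\al}^{*}}(k_p,T^{*})\stackrel{\sim}{\lra}\al^{\vee},$$
which is precisely $\loc_p^{*}$ in the notation of the statement.

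The argument is essentially bookkeeping once the two local ingredients are in place, so the only substantive point is the local duality step at $p$: one needs $\hpss$(a) (equivalently $V$ de Rham at $p$) for the Bloch--Kato orthogonality that gives $H^1_{\FF_{\BK}^{*}}(k_p,T^{*})=H^1_f(k_p,T)^{\perp}$, and one uses that $\al$ is a direct summand disjoint from $H^1_f(k_p,T)$ to see that the induced pairing between $\al$ and $H^1_{\FF_{\BK}^{*}}(k_p,T^{*})/H^1_{\FF_{\al}^{*}}(k_p,T^{*})$ is perfect. No serious obstacle is expected beyond confirming that these hypotheses (already standing in this subsection through Proposition~\ref{prop:finiteconditionhalfrank}) are in force.
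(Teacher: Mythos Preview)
Your proof is correct and follows the same approach as the paper: the paper simply records the two tautological exact sequences for $\FF_{\BK}\leq\FF_{\al}$ and their duals, then cites the Poitou--Tate global duality theorem (\cite[Theorem I.7.3]{r00}, \cite[Theorem I.4.10]{milne}, \cite[Theorem 2.3.4]{mr02}) without further detail. Your argument is in fact more explicit than the paper's, since you spell out the identification of the local quotient with $\al$, the dual identification via Bloch--Kato orthogonality, and the description of the connecting map.
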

\begin{cor}
\label{cor:weakoutcomeofcomparison}
The quotient $H^1_{\FF_{\BK}^*}(k,T^*)/H^1_{\FF_{\al}^*}(k,T^*)$ is finite iff $\textup{loc}_p^s(H^1_{\FF_{\al}}(k,T)) \neq0$.
\end{cor}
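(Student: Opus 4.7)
The plan is to read the corollary directly off the four-term exact sequence in Proposition~\ref{prop:compareselmeroverk}. First I would split that sequence into two short exact sequences meeting at the middle term $\al$: one identifies the image of $(\textup{loc}_p^*)^{\vee}$ with the quotient $\al/\textup{loc}_p^s(H^1_{\FF_\al}(k,T))$, and the other identifies that same image with the kernel of the right-hand surjection $(H^1_{\FF_{\BK}^*}(k,T^*))^{\vee}\twoheadrightarrow(H^1_{\FF_{\al}^*}(k,T^*))^{\vee}$. This latter surjection comes from the inclusion $H^1_{\FF_\al^*}(k,T^*)\subset H^1_{\FF_{\BK}^*}(k,T^*)$, which itself follows from Tate local duality applied to the opposite inclusion $H^1_{\FF_{\BK}}(k_p,T)\subset H^1_{\FF_\al}(k_p,T)$ of local conditions at $p$.

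The key step is to translate that kernel via Pontryagin duality. Since $H^1_{\FF_{\BK}^*}(k,T^*)$ and $H^1_{\FF_\al^*}(k,T^*)$ are cofinitely generated $\oo$-modules and $\vee$ is exact on such objects, the kernel above is canonically identified with the Pontryagin dual of the quotient $H^1_{\FF_{\BK}^*}(k,T^*)/H^1_{\FF_\al^*}(k,T^*)$. Combining the two descriptions of $\textup{image}((\textup{loc}_p^*)^\vee)$ then yields the isomorphism
$$\left(\frac{H^1_{\FF_{\BK}^*}(k,T^*)}{H^1_{\FF_{\al}^*}(k,T^*)}\right)^{\vee}\;\cong\;\frac{\al}{\textup{loc}_p^s(H^1_{\FF_{\al}}(k,T))}.$$

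To finish, I would appeal to the structure of $\al$: it is a free $\oo$-module of rank one, so any nonzero $\oo$-submodule already has rank one and therefore finite index. Hence the right-hand side is finite if and only if $\textup{loc}_p^s(H^1_{\FF_\al}(k,T))\neq 0$, and a cofinitely generated $\oo$-module is finite precisely when its Pontryagin dual is finite. This converts finiteness of $H^1_{\FF_{\BK}^*}(k,T^*)/H^1_{\FF_\al^*}(k,T^*)$ into nontriviality of $\textup{loc}_p^s(H^1_{\FF_\al}(k,T))$, which is the claim. There is no real obstacle beyond the Poitou--Tate input already packaged in Proposition~\ref{prop:compareselmeroverk}; the only delicate point is unwinding the duality to read off both descriptions of the middle image simultaneously.
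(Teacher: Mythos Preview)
Your argument is correct and follows essentially the same approach as the paper: both deduce the corollary directly from the four-term exact sequence of Proposition~\ref{prop:compareselmeroverk} together with the fact that $\al$ is free of rank one over $\oo$. You have simply unwound the duality and the image/kernel identifications in more detail than the paper, which records the proof in a single line.
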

\begin{proof}
Since $\al$ is a free $\oo$-module of rank one, this is immediate from Proposition~\ref{prop:compareselmeroverk}.
\end{proof}

\begin{cor}
\label{cor:compareoverkwithclass}
Suppose $H^1_{\FF_{\BK}}(k,T)=0$. Let $c \in H^1_{\FF_{\al}}(k,T)$ be any class. Then the following sequence is exact:
$$ 
0\ra {\frac{H^1_{\FF_{\al}}(k,T)}{\oo\cdot c}}\stackrel{\textup{loc}_p^s}{\lra}\frac{\al}{\oo\cdot \textup{loc}_p^s(c)} {\lra} \left(H^1_{\FF_{\BK}^*}(k,T^*)\right)^{\vee}\lra{\left(H^1_{\FF_{\al}^*}(k,T^*)\right)^{\vee}}\ra 0.$$
\end{cor}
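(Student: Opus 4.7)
The plan is to deduce this corollary directly from Proposition~\ref{prop:compareselmeroverk} by a snake-lemma style argument, using the vanishing hypothesis $H^1_{\FF_{\BK}}(k,T)=0$ to simplify the left end of the exact sequence there.

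First, under the assumption $H^1_{\FF_{\BK}}(k,T)=0$, the quotient $H^1_{\FF_{\al}}(k,T)/H^1_{\FF_{\BK}}(k,T)$ appearing in Proposition~\ref{prop:compareselmeroverk} is simply $H^1_{\FF_{\al}}(k,T)$ itself, so the sequence there becomes
\[
0 \lra H^1_{\FF_{\al}}(k,T) \stackrel{\textup{loc}_p^s}{\lra} \al \stackrel{(\textup{loc}_p^*)^{\vee}}{\lra} \left(H^1_{\FF_{\BK}^*}(k,T^*)\right)^{\vee} \lra \left(H^1_{\FF_{\al}^*}(k,T^*)\right)^{\vee} \lra 0.
\]
In particular, $\textup{loc}_p^s$ is injective on $H^1_{\FF_{\al}}(k,T)$, so the cyclic $\oo$-module $\oo\cdot c \subset H^1_{\FF_{\al}}(k,T)$ maps isomorphically onto $\oo\cdot\textup{loc}_p^s(c)\subset\al$. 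Splitting the four-term sequence above into two short exact pieces, let $K$ denote the image of $(\textup{loc}_p^*)^{\vee}$; then $K = \textup{coker}\bigl(\textup{loc}_p^s: H^1_{\FF_{\al}}(k,T)\to \al\bigr)$ and $K$ injects into $\left(H^1_{\FF_{\BK}^*}(k,T^*)\right)^{\vee}$ with cokernel $\left(H^1_{\FF_{\al}^*}(k,T^*)\right)^{\vee}$.

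Next I would apply the snake lemma to the commutative diagram with exact rows
\[
\begin{array}{ccccccccc}
0 & \lra & \oo\cdot c & \lra & \oo\cdot \textup{loc}_p^s(c) & \lra & 0 & \lra & 0 \\
& & \downarrow & & \downarrow & & \downarrow & & \\
0 & \lra & H^1_{\FF_{\al}}(k,T) & \stackrel{\textup{loc}_p^s}{\lra} & \al & \lra & K & \lra & 0,
\end{array}
\]
in which all three vertical maps are injections (the rightmost being trivial, and the middle one sending $\oo\cdot\textup{loc}_p^s(c)$ to zero in $K$ since $\textup{loc}_p^s(c)$ is in the image of $\textup{loc}_p^s$). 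Because all kernels vanish, the snake lemma produces a short exact sequence of cokernels
\[
0 \lra \frac{H^1_{\FF_{\al}}(k,T)}{\oo\cdot c} \stackrel{\textup{loc}_p^s}{\lra} \frac{\al}{\oo\cdot\textup{loc}_p^s(c)} \lra K \lra 0.
\]

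Finally, splicing this with the short exact sequence
\[
0 \lra K \lra \left(H^1_{\FF_{\BK}^*}(k,T^*)\right)^{\vee} \lra \left(H^1_{\FF_{\al}^*}(k,T^*)\right)^{\vee} \lra 0
\]
obtained from the right end of Proposition~\ref{prop:compareselmeroverk} yields exactly the claimed four-term exact sequence. There is no real obstacle here; the only thing to be careful about is verifying that the image of $\oo\cdot \textup{loc}_p^s(c)$ in $K$ is indeed zero, which is immediate from the exactness of Proposition~\ref{prop:compareselmeroverk}.
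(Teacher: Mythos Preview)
Your argument is correct and follows essentially the same approach as the paper: both deduce the result from Proposition~\ref{prop:compareselmeroverk} after observing that the hypothesis $H^1_{\FF_{\BK}}(k,T)=0$ makes $\textup{loc}_p^s$ injective on $H^1_{\FF_{\al}}(k,T)$. The paper's proof is terse (it simply says ``Corollary follows from Proposition~\ref{prop:compareselmeroverk}''), whereas you have spelled out the passage to quotients via the snake lemma, which is exactly the standard justification one would supply if asked.
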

\begin{proof}
Note that the assumption $H^1_{\FF_{\BK}}(k,T)=0$ forces the map $\textup{loc}^s_p:H^1_{\FF_{\al}}(k,T) \ra \al$ to be injective. Corollary follows from Proposition~\ref{prop:compareselmeroverk}.
\end{proof}

\begin{rem}
\label{rem:assumptionBKvanishes}
The assumption that $H^1_{\FF_{\BK}}(k,T)=0$ may seem like an unreasonably strong assumption at the moment, however, we will be able to rephrase this assumption in terms of an Euler system of rank $r$ later on.
\end{rem}
\subsubsection{Comparison over $k_\infty$}
\label{subsubsec:compareselmer2}
For a fixed topological generator  $\gamma$ of $\Gamma$, set $\gamma_n:=\gamma^{p^n}$, and define $\al_n\subset H^1(k_p,\TT/(\gamma_n-1)\TT)$ to be the image of $\all$ under the map
$$H^1(k_p,\TT) \lra H^1(k_p,\TT/(\gamma_n-1)\TT).$$
 Let $\FF_{\al_n}$ denote the Selmer structure on $\TT/(\gamma_n-1)\TT$, which is obtained by propagating the Selmer structure $\FF_{\all}$ on $\TT$ to its quotient $\TT/(\gamma_n-1)\TT$. The propagated Selmer structure from  $\FF_{\textup{Gr}}$ on $\TT$  to the quotient $\TT/(\gamma_n-1)\TT$ will still be denoted by $\FF_{\textup{Gr}}$.

 By Shapiro's lemma, there is a canonical isomorphism
 $H^1(k,\TT/(\gamma_n-1)\TT) \stackrel{\frak{s}}{\ra} H^1(k_n,T)$, and for every prime
 $\lambda \subset k$, a canonical isomorphism
 $H^1(k_\lambda,\TT/(\gamma_n-1)\TT)\stackrel{\frak{s}_\lambda}{\lra}H^1((k_n)_\lambda,T)$;
 c.f.,~\cite[Appendix B.4 and B.5]{r00}. For $\FF=\FF_{\textup{Gr}} \hbox{ or }
 \FF_{\al_n}$, we define the submodule
 $$H^1_{\FF}((k_n)_\lambda,T) \subset
 H^1((k_n)_\lambda,T)$$
  as the image of
 $H^1_{\FF}(k,\TT/(\gamma_n-1)\TT)$ under the isomorphism
 $\frak{s}_\lambda$.

Repeating the argument of Proposition~\ref{prop:compareselmeroverk} for each field $k_n$ (instead of $k$) with Selmer structures $\FF_{\Gr}$ and $\FF_{\al_n}$ and passing to inverse limit we obtain the following:

\begin{prop}
\label{prop:compare selmer over k_infty}
The following sequences of $\LL$-modules are exact:
\begin{itemize}
\item[(i)]$0\ra\frac{H^1_{\FF_{\all}}(k,\TT)}{H^1_{\FF_{\Gr}}(k,\TT)}\stackrel{\textup{loc}_p^s}{\lra}{\all}{\lra} \left(H^1_{\FF_{\Gr}^*}(k,\TT^*)\right)^{\vee}\lra{\left(H^1_{\FF_{\all}^*}(k,\TT^*)\right)^{\vee}}\ra 0.$
\end{itemize}
If further $H^1_{\FF_{\Gr}}(k,T)$ defined in Remark~\ref{rem:BKvsGr} vanishes, then,
\begin{itemize}
\item[(ii)] for any class $c \in H^1_{\FF_{\all}}(k,\TT)$,
$$0\lra {\frac{H^1_{\FF_{\all}}(k,\TT)}{\LL\cdot c}}\stackrel{\textup{loc}_p^s}{\lra}\frac{\all}{\LL\cdot \textup{loc}_p^s(c)} {\lra} \left(H^1_{\FF_{\Gr}^*}(k,\TT^*)\right)^{\vee}\lra{\left(H^1_{\FF_{\all}^*}(k,\TT^*)\right)^{\vee}}\lra 0.$$
\end{itemize}
\end{prop}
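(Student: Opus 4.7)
The approach is to apply the Iwasawa--theoretic Poitou--Tate global duality to the pair of Selmer structures $\FF_{\Gr}\subset\FF_{\all}$ on $\TT$, or equivalently (as suggested in the statement) to apply the finite-level analogue of Proposition~\ref{prop:compareselmeroverk} over each layer $k_n$ with the pair $(\FF_\Gr,\FF_{\al_n})$ and then pass to the inverse limit via Shapiro's isomorphisms $\mathfrak{s}$, $\mathfrak{s}_\lambda$. The two Selmer structures coincide at every prime $\lambda\nmid p$; at $p$, Definition~\ref{def:line over k_infty} gives the identification
\[
H^1_{\FF_{\all}}(k_p,\TT)\big/H^1_{\FF_{\Gr}}(k_p,\TT) \;\cong\; \all,
\]
and the perfect local Tate pairing between $H^1(k_p,\TT)$ and $H^1(k_p,\TT^*)$ identifies the dual local quotient $H^1_{\FF_\Gr^*}(k_p,\TT^*)/H^1_{\FF_\all^*}(k_p,\TT^*)$ with the Pontryagin dual of $\all$.

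For part (i), the nine-term Poitou--Tate sequence (c.f.\ \cite[Theorem I.7.3]{r00}, \cite[Theorem 2.3.4]{mr02}) collapses under these observations to the five-term exact sequence
\[
0 \lra H^1_{\FF_{\Gr}}(k,\TT) \lra H^1_{\FF_{\all}}(k,\TT) \stackrel{\textup{loc}_p^s}{\lra} \all \lra H^1_{\FF_\Gr^*}(k,\TT^*)^{\vee} \lra H^1_{\FF_\all^*}(k,\TT^*)^{\vee} \lra 0,
\]
and quotienting out the injection $H^1_{\FF_{\Gr}}(k,\TT) \hookrightarrow H^1_{\FF_{\all}}(k,\TT)$ yields part (i). When proving this via inverse limits from the finite layers, exactness in the limit reduces to the Mittag--Leffler condition, which in turn follows from finite generation of each finite-level Selmer group over $\oo$ (a consequence of $\hne$, $\hwd$ and $\hord$, as used in the proof of Proposition~\ref{prop:greenbergrank}).

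For part (ii), I would first promote the vanishing $H^1_{\FF_\Gr}(k,T)=0$ to $H^1_{\FF_\Gr}(k,\TT)=0$ by a Mazur-style control argument: the finitely generated $\LL$-module $H^1_{\FF_\Gr}(k,\TT)$ has $\Gamma$-coinvariants mapping surjectively onto $H^1_{\FF_\Gr}(k,T)$ (surjectivity follows from the vanishing of the obstructing local and global $H^2$ terms under $\hne$ and $\hntz$, in the spirit of the computations around Lemma~\ref{lemma:free for k_infty}), so Nakayama's lemma forces $H^1_{\FF_\Gr}(k,\TT)=0$. With the map $\textup{loc}_p^s$ from part (i) now injective, the snake lemma applied to the commutative diagram with rows $0\to \LL c\to H^1_{\FF_\all}(k,\TT)\to H^1_{\FF_\all}(k,\TT)/\LL c\to 0$ and $0\to \LL\,\textup{loc}_p^s(c)\to \all\to \all/\LL\,\textup{loc}_p^s(c)\to 0$ (the left vertical being an isomorphism, the middle being $\textup{loc}_p^s$) produces the short exact sequence
\[
0\lra H^1_{\FF_\all}(k,\TT)/\LL c \lra \all/\LL\,\textup{loc}_p^s(c)\lra \all/\textup{im}(\textup{loc}_p^s)\lra 0;
\]
splicing the rightmost term with the tail $0\to \all/\textup{im}(\textup{loc}_p^s)\to H^1_{\FF_\Gr^*}(k,\TT^*)^{\vee}\to H^1_{\FF_\all^*}(k,\TT^*)^{\vee}\to 0$ extracted from part (i) gives the desired four-term exact sequence.

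The main obstacle is the control step in part~(ii): establishing rigorously that $H^1_{\FF_\Gr}(k,T)=0$ implies $H^1_{\FF_\Gr}(k,\TT)=0$. Unwinding the Greenberg local conditions via the $\textup{F}_\wp^\pm$-decomposition and re-invoking global Poitou--Tate, this reduces to the vanishing of certain local $H^0$'s on the ``$-$'' part (ensured by $\hntz$) together with vanishing of the appropriate global $H^2$'s (ensured by $\hne$); the argument parallels those in the proofs of Lemma~\ref{lemma:free for k} and Proposition~\ref{prop:greenbergrank}. Should one prefer to avoid the control theorem, the alternative is to run the entire argument at each finite level $k_n$ and conclude by inverse limit, which shifts the work to Mittag--Leffler bookkeeping for the cyclic-quotient systems $H^1_{\FF_{\al_n}}(k_n,T)/\oo\cdot c_n$.
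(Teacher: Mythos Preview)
Your approach to part~(i) matches the paper's: apply the finite-level Poitou--Tate sequence of Proposition~\ref{prop:compareselmeroverk} over each $k_n$ and pass to the inverse limit (the paper cites \cite[Proposition~B.1.1]{r00} rather than Mittag--Leffler explicitly, but the content is the same).

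For part~(ii), however, there is a genuine slip in your control step. You write that the $\Gamma$-coinvariants of $H^1_{\FF_\Gr}(k,\TT)$ map \emph{surjectively} onto $H^1_{\FF_\Gr}(k,T)$, and then invoke Nakayama. But surjectivity onto a zero module says nothing about the source; what Nakayama needs is that the coinvariants themselves vanish, i.e.\ you need \emph{injectivity} of that map. The paper's argument is in fact much simpler than the control-theoretic machinery you outline: the long exact sequence associated to $0\to\TT\stackrel{\gamma-1}{\to}\TT\to T\to 0$ gives an injection
\[
H^1(k,\TT)/(\gamma-1)\;\hookrightarrow\; H^1(k,T),
\]
which restricts to an injection $H^1_{\FF_\Gr}(k,\TT)/(\gamma-1)\hookrightarrow H^1_{\FF_\Gr}(k,T)$. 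Since the target vanishes by hypothesis, Nakayama immediately gives $H^1_{\FF_\Gr}(k,\TT)=0$. No $H^2$-vanishing, no $\hntz$, and no control theorem are needed for this step; the injectivity is automatic from the long exact sequence. Once $H^1_{\FF_\Gr}(k,\TT)=0$, part~(ii) drops out of part~(i) exactly as you describe (your snake-lemma formulation is correct but more elaborate than necessary).
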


\begin{proof}
We give a sketch of the proof. As in Proposition~\ref{prop:compareselmeroverk}, we have an exact sequence
$$0\lra\frac{H^1_{\FF_{\al_n}}(k_n,T)}{H^1_{\FF_{\Gr}}(k_n,T)}\stackrel{\textup{loc}_p^s}{\lra}{\al_n}{\lra} \left(H^1_{\FF_{\Gr}^*}(k_n,T^*)\right)^{\vee}\lra{\left(H^1_{\FF_{\al_n}^*}(k_n,T^*)\right)^{\vee}}\lra 0$$
 for each $n$. Passing to inverse limit (and making use of~\cite[Proposition B.1.1]{r00}) we obtain the exact sequence of (i).

For (ii), note that there is an injection $H^1_{\FF_{\Gr}}(k,\TT)/(\gamma-1) \hookrightarrow H^1_{\FF_{\Gr}}(k,T)$ induced from the exact sequence
$$H^1(k,\TT)\stackrel{\gamma-1}{\lra} H^1(k,\TT)\lra H^1(k,T).$$
 Therefore, our assumption that $H^1_{\FF_{\Gr}}(k,T)=0$ implies, by Nakayama's lemma, that $H^1_{\FF_{\Gr}}(k,\TT)=0$. (ii) now follows from (i).
\end{proof}

\subsection{Kolyvagin systems for modified Selmer structures}
\label{subsec:KS1}
Throughout \S\ref{subsec:KS1} we assume that the hypotheses $\mathbf{H.1}$-$\mathbf{5}$ hold for $T$.  Assume in addition that $\hne$ (both for $T$ and $T^{\mathcal{D}}$), $\hwd$\, and $\htam$\, 
hold.

One may apply~\cite[Lemma 3.7.1]{mr02} to verify that all the three Selmer triples  $(T,\FF_{\BK},\PP)$, $(T,\FF_{\Gr},\PP)$ and $(T,\FF_{\al},\PP)$ satisfy the hypothesis $\mathbf{H.6}$ of \cite[\S 3.5]{mr02} (with base field $\QQ$ in their treatment replaced by $k$). Therefore, the existence of Kolyvagin systems for these Selmer structures will be decided by their \emph{core Selmer ranks} (c.f., \cite[Definition 4.1.8 and 4.1.11]{mr02}). Let $\XX(T,\FF)$ denote the core Selmer rank of the Selmer structure $\FF$ on $T$, for $\FF=\FF_{\BK}, \FF_{\Gr} \hbox{ or } \FF_{\al}$. 
\begin{prop}
\label{rem:canonicalSelmerrank}
$\XX(T,\FFc)=r$.
\end{prop}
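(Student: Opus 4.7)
The plan is to compute the core Selmer rank via the Euler--Poincar\'e (Greenberg--Wiles) formula, which under $\hone$--$\hsix$ identifies $\XX(T,\FFc)$ with the difference of coranks of the global Selmer group and its dual. Concretely, what I shall invoke is
$$
\XX(T,\FFc) \,=\, \textup{rank}_{\oo}\, H^0(k,T^*) - \textup{rank}_{\oo}\, H^0(k,T) + \sum_{v \in \Sigma(\FFc)} \bigl[\textup{rank}_{\oo}\, H^1_{\FFc}(k_v,T) - \textup{rank}_{\oo}\, H^0(k_v,T)\bigr].
$$
First I would dispose of the global $H^0$ terms: the irreducibility hypothesis $\hone$ ensures that $T/\mm T$ is absolutely irreducible, and together with $\htwo$ (which furnishes an element $\tau$ acting non-trivially on $T$, ruling out the possibility of trivial $G_k$-action) this forces $H^0(k,T)=0$; the identical reasoning applied to the dual residual representation $T^*[\mm]$ gives $H^0(k,T^*)=0$.

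Next I would evaluate each local contribution. At an archimedean $v$, $H^1(k_v,T)=0$ since $p$ is odd, so the contribution at $v$ is $-\textup{rank}_\oo H^0(k_v,T)$; summing over $v\mid \infty$ yields exactly $-d_+$ by the very definition of $d_+$. At a non-archimedean $\lambda \nmid p$ (whether or not $T$ is ramified at $\lambda$), the canonical local condition equals the ``quasi-unramified'' submodule $\ker(H^1(k_\lambda,T)\to H^1(k_\lambda^{\textup{unr}},V))=H^1(k_\lambda^{\textup{unr}}/k_\lambda, V^{\mathcal{I}_\lambda})$, whose $\oo$-rank coincides with that of $H^0(k_\lambda,V)=H^0(k_\lambda,T)\otimes\Phi$; hence this contribution is $0$. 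At a prime $\wp \mid p$, $H^1_{\FFc}(k_\wp,T)=H^1(k_\wp,T)$, and combining the local Euler--Poincar\'e formula with local Tate duality $H^2(k_\wp,T)\cong H^0(k_\wp,T^*)^{\vee}$ and the non-exceptionality hypothesis $\hne$ (which kills $H^0(k_\wp,T^*)$) yields
$$
\textup{rank}_\oo H^1(k_\wp,T) - \textup{rank}_\oo H^0(k_\wp,T) \,=\, [k_\wp:\QQ_p]\cdot \textup{rank}_\oo T.
$$
Summing over $\wp\mid p$ gives $d$. Adding the three contributions produces $-d_+ + 0 + d = d-d_+ = r$.

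The main obstacle I foresee is simply justifying the precise Euler--Poincar\'e identity for the core Selmer rank in the present generality: Mazur and Rubin state and prove the relevant formula over $k=\QQ$, but its extension to a totally real base field $k$ is formal once the $\hsix$-type cartesianness of $\FFc$ (already verified via~\cite[Lemma 3.7.1]{mr02} in the paragraph immediately preceding the proposition) is in hand. The remaining local rank computations are either immediate from standard local duality or from the hypotheses already recorded, so the proof reduces to the bookkeeping above.
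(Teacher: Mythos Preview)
Your computation is correct and is precisely the content of \cite[Theorem~5.2.15]{mr02}, which the paper simply cites in one line (together with the hypothesis $\hne$). One cosmetic point: in your displayed Euler--Poincar\'e formula the global terms should read $\textup{rank}_{\oo}H^0(k,T)-\textup{corank}_{\oo}H^0(k,T^*)$ rather than the reverse (and strictly speaking the core-rank formula of \cite{mr02} is stated with $\mathbb{F}$-dimensions of $T/\mm T$ and $T^*[\mm]$, as the paper itself does in the proof of the next Proposition); since both global terms vanish under $\hone$--$\hthree$ this does not affect the outcome.
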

\begin{proof}
This follows using~\cite[Theorem 5.2.15]{mr02}, since we assumed $\hne$.
\end{proof}
\begin{prop}
\label{prop:BKGrcoreselmerrank}
If $\hpss$ holds (resp., $\hord$ and $\hntz$ hold), then $\XX(T,\FF_{\BK})=0$ (resp., $\XX(T,\FF_{\Gr})=0$).
\end{prop}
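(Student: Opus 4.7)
The strategy is to reduce both assertions to the known value $\XX(T,\FFc)=r$ from Proposition~\ref{rem:canonicalSelmerrank}, by exploiting the fact that the three Selmer structures $\FFc$, $\FF_{\BK}$, and $\FF_{\Gr}$ differ only at primes above $p$. The paragraph preceding the statement has already verified that all three Selmer triples satisfy $\mathbf{H.6}$ of \cite{mr02}, so the Mazur-Rubin core rank machinery (Theorem 4.1.11 and Lemma 4.1.7, adapted from $\QQ$ to the totally real base $k$) applies uniformly. The key comparison formula one extracts from that machinery is that for Selmer structures $\FF$ and $\FF'$ differing only at places in a set $S$,
$$\XX(T,\FF)-\XX(T,\FF')=\sum_{v\in S}\Bigl(\textup{rank}_{\oo}H^1_{\FF}(k_v,T)-\textup{rank}_{\oo}H^1_{\FF'}(k_v,T)\Bigr),$$
since the difference in local ranks is exactly what controls the core rank after fixing the global/archimedean contributions common to both.

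I would then compute the local ranks at $p$ in each case. First, Lemma~\ref{lemma:free for k} identifies $H^1_{\FFc}(k_p,T)=H^1(k_p,T)$ as a free $\oo$-module of rank $d$. For the Bloch-Kato structure, Proposition~\ref{prop:finiteconditionhalfrank} gives $\textup{rank}_{\oo}H^1_f(k_p,T)=d_+$ under $\hne$, $\hwd$ and $\hpss$. For the Greenberg structure, Proposition~\ref{prop:greenbergrank}(iii) combined with $\mathbf{H.P}$(ii) yields
$$\textup{rank}_{\oo}H^1_{\textup{Gr}}(k_p,T)=\sum_{\wp\mid p}r_\wp=d_+,$$
under $\hord$ and $\hntz$. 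In either case the local rank at $p$ drops by $d-d_+=r$ relative to the canonical structure, so the displayed formula gives
$$\XX(T,\FF_{\BK})=r+(d_+-d)=0\quad\text{and}\quad\XX(T,\FF_{\Gr})=r+(d_+-d)=0,$$
as required.

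The main obstacle I anticipate is not the rank computation itself, which is a bookkeeping exercise, but rather the justification of the change-of-local-conditions formula over a general totally real $k$ with archimedean places contributing to $d_+$. Mazur-Rubin formulate the core Selmer rank over $\QQ$, where the only archimedean constraint is the single complex conjugation; for $k\neq\QQ$ one must check that their local-to-global formula still tracks only the $p$-adic perturbations when the Selmer structures agree at all finite places $v\nmid p$ and at all archimedean places. Once this is settled, the vanishing of $\XX(T,\FF_{\BK})$ and $\XX(T,\FF_{\Gr})$ follows mechanically from the ``half-rank'' behavior of the Bloch-Kato and Pan\v{c}i\v{s}kin local conditions, which is precisely the numerical content of hypotheses $\hpss$(b) and $\mathbf{H.P}$(ii).
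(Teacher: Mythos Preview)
Your approach is essentially the same as the paper's: both compute $\XX(T,\FFc)-\XX(T,\FF_{\BK})$ (resp.\ $\XX(T,\FFc)-\XX(T,\FF_{\Gr})$) as the difference of local contributions at $p$, find this equals $d-d_+=r$, and conclude using $\XX(T,\FFc)=r$. The only cosmetic difference is that the paper writes the comparison in terms of $\mathbb{F}$-dimensions of the propagated local conditions on $T/\mm T$ (citing \cite[Definition 5.2.4, Proposition 5.2.5]{mr02} and Wiles' formula), whereas you phrase it via $\oo$-ranks on $T$; under the running hypotheses (freeness of $H^1(k_p,T)$ and $\hne$, which makes $H^1(k_p,T)\to H^1(k_p,T/\mm T)$ surjective) these agree, so the argument goes through as you wrote it.
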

\begin{proof}
By~ \cite[Definition 5.2.4 and Proposition 5.2.5]{mr02} and \cite[Proposition 1.6]{wiles}
\begin{align*}
&\XX(T,\FF)=\textup{dim}_{\mathbb{F}}\,H^1_{\FF}(k,T/\mm T)-\textup{dim}_{\mathbb{F}}\,H^1_{\FF^*}(k,T^*[\mm])=\\
&\textup{dim}_{\mathbb{F}}\,H^0(k,T/\mm T)-\textup{dim}_{\mathbb{F}}\,H^0(k,T^*[\mm])
-\sum_{\lambda \in \Sigma(\FF)}\textup{dim}_{\mathbb{F}}\,H^0(k_{\lambda},T/\mm T)-\textup{dim}_{\mathbb{F}}\,H^1_{\FF}(k_{\lambda},T/\mm T)
\end{align*}
Applying this formula with $\FF=\FFc$ and  $\FF=\FF_{\BK}$ 
we see that
$$
\XX(T,\FFc)-\XX(T,\FF_{\BK})=\textup{dim}_{\mathbb{F}}\,H^1_{\FFc}(k_{p},T/\mm T)-\textup{dim}_{\mathbb{F}}\,H^1_{\FF_{\BK}}(k_{p},T/\mm T)
$$
 and this equals $d-d_+=r$. We have by Proposition~\ref{rem:canonicalSelmerrank} that $\XX(T,\FFc)=r$ and the proof follows.

Identical proof works also for the Greenberg condition under $\hord$ and $\hntz$.

\end{proof}

\begin{prop}
\label{modifiedcorerank}
If $\hpss$ holds, then $\XX(T,\FF_{\al})=1$.
\end{prop}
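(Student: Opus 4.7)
The strategy is to compute $\XX(T,\FF_\al)$ by comparing it to the already computed core rank $\XX(T,\FFc)=r$ (Proposition~\ref{rem:canonicalSelmerrank}), using the same Wiles/Poitou--Tate formula for core Selmer ranks that was used in the proof of Proposition~\ref{prop:BKGrcoreselmerrank}.

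First observe that the Selmer structures $\FFc$ and $\FF_\al$ agree at every prime $\lambda\nmid p$ by construction (both equal $\FFc$ away from $p$). Therefore, when one writes out the formula
\begin{align*}
\XX(T,\FF)=&\;\dim_{\mathbb{F}}H^0(k,T/\mm T)-\dim_{\mathbb{F}}H^0(k,T^*[\mm])\\
&-\sum_{\lambda\in\Sigma(\FF)}\bigl(\dim_{\mathbb{F}}H^0(k_\lambda,T/\mm T)-\dim_{\mathbb{F}}H^1_{\FF}(k_\lambda,T/\mm T)\bigr)
\end{align*}
for $\FF=\FFc$ and $\FF=\FF_\al$ separately, every contribution cancels except the sum over $\wp|p$, and one obtains
$$\XX(T,\FFc)-\XX(T,\FF_\al)\;=\;\sum_{\wp|p}\bigl(\dim_{\mathbb{F}}H^1_{\FFc}(k_\wp,T/\mm T)-\dim_{\mathbb{F}}H^1_{\FF_\al}(k_\wp,T/\mm T)\bigr).$$

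Next I would evaluate the two propagated local conditions at $p$. Under $\hne$ and $\hwd$, Lemma~\ref{lemma:free for k} gives that $H^1(k_p,T)$ is $\oo$-free of rank $d$, and Proposition~\ref{prop:finiteconditionhalfrank} (which uses $\hpss$ additionally) gives that $H^1_f(k_p,T)$ is $\oo$-free of rank $d_+$; the line $\al$ is $\oo$-free of rank $1$ by definition and is a direct summand complementing $H^1_f(k_p,T)$ in its singular quotient. Hence $H^1_{\FF_\al}(k_p,T)=H^1_f(k_p,T)\oplus\al$ is $\oo$-free of rank $d_++1$, while $H^1_{\FFc}(k_p,T)=H^1(k_p,T)$ is $\oo$-free of rank $d$. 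Because these are free $\oo$-modules, their propagations (images modulo $\mm$) inside $H^1(k_p,T/\mm T)$ have $\mathbb{F}$-dimensions $d$ and $d_++1$ respectively.

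Combining the previous two paragraphs yields
$$\XX(T,\FFc)-\XX(T,\FF_\al)\;=\;d-(d_++1)\;=\;r-1,$$
and since $\XX(T,\FFc)=r$ we conclude $\XX(T,\FF_\al)=1$, as desired. The only subtle point in the argument is the comparison between the propagated dimension and the $\oo$-rank: this is where the freeness statements of Lemma~\ref{lemma:free for k} and Proposition~\ref{prop:finiteconditionhalfrank} (and hence the hypotheses $\hne$, $\hwd$, $\hpss$) are essential; everything else is bookkeeping with the Wiles formula already used in the previous proposition.
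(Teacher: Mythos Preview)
Your proof is correct and uses the same method as the paper (subtracting two instances of the Wiles formula so that only the local terms at $p$ survive). The only difference is the choice of reference Selmer structure: you compare $\FF_\al$ to $\FFc$ and use $\XX(T,\FFc)=r$ from Proposition~\ref{rem:canonicalSelmerrank}, whereas the paper compares $\FF_\al$ to $\FF_{\BK}$ and uses $\XX(T,\FF_{\BK})=0$ from Proposition~\ref{prop:BKGrcoreselmerrank}; both comparisons give the same answer and require the same freeness inputs at $p$.
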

\begin{proof}
Mimicking the proof of Proposition~\ref{prop:BKGrcoreselmerrank}, we obtain the identity
$$
\XX(T,\FF_{\al})-\XX(T,\FF_{\BK})=\textup{dim}_{\mathbb{F}}\,H^1_{\FF_{\al}}(k_{p},T/\mm T)-\textup{dim}_{\mathbb{F}}\,H^1_{\FF_{\BK}}(k_{p},T/\mm T)
$$
 and this equals to one by the very definition of the $\al$-modified Selmer structure. We already know by Proposition~\ref{prop:BKGrcoreselmerrank} that $\XX(T,\FF_{\BK})=0$ and the proof follows.

Note that if we assumed $\hord$ and $\hntz$ (instead of assuming $\hpss$), and used $\FF_{\Gr}$ of Remark~\ref{rem:BKvsGr} (instead of $\FF_{\BK}$) in order to define $H^1_{\FF_{\al}}(k_p,T)=H^1_{\Gr}(k_p,T)\oplus\al$, the same proof would lead us to the identical result about $\XX(T,\FF_{\al})$, for the Selmer structure $\FF_{\al}$ which is obtained by modifying the Greenberg local conditions.
\end{proof}

\subsubsection{Kolyvagin systems over $k$}
We write  $\textup{\textbf{KS}}(T,\FF_{\al},\PP)$ for the $\oo$-module of Kolyvagin systems for the Selmer triple $(T,\FF_{\al},\PP)$. We refer the reader to \cite[Definition 3.1.3]{mr02} for a definition of this module. Assume that the hypotheses $\mathbf{H.1}$-$\mathbf{5}$, $\hne$ and $\hpss$ ($\hord$ and $\hntz$ instead of $\hpss$  whenever we refer to Greenberg's local conditions) hold.
\begin{prop}
\label{prop:modifiedKSrank1}
The $\oo$-module $\textup{\textbf{KS}}(T,\FF_{\al},\PP)$ is free of rank \emph{one}, generated by a Kolyvagin system $\kappa \in \textup{\textbf{KS}}(T,\FF_{\al},\PP)$ whose image \textup{(}under the canonical map induced from reduction mod $\mm$\textup{)} in $\textup{\textbf{KS}}(T/\mm T,\FF_{\al},\PP)$ is non-zero.
\end{prop}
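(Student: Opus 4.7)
\medskip
\noindent\textbf{Proof proposal.} The plan is to reduce the statement to a direct application of~\cite[Theorem 5.2.10]{mr02}, which asserts that whenever a Selmer triple satisfies the hypotheses $\mathbf{H.1}$--$\mathbf{H.6}$ of \emph{loc.cit.} and its core Selmer rank equals one, the module of Kolyvagin systems for that triple is a free $\oo$-module of rank one, generated by a \emph{primitive} Kolyvagin system (i.e., one whose reduction modulo $\mm$ is non-zero). Hence the task reduces to verifying, for the Selmer triple $(T,\FF_{\al},\PP)$, the following three ingredients: (a) hypotheses $\mathbf{H.1}$--$\mathbf{H.5}$, (b) hypothesis $\mathbf{H.6}$ of~\cite[\S3.5]{mr02}, and (c) the core Selmer rank equals one.

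Item (a) is built into the standing assumptions of \S\ref{subsec:KS1}. Item (b) has already been noted at the start of \S\ref{subsec:KS1}: since we assume $\hne$, $\hwd$ and $\htam$, the cartesian criterion of~\cite[Lemma 3.7.1]{mr02} applies to each of $\FF_{\BK}$, $\FF_{\Gr}$ and $\FF_{\al}$, so $\mathbf{H.6}$ is valid for the triple $(T,\FF_\al,\PP)$. The point here is that the local condition at $p$ for $\FF_\al$ is, by Definition~\ref{def:line}, the direct sum of a local condition which is already cartesian on quotients (namely $H^1_f(k_p,T)$, respectively $H^1_{\Gr}(k_p,T)$) and the free rank one $\oo$-summand $\al$, so the cartesian property is preserved. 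Item (c) is precisely the content of Proposition~\ref{modifiedcorerank}, which under $\hpss$ (or under $\hord$ and $\hntz$, if one works with the Greenberg variant) gives $\XX(T,\FF_\al)=1$.

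Putting (a), (b), (c) together, \cite[Theorem 5.2.10]{mr02} applies verbatim to $(T,\FF_\al,\PP)$ and yields that $\textup{\textbf{KS}}(T,\FF_\al,\PP)$ is a free $\oo$-module of rank one generated by a Kolyvagin system whose reduction in $\textup{\textbf{KS}}(T/\mm T,\FF_\al,\PP)$ is non-trivial. I expect no genuine obstacle here; the only point deserving care is the verification of $\mathbf{H.6}$ for the modified local condition at $p$, since $\FF_\al$ is not one of the Selmer structures treated directly in~\cite{mr02}. The cleanest way to check this is to observe that, in the cartesian criterion of~\cite[Lemma 3.7.1]{mr02}, the local condition $H^1_{\FF_\al}(k_p,T)$ is a direct summand of $H^1(k_p,T)$ (thanks to Proposition~\ref{prop:finiteconditionhalfrank} and the choice of $\al$ as a direct summand) and so its formation commutes with reduction modulo $\mm^n$; the desired cartesianness then follows formally.
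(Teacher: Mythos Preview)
Your proposal is correct and follows exactly the paper's approach: the paper's proof is the single sentence ``This is immediate after Proposition~\ref{modifiedcorerank} and~\cite[Theorem 5.2.10]{mr02},'' relying on the verification of $\mathbf{H.6}$ already recorded at the opening of \S\ref{subsec:KS1}. Your write-up simply unpacks these ingredients more explicitly, including the cartesianness check for the modified local condition at $p$, which the paper handles by the same appeal to~\cite[Lemma 3.7.1]{mr02}.
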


\begin{proof}
This is immediate after Proposition~\ref{modifiedcorerank} and~\cite[Theorem 5.2.10]{mr02}.
\end{proof}

\begin{rem}
\label{rem:question}
Note that the {choice} of a rank one direct summand $\al \subset H^1(k_p,T)$ makes our approach somewhat unnatural. This issue is addressed in this paragraph. Put \be\label{eqn:decomposeH1} H^1(k_p,T)=\bigoplus_{i=1}^{r}\al_i\oplus H^1_f(k_p,T)\ee (where each $\al_i$ is a free $\oo$-submodule of $H^1(k_p,T)$ of rank one) and consider \be\label{eqn:sum} \sum_{i=1}^{r}\KS(T,\FF_{\al_i},\PP)\subset \KS(T,\FFc,\PP).\ee
\begin{claim}
The sum in (\ref{eqn:sum}) is in fact a direct sum.
\end{claim}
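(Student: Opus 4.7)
The plan is to show that if $\sum_{i=1}^{r}\kappa^{(i)}=0$ in $\KS(T,\FFc,\PP)$ with $\kappa^{(i)}\in\KS(T,\FF_{\al_i},\PP)$, then every $\kappa^{(i)}$ vanishes. The decisive observation is that the direct-sum decomposition (\ref{eqn:decomposeH1}) propagates faithfully to every admissible level $\mathfrak{n}$, forcing each $\kappa^{(i)}$ to actually lie in $\KS(T,\FF_{\BK},\PP)$; and this last module is zero since $\XX(T,\FF_{\BK})=0$ by Proposition~\ref{prop:BKGrcoreselmerrank}.

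Concretely, for any admissible $\mathfrak{n}$ the relation gives $\sum_{i}\kappa^{(i)}_{\mathfrak{n}}=0$ in $H^1(k,T/I_{\mathfrak{n}}T)$. Applying the singular localization $\textup{loc}_p^s$ at $p$ yields $\sum_{i}\textup{loc}_p^s(\kappa^{(i)}_{\mathfrak{n}})=0$ in $H^1_s(k_p,T/I_{\mathfrak{n}}T)$, and by the definition of $\FF_{\al_i}$ at $p$, each summand lies in the image $\bar{\al}_i(\mathfrak{n})\subset H^1_s(k_p,T/I_{\mathfrak{n}}T)$ of $\al_i$ under the natural map.

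By Lemma~\ref{lemma:free for k} and Proposition~\ref{prop:finiteconditionhalfrank}, both $H^1(k_p,T)$ and $H^1_s(k_p,T)$ are free $\oo$-modules, and (\ref{eqn:decomposeH1}) is a splitting of free $\oo$-modules. Tensoring with $\oo/I_{\mathfrak{n}}$ preserves this splitting, and the Cartesian behavior of the Bloch-Kato local condition (compare~\cite[Lemma 3.7.1]{mr02}) ensures that the natural map into $H^1_s(k_p,T/I_{\mathfrak{n}}T)$ keeps the summands $\bar{\al}_i(\mathfrak{n})$ linearly independent. Hence each $\textup{loc}_p^s(\kappa^{(i)}_{\mathfrak{n}})=0$, so $\kappa^{(i)}_{\mathfrak{n}}\in H^1_{\FF_{\BK}(\mathfrak{n})}(k,T/I_{\mathfrak{n}}T)$ for every admissible $\mathfrak{n}$. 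This is precisely the statement that $\kappa^{(i)}\in\KS(T,\FF_{\BK},\PP)$.

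Since $\XX(T,\FF_{\BK})=0$ by Proposition~\ref{prop:BKGrcoreselmerrank}, the Mazur-Rubin machinery (cf.~\cite[Theorem 5.2.10]{mr02} and the core-rank discussion around it) yields $\KS(T,\FF_{\BK},\PP)=0$. Therefore $\kappa^{(i)}=0$ for every $i$, and the sum (\ref{eqn:sum}) is indeed direct. The main obstacle in the argument is the faithful propagation of the direct-sum decomposition across all levels $\mathfrak{n}$: this is where the freeness afforded by $\hne$ and $\hwd$ (via Lemma~\ref{lemma:free for k} and Proposition~\ref{prop:finiteconditionhalfrank}) combines with the Cartesian character of the finite local condition at $p$, and while routine, it requires careful bookkeeping.
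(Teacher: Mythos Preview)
Your proposal is correct and follows essentially the same approach as the paper: reduce each $\kappa^{(i)}$ to an element of $\KS(T,\FF_{\BK},\PP)$ by showing its singular projection at $p$ vanishes at every level, then invoke $\XX(T,\FF_{\BK})=0$ together with \cite[Theorem 5.2.10(i)]{mr02} to conclude that module is zero. The only cosmetic differences are that the paper argues by contradiction (singling out one nonzero $\pmb{\kappa}^1$) while you argue directly for all $i$, and the paper justifies the propagation of the direct-sum decomposition at $p$ via the surjectivity of $H^1(k_p,T)\twoheadrightarrow H^1(k_p,T/I_{\eta}T)$ coming from $\hne$ (i.e., $H^2(k_p,T)=0$) rather than via the Cartesian property you cite---the latter is a slightly roundabout reference, but the content is the same.
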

\begin{proof}
Assume contrary: Suppose there exist $\pmb{\kappa}^i \in \KS(T,\FF_{\al_i},\PP)$ such that $\sum_{i=1}^r\pmb{\kappa}^i=0$, and not all $\pmb{\kappa}^i=0$; say without loss of generality $\pmb{\kappa}^1\neq 0$. Then
$$\pmb{\kappa}^1=-\sum_{i \neq 1} \kappa^{i} \in \sum_{i \neq 1} \KS(T,\FF_{\al_i},\PP).$$
 This means, for every $\eta \in \NN(\PP)$ (:=\,square free products of primes in $\PP$)
\be\label{eqn:kappa1equality}\al_1/I_\eta\al_1\ni\textup{loc}_p^s(\kappa_\eta^1)=-\sum_{i \neq 1} \textup{loc}_p^s(\kappa_\eta^i) \in \bigoplus_{i\neq 1} \al_i/I_\eta \al_i.\ee
Here  $I_\eta:=\prod_{\lambda|\eta} I_\lambda \subset \oo,$
 and for $\lambda \in \PP$, the ideal $I_\lambda \subset \oo$ is as defined in the introduction. The equality of~(\ref{eqn:kappa1equality})  takes place in
 $$H^1_s(k_p,T/I_\eta T):=\frac{H^1(k_p,T/I_\eta T)}{H^1_f(k_p,T/I_\eta T)},$$
  where $H^1_f(k_p,T/I_\eta T)$ is the image of $H^1_f(k_p,T)$ under the surjective (thanks to $\hne$) map
  $$H^1(k_p,T)\twoheadrightarrow H^1(k_p,T/I_{\eta} T),$$
   which is induced from the surjection $T\twoheadrightarrow T/I_\eta T$. We therefore have a decomposition
   $$H^1(k_p,T/I_\eta T) \cong H^1_f(k_p,T/I_\eta T) \oplus \bigoplus_{i=1}^r \al_i/I_\eta \al_i.$$ Since $\left(\bigoplus_{i\neq1} \al_i/I_\eta \al_i\right)\cap\al_1/I_\eta \al_1=\{0\},$ it follows from~(\ref{eqn:kappa1equality}) that $\textup{loc}_p^s(\kappa_\eta^1)=0$, i.e.,
    $$\textup{loc}_p(\kappa_\eta^1) \in H^1_f(k_p,T/I_\eta T)$$
     for every $\eta \in \NN(\PP)$. This means $\pmb{\kappa}^1 \in \KS(T,\FF_{\BK},\PP)$. On the other hand $\KS(T,\FF_{\BK},\PP)=0$ by Proposition~\ref{prop:BKGrcoreselmerrank} and \cite[Theorem 5.2.10(i)]{mr02}, hence we proved $\pmb{\kappa}^1=0$, a contradiction.
\end{proof}

As in~\cite[Remark 1.27]{kbbstick}, we pose the following: \\
\textbf{Question:} Is the direct sum
$$\bigoplus_{i=1}^{r}\KS(T,\FF_{\al_i},\PP)\subset \KS(T,\FFc,\PP)$$
 independent of the choice of the decomposition (\ref{eqn:decomposeH1})?

When the answer to this question is affirmative, we would have a \emph{canonical} rank $r$ submodule of $ \KS(T,\FFc,\PP)$. It would be even more interesting to see if this rank $r$ submodule descends from Euler systems (via the Euler systems to Kolyvagin systems map of Mazur and Rubin~\cite[Theorem 3.2.4]{mr02}). Below, we construct such a (rank $r$) submodule of $ \KS(T,\FFc,\PP)$ which descends from an Euler system of rank $r$ (in case it exists); however, this module \emph{does} depend on the decomposition~(\ref{eqn:decomposeH1}).

\end{rem}

\subsubsection{Kolyvagin systems over $k_\infty$}
For every $s,m \in \ZZ^+$ and for a fixed topological generator of $\gamma$ of $\Gamma$, write $T_{s,m}=\TT/(p^s,(\gamma-1)^m)$.
\begin{define}(Compare to~\cite[Definition 3.1.6]{mr02}.)
\label{def:KSboth}
Define the module of \emph{$\LL$-adic Kolyvagin systems} as
$$\overline{\KS}(T\otimes_{\oo}\LL,\FF_{\all},\PP):=\varprojlim_{s,m}\varinjlim_{j} \KS(T_{s,m},\FF_{\all},\PP_j),$$
where $\KS(T_{s,m},\FF_{\all},\PP_j)$ is the module of Kolyvagin
systems for the Selmer structure $\FF_{\all}$ on the representation
$T_{s,m}$, as in~\cite[Definition 3.1.3]{mr02}.
\end{define}
The analogue of~\cite[Theorem 5.2.10]{mr02}, which we used to prove Proposition~\ref{prop:modifiedKSrank1}, for the big Galois representation $\TT$ has been proved by the author in~\cite[Theorem 3.23]{kbb}. Under the hypotheses $\mathbf{H.1}$-$\mathbf{5}$, $\hne$, $\htam$ and $\hord$, this result together with Proposition~\ref{prop:modifiedKSrank1} (now using $\FF_{\Gr}$ on $\TT$ (instead of $\FF_{\BK}$) to define $\FF_{\al}$ on $T$) can be used to show:
\begin{prop}
\label{prop:modifedKSinftyrk1}
The $\LL$-module of Kolyvagin Systems $\overline{\KS}(\TT,\FF_{\all},\PP)$ for the Selmer structure $\FF_{\all}$ on $\TT$  is free of rank one. Furthermore, the canonical map
$$\overline{\KS}(\TT,\FF_{\all},\PP) \lra{\KS}(T,\FF_{\mathcal{L}},\PP)$$
 is surjective.
\end{prop}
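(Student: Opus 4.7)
The plan is to apply the Iwasawa-theoretic classification of Kolyvagin systems from~\cite[Theorem 3.23]{kbb}---the $\LL$-adic analogue of~\cite[Theorem 5.2.10]{mr02} used in Proposition~\ref{prop:modifiedKSrank1}---to the Selmer triple $(\TT,\FF_{\all},\PP)$. First I would verify that the hypotheses of~\cite[Theorem 3.23]{kbb} are in force. The residual conditions $\mathbf{H.1}$--$\mathbf{H.5}$ concern only $T/\mm T$ (which coincides with $\TT/\mm\TT$), so they hold by assumption. The analogue of the technical hypothesis $\mathbf{H.6}$ of~\cite[\S3.5]{mr02} for the big representation is ensured by~\cite[Lemma 3.7.1]{mr02} together with $\htam$ and $\hne$, which were imposed precisely so that the canonical Selmer structure $\FF_\LL$ on $\TT$ propagates to $\FFc$ on $T$ (compare Example~\ref{example:canonical selmer} and Remark~\ref{rem:canonical selmer}). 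The analogous compatibility for $\FF_{\all}$ reduces to this via the direct-summand definition of $\all$.

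Next I would compute the residual core Selmer rank of $\FF_{\all}$. Since $\FF_{\all}$ propagates to the modified Selmer structure $\FF_{\al}$ on $T$ built from the Greenberg local condition $H^1_{\Gr}(k_p,T)$ (as highlighted in the remark just before the statement of the proposition), its residual core rank equals that of $\FF_{\al}$ on $T/\mm T$. The variant of Proposition~\ref{modifiedcorerank} in which $\FF_{\Gr}$ replaces $\FF_{\BK}$---valid under $\hord$ and $\hntz$---shows this core rank equals one. Feeding this into~\cite[Theorem 3.23]{kbb} yields that $\overline{\KS}(\TT,\FF_{\all},\PP)$ is free of rank one over $\LL$, generated by a class $\pmb{\kappa}^{\infty}$ whose image in $\KS(T/\mm T,\FF_{\all},\PP)$ is non-zero.

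For the second assertion, the canonical map factors as
$$\overline{\KS}(\TT,\FF_{\all},\PP)\lra \KS(T,\FF_{\al},\PP)\lra \KS(T/\mm T,\FF_{\al},\PP),$$
and the composite sends $\pmb{\kappa}^{\infty}$ to a non-zero class by the previous paragraph. Since both $\overline{\KS}(\TT,\FF_{\all},\PP)$ and $\KS(T,\FF_{\al},\PP)$ are free of rank one, it suffices to show that $\pmb{\kappa}^{\infty}$ maps to a generator of the latter; and by Proposition~\ref{prop:modifiedKSrank1}, a generator of $\KS(T,\FF_{\al},\PP)$ is characterized precisely by having non-zero reduction modulo $\mm$. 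Nakayama's lemma then forces the image of $\pmb{\kappa}^{\infty}$ to be a generator, yielding surjectivity.

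The main obstacle is the propagation compatibility in the first paragraph: one needs to ensure that the chosen local condition $\all\subset H^1(k_p,\TT)$ descends coherently through the quotients $T_{s,m}$ that define $\overline{\KS}(\TT,\FF_{\all},\PP)$ and all the way to $\al\subset H^1(k_p,T)$, with no discrepancy between the propagated Selmer structure and the one used in Definition~\ref{def:line}. This is exactly where $\htam$ and $\hne$ enter via Remark~\ref{rem:canonical selmer}, and it is what allows one to identify the residual structure of $\overline{\KS}(\TT,\FF_{\all},\PP)$ with that of $\KS(T,\FF_{\al},\PP)$.
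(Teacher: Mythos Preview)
Your proposal is correct and follows essentially the same approach as the paper: both reduce to~\cite[Theorem 3.23]{kbb}, noting that although that theorem is stated for the canonical Selmer structure $\FF_\LL$ under the hypothesis $\XX(T,\FFc)=1$, its proof carries over verbatim to $\FF_{\all}$ once one knows $\XX(T,\FF_\al)=1$ (Proposition~\ref{modifiedcorerank}). The paper's proof is considerably terser---it simply asserts that the argument applies verbatim and does not spell out the surjectivity via Nakayama as you do---but your added detail on hypothesis verification and the surjectivity step is accurate and in the same spirit.
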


\begin{proof}
Theorem 3.23 of~\cite{kbb} is proved for the canonical Selmer structure $\FF_\LL=\FFc$ on $\TT$, under the condition that $\XX(T,\FFc)=1$. Under the running hypotheses, which in particular imply (Proposition~\ref{modifiedcorerank}) that $\XX(T,\FF_\al)=1$, the proof of~\cite[Theorem 3.23]{kbb} applies \emph{verbatim}  for the Selmer structure $\FF_{\all}$ on $\TT$.
\end{proof}


 \section{Euler systems of rank $r$ and the Euler systems to Kolyvagin systems map}
 \label{sec:ES}
Suppose $k,T,r$ and $\PP$ are as in \S\ref{subsec:notationhypo}. We define $\NN=\NN(\PP)$ as the collection of ideals
$$\NN(\PP)=\{\tau= \frak{q}_1\cdots\qq_s \subset k\,\,\,\big{|}\,\,\, \qq_i \in \PP \hbox{ are distinct prime ideals}\}.$$
As before, we write $k(\qq)$ for the maximal $p$-extension of $k$ inside the ray class field of $k$ modulo $\qq$ and let $\textup{Fr}_{\qq}$ denote an arithmetic  Frobenius at $\qq$ in $G_k$. If $\tau=\qq_1\cdots\qq_s$ is an ideal in $\NN$, we let $k(\tau)$ denote the compositum $k(\tau):=k(\qq_1)\cdots k(\qq_s),$
  and set $k_n(\tau):=k_n\cdot k(\tau)$. We define
  $$\mathfrak{C}=\{k_n(\tau): \tau \in \NN, n \in \ZZ_{\geq 0}\},$$ and $\kk=\bigcup_{F\in \mathfrak{C}} F$. We set $\Delta^\tau=\Gal(k(\tau)/k)$ and $\Delta_n^\tau=\Gal(k_n(\tau)/k))=\Delta^\tau\times\Gamma_n$. Finally,
  $$P_{\qq}(x):=\textup{det}(1-\textup{Fr}_{\qq}^{-1}\cdot x|T^{\mathcal{D}}) \in \oo[x]$$ 
  is the Euler factor at the prime $\qq \in \PP$ associated with the dual Galois representation $T^{\mathcal{D}}=\Hom(T,\oo)(1)$.

 For any finite group $G$ and a finitely generated $\oo[G]$-module $M$, we define (following~\cite[\S1.2]{ru96})
 \begin{align*}
 \wedge^r_0M:=\{m \in \,&\Phi\otimes\wedge^rM: (\psi_1\wedge\cdots\wedge \psi_r)(m) \in \oo[G]\\ &\hbox{ for every }\psi_1,\dots,\psi_r \in \Hom_{\oo[G]}(M,\oo[G])\},
 \end{align*} where the exterior power is calculated in the category of $\oo[G]$-modules.
 \begin{define}
 \label{def:weaklyintegralESofrankr}
 An \emph{Euler system of rank r} is a collection $\mathbf{c}=\{c_{k_n(\tau)} \}$ such that
 \begin{itemize}
 \item[(i)] $c_{k_n(\tau)}\in \wedge^r_0H^1(k_n(\tau),T)$,
\item[(ii)] for $\tau^\prime|\tau$ and $n \geq n^\prime$, we have
$\textup{Cor}^r_{{k_n(\tau)/k_{n\prime}({\tau^\prime})}}\left(c_{k_n(\tau)}\right)=\left(\prod_{\substack{\qq|\tau\\ \qq\nmid \tau^\prime}}P_{\qq}(\textup{Fr}_{\qq}^{-1})\right)c_{k_{n^\prime}(\tau^\prime)},$ where $\textup{Cor}^r_{{k_n(\tau)/k_{n\prime}({\tau^\prime})}}$ is the map induced from the corestriction $$\textup{Cor}_{{k_n(\tau)/k_{n\prime}({\tau^\prime})}}: H^1(k_n(\tau),T) \lra H^1(k_{n^\prime}(\tau^\prime),T).$$
\end{itemize}
We note that the $\wedge^r H^1(k_n(\tau),T)$ is the $r$-th exterior power of the $\oo[\Delta_n^\tau]$-module $H^1(k_n(\tau),T)$ in the category of $\oo[\Delta_n^\tau]$-modules.
 \end{define}

 \begin{rem}
 \label{rem:weakintegrality}
 Note that we demand the collection $\mathbf{c}$ to be integral in a weaker sense than~\cite[\S1.2.2]{pr-es}, i.e., we allow our classes to have \emph{denominators}. This, of course, is inspired from~\cite{ru96}, and  this weaker version is sufficient for our purposes.
 \end{rem}

 \begin{rem}
 \label{rem:examplesesrankr}
 We describe a conjectural example of an Euler system of rank $r$ in~\S\ref{subsec:PRpadic} below, which we obtain from Perrin-Riou's conjectures on $p$-adic $L$-functions. See also~\cite{kbbstark, kbbiwasawa} where an Euler system of rank $r$ for the multiplicative group $\mathbb{G}_m$ is studied extensively. In a forthcoming work, we consider an Euler system of rank $r$ for Hecke characters of CM fields, which is obtained from Rubin-Stark elements, in order to study CM abelian varieties of higher dimension.
 \end{rem}
 \begin{rem}
 \label{rem:locallyintegral}
For any number field $K$, let
$$\textup{loc}_p: \wedge^r_0 H^1(K,T)\lra \wedge^r_0 H^1(K_p,T)$$
 ({resp.},
 $$\textup{loc}_p^s: \wedge^r_0 H^1(K,T) \lra \wedge^r_0 H^1_s(K_p,T))$$
  be the map induced from
  $$H^1(K,T)\lra H^1(K_p,T)$$
    ({resp.}, from the compositum
    $$H^1(K,T)\lra H^1(K_p,T)\lra H^1_s(K_p,T)).$$
     Suppose $\mathbf{c}=\{c_{k_n(\tau)}\}$ is an Euler system of rank $r$. Then our results regarding the freeness of the semi-local cohomology from~\S\ref{subsub:lock} and \S\ref{subsub:localconditionatpoverkinfty} above, together with~\cite[Example 1 on page 38]{ru96} show that
\be\label{eqn:newnew}\wedge^r_0 H^1(K_p,T)=\wedge^r H^1(K_p,T) \hbox{ and } \wedge^r_0 H^1_s(K_p,T)=\wedge^r H^1_s(K_p,T).\ee
Here the equalities are induced from the canonical inclusion $\wedge^rM \hookrightarrow \QQ_p\otimes\wedge^rM$. It follows from \ref{eqn:newnew} that
$$\textup{loc}_p(c_{k_n}) \in \wedge^r H^1((k_n)_p,T) \hbox{ and } \textup{loc}_p^s(c_{k_n}) \in \wedge^rH^1_s((k_n)_p,T).$$
\end{rem}
 \begin{rem}
 \label{rem:comparisonofeulerfactors}
 The `Euler factors' $P_{\qq}(\textup{Fr}_{\qq}^{-1})$ which appear in the distribution relation (ii) above matches with  the Euler factors in~\cite{pr-es,r00} but differ from the Euler factors chosen in~\cite[Definition 3.2.3]{mr02}. However, thanks to~\cite[\S IX.6]{r00}, it is possible to go back and forth between these two choices and~\cite[Theorem 3.2.4]{mr02} still applies.
 \end{rem}
 \begin{rem}
 \label{rem:ESrank1}
 Suppose $r=1$. In this case
 $$\wedge^r_0H^1(K,T)=\wedge^r H^1(K,T)=H^1(K,T)$$
  for any number field $K \subset \kk$ (where the first equality is~\cite[Proposition 1.2(ii)]{ru96}) and our definition agrees with Perrin-Riou's definition~\cite[\S1.2.1]{pr-es} of an \emph{Euler system of rank one}; and these both agree with Rubin's~\cite[Definition II.1.1 and Remark II.1.4]{r00} definition of an \emph{Euler system}. We also will henceforth call an Euler system of rank one simply an `Euler system'.
  \end{rem}

 \subsection{Euler systems to Kolyvagin systems map}
 \label{subsec:ESKSmap}
We first recall what Mazur and Rubin call the \emph{Euler systems to Kolyvagin systems map}. Suppose $T,\PP$ and $\kk$ are as in the beginning of \S\ref{sec:ES}. Let $\ES(T)=\ES(T,\kk)$ denote the collection of Euler systems (i.e., \emph{Euler systems of rank one} in the sense of Definition~\ref{def:weaklyintegralESofrankr}) for $(T,\kk)$. Fix a topological generator $\gamma$ of $\Gamma$ and set $\gamma_n=\gamma^{p^n}$, and let $\mm_{\LL}$ be the maximal ideal of $\LL=\oo[[\Gamma]]$.
\begin{define}
\label{def:alternativeKS}
For $\mathbb{F}=\FF_{\LL}$ or $\FF_{\all}$, we set
$$\overline{\KS}^{\prime}(\mathbb{T},\mathbb{F},\PP):=\varprojlim_{m,n}\varinjlim_j \textup{\KS}(\mathbb{T}/(p^m,\gamma_{n}-1)\mathbb{T},\mathbb{F}, \PP_{j}),$$
where $\textup{\KS}(\mathbb{T}/(p^m,\gamma_{n}-1)\mathbb{T},\mathbb{F}, \PP_{j})$ is the $\LL/(p^m,\gamma_n-1)$-module of Kolyvagin systems (in the sense of~\cite[Definition 3.1.3]{mr02}) for the Selmer structure $\mathbb{F}$ propagated to the quotient $\mathbb{T}/(p^m,\gamma_{n}-1)\mathbb{T}$.
\end{define}
\begin{rem}
\label{1-kolsys} We introduced the module $\overline{\KS}^{\prime}(\mathbb{T},\mathbb{F},\PP)$ above because, after applying Kolyvagin's descent procedure~\cite[\S IV]{r00} (modified appropriately in~\cite[Appendix A]{mr02}) on an Euler system, one obtains elements of $\overline{\KS}^{\prime}(\mathbb{T},\FF_{\LL},\PP)$. On the other hand, it is not hard to see that the module  $\overline{\KS}^{\prime}(\mathbb{T},\mathbb{F},\PP)$ defined above is naturally isomorphic to the module $\overline{\KS}(\mathbb{T},\mathbb{F},\PP)$ of Definition~\ref{def:KSboth}, using the fact that each of the collections of ideals $\{\langle p^m,\gamma_n-1\rangle\}_{_{m,n}}$ and $\{\langle p^m,(\gamma-1)^n\rangle\}_{_{m,n}}$ forms a base of neighborhoods at $zero$.  Furthermore, using the fact that the collection $\{\mm_\LL^\alpha\}_{\alpha\in \ZZ^+}$ also forms a base of neighborhoods at $zero$, one may identify these two modules of Kolyvagin systems with the generalized module of Kolyvagin systems defined in~\cite[Definition 3.1.6]{mr02}. By slight abuse,  we will write  $\overline{\KS}(\mathbb{T},\mathbb{F},\PP)$ for any of the three modules of Kolyvagin systems given by three different definitions (i.e., by Definitions~\ref{def:KSboth} and \ref{def:alternativeKS} here; and by~\cite[Definition 3.1.6]{mr02}). For our purposes in this section, we will use Definition~\ref{def:alternativeKS} as the description of this module (mainly because, one naturally lands in this module after applying Kolyvagin-Mazur-Rubin descent~\cite[Theorem 5.3.3]{mr02} on an Euler system).
 \end{rem}

 Consider the following hypotheses:
\begin{itemize}
\item[$\mathbf{KS1.}$] $T/(\textup{Fr}_{\qq}-1)T$ is a cyclic $\oo$-module for every $\qq \in \PP$.
\item[$\mathbf{KS2.}$] $\textup{Fr}_{\qq}^{p^k}-1$ is injective on $T$ for every $\qq \in \PP$ and $k\geq 0$.
\end{itemize}

\begin{thm}\textup{(\cite[Theorem 3.2.4 \& 5.3.3]{mr02})}
\label{thm:ESKSmain} Suppose the hypotheses  $\mathbf{KS1}$-$\mathbf{2}$ hold. Then there are canonical maps
\begin{itemize}
\item $\ES(T) \lra \overline{\KS}(T,\FFc,\PP)$,
\item $\ES(T) \lra \overline{\KS}(\TT,\FF_{\LL},\PP)$
\end{itemize}
with the properties that
\begin{itemize}
\item if $\mathbf{c}$ maps to $\pmb{\kappa} \in \overline{\KS}(T,\FFc,\PP)$ then $\kappa_1=c_k$,
\item  if $\mathbf{c}$ maps to $\pmb{\kappa}^{\textup{Iw}} \in \overline{\KS}(\TT,\FF_{\LL},\PP)$ then
$$\kappa_1^{\textup{Iw}}=\{c_{k_n}\} \in \varprojlim_n H^1(k_n,T)=H^1(k,\TT).$$
\end{itemize}
\end{thm}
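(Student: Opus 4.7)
The plan is to follow Kolyvagin's classical derivative construction, as systematized in~\cite[\S4]{mr02}. For each prime $\qq\in\PP$, the extension $k(\qq)/k$ is cyclic of $p$-power order; fix a generator $\sigma_\qq$ of $\textup{Gal}(k(\qq)/k)$ and define the Kolyvagin derivative operator $D_\qq:=\sum_{i=0}^{|\textup{Gal}(k(\qq)/k)|-1} i\,\sigma_\qq^i$, which satisfies the telescoping identity $(\sigma_\qq-1)D_\qq=|\textup{Gal}(k(\qq)/k)|-N_\qq$ with $N_\qq=\sum_i\sigma_\qq^i$. For $\eta=\qq_1\cdots\qq_s\in\NN(\PP)$, set $D_\eta=\prod_{\qq\mid\eta}D_\qq\in\oo[\textup{Gal}(k(\eta)/k)]$. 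The first step is to show that the class $D_\eta c_{k(\eta)}\in H^1(k(\eta),T)$ becomes $\textup{Gal}(k(\eta)/k)$-invariant modulo $I_\eta T$, where $I_\eta=\prod_{\qq\mid\eta}I_\qq$: one applies the identity above to kill each $\sigma_\qq-1$ and rewrites the resulting norm term via the distribution relation of Definition~\ref{def:weaklyintegralESofrankr}(ii), namely $\textup{Cor}_{k(\eta)/k(\eta/\qq)}c_{k(\eta)}=P_\qq(\textup{Fr}_\qq^{-1})c_{k(\eta/\qq)}$, noting that (a suitable rewriting of) the Euler factor lies in $I_\qq$ by the very definition of $\PP_1\supset\PP$. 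Hypothesis $\mathbf{KS2}$ then permits an inflation-restriction argument to lift this invariant class uniquely to a class $\kappa_\eta\in H^1(k,T/I_\eta T)$.

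Next I would verify the Kolyvagin system axioms for the family $\pmb{\kappa}=\{\kappa_\eta\}_{\eta\in\NN(\PP)}$. At primes $\lambda\nmid p\eta$, the local class lies in the (unramified) finite part because $c_{k(\eta)}$ is unramified at such $\lambda$; at primes $\wp\mid p$, the canonical Selmer structure $\FFc$ imposes no local condition, so nothing is required. The most delicate axiom is the \emph{finite-singular comparison} at $\qq\mid\eta$: the singular localization of $\kappa_\eta$ at $\qq$ must correspond, under the canonical finite-singular isomorphism whose existence relies on $\mathbf{KS1}$, to the finite localization of $\kappa_{\eta/\qq}$ at $\qq$. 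This comparison is the technical heart of the construction and is obtained by a direct manipulation at $\qq$, combining the distribution relation with the defining identity of $D_\qq$; this is exactly the calculation that occupies~\cite[\S4.4-4.5]{mr02}. The identity $\kappa_1=c_k$ is then immediate from $D_1=1$.

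For the Iwasawa-theoretic map, I would carry out the same derivative construction inside each layer of the tower $\{k_n(\eta)\}_n$. Since $\qq\in\PP$ is prime to $p$, $\textup{Gal}(k_n(\eta)/k_n)\cong\textup{Gal}(k(\eta)/k)$ canonically and $D_\eta$ descends uniformly in $n$. The corestriction compatibility along the cyclotomic tower is automatic because the Euler system relation from $k_n(\eta)$ to $k_{n-1}(\eta)$ has trivial Euler factor (no prime of $\PP$ ramifies in $k_\infty/k$, and no prime above $p$ enters $\PP$), so the derivative classes $\kappa_{\eta,n}\in H^1(k_n, T/(p^m,\gamma_n-1)\TT)$ assemble into an inverse system. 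Passing to the limit in $(m,n)$, via the identifications of Remark~\ref{1-kolsys} between the three equivalent presentations of the module of $\LL$-adic Kolyvagin systems, yields $\pmb{\kappa}^{\textup{Iw}}\in\overline{\KS}(\TT,\FF_{\LL},\PP)$ with $\kappa_1^{\textup{Iw}}=\{c_{k_n}\}\in H^1(k,\TT)$ by construction. The main obstacle throughout is the careful bookkeeping of the descent modulo $I_\eta$ and the finite-singular comparison; once $\mathbf{KS2}$ controls the descent equivariantly and $\mathbf{KS1}$ supplies the cyclicity required at each $\qq$, the construction runs through as in Mazur-Rubin.
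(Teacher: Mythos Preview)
The paper does not give its own proof of this theorem: it is stated as a citation of \cite[Theorem 3.2.4 \& 5.3.3]{mr02} and used as a black box. Your sketch is essentially a summary of the Mazur--Rubin construction being cited, so in that sense your approach \emph{is} the paper's approach.

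Two small imprecisions are worth flagging. First, the unique descent of $D_\eta c_{k(\eta)}$ from $H^1(k(\eta),T/I_\eta T)^{\Delta^\eta}$ to $H^1(k,T/I_\eta T)$ is not really driven by $\mathbf{KS2}$; what one needs is that the restriction map $H^1(k,T/I_\eta T)\to H^1(k(\eta),T/I_\eta T)^{\Delta^\eta}$ is an isomorphism, and (as the paper itself notes in the footnote to the proof of Proposition~\ref{loc-p}) this comes from hypothesis $\hthree$ via \cite[Lemma 3.5.2]{mr02} together with the fact that $\Delta^\eta$ is a $p$-group. Second, the classes $\{\kappa_\eta\}$ you produce by straight descent of $D_\eta c_{k(\eta)}$ are Rubin's classes $\tilde\kappa_\eta$ of \cite[Definition 4.4.10]{r00}; to obtain an actual Kolyvagin system in the sense of \cite{mr02} one must apply the explicit modification recorded in \cite[Appendix A, Equation (33)]{mr02}. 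The paper uses precisely this distinction in Claim~\ref{reduction-1}. Neither point changes the overall shape of your argument, but both are needed for the statement to hold as written.
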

Starting from an Euler system of rank $r$, one first applies Perrin-Riou's procedure ~\cite[\S1.2.3]{pr-es} (based on an idea due to Rubin~\cite[\S6]{ru96}) to obtain an Euler system.  After this, we would like to apply the Euler systems to Kolyvagin systems map (Theorem~\ref{thm:ESKSmain}) on these Euler systems. Note however that Theorem~\ref{thm:ESKSmain} will only give rise to Kolyvagin systems for the coarser Selmer structures $\FF_\LL$ and $\FFc$ (rather than the finer Selmer structures $\FF_\all$ and $\FF_\al$).

Let $\ES^{(r)}(T)=\ES^{(r)}(T,\kk)$ denote the collection of Euler systems of rank $r$. The previous paragraph is summarized in the diagram below:
$$\xymatrix{\ES^{(r)}(T) \ar@{.>}[rd]_{{\mathcal{R}}}\ar[r]^{\textup{[PR98]}}&\ES(T)\ar[r]^(.4){\textup{[MR02]}}&\overline{\KS}(\TT,\FF_{\LL},\PP) \ar[r]&\overline{\KS}(T,\FFc,\PP)\\
&\pmb{(?)} \ar@{^{(}->}[u]\ar@{.>}[r]^(.3){\mathcal{D}_{\LL}}\ar@{.>}@/_1.8pc/[rr]_(.43){\mathcal{D}}&\overline{\KS}(\TT,\FF_{\all},\PP)\ar[r]\ar@{^{(}->}[u]&\overline{\KS}(T,\FF_{\al},\PP)\ar@{^{(}->}[u]
}$$

To be able to obtain Kolyvagin systems for the modified Selmer structures $\FF_\all$ and $\FF_\al$, we need to analyze the structure of semi-local cohomology groups for $\TT$ and $T$ at $p$, over various ray class fields of $k$. This is carried out in \S\ref{subsec:localstructure}. We then apply the results of \S\ref{subsec:localstructure} in~\S\ref{subsec:choosehoms} to choose carefully a map $\mathcal{R}$ such that  the image of the map $\mathcal{R}$ determines the correct submodule $\pmb{(?)} \subset \ES(T)$, on which the Euler systems to Kolyvagin systems map (Theorem~\ref{thm:ESKSmain}) restricts to what we call $\mathcal{D}_{\LL}$ and $\mathcal{D}$; and gives (see \S\ref{subsec:kolsys2}) Kolyvagin systems for the modified Selmer structures $\FF_{\all}$ and $\FF_\al$.

\subsection{Semi-local preparation}
\label{subsec:localstructure}
Throughout \S\ref{subsec:localstructure} we will assume $\hne$ (both for $T$ and $T^\mathcal{D}$) and $\hwd$\, hold true.

\begin{lemma}
\label{lem:referee} Let $\tau \in \NN(\PP)$ and let $k(\tau)$ be as
above. Then $H^0(k(\tau)_p,X^*)=0,$ for $X=T, T^\mathcal{D}$.
\end{lemma}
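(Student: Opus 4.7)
The plan is to reduce, prime by prime above $p$, the vanishing of $H^0(k(\tau)_\mathfrak{P},X^*)$ to the hypothesis $\hne$ for $X$, using the standard fixed‐point theorem for $p$-groups acting on $\mathbb{F}$-vector spaces.

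First I would record two structural properties of $k(\tau)/k$. By construction $k(\tau)$ is contained in the $p$-part of the ray class field of $k$ modulo $\tau$, so $k(\tau)/k$ is a finite abelian $p$-extension which is unramified outside the primes dividing $\tau$ and the archimedean places. Since $\tau\in\NN(\PP)$ is supported away from $p$, the extension $k(\tau)/k$ is unramified at every prime $\wp\mid p$. Consequently, for any $\mathfrak{P}\mid \wp$ in $k(\tau)$, the decomposition group $\mathcal{D}_{\mathfrak{P}}$ is a subgroup of $\mathcal{D}_{\wp}$ whose quotient $G:=\mathcal{D}_{\wp}/\mathcal{D}_{\mathfrak{P}}$ embeds into $\textup{Gal}(k(\tau)/k)$, hence is a finite $p$-group.

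Next, fix $X\in\{T,T^{\mathcal{D}}\}$ and set $Y:=X^*$; note $Y$ is a $p$-primary torsion $\oo$-module whose $\mm$-torsion $Y[\mm]$ is a finite-dimensional $\mathbb{F}$-vector space. Suppose for contradiction that $Y^{\mathcal{D}_{\mathfrak{P}}}\neq 0$ for some $\mathfrak{P}\mid p$. Then $Y^{\mathcal{D}_{\mathfrak{P}}}[\mm]$ is a nonzero finite-dimensional $\mathbb{F}$-vector space on which the finite $p$-group $G$ acts. Because $\mathrm{char}\,\mathbb{F}=p$, the classical orbit-counting argument gives
$$0\neq \bigl(Y^{\mathcal{D}_{\mathfrak{P}}}[\mm]\bigr)^{G}=Y^{\mathcal{D}_{\wp}}[\mm]\subset H^0(k_{\wp},X^*),$$
which contradicts $\hne$ applied to $X$ (valid for both $X=T$ and $X=T^{\mathcal{D}}$ by the standing assumptions of \S3.2). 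Hence $H^0(k(\tau)_{\mathfrak{P}},X^*)=0$ for every $\mathfrak{P}\mid p$, and summing over such $\mathfrak{P}$ yields $H^0(k(\tau)_p,X^*)=0$.

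I do not anticipate a real obstacle here: the only nontrivial input is the unramifiedness of $k(\tau)/k$ above $p$, which is immediate from the definition of the ray class field, and the $p$-group fixed-point lemma, which is entirely standard.
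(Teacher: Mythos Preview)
Your argument is correct and follows essentially the same route as the paper: both reduce to the standard fixed-point lemma for a finite $p$-group acting on the $\mm$-torsion (resp.\ $p$-torsion) of $X^*$, using that the local decomposition group of $k(\tau)/k$ at any place above $p$ is a $p$-group, and then invoke $\hne$. The unramifiedness of $k(\tau)/k$ above $p$ that you record is true but not actually needed---all that matters is that $\Gal(k(\tau)/k)$ is a $p$-group, so its decomposition subgroups are too.
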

The original argument to prove Lemma~\ref{lem:referee} was
defective. The proof which we include below has been kindly pointed
to us by the anonymous referee.
\begin{proof}
Let $v$ be any prime of $k(\tau)$ above $p$. Write
$\mathcal{D}_v^\tau$ for the decomposition group of $v$ inside
$\Gal(k(\tau)/k):=\Delta^\tau$. We may identify $\mathcal{D}_v^\tau
\subset \Delta^\tau$ with the local Galois group
$\Gal(k(\tau)_v/k_\wp)$ where $\wp\subset k$ is the prime below $v$.
If $\mathcal{D}_v^\tau$ is trivial, then
$H^0(k(\tau)_v,X^*)=H^0(k_\wp,X^*)$ and Lemma follows from $\hne$.
If $\mathcal{D}_v^\tau$ is not trivial, then it is a non-trivial
$p$-group, hence the order of $H^0\left(k(\tau)_v,X^*[p]\right)$ is
congruent modulo $p$ to the order of
$$H^0\left(k(\tau)_v,X^*[p]\right)^{\mathcal{D}_v^\tau}=H^0\left(k_\wp,X^*[p]\right)=0,$$
thus $H^0\left(k(\tau)_v,X^*[p]\right)=0$ as well.
\end{proof}

 \begin{lemma}
 \label{lem:surj}
 \begin{itemize} For every $k_n(\tau) \in \mathfrak{C}$, the corestriction maps
 \item[(i)] $H^1(k_n(\tau)_p,T)\lra H^1(k(\tau)_p,T)$,
 \item[(ii)] $H^1(k(\tau)_p,T)\lra H^1(k_p,T)$,
 \item[(iii)] $H^1(k_n(\tau)_p,T)\lra H^1(k_p,T)$
  \end{itemize}
   on the semi-local cohomology at $p$ are all surjective.
 \end{lemma}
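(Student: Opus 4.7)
The plan is to deduce each of the three surjectivity statements from a suitable vanishing of $H^0$ via local Tate duality and inflation--restriction.

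First, for a finite extension $L/K$ of $p$-adic local fields, local Tate duality identifies the corestriction $\textup{Cor}: H^1(L,T) \to H^1(K,T)$ with the Pontryagin dual of the restriction $\textup{Res}: H^1(K,T^*) \to H^1(L,T^*)$. Working semi-locally at $p$ (summing over primes above $p$), each of the corestrictions in (i)--(iii) is surjective if and only if the corresponding restriction on $H^1(-,T^*)$ is injective. Moreover, the semi-local restriction composed with the projection onto the summand at a single prime $v$ above $\wp$ equals the local restriction $H^1(K_\wp,T^*) \to H^1(L_v,T^*)$; hence it suffices to prove the latter is injective for one (equivalently, every) prime $v \mid p$ of the larger field lying above each prime $\wp \mid p$ of the smaller.

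Next, by the inflation--restriction five-term sequence, the kernel of each local restriction $H^1(K_\wp,T^*) \to H^1(L_v,T^*)$ is $H^1(\Gal(L_v/K_\wp), H^0(L_v,T^*))$, which vanishes whenever $H^0(L_v,T^*)=0$. For case (ii) with $L=k(\tau)$ and $K=k$, this vanishing is precisely Lemma~\ref{lem:referee} applied to $X=T$. For (i) and (iii) I need the analogous statement $H^0(k_n(\tau)_p,T^*)=0$, which is not explicitly recorded but follows by a verbatim extension of the proof of Lemma~\ref{lem:referee}: since $\Gal(k_n(\tau)/k) = \Delta^\tau \times \Gamma_n$ is a finite $p$-group, every decomposition group $\mathcal{D}_v^{n,\tau}$ at a prime $v \mid p$ is also a $p$-group. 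Setting $A := H^0(k_n(\tau)_v, T^*[p])$, the class equation for the $p$-group action of $\mathcal{D}_v^{n,\tau}$ on $A$ gives $|A| \equiv |A^{\mathcal{D}_v^{n,\tau}}| = |H^0(k_\wp, T^*[p])| = 1 \pmod{p}$, the last equality by $\hne$; since $|A|$ is a power of $p$, this forces $A=0$, and hence $H^0(k_n(\tau)_v, T^*)=0$ because $T^*$ is $p$-primary.

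Finally, (iii) can be obtained either directly from the preceding argument applied to $k_n(\tau)/k$, or more cleanly as the composition of (i) and (ii) via the functoriality of corestriction in the tower $k \subset k(\tau) \subset k_n(\tau)$. I do not anticipate any real difficulty here: the argument is formal once one observes that the extension of Lemma~\ref{lem:referee} from $k(\tau)$ to $k_n(\tau)$ is routine, the only new ingredient being that the added Galois factor $\Gamma_n$ is itself a $p$-group.
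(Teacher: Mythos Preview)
Your argument is correct and is genuinely more elementary than the paper's. Both routes ultimately rest on the vanishing $H^0(k(\tau)_p,T^*)=0$ of Lemma~\ref{lem:referee} (or its easy extension to $k_n(\tau)$), but the paper reaches it by a longer path. For (i), the paper passes to the Iwasawa limit, identifying the cokernel of $\varprojlim_n H^1(k_n(\tau)_p,T)\to H^1(k(\tau)_p,T)$ with $H^2(k(\tau)_p,\TT)[\gamma-1]$, and then kills this via cohomological dimension and local duality. For (ii), it applies semi-local Shapiro's lemma to rewrite the corestriction as the map on $H^1(k_p,-)$ induced by the augmentation sequence for $T_\tau=\textup{Ind}_{k(\tau)/k}T$, identifies the cokernel with the dual of $H^0(k_p,(\mathcal{A}_\tau\cdot T_\tau)^*)$, and then proves a separate Gorenstein-type Claim~\ref{claim:refgorremark} to embed this into $H^0(k_p,T_\tau^*)$, which vanishes again by Lemma~\ref{lem:referee}. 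Your single step---dualize corestriction to restriction and kill its kernel by inflation--restriction---bypasses both the Iwasawa-theoretic detour and the auxiliary claim; the only price is that you need the vanishing over $k_n(\tau)$ rather than just $k(\tau)$, but as you note this is the identical $p$-group argument with $\Gamma_n$ adjoined.
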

 \begin{proof}
 The cokernel of the map
 $$H^1(k(\tau),\TT)=\varprojlim_n H^1(k_n(\tau)_p,T)\lra H^1(k(\tau)_p,T)$$
  is given by $H^2(k(\tau)_p,\TT)[\gamma-1]$, where $\gamma$ is any topological generator of
  $\Gamma=\Gal(k_\infty/k)$. Since it is known that~(c.f., \cite[Proposition 3.2.1]{pr}) $H^2(k(\tau)_p,\TT)$ is a
  finitely generated $\oo$-module, it follows that
  $$H^2(k(\tau)_p,\TT)[\gamma-1]=0 \iff H^2(k(\tau)_p,\TT)/(\gamma-1)=0.$$ Since the cohomological dimension of
  the absolute Galois group of any local field is 2,
  $$H^2(k(\tau)_p,\TT)/(\gamma-1)\cong H^2(k(\tau)_p,\TT/(\gamma-1))=H^2(k(\tau)_p,T).$$
   It therefore suffices to check that $$H^2(k(\tau)_p,T):=\bigoplus_{v|p}H^2(k(\tau)_v,T)=0,$$
   which, via local duality is equivalent to checking that $H^0(k(\tau)_v,T^*)=0$ for each $v|p$. The proof of (i)
   now follows from
   Lemma~\ref{lem:referee}.

Now set $T_\tau:=\textup{Ind}_{k(\tau)/k}\,T$. The semi-local version of Shapiro's lemma
(which is explained in~\cite[\S A.5]{r00}) gives an isomorphism
$$H^1(k(\tau)_p, T) \cong H^1(k_p,T_\tau)$$ and the map
$$\textup{Cor}_\tau: \,H^1(k_p,T_\tau)\cong H^1(k(\tau)_p,T)\lra H^1(k_p,T)$$ is induced from the augmentation sequence
$$0\lra \mathcal{A}_\tau\cdot T_\tau\lra T_\tau\lra T\lra 0,$$ where $\mathcal{A}_\tau$ is the augmentation ideal of the
local ring $\oo[\Delta^\tau]$. The argument above shows that the cokernel of $\textup{Cor}_\tau$ is dual to
$$H^0(k_p,(\mathcal{A}_\tau\cdot T_\tau)^*):=\oplus
_{\wp|p}H^0(k_\wp, (\mathcal{A}_\tau\cdot T_\tau)^*).$$
 Furthermore, $(\mathcal{A}_\tau\cdot T_\tau)^{*}: =
 \Hom(\mathcal{A}_\tau\cdot T_\tau,\Phi/\oo(1))=\Hom(\mathcal{A}_\tau\cdot T_\tau,\Phi/\oo)\otimes \oo(1)$
  and,
\be\label{eqn:gorlikeprop}\Hom_{\oo}(\mathcal{A}_\tau\cdot
T_\tau,\Phi/\oo)\stackrel{\sim}{\lra}\mathcal{A}_\tau\cdot\Hom_{\oo}(T_\tau,\Phi/\oo)\ee
  by the Claim
  that we prove below, hence
  $$H^0(k_p,(\mathcal{A}_\tau\cdot T_\tau)^*) \hookrightarrow H^0(k_p,T_\tau^{*}).$$
   It therefore suffices to show that $H^0(k_p,T_\tau^*)=0$.
   By local duality this is equivalent to proving $H^2(k_p,T_\tau)=0$, which by the semi-local
   Shapiro's Lemma equivalent to show $H^2(k(\tau)_p,T)=0$, which again by local duality equivalent
   to the statement $H^0(k(\tau)_p,T^*)=0$;
   and this we have already verified in Lemma~\ref{lem:referee}. Thus, the proof of (ii) follows once
   we verify (\ref{eqn:gorlikeprop}) in the Claim below.

(iii) clearly follows from (i) and (ii).
  \end{proof}
  We now verify that (\ref{eqn:gorlikeprop}) holds true:
\begin{claim}
\label{claim:refgorremark} $\Hom_{\oo}(\mathcal{A}_\tau\cdot
T_\tau,\Phi/\oo)\stackrel{\sim}{\lra}\mathcal{A}_\tau\cdot\Hom_{\oo}(T_\tau,\Phi/\oo).$
\end{claim}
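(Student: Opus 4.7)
The plan exploits that $T_\tau := \textup{Ind}_{k(\tau)/k} T$ is a free $\oo[\Delta^\tau]$-module of rank $n = \textup{rank}_\oo T$ (since $T$ is $\oo$-free), with a $\Delta^\tau$-action that commutes with the $G_k$-action because $k(\tau)/k$ is Galois. In particular $\mathcal{A}_\tau T_\tau \subset T_\tau$ is a $G_k$-submodule, and the claimed isomorphism is a statement about $G_k \times \Delta^\tau$-modules.

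First I would apply $\Hom_\oo(-,\Phi/\oo)$ (exact since $\Phi/\oo$ is $\oo$-injective) to the short exact sequence $0 \to \mathcal{A}_\tau T_\tau \to T_\tau \to T \to 0$ of $G_k$-modules, producing $0 \to T^\vee \to T_\tau^\vee \to (\mathcal{A}_\tau T_\tau)^\vee \to 0$, where $M^\vee := \Hom_\oo(M,\Phi/\oo)$. The image of $T^\vee$ is exactly the $\Delta^\tau$-invariants $(T_\tau^\vee)^{\Delta^\tau}$, so $(\mathcal{A}_\tau T_\tau)^\vee \cong T_\tau^\vee/(T_\tau^\vee)^{\Delta^\tau}$.

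Second, Frobenius reciprocity yields a canonical $G_k$-equivariant isomorphism $T_\tau^\vee \cong \textup{Ind}_{k(\tau)/k} T^\vee$, so $T_\tau^\vee$ is itself free over $\oo[\Delta^\tau]$. Because free $\oo[\Delta^\tau]$-modules are $\Delta^\tau$-cohomologically trivial, the norm element $N_\tau = \sum_{\delta \in \Delta^\tau} \delta$ acts on $T_\tau^\vee$ with kernel exactly $\mathcal{A}_\tau T_\tau^\vee$ and image exactly $(T_\tau^\vee)^{\Delta^\tau}$; thus $N_\tau$ factors as the surjection $T_\tau^\vee \twoheadrightarrow T_\tau^\vee/\mathcal{A}_\tau T_\tau^\vee$ followed by the inclusion $(T_\tau^\vee)^{\Delta^\tau} \hookrightarrow T_\tau^\vee$.

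The sought $G_k$-equivariant isomorphism $(\mathcal{A}_\tau T_\tau)^\vee \simeq \mathcal{A}_\tau T_\tau^\vee$ is then obtained by matching, through the above norm factorization, the two subquotients $T_\tau^\vee/(T_\tau^\vee)^{\Delta^\tau}$ and $\mathcal{A}_\tau T_\tau^\vee$ of the same $\oo[\Delta^\tau]$-free module $T_\tau^\vee$. I expect the main obstacle to be precisely this matching: the naive composition $\mathcal{A}_\tau T_\tau^\vee \hookrightarrow T_\tau^\vee \twoheadrightarrow T_\tau^\vee/(T_\tau^\vee)^{\Delta^\tau}$ need not be an isomorphism on the nose (its kernel can be computed to be $|\Delta^\tau|$-torsion coming from $T^\vee$), so the argument must use the $\oo[\Delta^\tau]$-freeness of $T_\tau^\vee$ and the canonical identification $T_\tau^\vee \cong \textup{Ind}_{k(\tau)/k} T^\vee$ more subtly in order to produce the isomorphism canonically and verify $G_k$-equivariance.
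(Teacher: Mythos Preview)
Your proposal has a genuine gap at the decisive step. You correctly reduce the claim to identifying $T_\tau^\vee/(T_\tau^\vee)^{\Delta^\tau}$ with $\mathcal{A}_\tau\cdot T_\tau^\vee$ as $G_k$-modules, and you correctly observe that the naive composite $\mathcal{A}_\tau T_\tau^\vee \hookrightarrow T_\tau^\vee \twoheadrightarrow T_\tau^\vee/(T_\tau^\vee)^{\Delta^\tau}$ has nontrivial kernel (the $|\Delta^\tau|$-torsion, which is nonzero since $\Delta^\tau$ is a $p$-group and $T_\tau^\vee$ is $p$-divisible). But you then stop, saying only that one must use the structure of $T_\tau^\vee$ ``more subtly.'' No isomorphism is actually produced, so the argument is incomplete. (A minor side issue: $T_\tau^\vee=\Hom_\oo(T_\tau,\Phi/\oo)$ is not literally free over $\oo[\Delta^\tau]$, since $\Phi/\oo$ is not $\oo$-free; it is rather a direct sum of copies of $(\Phi/\oo)[\Delta^\tau]$. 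Cohomological triviality still holds, but your phrasing needs adjustment.)

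The paper takes a different, explicit route that avoids this matching problem altogether. It writes $\Delta^\tau=\mathcal{G}_1\times\cdots\times\mathcal{G}_s$ as a product of cyclic groups with chosen generators $\delta_i$, and for each cyclic factor exhibits mutually inverse $G_k$-equivariant maps
\[
\Hom_\oo(\mathcal{A}_i^{(1)}\cdot T_\tau,\Phi/\oo)\ \longleftrightarrow\ \mathcal{A}_i^{(1)}\cdot\Hom_\oo(T_\tau,\Phi/\oo),
\]
given by $\phi\mapsto\bigl(x\mapsto\phi((\delta_i-1)x)\bigr)$ in one direction and $(\delta_i^{-1}-1)\psi\mapsto\psi|_{\mathcal{A}_i^{(1)}T_\tau}$ in the other, where $\mathcal{A}_i^{(1)}$ is the augmentation ideal of $\oo[\mathcal{G}_i]$. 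This settles the cyclic case by hand, and the general case follows by induction on $s$. Your framework could probably be salvaged by invoking the Frobenius-algebra structure of $\oo[\Delta^\tau]$ (self-duality under the trace pairing) to identify $\oo[\Delta^\tau]/(N_\tau)$ with $\mathcal{A}_\tau$ functorially, but that is precisely the missing ingredient you would need to supply.
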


\begin{proof}
Write $\Delta^\tau=\mathcal{G}_1\times\cdots\times \mathcal{G}_s$ as a product of non-trivial cyclic groups. For each $1\leq i \leq s$, choose a generator $\delta_i$ of $\mathcal{G}_i$. Set $R_i=\oo[\mathcal{G}_1\times\cdots\times \mathcal{G}_i]$. Thanks to the direct sum decomposition above, we think of $R_i$ as both a quotient ring and a subring as $R_{j}$ for $j \geq i$. We define $\mathcal{A}_i=\ker(R_i \ra \oo)$, the augmentation ideal of $R_i$. Define also $R_i^{(1)}=\oo[\mathcal{G}_i]$ and $\mathcal{A}_i^{(1)}=\ker(R_i^{(1)} \ra \oo)$.

 We then have a surjective map
$T_\tau \stackrel{\delta_i-1}{\lra} \mathcal{A}_i^{(1)}\cdot T_\tau$  and an induced injection 
$$ \xymatrix@R=.085in{
\Hom_{\oo}(\mathcal{A}_i^{(1)}\cdot
T_\tau,\Phi/\oo)\ar[r]^{\circ[\delta_i-1]} & \mathcal{A}_i^{(1)}\cdot\Hom_{\oo}(T_\tau,\Phi/\oo)\\
\phi \ar@{|->}[r]& \left\{x \mapsto \phi((\delta_i-1)x)\right
\}
}$$
Define
$$\xymatrix@R=.085in@C=0.1in{
\frak{d}_i:&\mathcal{A}_i^{(1)}\cdot\Hom_{\oo}(T_\tau,\Phi/\oo)\ar[rrrr]&&&& \Hom_{\oo}(\mathcal{A}_i^{(1)}\cdot
T_\tau,\Phi/\oo)\\
&(\delta_i^{-1}-1)\psi\ar@{|->}[rrrr] &&&& \psi \big{|}_{\mathcal{A}_i^{(1)}\cdot T_\tau}
}$$
It is easy to verify that $\frak{d}_i$ is well-defined and the two maps $\frak{d}_i$ and $\circ[\delta_i-1]$ are
mutual inverses of each other. This proves that: 
\begin{itemize}
\item[(i)] $\mathcal{A}_k^{(1)}\cdot\Hom_{\oo}(T_\tau,\Phi/\oo) \stackrel{\sim}{\lra} \Hom_{\oo}(\mathcal{A}_k^{(1)}\cdot
T_\tau,\Phi/\oo)$, for all $k$,
\item[(ii)] $\mathcal{A}_1\cdot\Hom_{\oo}(T_\tau,\Phi/\oo) \stackrel{\sim}{\lra} \Hom_{\oo}(\mathcal{A}_1\cdot
T_\tau,\Phi/\oo)$.

Now the proof that 
$$\mathcal{A}_k\cdot\Hom_{\oo}(T_\tau,\Phi/\oo) \stackrel{\sim}{\lra} \Hom_{\oo}(\mathcal{A}_k\cdot
T_\tau,\Phi/\oo)$$ 
for every $k$ (in particular, for $k=s$, i.e., for $\mathcal{A}_s=\mathcal{A}_\tau$) follows by induction on $k$, invoking $\textup{(i)}$ above at each step.

\end{itemize}
\end{proof}
 \begin{prop}
 \label{prop:semilocalstructure}
 \begin{enumerate}
  For every $\tau \in \NN(\PP)$:
\item[(i)]  The semi-local cohomology group $H^1(k(\tau)_p,T)$ is a free $\oo[\Delta^\tau]$-module of rank $d$.
 \item[(ii)] For every $n\in\ZZ_{\geq 0}$, the semi-local cohomology group $H^1(k_n(\tau)_p,T)$ is a free $\oo[\Delta^\tau_n]$-module of rank $d$.
 \end{enumerate}
 \end{prop}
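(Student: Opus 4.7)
The strategy is to combine semi-local Shapiro's Lemma with the perfect-complex observation recorded in Remark~\ref{ref:referee}. Since $k_n(\tau)/k$ is a $p$-extension, the Galois group $\Delta_n^\tau=\Delta^\tau\times\Gamma_n$ is a finite $p$-group, so $R:=\oo[\Delta_n^\tau]$ is a complete local noetherian (in fact Gorenstein) $\oo$-algebra with finite residue field. Set $M:=\textup{Ind}_{G_{k_n(\tau)}}^{G_k}T$, a free $R$-module of finite rank carrying a continuous $G_k$-action, and note that a direct computation identifies $\Hom_R(M,R)(1)\cong\textup{Ind}_{G_{k_n(\tau)}}^{G_k}T^{\mathcal{D}}$. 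For each prime $\wp$ of $k$ above $p$, semi-local Shapiro's Lemma (as in~\cite[\S A.5]{r00}) yields canonical $R$-module isomorphisms
$$H^i(k_\wp,M)\,\cong\,\bigoplus_{v\mid\wp\text{ in }k_n(\tau)}H^i(k_n(\tau)_v,T),$$
and similarly with $T^{\mathcal{D}}$ in place of $T$.

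I would then extend Lemma~\ref{lem:referee} from $k(\tau)$ to $k_n(\tau)$: the decomposition group of any $v\mid p$ is a $p$-subgroup of $\Delta_n^\tau$, so the $p$-group averaging argument of that lemma applies verbatim to yield $H^0(k_n(\tau)_p,X^*)=0$ for both $X=T$ and $X=T^{\mathcal{D}}$. Local Tate duality converts these into $H^2(k_n(\tau)_v,X)=0$ for every $v\mid p$ and $X\in\{T,T^{\mathcal{D}}\}$; via the Shapiro identifications above this propagates to $H^2(k_\wp,M)=0=H^2(k_\wp,\Hom_R(M,R)(1))$ at every $\wp\mid p$. Applying Remark~\ref{ref:referee} to the triple $(k_\wp,M,R)$ then asserts that $H^1(k_\wp,M)$ is a free $R$-module of finite type; summing over $\wp\mid p$ delivers the freeness of $H^1(k_n(\tau)_p,T)$ over $R$.

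For the rank, I would first upgrade the vanishing $H^0(k_n(\tau)_p,Y\otimes\Phi/\oo)=0$ (a reformulation of the extended Lemma~\ref{lem:referee}, valid for both $Y=T$ and $Y=T^{\mathcal{D}}$) to the pointwise assertion $H^0(k_n(\tau)_v,Y)=0$ at every $v\mid p$, via the elementary divisibility trick: any nonzero $G$-fixed $y\in Y$, together with the minimal $m$ for which $y/p^m\notin Y$, produces a nonzero fixed element in $Y\otimes\Phi/\oo$. The local Euler--Poincar\'e characteristic formula then gives $\textup{rank}_{\oo}\,H^1(k_n(\tau)_v,T)=[k_n(\tau)_v:\QQ_p]\cdot\textup{rank}_{\oo}\,T$ at each $v\mid p$; summing over $v\mid p$ and dividing by $[R:\oo]=|\Delta_n^\tau|$ recovers $\textup{rank}_R\,H^1(k_n(\tau)_p,T)=[k:\QQ]\cdot\textup{rank}_{\oo}\,T=d$. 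Specializing $n=0$ yields (i). The delicate point is the \emph{symmetric} invocation of the extended Lemma~\ref{lem:referee} on both $T$ and $T^{\mathcal{D}}$, since Remark~\ref{ref:referee} demands $H^2$-vanishing simultaneously on $M$ and its $R$-dual $\Hom_R(M,R)(1)$; once both vanishings are in place, the perfect complex argument and the rank count become essentially formal.
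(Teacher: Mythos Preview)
Your argument is correct, and it is a genuinely different route from the paper's own proof. The paper proceeds more elementarily: it first establishes (by repeating the Iwasawa-theoretic argument of Lemma~\ref{lemma:free for k}, hence using $\hwd$ as well as $\hne$) that $H^1(k(\tau)_p,T)$ is \emph{$\oo$-free} of rank $d\cdot|\Delta^\tau|$; then it invokes Lemma~\ref{lem:surj} to see that corestriction to $H^1(k_p,T)$ (reduction modulo the augmentation ideal) is surjective, applies Nakayama's lemma to produce $d$ generators over $\oo[\Delta^\tau]$, and finishes with a bare-hands counting argument (any nontrivial $\oo[\Delta^\tau]$-relation would allow one to drop an $\oo$-generator, contradicting the known $\oo$-rank). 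Part~(ii) is then bootstrapped from~(i) by the same Nakayama trick, using the surjection $H^1(k_n(\tau)_p,T)\to H^1(k(\tau)_p,T)$.

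Your approach instead packages everything through Shapiro's lemma and the perfect-complex/Gorenstein observation of Remark~\ref{ref:referee}, plus the local Euler characteristic for the rank. This is cleaner conceptually and, as a small bonus, never actually invokes $\hwd$: you only use the extended Lemma~\ref{lem:referee} (which rests solely on $\hne$ for $T$ and $T^{\mathcal{D}}$). The price is that Remark~\ref{ref:referee} is a somewhat heavier input than the paper's direct Nakayama/counting argument. One cosmetic point: your identification $\Hom_R(M,R)(1)\cong\textup{Ind}\,T^{\mathcal{D}}$ holds as $G_k$-modules only after twisting the $R$-structure by the standard involution $g\mapsto g^{-1}$ on $\oo[\Delta_n^\tau]$; this does not affect the $H^2$-vanishing you need, but is worth noting if you write this out in full.
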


 \begin{proof}
 We start with the remark that $H^1(k(\tau)_p,T)$ is a free $\oo$-module of rank $d\cdot|\Delta^\tau|$. Indeed, this may be proved following the proof of Lemma~\ref{lemma:free for k} (again relying on the hypotheses $\hne$ (both for $T$ and $T^{\mathcal{D}}$) and $\hwd$). Further, we know thanks to Lemma~\ref{lem:surj} that the map $H^1(k(\tau)_p,T)\ra H^1(k_p,T)$ (which could be thought of as reduction modulo the augmentation ideal $\mathcal{A}_\tau \subset \oo[\Delta^\tau]$) is surjective. Nakayama's Lemma and Lemma~\ref{lemma:free for k} therefore imply that $H^1(k(\tau)_p,T)$ is generated by (at most) $d$ elements over the ring $\oo[\Delta^\tau]$. Let $\frak{B}=\{x_1, x_2,\dots,x_{d}\}$ be any set of such generators. To prove (i), it suffices to check that the $x_i$'s do not admit any non-trivial $\oo[\Delta^\tau]$-linear relation. Assume contrary, and suppose there is a non-trivial relation
 \be\label{eqn:relation}
 \sum_{i=1}^{d}\alpha_i x_i=0, \,\,\, \alpha_i \in \oo[\Delta^\tau].
 \ee
 Write $S=\{\delta x_j: \delta \in \Delta^\tau, 1\leq j\leq d\},$ note that by our assumption on the set  $\frak{B}$, the set $S$ generates $H^1(k(\tau)_p,T)$ as an $\oo$-module, and $|S|=d\cdot|\Delta^\tau|=\textup{rank}_{\oo} \, H^1(k(\tau)_p,T)$.  Equation (\ref{eqn:relation}) can be rewritten as 
 $$\sum_{\delta,j} a_{\delta,j}\cdot \delta x_j=0$$
  with $a_{\delta,j} \in \oo$. Since we already know that $H^1(k(\tau)_p,T)$ is $\oo$-torsion free, we may assume without loss of generality that $a_{\delta_0,j_0} \in \oo^\times$ for some $\delta_0,j_0$. This in turn implies that 
 $$\delta_0x_{j_0} \in \textup{span}_{\oo}(S-\{\delta_0x_{j_0}\}),$$
  hence $H^1(k(\tau)_p,T)$ is generated by $S-\{\delta_0x_{j_0}\}$. This, however, is a contradiction since we already know that the $\oo$-rank of $H^1(k(\tau)_p,T)$ is $d\cdot|\Delta^\tau|=|S|$, hence it cannot be generated by $|S|-1$ elements over $\oo$. The proof of (i) is now complete.

 (ii) is proved in an identical fashion, now considering the \emph{augmentation map}
 $$H^1(k_n(\tau)_p,T)\lra H^1(k(\tau)_p,T),$$ which is surjective thanks to Lemma~\ref{lem:surj}.
 \end{proof}

 Let $\kk_0 \subset \kk$ be the composite of all fields $k(\tau)$, where $\tau$ runs through the set $\NN=\NN(\PP)$. Set $\pmb{\Delta}:=\Gal(\kk_0/k)$.
 \begin{cor}
 \label{cor:thick free}
  $\varprojlim_{n,\tau} H^1(k_n(\tau)_p,T)$ is a free $\oo[[\Gamma\times\pmb{\Delta}]]$-module of rank $d$ and the natural projection maps $$\varprojlim_{n,\tau} H^1(k_n(\tau)_p,T) \lra H^1(k_{m}(\eta)_p,T)$$ are surjective for all $m \in \ZZ_{\geq 0}$ and $\eta \in \NN$.
 \end{cor}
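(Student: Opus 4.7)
The plan is to promote the finite-level statement of Proposition \ref{prop:semilocalstructure}(ii) to the inverse limit using a standard compactness/Nakayama argument, taking advantage of the surjectivity provided by Lemma \ref{lem:surj}. Set $\mathbb{H}:=\varprojlim_{n,\tau} H^1(k_n(\tau)_p,T)$ and $\Lambda^{\pmb{\Delta}}:=\oo[[\Gamma\times\pmb{\Delta}]]=\varprojlim_{n,\tau}\oo[\Delta_n^\tau]$. The transition maps in the projective system defining $\mathbb{H}$ are the corestrictions, which by Lemma \ref{lem:surj} are all surjective; hence Mittag-Leffler is trivially satisfied, so for every $m\in\ZZ_{\geq 0}$ and $\eta\in\NN$ the natural map $\mathbb{H}\ra H^1(k_m(\eta)_p,T)$ is surjective. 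This takes care of the second assertion, and it will be used crucially for the first.

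For freeness, first fix an $\oo$-basis $\overline{e}_1,\dots,\overline{e}_d$ of $H^1(k_p,T)$ (which exists by Lemma \ref{lemma:free for k}). Using the surjectivity just established (with $m=0$, $\eta=1$), lift each $\overline{e}_i$ to an element $e_i\in\mathbb{H}$. I claim that $e_1,\dots,e_d$ form a $\Lambda^{\pmb{\Delta}}$-basis of $\mathbb{H}$. For generation, consider any $x\in\mathbb{H}$; by Proposition \ref{prop:semilocalstructure}(ii), at each finite level $(n,\tau)$ the image of $x$ can be written uniquely as $\sum_i a_{i,n,\tau}\cdot(e_i)_{n,\tau}$ with $a_{i,n,\tau}\in\oo[\Delta_n^\tau]$, because the images $(e_i)_{n,\tau}$ reduce modulo the augmentation ideal to the fixed $\oo$-basis of $H^1(k_p,T)$, and a lift of an $\oo$-basis of the cofibre to an $\oo[\Delta_n^\tau]$-generating set of a free $\oo[\Delta_n^\tau]$-module of the same rank is automatically an $\oo[\Delta_n^\tau]$-basis (by Nakayama applied to the local ring $\oo[\Delta_n^\tau]$, whose maximal ideal contains the augmentation ideal). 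Uniqueness at each finite level ensures the coefficient systems $\{a_{i,n,\tau}\}_{n,\tau}$ are compatible under the transition maps on coefficients, hence assemble to elements $a_i\in\Lambda^{\pmb{\Delta}}$ with $x=\sum_i a_i\cdot e_i$.

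For linear independence, any relation $\sum_i a_i\cdot e_i=0$ in $\mathbb{H}$ projects to a relation $\sum_i a_{i,n,\tau}\cdot(e_i)_{n,\tau}=0$ at each finite level; by Proposition \ref{prop:semilocalstructure}(ii) this forces $a_{i,n,\tau}=0$ for all $i,n,\tau$, and passing to the limit gives $a_i=0$.

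The only part that requires real care is the Nakayama step at each finite level $(n,\tau)$, where one must argue that a lift of a free $\oo$-basis of the quotient by the augmentation ideal is automatically an $\oo[\Delta_n^\tau]$-basis of the free module $H^1(k_n(\tau)_p,T)$ of the correct rank. This is a standard fact for finitely generated modules over the local ring $\oo[\Delta_n^\tau]$ (whose residue field is $\mathbb{F}$): generation follows from Nakayama, and independence from the rank count, exactly as in the proof of Proposition \ref{prop:semilocalstructure}(i). No new ideas beyond those already deployed in \S\ref{subsec:localstructure} are needed.
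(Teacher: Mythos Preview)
Your argument is correct and is precisely the standard unpacking of the paper's one-line proof (``Immediate after Proposition~\ref{prop:semilocalstructure}''): lift an $\oo$-basis of $H^1(k_p,T)$ to the limit and use Nakayama at each finite level together with the freeness from Proposition~\ref{prop:semilocalstructure}(ii). One small imprecision: Lemma~\ref{lem:surj} as stated only gives surjectivity of certain specific corestriction maps, not the general transition map $H^1(k_n(\tau)_p,T)\to H^1(k_m(\eta)_p,T)$; however, the same proof applies verbatim (using Lemma~\ref{lem:referee} for $k_m(\eta)$ in place of $k(\tau)$), or alternatively you can deduce it a posteriori from your own Nakayama step, so this is not a genuine gap.
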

 \begin{proof}
 Immediate after Proposition~\ref{prop:semilocalstructure}.
 \end{proof}

\subsection{Choosing the correct homomorphisms}
\label{subsec:choosehoms}
In this section we use the results from~\S\ref{subsec:localstructure} to choose useful homomorphisms which will be utilized in~\S\ref{subsec:kolsys2} to construct Kolyvagin systems for the modified Selmer structure $\FF_{\al}$ (resp., $\FF_{\all}$) on $T$ (resp., on $\TT$). This will be carried out in two steps: Under the hypotheses $\hpss$\, on $T$, we will make our choice of homomorphisms in~\S\ref{subsubsec:pss} and use the results of this section in~\S\ref{subsubsec:KSoverkbis} to construct an element of $\overline{\KS}(T,\FF_{\al},\PP)$ out of an Euler system of rank $r$. For the Iwasawa theoretic results, we will assume $\hord$, and we will show how to choose the useful homomorphisms in~\S\ref{subsubsec:ordcase}. This choice will be utilized in~\S\ref{subsubsec:KSoverkinftybis} to construct an element of $\overline{\KS}(\TT,\FF_{\all},T)$ starting from an Euler system of rank $r$.
\subsubsection{Choice of Homomorphisms: Potentially semi-stable case}
\label{subsubsec:pss}
In this section, we assume that the hypothesis $\hpss$ holds along with $\hne$\, and $\hwd$ (which we need to prove Proposition~\ref{prop:finiteconditionhalfrank}). Let $\al \subset H^1(k_p,T)$ be as in~\S\ref{subsub:lock}. As before,  we denote the (isomorphic) image of $\al$ under $H^1(k_p,T)\ra H^1_s(k_p,T)$ also by $\al$.

\begin{prop}
\label{prop:linearalgebra1}
There exists a decomposition of  the $\oo[[\Gamma\times\pmb{\Delta}]]$-module of rank-$d$
$$\mathbb{V}_p:=\varprojlim_{n,\tau} H^1(k_n(\tau)_p,T)=\pmb{\al^{+}} \oplus\pmb{\al_s}$$
with a distinguished rank one direct summand $\pmb{\al}\subset \pmb{\al_s}$ with the following properties:

Under the maps induced from the corestriction map $$\varprojlim_{n,\tau} H^1(k_n(\tau)_p,T)\lra H^1(k_p,T),$$
\begin{enumerate}
\item $\pmb{\al^{+}}$ (resp., $\pmb{\al_s}$) is a free $\oo[[\Gamma\times\pmb{\Delta}]]$-module of rank $d_+$ (resp., of rank $r$),
\item $\pmb{\al^{+}}$ projects onto $H^1_f(k_p,T)$, and  $\pmb{\al_s}$ onto $H^1_s(k_p,T)$,
\item $\pmb{\al}$ projects onto $\al$.
\end{enumerate}
\end{prop}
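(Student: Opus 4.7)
The plan is to lift a suitable $\oo$-decomposition of $H^1(k_p,T)$ up to $\mathbb{V}_p$ by a Nakayama argument over the complete local ring $R:=\oo[[\Gamma\times\pmb{\Delta}]]$.

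First, I would identify the quotient $\mathbb{V}_p/\mathfrak{a}\mathbb{V}_p$ (where $\mathfrak{a}:=\ker(R\twoheadrightarrow\oo)$ is the augmentation ideal) with $H^1(k_p,T)$. Corollary~\ref{cor:thick free} tells us that $\mathbb{V}_p$ is a free $R$-module of rank $d$ and that the natural projection $\mathbb{V}_p\twoheadrightarrow H^1(k_p,T)$ is surjective; this projection kills $\mathfrak{a}\mathbb{V}_p$, so it factors through the free $\oo$-module $\mathbb{V}_p/\mathfrak{a}\mathbb{V}_p$ of rank $d$. Since $H^1(k_p,T)$ is also free of rank $d$ over $\oo$ by Lemma~\ref{lemma:free for k}, the induced surjection $\mathbb{V}_p/\mathfrak{a}\mathbb{V}_p\twoheadrightarrow H^1(k_p,T)$ is automatically an isomorphism.

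Next I would build the decomposition downstairs. By Proposition~\ref{prop:finiteconditionhalfrank}, $H^1_s(k_p,T)$ is $\oo$-torsion free, so $H^1_f(k_p,T)$ is an $\oo$-direct summand of $H^1(k_p,T)$; since $\al$ is a rank-one direct summand with $\al\cap H^1_f(k_p,T)=\{0\}$, one may choose a complement $\widetilde{\pmb{\al}}_s$ of $H^1_f(k_p,T)$ inside $H^1(k_p,T)$ that contains $\al$, and this $\widetilde{\pmb{\al}}_s$ projects isomorphically onto $H^1_s(k_p,T)$. Fix an $\oo$-basis $\{f_1,\dots,f_{d_+}\}$ of $H^1_f(k_p,T)$ and an $\oo$-basis $\{\ell,s_2,\dots,s_r\}$ of $\widetilde{\pmb{\al}}_s$ in which $\ell$ generates $\al$, and lift each of these to elements $\tilde{f}_i,\tilde{\ell},\tilde{s}_j\in\mathbb{V}_p$ via the isomorphism of the previous step.

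Finally, since $\Gamma\times\pmb{\Delta}$ is pro-$p$, the ring $R$ is a complete local ring with residue field $\mathbb{F}$ and $\mathfrak{a}$ lies in its Jacobson radical. Thus $\{\tilde{f}_i,\tilde{\ell},\tilde{s}_j\}$ reduces modulo $\mathfrak{a}$ to an $\oo$-basis of $\mathbb{V}_p/\mathfrak{a}\mathbb{V}_p$, and by Nakayama's lemma it therefore forms an $R$-basis of the free $R$-module $\mathbb{V}_p$. Setting
$$\pmb{\al^+}:=\bigoplus_{i=1}^{d_+}R\,\tilde{f}_i,\qquad \pmb{\al}:=R\,\tilde{\ell},\qquad \pmb{\al_s}:=R\,\tilde{\ell}\oplus\bigoplus_{j=2}^{r}R\,\tilde{s}_j$$
then produces free $R$-summands of ranks $d_+$, $1$ and $r$ with $\pmb{\al^+}\oplus\pmb{\al_s}=\mathbb{V}_p$, whose images under corestriction are $H^1_f(k_p,T)$, $\al$ and $\widetilde{\pmb{\al}}_s$ respectively (the last then projecting isomorphically onto $H^1_s(k_p,T)$). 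The only real obstacle is the opening step — identifying $\mathbb{V}_p/\mathfrak{a}\mathbb{V}_p$ with $H^1(k_p,T)$ via the freeness statements of Lemma~\ref{lemma:free for k} and Corollary~\ref{cor:thick free}; once that rank comparison is in place, both the lifting of bases and the verification of the projection properties are formal consequences of Nakayama's lemma.
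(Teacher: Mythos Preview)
Your proposal is correct and follows essentially the same route as the paper's own proof: fix an $\oo$-basis of $H^1(k_p,T)$ adapted to the flag $H^1_f(k_p,T)\subset H^1_f(k_p,T)\oplus\al\subset H^1(k_p,T)$, then lift it to an $R$-basis of $\mathbb{V}_p$ by Nakayama and take the obvious spans. The only difference is cosmetic: you make the identification $\mathbb{V}_p/\mathfrak{a}\mathbb{V}_p\cong H^1(k_p,T)$ explicit before lifting, whereas the paper simply invokes Nakayama's lemma directly on a chosen basis $\{e_1,\dots,e_d\}$ of $H^1(k_p,T)$.
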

\begin{proof}
By Lemma~\ref{lemma:free for k}, the $\oo$-module $H^1(k_p,T)$ is free  of rank $d$, and by Proposition~\ref{prop:finiteconditionhalfrank}, the $\oo$-module $H^1_f(k_p,T)$ (resp., $H^1_s(k_p,T)$) is free of rank $d_+$ (resp., of rank $r$). Fix a basis
$\{e_1, \dots, e_{d_+}, e_{1+d_+},\dots, e_{d}\}$ of $H^1(k_p,T)$ such that
\begin{itemize}
\item $\{e_1, \dots, e_{d_+}\}$ is a basis for $H^1_f(k_p,T)$,
\item $e_{1+d_+}$ generates the $\oo$-line $\al$,
\item the isomorphic image of $\{e_{1+d_+},\dots,e_{d}\}$ under $H^1(k_p,T)\twoheadrightarrow H^1_s(k_p,T)$ gives a basis of $H^1_s(k_p,T)$.
\end{itemize}
Now use Nakayama's lemma to lift $\frak{b}$ to a basis $\{\mathbb{E}_1,\dots,\mathbb{E}_{d}\}$ of $\mathbb{V}_p$, such that $\mathbb{E}_i$ maps to $e_i$ under the augmentation map induced from  $\oo[[\Gamma\times\pmb{\Delta}]]\ra \oo$. Set
$\pmb{\al^{+}}=\textup{span}_{\oo[[\Gamma\times\pmb{\Delta}]]}\{\mathbb{E}_1,\dots,\mathbb{E}_{d_+}\}$, $\pmb{\al}=\textup{span}_{\oo[[\Gamma\times\pmb{\Delta}]]}\,\{\mathbb{E}_{1+d_+}\}$ and $\pmb{\al_s}=\textup{span}_{\oo[[\Gamma\times\pmb{\Delta}]]}\,\{\mathbb{E}_{1+d_+},\dots,\mathbb{E}_{d}\}$.

\end{proof}
\begin{define}
\label{lines}
For $k_n(\tau)=K \in \mathfrak{C}$, let $\al_{K}$ ({resp.}, $\al_K^{+}$; {resp.}, $\al_K^s$) be the image of $\pmb{\al}$ ({resp.}, $\pmb{\al^{+}}$; {resp.}, $\pmb{\al_s}$) under the (surjective) projection map \, $\vv_p \ra H^1(K_p,T)$.
\end{define}

Note that $\al_K$ ({resp.}, $\al_K^{+}$; resp., $\al_K^s$) is a free
$\ZZ_p[\textup{Gal}(K/k)]$-module of rank \emph{one} ({resp.}, of
rank $d_+$; resp., of rank $r$) for all $K \in \mathfrak{C}$, and that the restriction map
induces an isomorphism
$$\textup{res}: X_{K} \stackrel{\sim}{\lra} (X_{K^{\prime}})^{\textup{Gal}(K^{\prime}/K)}$$
 for $X=\al, \al^{+}$ and $\al^s$; for all $K\subset K^{\prime}$.  When $K=k$, note that $\al_K=\al$ and $\al_K^{+}=H^1_f(k_p,T)$ by definition (Proposition~\ref{prop:linearalgebra1}).

We write
\begin{align*}
\bigwedge^{r-1}\textbf{Hom}(\pmb{\al_s},\oo[[\Gamma\times\pmb{\Delta}]]):=
\varprojlim_{K \in \mathfrak{C}}
\bigwedge^{r-1}{}_{_{\oo[\Delta_K]}}\,\hbox{Hom}_{\oo[\Delta_K]}(\al_K^s,
\oo[\Delta_K]).
\end{align*}
 Here $\Delta_K=\Gal(K/k)$ and the inverse limit is with respect to the natural maps induced from  $$\al^s_K \lra (\al_{K^{\prime}}^s)^{\Gal(K^{\prime}/K)}$$ and the isomorphism
 \begin{align*}
  \oo[\Delta_{K^{\prime}}]^{\Gal(K^{\prime}/K)} & \tilde{\lra}\oo[\Delta_K]\\
\mathbf{N}^{K^{\prime}}_K &\longmapsto 1
 \end{align*} for $K \subset K^{\prime}$.

Localization at $p$ followed by the projection onto the ``singular quotient" $\al_K^s$ gives rise to a map
$$\textup{loc}_p^s: H^1(K,T)\stackrel{\textup{loc}_p}{\lra}H^1(K_p,T)\lra \al_K^s,$$which  induces a canonical  map
$$
\bigwedge^{r-1}\textbf{Hom}(\pmb{\al_s},\oo[[\Gamma\times\pmb{\Delta}]])\lra \varprojlim_{K\in\mathfrak{C}} \bigwedge^{r-1}\textup{Hom}_{\oo[\Delta_K]}(H^1(K,T), \oo[\Delta_K]).
$$
The image of $\Psi \in \hhh$ under this map will still be denoted by $\Psi$.
\begin{prop}
\label{def: es}
Suppose $\mathbf{c}^{(r)}=\{c_K^{(r)}\}_{_{K\in\mathfrak{C}}}$ is an Euler system of rank $r$. For any
 $$\{\psi_{K}\}_{_{K\in\mathfrak{C}}}=\Psi \in \hhh,$$ define $$H^1(K,T) \ni c_{K,\Psi}:=\psi_K(c_{K}^{(r)}).$$ Then the collection $\{c_{K,\Psi}\}_{_{K\in\mathfrak{C}}}$ is an Euler system for the $G_k$-representation $T$.
\end{prop}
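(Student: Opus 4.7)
My plan is to verify directly that $\{c_{K,\Psi}\}_{K \in \mathfrak{C}}$ satisfies the two defining properties of an Euler system in the rank-one sense of Remark~\ref{rem:ESrank1}: $\oo$-integrality of each class (i.e., $c_{K,\Psi} \in H^1(K,T)$), and the corestriction-distribution relation along the tower $\mathfrak{C}$.

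For integrality, fix $K \in \mathfrak{C}$. \emph{A priori} $c_{K,\Psi}=\psi_K(c_K^{(r)})$ only lies in $\Phi \otimes H^1(K,T)$ since $c_K^{(r)} \in \wedge^r_0 H^1(K,T) \subset \Phi \otimes \wedge^r H^1(K,T)$. To promote it to $H^1(K,T)$, take any auxiliary $\phi \in \textup{Hom}_{\oo[\Delta_K]}(H^1(K,T),\oo[\Delta_K])$; the defining property of $\wedge^r_0$ gives $(\phi \wedge \psi_K)(c_K^{(r)}) \in \oo[\Delta_K]$, and under the contraction pairing this equals $\pm\,\phi(c_{K,\Psi})$. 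Thus $\phi(c_{K,\Psi}) \in \oo[\Delta_K]$ for every $\phi$, and combined with the freeness of $\al_K^s$ as an $\oo[\Delta_K]$-module (Proposition~\ref{prop:semilocalstructure}) together with the factorization of $\Psi$ through $\textup{loc}_p^s$, a standard reflexivity argument pins $c_{K,\Psi}$ down to $H^1(K,T)$ itself.

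For the distribution relation, fix $K' \subset K$ in $\mathfrak{C}$ coming from $\tau'|\tau$ and $n' \le n$ as in Definition~\ref{def:weaklyintegralESofrankr}(ii). Unwinding definitions, I compute
\begin{align*}
\textup{Cor}_{K/K'}(c_{K,\Psi}) &= \psi_{K'}\!\left(\textup{Cor}^r_{K/K'}(c_K^{(r)})\right) \\
&= \psi_{K'}\!\left({\textstyle\prod_{\qq|\tau,\, \qq \nmid \tau'}} P_\qq(\textup{Fr}_\qq^{-1}) \cdot c_{K'}^{(r)}\right) \\
&= {\textstyle\prod_{\qq|\tau,\, \qq \nmid \tau'}} P_\qq(\textup{Fr}_\qq^{-1}) \cdot c_{K',\Psi},
\end{align*}
where the first equality is the key intertwining between $\{\psi_K\}$ and corestriction, the second is the distribution relation for $\mathbf{c}^{(r)}$, and the third uses $\oo[\Delta_{K'}]$-linearity of $\psi_{K'}$ together with the fact that the Euler factors lie in $\oo[\Delta_{K'}]$.

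The main technical obstacle is that first equality: I must show that the compatibility of $\psi_K,\psi_{K'}$ inside the inverse system defining $\hhh$ translates precisely into $\textup{Cor}_{K/K'} \circ \psi_K = \psi_{K'} \circ \textup{Cor}^r_{K/K'}$ at the level of contractions against exterior powers of $H^1$. The transition maps in the inverse limit — in particular the isomorphism $\oo[\Delta_{K'}]^{\Gal(K'/K)} \xrightarrow{\sim} \oo[\Delta_K]$ sending $\mathbf{N}_K^{K'} \mapsto 1$ — are tailor-made so that this intertwining holds, but verifying it cleanly requires bookkeeping that leans on the surjectivity results for semi-local cohomology established in \S\ref{subsec:localstructure} (Lemma~\ref{lem:surj}).
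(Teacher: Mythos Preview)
The paper does not prove this statement at all: it simply cites Perrin-Riou \cite[\S1.2.3]{pr-es} and Rubin \cite[Proposition~6.6]{ru96}. Your direct verification is therefore more detailed than what the paper offers, and it follows precisely the route taken in those references.

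One correction on the integrality step: your appeal to the freeness of $\al_K^s$ and the factorization of $\Psi$ through $\textup{loc}_p^s$ is not what does the work. Once you have $\phi(c_{K,\Psi}) \in \oo[\Delta_K]$ for every $\oo[\Delta_K]$-linear $\phi: H^1(K,T)\to\oo[\Delta_K]$, the conclusion follows from a purely $\oo$-linear argument: any $\oo$-linear $\ell: H^1(K,T)\to\oo$ gives rise to the $\oo[\Delta_K]$-linear map $x\mapsto\sum_{g\in\Delta_K}\ell(g^{-1}x)\,g$, and the coefficient of the identity recovers $\ell(c_{K,\Psi})\in\oo$. Since the torsion-free quotient of $H^1(K,T)$ is $\oo$-free, this forces $c_{K,\Psi}$ into the image of $H^1(K,T)$. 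This is exactly Rubin's Proposition~1.2(ii), and the semi-local freeness from \S\ref{subsec:localstructure} plays no role here.

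Your handling of the distribution relation is correct: the intertwining $\textup{Cor}_{K/K'}\circ\psi_K = \psi_{K'}\circ\textup{Cor}^r_{K/K'}$ is indeed the crux, and the normalization $\mathbf{N}^{K'}_K\mapsto 1$ in the transition maps of $\hhh$ is set up precisely so that this holds. This is the content of Perrin-Riou's Lemme~1.2.3; the surjectivity results of \S\ref{subsec:localstructure} are not actually needed for this step either---they are used elsewhere, to guarantee that the inverse limit $\hhh$ surjects onto each finite level.
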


We will sometimes denote the Euler system $\{c_{K,\Psi}\}_{_{K\in\mathfrak{C}}}$ by  $\{c_{k_n(\tau),\Psi}\}_{_{n,\tau}}$.
\begin{proof}
This is proved in~\cite[\S1.2.3]{pr-es}. See also~\cite[Proposition 6.6]{ru96} for the treatment in the particular case $T=\ZZ_p(1)$.
\end{proof}
 \begin{prop}
 \label{krasner-2}
 For any $K \in \mathfrak{C}$, the projection map
 \begin{align*}\hhh \lra
  \bigwedge^{r-1}\textup{Hom}_{\oo[\Delta_K]}(\al_K^s, \oo[\Delta_K])
  \end{align*}
   is surjective.
\end{prop}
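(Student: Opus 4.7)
The plan is to identify $\hhh$ as a free $\oo[[\Gamma\times\pmb{\Delta}]]$-module of rank $r$ and the target $\bigwedge^{r-1}\textup{Hom}_{\oo[\Delta_K]}(\al_K^s,\oo[\Delta_K])$ as a free $\oo[\Delta_K]$-module of rank $r$, in such a way that the projection becomes the coordinatewise surjection induced by the augmentation $\oo[[\Gamma\times\pmb{\Delta}]] \twoheadrightarrow \oo[\Delta_K]$.

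First I would pass from $\pmb{\al_s}$ to $\al_K^s$ cleanly. Let $J_K := \ker(\oo[[\Gamma\times\pmb{\Delta}]] \twoheadrightarrow \oo[\Delta_K])$. By Proposition~\ref{prop:linearalgebra1}, $\pmb{\al_s}$ is a direct summand of $\vv_p$ that is free of rank $r$ over $\oo[[\Gamma\times\pmb{\Delta}]]$, so $\pmb{\al_s}/J_K\pmb{\al_s}$ is free of rank $r$ over $\oo[\Delta_K]$; meanwhile $\al_K^s$ is also free of rank $r$ over $\oo[\Delta_K]$ by the remark following Definition~\ref{lines}. The natural surjection $\pmb{\al_s}/J_K\pmb{\al_s} \twoheadrightarrow \al_K^s$ is therefore a surjection of finitely generated modules between free modules of the same finite rank over a commutative ring, hence an isomorphism. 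Consequently, the basis $\{\mathbb{E}_{d_++i}\}_{i=1}^{r}$ of $\pmb{\al_s}$ produced in the proof of Proposition~\ref{prop:linearalgebra1} descends to an $\oo[\Delta_K]$-basis $\{e_i^{(K)}\}_{i=1}^{r}$ of $\al_K^s$ for every $K\in\mathfrak{C}$.

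Next, I would check that the dual bases $\{e_i^{(K),*}\}_{i=1}^{r}$ of $\textup{Hom}_{\oo[\Delta_K]}(\al_K^s,\oo[\Delta_K])$ are compatible with the transition maps in the inverse system defining $\textbf{Hom}(\pmb{\al_s},\oo[[\Gamma\times\pmb{\Delta}]])$. The restriction isomorphism $\al_K^s \stackrel{\sim}{\lra} (\al_{K'}^s)^{\Gal(K'/K)}$ sends $e_i^{(K)}$ to $\mathbf{N}_{K'/K}\cdot e_i^{(K')}$ (as is forced by $\textup{Cor}\circ\textup{Res}=[K':K]\cdot\textup{id}$ together with $\textup{Cor}(e_i^{(K')})=e_i^{(K)}$), so $e_i^{(K'),*}$ pulls back to the map sending $e_j^{(K)}$ to $\mathbf{N}_{K'/K}\delta_{ij}\in \oo[\Delta_{K'}]^{\Gal(K'/K)}$, which equals $\delta_{ij}\in\oo[\Delta_K]$ under the stipulated isomorphism $\mathbf{N}_{K'/K}\mapsto 1$. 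Thus the $\{e_i^{(K),*}\}_K$ assemble to an element $\mathbb{E}_i^{s,*}\in \textbf{Hom}(\pmb{\al_s},\oo[[\Gamma\times\pmb{\Delta}]])$, and $\{\mathbb{E}_i^{s,*}\}_{i=1}^{r}$ is an $\oo[[\Gamma\times\pmb{\Delta}]]$-basis.

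Taking $(r-1)$-fold wedges gives the $r$ basis elements $\mathbb{E}_1^{s,*}\wedge\cdots\wedge\widehat{\mathbb{E}_i^{s,*}}\wedge\cdots\wedge\mathbb{E}_r^{s,*}$ of $\hhh$, which project to the analogous $\oo[\Delta_K]$-basis of $\bigwedge^{r-1}\textup{Hom}_{\oo[\Delta_K]}(\al_K^s,\oo[\Delta_K])$. The projection map is then identified with the coordinatewise reduction $\oo[[\Gamma\times\pmb{\Delta}]]^r \twoheadrightarrow \oo[\Delta_K]^r$, and surjectivity is immediate. The only non-trivial step is the dual-basis compatibility in the second paragraph: this is the bookkeeping that makes the inverse-limit definition of $\textbf{Hom}$ agree with the naive $\textup{Hom}$ out of the free module $\pmb{\al_s}$, and it is the crux of the argument.
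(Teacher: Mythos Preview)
Your proof is correct and follows essentially the same idea as the paper, which dispatches the proposition in one line: ``Obvious since all $\al_K^s$, for $K\in \mathfrak{C}$, are free $\oo[\Delta_K]$-modules.'' You have unpacked what ``obvious'' means here by exhibiting compatible bases at every level and showing that the inverse limit therefore surjects onto each finite layer; this is exactly the content the paper leaves implicit.

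One small correction in your bookkeeping: to deduce $\textup{res}(e_i^{(K)}) = \mathbf{N}_{K'/K}\cdot e_i^{(K')}$ you should invoke $\textup{Res}\circ\textup{Cor} = \mathbf{N}_{K'/K}$ (as an endomorphism of $H^1(K',-)$), not $\textup{Cor}\circ\textup{Res} = [K':K]$. With the former identity, $\textup{Res}(e_i^{(K)}) = \textup{Res}(\textup{Cor}(e_i^{(K')})) = \mathbf{N}_{K'/K}\cdot e_i^{(K')}$ falls out immediately. The identity you cited does not by itself pin down $\textup{Res}(e_i^{(K)})$, though the conclusion you drew is correct.
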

\begin{proof}
Obvious since all $\al_K^s$, for $K\in \mathfrak{C}$, are free $\oo[\Delta_K]$-modules.
\end{proof}

If one applies the \emph{Euler systems to Kolyvagin systems map} of Mazur and  Rubin ({c.f.}, \cite[Theorem 5.3.3]{mr02}) on the Euler system $\{c_{K,\Psi}\}_{_{K\in \mathfrak{C}}}$ above, all one gets a priori is a Kolyvagin system for the (coarser) Selmer structure $\FFc$, and in general \emph{not} for the (finer) Selmer structure $\FF_\al$. Below, we will choose these homomorphisms $\Psi$ carefully so that the resulting Kolyvagin system is indeed a Kolyvagin system for the modified Selmer structure $\FF_{\al}$ (resp., $\FF_{\all}$) on $T$ (resp., on $\TT$).

\begin{define}
\label{hli}
We say that an element $$\{\psi_K\}_{_{K\in \mathfrak{C}}}=\Psi \in \hhh$$ satisfies $\hli$ if for any $K\in \mathfrak{C}$ one has $\psi_K(\wedge^{r} \al_K^s) \subset \al_K$.
\end{define}

We now construct a specific element
$$\Psi_0 \in \hhh$$
 that satisfies $\hli$ (and which, in a certain sense, is the best possible choice).

Fix an $\oo[[\Gamma\times\pmb{\Delta}]]$-basis
$$\{\Psi_{\pmb{\al}}^{(1)}, \dots, \Psi_{\pmb{\al}}^{(r-1)}\}$$
of the free  $\oo[[\Gamma\times\pmb{\Delta}]]$-module $\textup{Hom}_{\oo[[\Gamma\times\pmb{\Delta}]]}(\pmb{\al_s}/\pmb{\al},\oo[[\Gamma\times\pmb{\Delta}]])$ of rank $r-1$. This in turn fixes  a basis $\{\psi_{\al_K}^{(i)}\}_{i=1}^{r-1}$ for the free $\oo[\Delta_K]$-module $\textup{Hom}_{\oo[\Delta_K]}\left(\al_{K}^s/\al_K, \oo[\Delta_K]\right)$
 for all $K \in \mathfrak{C}$; such that $\{\psi_{\al_K}^{(i)}\}_{_{K\in\mathfrak{C}}}$ are compatible with respect to the surjections
 $$\xymatrix{ \textup{Hom}_{\oo[\Delta_{K^{\prime}}]}(\al_{K^{\prime}}^s/\al_{K^{\prime}},\oo[\Delta_{K^{\prime}}])\ar[r] & \textup{Hom}_{\oo[\Delta_K]}(\al_K^s/\al_K,\oo[\Delta_K])
 }$$
  for all $K\subset K^\prime$. Note that the homomorphism
  $$\bigoplus_{i=1}^{r-1}\psi_{\al_K}^{(i)}:\al_K^s/\al_K \lra \oo[\Delta_K]^{r-1}$$ is an isomorphism of $\oo[\Delta_K]$-modules, for all $K\in \mathfrak{C}$.
Let $\psi_{K}^{(i)}$ denote the image of $\psi_{K}^{(i)}$ under the canonical injection
 $$\xymatrix{
 \textup{Hom}_{\oo[\Delta_K]}(\al_K^s/\al_K,\oo[\Delta_K]) \ar @{^{(}->}[r]& \textup{Hom}_{\oo[\Delta_K]}(\al_K^s,\oo[\Delta_K]).
 }$$
 Note then that the map $$\Psi_K:=\bigoplus^{r-1}_{i=1}\psi_K^{(i)}: \al_K^s\lra \oo[\Delta_K]^{r-1}$$ is surjective and $\textup{ker}(\Psi_K)=\al_K$.


Define
$$\varphi_K := \psi_K^{(1)}\wedge\psi_K^{(2)} \wedge\dots \wedge\psi_K^{(r-1)} \in \bigwedge^{r-1}\textup{Hom}(\al_K^s,\oo[\Delta_K]). $$
For $K\subset K^{\prime}$, note that $\varphi_{K^{\prime}}$ maps to $\varphi_{K}$ under the homomorphism
$$\xymatrix{\bigwedge^{r-1}\textup{Hom}(\al_{K^{\prime}}^s,\oo[\Delta_{K^{\prime}}]) \ar@{->>}[r]& \bigwedge^{r-1}\textup{Hom}(\al_K^s,\oo[\Delta_K]).}$$
We may therefore regard $\Psi_0:=\{\varphi_K\}_{_{K\in \mathfrak{C}}}$ as an element of $\hhh$. Composing with $\textup{loc}_p^s: H^1(K,T) \ra \al_K^s$, we may further regard $\Psi_0$ as an element of $$\varprojlim_{K \in \mathfrak{C}} \bigwedge^{r-1} \textup{Hom}(H^1(K,T),\oo[\Delta_K]).$$
\begin{prop}
\label{homs-hli-1}
Suppose $\{\varphi_K\}_{_{K}}=\Psi_0$ is as above. Then $\varphi_K$ maps $\wedge^r \al_K^s$ isomorphically onto $\ker(\Psi_K)=\al_K $,  for all $K \in \mathfrak{C}$. In particular, $\Psi_0$ satisfies $\textup{H}_\al$.
\end{prop}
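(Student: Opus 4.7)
The idea is to reduce the assertion to a direct contraction computation in a well-chosen $\oo[\Delta_K]$-basis of $\al_K^s$ that is adapted to the distinguished splitting $\al_K^s=\al_K\oplus(\al_K^s/\al_K)$.

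First I would fix, for each $K\in\mathfrak{C}$, an $\oo[\Delta_K]$-basis of $\al_K^s$ of the form $\{e_1,e_2,\ldots,e_r\}$, where $e_1$ generates the rank-one direct summand $\al_K$ (this is possible by Proposition~\ref{prop:linearalgebra1}) and $e_2,\ldots,e_r\in\al_K^s$ are any lifts of a basis of the free rank $r-1$ quotient $\al_K^s/\al_K$. Since each $\psi_K^{(i)}$ is obtained from $\psi_{\al_K}^{(i)}\in\textup{Hom}_{\oo[\Delta_K]}(\al_K^s/\al_K,\oo[\Delta_K])$ via the canonical inclusion, one has $\psi_K^{(i)}(e_1)=0$ for every $i=1,\ldots,r-1$. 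Moreover, by the construction preceding the Proposition, $\bigoplus_{i=1}^{r-1}\psi_{\al_K}^{(i)}$ is an isomorphism $\al_K^s/\al_K\stackrel{\sim}{\lra}\oo[\Delta_K]^{r-1}$, hence the $(r-1)\times(r-1)$ matrix $A:=\bigl(\psi_K^{(i)}(e_{j+1})\bigr)_{1\le i,j\le r-1}$ lies in $\textup{GL}_{r-1}(\oo[\Delta_K])$; in particular $\det(A)\in\oo[\Delta_K]^{\times}$.

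Next I would apply the standard contraction pairing $\wedge^{r-1}\textup{Hom}(\al_K^s,\oo[\Delta_K])\otimes\wedge^r\al_K^s\lra\al_K^s$ to the generator $e_1\wedge\cdots\wedge e_r$ of the free rank-one module $\wedge^r\al_K^s$:
$$\varphi_K(e_1\wedge\cdots\wedge e_r)\;=\;\sum_{i=1}^{r}(-1)^{i-1}\det\bigl(\psi_K^{(a)}(e_b)\bigr)_{1\le a\le r-1,\;b\ne i}\cdot e_i.$$
For every $i\neq 1$, the displayed minor contains the column $\bigl(\psi_K^{(a)}(e_1)\bigr)_{a}=0$ and therefore has zero determinant. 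Only the $i=1$ term survives, yielding $\varphi_K(e_1\wedge\cdots\wedge e_r)=\det(A)\cdot e_1$.

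Finally, since $e_1\wedge\cdots\wedge e_r$ is an $\oo[\Delta_K]$-basis of the free rank-one module $\wedge^r\al_K^s$ and $\det(A)\cdot e_1$ is an $\oo[\Delta_K]$-basis of $\al_K=\oo[\Delta_K]\cdot e_1$, the contraction $\varphi_K$ induces an isomorphism $\wedge^r\al_K^s\stackrel{\sim}{\lra}\al_K=\ker(\Psi_K)\subset\al_K^s$, which is exactly the asserted statement, and hypothesis $\hli$ follows at once. There is no serious obstacle here; the only delicate point is to set up the basis in the right order so that the contraction formula collapses to a single non-vanishing term, and to record that $\det(A)$ is a unit (which is forced by the very choice of $\{\psi_{\al_K}^{(i)}\}$ as a basis of $\textup{Hom}_{\oo[\Delta_K]}(\al_K^s/\al_K,\oo[\Delta_K])$).
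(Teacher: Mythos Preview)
Your proof is correct and is precisely the explicit computation behind the references the paper invokes (\cite[Lemma 3.1]{kbb} and \cite[Lemma B.1]{mr02}): choose a basis adapted to the splitting $\al_K^s=\al_K\oplus(\al_K^s/\al_K)$, observe that all but one minor in the contraction formula vanish because each $\psi_K^{(i)}$ kills $e_1$, and record that the surviving coefficient $\det(A)$ is a unit. The paper itself gives no argument beyond pointing to those two lemmas, so you have effectively written out their content.
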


\begin{proof}
The proof is identical to the proof of (the easy half of)~\cite[Lemma 3.1]{kbb}, which also follows the proof of~\cite[Lemma B.1]{mr02}  almost line by line.
\end{proof}

\subsubsection{Choice of Homomorphisms: The case $T$ satisfies $\hord$}
\label{subsubsec:ordcase} Throughout~\S\ref{subsubsec:ordcase} we assume the hypotheses $\hord$, $\hntz$, $\hne$ and $\hwd$ hold true. Let $H^1_{\textup{Gr}}(k_p,\TT)$ and $\all$ be the submodules of $H^1(k_p,\TT)$ defined in~\S\ref{subsub:localconditionatpoverkinfty}.

We start with the following Proposition whose proof is identical to the proof of Proposition~\ref{prop:linearalgebra1}:
\begin{prop}
\label{prop:linearalgebra2}
There exists a decomposition of the $\oo[[\Gamma\times\pmb{\Delta}]]$-module of rank-$d$
$$\mathbb{V}_p:=\varprojlim_{n,\tau} H^1(k_n(\tau)_p,T)=\pmb{\al^{+}} \oplus\pmb{\al_s}$$
with a distinguished rank one direct summand $\pmb{\al}\subset \pmb{\al_s}$ with the following properties:

\begin{enumerate}
\item[(1)] $\pmb{\al^{+}}$ (resp., $\pmb{\al_s}$) is a free $\oo[[\Gamma\times\pmb{\Delta}]]$-modules of rank $d_+$ (resp., of rank $r$),
\end{enumerate}
Under the maps induced from the corestriction
$$\varprojlim_{n,\tau}H^1(k_n(\tau)_p,T)\lra  \varprojlim_n H^1((k_n)_p,T)=H^1(k_p,\TT),$$
\begin{enumerate}
\item[(2)] $\pmb{\al^{+}}$ projects onto $H^1_{\textup{Gr}}(k_p,\TT)$, and $\pmb{\al_s}$ onto $H^1_s(k_p,\TT):=H^1(k_p,\TT)/H^1_{\textup{Gr}}(k_p,\TT),$
\item[(3)] $\pmb{\al}$ projects onto $\all$.
\end{enumerate}
\end{prop}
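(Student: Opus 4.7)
The plan is to mirror the proof of Proposition~\ref{prop:linearalgebra1} step-for-step, replacing the $k$-level data $(H^1(k_p,T), H^1_f(k_p,T), \al)$ by the Iwasawa-theoretic data $(H^1(k_p,\TT), H^1_{\textup{Gr}}(k_p,\TT), \all)$. All the structural prerequisites are now on the table: by Lemma~\ref{lemma:free for k_infty} the semi-local module $H^1(k_p,\TT)$ is a free $\LL$-module of rank $d$; by the corollary to Proposition~\ref{prop:greenbergrank}, $H^1_{\textup{Gr}}(k_p,\TT)$ is a free $\LL$-module of rank $d_+$; and $\all$ is by construction a free $\LL$-module of rank one, chosen to be a direct summand of $H^1(k_p,\TT)$ meeting $H^1_{\textup{Gr}}(k_p,\TT)$ trivially.

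First I would pick an $\LL$-basis $\{E_1,\ldots,E_d\}$ of $H^1(k_p,\TT)$ adapted to the flag
$$H^1_{\textup{Gr}}(k_p,\TT)\;\subset\;H^1_{\textup{Gr}}(k_p,\TT)\oplus\all\;\subset\;H^1(k_p,\TT),$$
so that $\{E_1,\ldots,E_{d_+}\}$ is a basis of $H^1_{\textup{Gr}}(k_p,\TT)$, $E_{d_++1}$ generates $\all$, and the images of $\{E_{d_++1},\ldots,E_d\}$ in $H^1_s(k_p,\TT):=H^1(k_p,\TT)/H^1_{\textup{Gr}}(k_p,\TT)$ form a basis of this quotient. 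Existence of such an adapted basis uses that the submodule $H^1_{\textup{Gr}}(k_p,\TT)\oplus\all$ is a direct summand of $H^1(k_p,\TT)$, so that the quotient modulo it is a free $\LL$-module of rank $r-1$ into which a basis can be lifted.

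Next I would lift to $\vv_p$ using Nakayama's lemma. Since $\Gamma$ is pro-$p$ and $\pmb{\Delta}=\Gal(\kk_0/k)$ is a Galois group of a compositum of $p$-extensions (hence pro-$p$), the ring $\oo[[\Gamma\times\pmb{\Delta}]]$ is a complete Noetherian local ring. By Corollary~\ref{cor:thick free}, $\vv_p$ is a free $\oo[[\Gamma\times\pmb{\Delta}]]$-module of rank $d$, and the natural projection $\vv_p\twoheadrightarrow H^1(k_p,\TT)$ (reduction modulo the augmentation ideal of $\oo[\pmb{\Delta}]$, applied over $\LL$) is surjective. Choosing any preimages $\mathbb{E}_i\in\vv_p$ of $E_i$, the reductions of $\{\mathbb{E}_1,\ldots,\mathbb{E}_d\}$ form an $\LL$-basis of $H^1(k_p,\TT)$, so Nakayama forces $\{\mathbb{E}_1,\ldots,\mathbb{E}_d\}$ to be an $\oo[[\Gamma\times\pmb{\Delta}]]$-basis of $\vv_p$. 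I then define
$$\pmb{\al^{+}}:=\bigoplus_{i=1}^{d_+}\oo[[\Gamma\times\pmb{\Delta}]]\cdot\mathbb{E}_i,\quad\pmb{\al}:=\oo[[\Gamma\times\pmb{\Delta}]]\cdot\mathbb{E}_{d_++1},\quad\pmb{\al_s}:=\bigoplus_{i=d_++1}^{d}\oo[[\Gamma\times\pmb{\Delta}]]\cdot\mathbb{E}_i,$$
so that the decomposition $\vv_p=\pmb{\al^{+}}\oplus\pmb{\al_s}$ together with $\pmb{\al}\subset\pmb{\al_s}$ and the asserted ranks are immediate, and the surjections in (2) and (3) onto $H^1_{\textup{Gr}}(k_p,\TT)$, $H^1_s(k_p,\TT)$, and $\all$ follow at once from the choice of basis in the first step combined with surjectivity of $\vv_p\to H^1(k_p,\TT)$.

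The main obstacle I expect is the very first step, namely the existence of an adapted $\LL$-basis of $H^1(k_p,\TT)$; this boils down to knowing that $H^1_s(k_p,\TT)$ is itself a free $\LL$-module (so that the defining short exact sequence for it splits), and to knowing that $\all$ remains a direct summand after collapsing $H^1_{\textup{Gr}}(k_p,\TT)$. The former can be extracted from Proposition~\ref{prop:greenbergrank} combined with the freeness of $H^1(k_p,\TT)$ and a descent argument; the latter follows because $\all$ was \emph{chosen} in Definition~\ref{def:line over k_infty} as a direct summand complementary to $H^1_{\textup{Gr}}(k_p,\TT)$. Once these structural facts are secured, the remainder of the argument is a direct transcription of the potentially semi-stable case of Proposition~\ref{prop:linearalgebra1}.
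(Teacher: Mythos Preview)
Your proposal is correct and follows exactly the route the paper intends: the paper simply states that the proof is identical to that of Proposition~\ref{prop:linearalgebra1}, and what you have written is precisely that argument transported from the $\oo$-level data $(H^1(k_p,T),H^1_f(k_p,T),\al)$ to the $\LL$-level data $(H^1(k_p,\TT),H^1_{\textup{Gr}}(k_p,\TT),\all)$, with the Nakayama lift now taken along the augmentation $\oo[[\Gamma\times\pmb{\Delta}]]\twoheadrightarrow\LL$ rather than $\oo[[\Gamma\times\pmb{\Delta}]]\twoheadrightarrow\oo$. Your discussion of the ``main obstacle'' (freeness of $H^1_s(k_p,\TT)$ and the direct-summand property of $\all$) is exactly the content supplied by Proposition~\ref{prop:greenbergrank}, its corollary, and Definition~\ref{def:line over k_infty}, so nothing further is needed.
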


Having defined $\pmb{\al^{+}}$, $\pmb{\al_s}$ and $\pmb{\al}$, we may proceed as in \S\ref{subsubsec:pss} and define $\al_K^{+}$, $\al_K^s$ and $\al_K$ as above; and use these to define a particular element
$$\Psi_0 \in \hhh$$
 in an identical fashion. We also note that
$$H^1(k_p,\TT)\supset \all =\{\al_{k_n}\} \subset \varprojlim_n H^1((k_n)_p,T).$$
\begin{define}
\label{hLi}
We say that an element $$\{\psi_K\}_{_{K\in \mathfrak{C}}}=\Psi \in \hhh$$ satisfies $\hLi$ if for any $K\in \mathfrak{C}$ one has $\psi_K(\wedge^{r} \al_K^s) \subset \al_K$.
\end{define}
Although the definition of the property $\hLi$ is identical to the definition of $\hli$ (Definition~\ref{hli}), we wish to distinguish between these two in order to remind us that we used Greenberg's local conditions as a start for one, and Bloch-Kato local conditions for the other. Finally, we note that the following (almost identical) version of Proposition~\ref{homs-hli-1} holds:

\begin{prop}
\label{prop:homshLi}
Let $\Psi_0=\{\varphi_K\}_{_{K}}$ be as above. Then $\varphi_K$ maps $\wedge^r \al_K^s$ isomorphically onto $\ker(\Psi_K)=\al_K$,  for all $K \in \mathfrak{C}$. In particular, $\Psi_0$ satisfies $\textup{H}_{\all}$.
\end{prop}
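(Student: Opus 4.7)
\medskip
\noindent\textbf{Proof proposal.} The plan is to imitate line-by-line the argument used to prove Proposition~\ref{homs-hli-1}; the point is that the constructions leading up to the present statement depend only on the fact that $\pmb{\al}\subset\pmb{\al_s}$ is a rank-one direct summand of a free $\oo[[\Gamma\times\pmb{\Delta}]]$-module of rank $r$ which is compatible with the Galois-equivariant decomposition at every finite layer. Proposition~\ref{prop:linearalgebra2} supplies exactly this structural information in the Greenberg setting, paralleling what Proposition~\ref{prop:linearalgebra1} did in the potentially semistable setting, so the computation that was used in Proposition~\ref{homs-hli-1} transfers verbatim to the present context.

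Concretely, I would fix $K\in\mathfrak{C}$ and choose an $\oo[\Delta_K]$-basis $\{e_0,e_1,\dots,e_{r-1}\}$ of $\al_K^s$ adapted to the split exact sequence $0\to\al_K\to\al_K^s\to\al_K^s/\al_K\to 0$, so that $e_0$ generates $\al_K$ and the images $\bar e_1,\dots,\bar e_{r-1}$ form the $\oo[\Delta_K]$-basis of $\al_K^s/\al_K$ dual to $\{\psi_{\al_K}^{(1)},\dots,\psi_{\al_K}^{(r-1)}\}$. Under the canonical injection $\Hom_{\oo[\Delta_K]}(\al_K^s/\al_K,\oo[\Delta_K])\hookrightarrow \Hom_{\oo[\Delta_K]}(\al_K^s,\oo[\Delta_K])$ one then has $\psi_K^{(i)}(e_0)=0$ and $\psi_K^{(i)}(e_j)=\delta_{ij}$ for $1\leq i,j\leq r-1$. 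Since $\al_K^s$ is $\oo[\Delta_K]$-free of rank $r$, the exterior power $\wedge^r\al_K^s$ is $\oo[\Delta_K]$-free of rank one, generated by $e_0\wedge e_1\wedge\cdots\wedge e_{r-1}$. Expanding the contraction
\[
\varphi_K(e_0\wedge e_1\wedge\cdots\wedge e_{r-1})=\sum_{i=0}^{r-1}(-1)^{i}\det\bigl(\psi_K^{(j)}(e_k)\bigr)_{\substack{1\leq j\leq r-1\\ 0\leq k\leq r-1,\,k\neq i}}\cdot e_i
\]
and observing that the column indexed by $k=0$ of each minor vanishes unless $i=0$, one finds that only the $i=0$ term survives and equals $e_0$. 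Hence $\varphi_K$ sends the generator of $\wedge^r\al_K^s$ to the generator $e_0$ of $\al_K$, so it restricts to an isomorphism $\wedge^r\al_K^s\stackrel{\sim}{\to}\ker(\Psi_K)=\al_K$.

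The compatibility of these isomorphisms as $K$ varies follows from the fact that the bases $\{\psi_{\al_K}^{(i)}\}$ were chosen compatibly with respect to the norm/inclusion maps coming from Proposition~\ref{prop:linearalgebra2}, so that $\Psi_0=\{\varphi_K\}_{K\in\mathfrak{C}}$ is a well-defined element of $\hhh$ satisfying $\hLi$ by construction. The only step that requires any real content is verifying the isomorphism claim at a single level $K$, and this is nothing but the elementary exterior-algebra calculation above (the same one that appears in~\cite[Lemma B.1]{mr02} and in the easy direction of~\cite[Lemma 3.1]{kbb}); there is no genuine obstacle, since the freeness results of~\S\ref{subsec:localstructure} have already taken care of all the structural input needed to make the contraction formula well-defined and exact.
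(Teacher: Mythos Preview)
Your proposal is correct and is essentially the paper's approach: the paper gives no separate proof for this proposition, simply remarking that it is an ``almost identical'' version of Proposition~\ref{homs-hli-1}, whose proof in turn is deferred to \cite[Lemma 3.1]{kbb} and \cite[Lemma B.1]{mr02}---and the explicit contraction computation you carry out is precisely the content of those references. In other words, you have written out in full the elementary exterior-algebra argument that the paper merely cites.
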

\begin{rem}
\label{rem:diagramupdate}
The diagram in \S\ref{subsec:ESKSmap} now looks as follows:
$$\xymatrix@C=.7cm{\ES^{(r)}(T) \ar@{.>}[rd]_(.45){\Psi_0}\ar[r]^{\textup{[PR98]}}&\ES(T)\ar[r]^(.4){\textup{[MR02]}}&\overline{\KS}(\TT,\FF_{\LL},\PP) \ar[r]&\overline{\KS}(T,\FFc,\PP)\\
&\Psi_0(\ES^{(r)}(T)) \ar@{^{(}->}[u]\ar@{.>}[r]^(.48){\mathcal{D}_{\LL}}\ar@{.>}@/_1.8pc/[rr]_(.48){\mathcal{D}}&\overline{\KS}(\TT,\FF_{\all},\PP)\ar[r]\ar@{^{(}->}[u]&\overline{\KS}(T,\FF_{\al},\PP)\ar@{^{(}->}[u]
}$$
 where $\Psi_0(\ES^{(r)}(T))$ stands for the collection of Euler systems (of rank one) obtained from Euler systems of rank $r$, following the procedure of Perrin-Riou and Rubin (Proposition~\ref{def: es}) with the choice $\Psi_0 \in \hhh$. In the following section, we verify that the restriction of the Euler systems to Kolyvagin systems map of~\cite{mr02} on $\Psi_0(\ES^{(r)}(T))$ really restricts to the maps $\mathcal{D}_{\LL}$ and $\mathcal{D}$.
\end{rem}
\begin{rem}\label{interchange}
Since the maps $H^1_s(k_p.\TT) \lra \al_{k_n}^s$, for $n \in \ZZ^+$, are all surjective (by our choices made in Proposition~\ref{prop:linearalgebra2})  and $H^1_s(k_p,\TT)$ ({resp.}, $\al_{k_n}^s$) is a free $\LL$-module ({resp.}, $\oo[\Gamma_n]$-module) of rank $r$, it follows that there is a canonical isomorphism
$$\varprojlim_n\wedge^r_{\oo[\Gamma_n]} \al_{k_n}^s \cong \wedge^r_{\LL} \varprojlim_n \al_{k_n}^s= \wedge^r_{\LL} H^1_s(k_p,\TT).$$
This and  Proposition~\ref{prop:homshLi} show that $\varphi_\infty=\{\varphi_{k_n}\}_{_n}$ maps $\bigwedge^r H^1_s(k_p,\TT)$ isomorphically onto $\all=\varprojlim_n \al_n$.
\end{rem}
 \subsection{Kolyvagin systems for modified Selmer structures (bis)}
  \label{subsec:kolsys2}
  We are now ready to construct  Kolyvagin systems\footnote{which we proved to exist in~\S\ref{subsec:KS1}.} for the $\al$-modified Selmer structure $\FF_{\al}$ on $T$ ({resp.}, $\all$-modified Selmer structure $\FF_{\all}$ on $\TT$) starting from an Euler system of rank $r$, for each choice of a compatible homomorphisms $\Psi \in \hhh$ that satisfies $\hli$ ({resp.}, $\hLi$). 
  \subsubsection{Kolyvagin systems over $k$ \textup{(bis)}}
  \label{subsubsec:KSoverkbis}
Theorem~\ref{thm:ESKSmain} gives a map
$$\textup{\textbf{ES}}(T)\lra \overline{\textup{\textbf{KS}}}({T},\FFc,\PP)$$
 where
 $$\overline{\textup{\textbf{KS}}}({T},\FFc,\PP):=\varprojlim_{\alpha}(\varinjlim_{j} \textup{\textbf{KS}}({T}/\mm^\alpha{T},\,\FFc,\PP \cap \PP_{j}))$$
  is the (generalized) module of Kolyvagin systems for the Selmer triple $(T,\FFc,\PP)$ and $\FFc$ is the canonical Selmer structure on ${T}$ as in Example~\ref{example:canonical selmer} (and its \emph{propagation} to the quotients of ${T}$). One of the main properties  of this map is that if an Euler system  $\left\{c_{k_n(\tau)}\right\}_{n,\tau}=\textbf{c} \in \textup{\textbf{ES}}(T)$ maps to the Kolyvagin system ${\boldsymbol\kappa}=\left\{\left\{\kappa_{\tau}(\alpha)\right\}_{\tau \in \NN_{j}}\right\}_{\alpha}$ under this map, then
\be\label{eqn:xyeqn}\xymatrix@R=.2cm @C=.30cm{
{\kappa}_1\ar@{=}[d]  & \ar@{=}[l]_(.75){\textup{def}} \varprojlim_{\alpha} \kappa_{1}(\alpha) \,\,\,\in& \varprojlim_{\alpha}H^1(k,{T}/\mm^ \alpha{T}) =H^1(k,{T}) \\
c_k &\in \,\,H^1(k,T),&
}\ee
c.f., \cite[Theorem 3.2.4]{mr02}. Let ${\boldsymbol\kappa}^{\Psi_0}=\left\{\left\{\kappa^{\Psi_0}_{\tau}(\alpha)\right\}_{_{\tau \in \NN_j}}\right\}_{\alpha}$ be the image of the Euler system $\textbf{c}^{(r)}_{\Psi_0}=\left\{c_{k_n(\tau),\Psi_0}^{(r)}\right\}_{_{n,\tau}}$, which itself is obtained from an Euler system $\textbf{c}^{(r)}=\{c_K^{(r)}\}_{K\in\mathfrak{C}}$ of rank $r$ via Proposition~\ref{def: es} applied with $\Psi_0=\{\varphi_K\}_{K\in\mathfrak{C}}$ above. Thus the equation (\ref{eqn:xyeqn}) reads
\be\label{eqn:kappa1es1}{\kappa}_1^{\Psi_0}=c_{k,\Psi_0}^{(r)}=\varphi_k(c_k^{(r)}).\ee

\begin{thm}
\label{mainkolsys}
${\boldsymbol\kappa}^{\Psi_0}:=\left\{\left\{\kappa^{\Psi_0}_{\tau}(\alpha)\right\}_{_{\tau \in \NN}}\right\}_{\alpha} \in  \overline{\textup{\textbf{KS}}}({T},\FF_{\al},\PP).$
\end{thm}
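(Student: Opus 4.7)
Since Theorem~\ref{thm:ESKSmain} already yields $\boldsymbol{\kappa}^{\Psi_0}\in\overline{\textup{\textbf{KS}}}(T,\FFc,\PP)$, and the modified Selmer structure $\FF_\al$ coincides with $\FFc$ at every prime $\lambda\nmid p$, I only need to verify that for every $\tau\in\NN(\PP)$ and every $\alpha$, the localization $\textup{loc}_p(\kappa_\tau^{\Psi_0}(\alpha))$ lies in the propagation to $T/\mm^\alpha T$ of $H^1_{\FF_\al}(k_p,T)=H^1_f(k_p,T)\oplus\al$. Using the identification $H^1_s(k_p,T)=\al_k^s$ coming from Proposition~\ref{prop:linearalgebra1}, this reduces to showing that the image of $\kappa_\tau^{\Psi_0}(\alpha)$ under $\textup{loc}_p^s$ lies in the reduction mod $\mm^\alpha$ of $\ker(\Psi_k)=\al$, where $\Psi_k=\bigoplus_{i=1}^{r-1}\psi_k^{(i)}\colon\al_k^s\to\oo^{r-1}$ was the surjection built in \S\ref{subsubsec:pss}.

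The key computation uses the Kolyvagin--Mazur--Rubin derivative formula underlying the map of Theorem~\ref{thm:ESKSmain}:
$$\textup{res}_{k(\tau)/k}(\kappa_\tau^{\Psi_0}(\alpha))\equiv D_\tau\cdot c_{k(\tau),\Psi_0}\pmod{(\mm^\alpha,I_\tau)},$$
for an appropriate Kolyvagin derivative operator $D_\tau\in\oo[\Delta^\tau]$. Applying $\textup{loc}_p^s$ and using $c_{k(\tau),\Psi_0}=\varphi_{k(\tau)}(c_{k(\tau)}^{(r)})$, the right-hand side becomes $D_\tau\cdot\varphi_{k(\tau)}(\textup{loc}_p^s(c_{k(\tau)}^{(r)}))$. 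By Proposition~\ref{homs-hli-1}, $\varphi_{k(\tau)}$ sends $\wedge^r\al_{k(\tau)}^s$ into $\al_{k(\tau)}=\ker(\Psi_{k(\tau)})$; since $\al_{k(\tau)}$ is $\Delta^\tau$-stable, the action of $D_\tau\in\oo[\Delta^\tau]$ preserves this containment, and therefore
$$\Psi_{k(\tau)}\bigl(\textup{loc}_p^s(\textup{res}_{k(\tau)/k}\kappa_\tau^{\Psi_0}(\alpha))\bigr)\equiv 0\pmod{(\mm^\alpha,I_\tau)}.$$

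To transport this vanishing from $k(\tau)$ down to $k$, I invoke the compatibility of the family $\{\Psi_K\}_{K\in\mathfrak{C}}$ built into the inverse limit defining $\hhh$: for any $x\in\al_k^s$, viewed as the $\Delta^\tau$-invariants of $\al_{k(\tau)}^s$ under the restriction isomorphism $\al_k^s\cong(\al_{k(\tau)}^s)^{\Delta^\tau}$, the compatibility reads
$$\Psi_{k(\tau)}(\textup{res}_{k(\tau)/k}(x))=N_{\Delta^\tau}\cdot\Psi_k(x)\in\oo[\Delta^\tau]^{r-1},$$
where $N_{\Delta^\tau}=\sum_{\delta\in\Delta^\tau}\delta$. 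Inserting $x=\textup{loc}_p^s(\kappa_\tau^{\Psi_0}(\alpha))$ and combining with the previous vanishing, I get $N_{\Delta^\tau}\cdot\Psi_k(x)\equiv 0\pmod{(\mm^\alpha,I_\tau)}$; since $\{\delta\}_{\delta\in\Delta^\tau}$ is an $\oo$-basis of $\oo[\Delta^\tau]$, multiplication by $N_{\Delta^\tau}$ is injective on $\oo/(\mm^\alpha,I_\tau)$, and I conclude $\Psi_k(\textup{loc}_p^s(\kappa_\tau^{\Psi_0}(\alpha)))\equiv 0$. This places $\textup{loc}_p^s(\kappa_\tau^{\Psi_0}(\alpha))$ in $\al/\mm^\alpha\al$, completing the verification of the $\FF_\al$-local condition at $p$.

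The principal obstacle I expect is the clean interaction between the Kolyvagin derivative $D_\tau$, the localization at $p$, the singular projection, and the contraction by $\varphi_{k(\tau)}$, all modulo the composite ideal $(\mm^\alpha,I_\tau)$; once this bookkeeping is set up, the two crucial algebraic inputs are the property $\hli$ of $\Psi_0$ (Proposition~\ref{homs-hli-1}), which forces the singular part into $\ker(\Psi_{k(\tau)})=\al_{k(\tau)}$, and the injectivity of multiplication by $N_{\Delta^\tau}$ on $\oo/(\mm^\alpha,I_\tau)$, which permits the descent from $k(\tau)$ to $k$.
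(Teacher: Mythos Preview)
Your strategy is the right one and tracks the paper's closely, but there is one genuine gap. The congruence you invoke,
\[
\textup{res}_{k(\tau)/k}\bigl(\kappa_\tau^{\Psi_0}(\alpha)\bigr)\equiv D_\tau\cdot c_{k(\tau),\Psi_0}\pmod{\mm^\alpha},
\]
is the defining property of Rubin's derivative classes $\tilde{\kappa}^{\Psi_0}_\tau$ (as in \cite[Definition~4.4.10]{r00}), \emph{not} of the Mazur--Rubin Kolyvagin system classes $\kappa_\tau^{\Psi_0}$ produced by Theorem~\ref{thm:ESKSmain}. The two collections are related by Equation~(33) of \cite[Appendix~A]{mr02}, which expresses $\kappa_\tau^{\Psi_0}$ as an $\oo$-linear combination of the $\tilde{\kappa}^{\Psi_0}_{\tau'}$ for $\tau'\mid\tau$. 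The paper handles this explicitly: it first proves $\textup{loc}_p^s(\tilde{\kappa}^{\Psi_0}_\tau)\in\al/\mm^\alpha\al$ (this is Lemma~\ref{lined}, whose content is exactly your $\hli$-plus-descent argument), and then invokes Equation~(33) to transfer the conclusion to $\kappa_\tau^{\Psi_0}$ (this is Claim~\ref{reduction-1}). You need to insert this reduction; without it, your displayed congruence is simply not what the Euler-system-to-Kolyvagin-system map actually gives.

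Once that reduction is in place, your descent from $k(\tau)$ to $k$ via the compatibility $\Psi_{k(\tau)}\circ\textup{res}=N_{\Delta^\tau}\cdot\Psi_k$ and the injectivity of multiplication by $N_{\Delta^\tau}$ is correct and equivalent to the paper's argument. The paper phrases the same step slightly differently: it uses the $\oo[\Delta^\tau]$-freeness of $\al_{k(\tau)}$ to identify $(\al_{k(\tau)}/\mm^\alpha)^{\Delta^\tau}\cong\al/\mm^\alpha\al$ under restriction, together with Rubin's lemma that $D_\tau c_{k(\tau),\Psi_0}\pmod{\mm^\alpha}$ is $\Delta^\tau$-fixed. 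Your version packages the same freeness via the kernel description $\al_{k(\tau)}=\ker\Psi_{k(\tau)}$; both arguments are unwinding the same module-theoretic fact. (A small side remark: since you work with $\tau\in\NN_\alpha$, one has $I_\tau\subset\mm^\alpha$, so writing ``mod $(\mm^\alpha,I_\tau)$'' is harmless but redundant.)
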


Here $$\overline{\textup{\textbf{KS}}}({T},\FF_{\al},\PP)=\varprojlim_{\alpha}(\varprojlim_{j} \textup{\textbf{KS}}({T}/\mm^ \alpha{T},\FF_{\al},\PP \cap \PP_{j}))$$is the (generalized) module of Kolyvagin systems (see~\cite[Definition 3.1.6]{mr02} for a definition) for the $\al$-modified Selmer structure $\FF_{\al}$ on ${T}$.

\begin{rem}
We could have used \emph{any} element $\Psi \in \hhh$ in~Theorem~\ref{mainkolsys} that satisfies $\hli$ (rather then the particular element $\Psi_0$) and still obtain Kolyvagin systems for the $\al$-modified Selmer structure.
 \end{rem}
For the rest of this section the integer $\alpha $ will be fixed, and we denote the element $\kappa^{\Psi_0}_{\tau}(\alpha) \in H^1(k,{T}/\mm^ \alpha{T})$ by $\kappa^{\Psi_0}_{\tau}$. Note that the statement of Theorem~\ref{mainkolsys} claims for each $\tau \in \NN_{\alpha}$ that
$$\kappa^{\Psi_0}_{\tau} \in H^1_{\FF_{\al}(\tau)}(k,{T}/\mm^ \alpha{T}),$$
 where $\FF_{\al}(\tau)$ is defined as in~\cite[Example 2.1.8]{mr02}. However,~\cite[Theorem 5.3.3]{mr02} already concludes that
 $$\kappa^{\Psi_0}_{\tau} \in  H^1_{\FFc(\tau)}(k,{T}/\mm^ \alpha{T}).$$
 Since $\FF_{\al}$ and $\FFc$ determine the same local conditions outside $p$, it suffices to prove the following in order to prove Theorem~\ref{mainkolsys}:

\begin{prop}
\label{loc-p}
Let
$$\textup{loc}_p^s: H^1(k,T/\mm^ \alpha T) \lra H^1_s(k_p,T/\mm^ \alpha T):=\frac{H^1(k_p,T/\mm^ \alpha T)}{H^1_f(k_p,T/\mm^ \alpha T)}$$
 be the localization map into the semi-local cohomology at $p$, followed by the projection onto the singular quotient. Then
 $$\textup{loc}_p^s (\kappa^{\Psi_0}_{\tau}) \in \al/\mm^ \alpha\al \subset H^1_s(k_p,T/\mm^ \alpha T).$$
\end{prop}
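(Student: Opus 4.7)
The strategy is to leverage the specific construction of $\Psi_0=\{\varphi_K\}$ together with the compatibility of the system $\{\al_K^\bullet\}_{K\in\mathfrak{C}}$ from Proposition~\ref{prop:linearalgebra1} and Definition~\ref{lines}. Since $\al_k^+=H^1_f(k_p,T)$ by construction, the natural projection $H^1(k_p,T)\twoheadrightarrow\al_k^s$ induced by $H^1(k_p,T)=\al_k^+\oplus\al_k^s$ agrees with the singular quotient map, so the assertion $\textup{loc}_p^s(\kappa_\tau^{\Psi_0})\in\al/\mm^\alpha\al$ is equivalent to showing $\textup{loc}_p(\kappa_\tau^{\Psi_0})\in\bigl(\al_k^+\oplus\al\bigr)/\mm^\alpha\subset H^1(k_p,T/\mm^\alpha T)$. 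Moreover, by Theorem~\ref{thm:ESKSmain} we already know $\kappa_\tau^{\Psi_0}\in H^1_{\FFc(\tau)}(k,T/\mm^\alpha T)$, so the Selmer conditions at primes $\lambda\nmid p$ are automatic.

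The first step is a semi-local computation over $K=k(\tau)$. By the contraction formula, if we write $\varphi_K=\psi_K^{(1)}\wedge\cdots\wedge\psi_K^{(r-1)}$ with each $\psi_K^{(j)}\colon H^1(K,T)\to\oo[\Delta_K]$ factoring through the projection $\textup{loc}_p^s\colon H^1(K,T)\to\al_K^s$, then for $x\in\wedge^r H^1(K,T)$ one has
\[
\textup{loc}_p\bigl(\varphi_K(x)\bigr)=\sum_{i=1}^r(-1)^{i-1}\det\!\bigl[\psi_K^{(j)}(x_k)\bigr]_{j,\,k\neq i}\cdot\textup{loc}_p(x_i).
\]
Projecting to the $\al_K^s$-component via the chosen splitting, the right-hand side becomes precisely the internal contraction $\varphi_K\bigl(\textup{loc}_p^{s,\wedge r}(x)\bigr)\in\al_K^s$, which lies in $\al_K$ by Proposition~\ref{homs-hli-1}. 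Applied to $x=c_K^{(r)}$, this gives $\textup{loc}_p(c_{K,\Psi_0})\in\al_K^+\oplus\al_K$ inside $H^1(K_p,T)$.

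The second step is Kolyvagin descent. By the construction underlying Theorem~\ref{thm:ESKSmain} (and the inflation--restriction argument made possible by $\mathbf{H.3}$), $\kappa_\tau^{\Psi_0}$ is characterized by
\[
\textup{res}_{k(\tau)/k}\bigl(\kappa_\tau^{\Psi_0}\bigr)=D_\tau c_{k(\tau),\Psi_0}\pmod{\mm^\alpha}\quad\text{in }H^1(k(\tau),T/\mm^\alpha T).
\]
Since localization at $p$ commutes with restriction and with $D_\tau$ (the latter because primes dividing $\tau$ are coprime to $p$, so $D_\tau$ acts on $H^1(k(\tau)_p,T/\mm^\alpha T)$ purely through the $\oo[\Delta_\tau]$-action), and since the submodules $\al_{k(\tau)}^+$ and $\al_{k(\tau)}$ are $\oo[\Delta_\tau]$-stable, Step~1 yields
\[
\textup{res}\bigl(\textup{loc}_p(\kappa_\tau^{\Psi_0})\bigr)=D_\tau\textup{loc}_p(c_{k(\tau),\Psi_0})\in\bigl(\al_{k(\tau)}^+\oplus\al_{k(\tau)}\bigr)/\mm^\alpha.
\]

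The final step is to transfer this containment back to $k_p$. Decompose $\textup{loc}_p(\kappa_\tau^{\Psi_0})=\xi^++\xi^s$ along $\al_k^+/\mm^\alpha\oplus\al_k^s/\mm^\alpha$. Under restriction, Definition~\ref{lines} and Proposition~\ref{prop:semilocalstructure} guarantee that $\al_k^+\to\al_{k(\tau)}^+$ and $\al_k^s\to\al_{k(\tau)}^s$ are injective with image the $\Delta_\tau$-invariants. Hence $\textup{res}(\xi^s)\in\al_{k(\tau)}^s/\mm^\alpha$ must land in $\al_{k(\tau)}/\mm^\alpha$, which forces $\textup{res}(\xi^s)\in(\al_{k(\tau)})^{\Delta_\tau}/\mm^\alpha$. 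Using the restriction isomorphism $\al\xrightarrow{\sim}\al_{k(\tau)}^{\Delta_\tau}$ together with injectivity of $\al_k^s/\mm^\alpha\hookrightarrow\al_{k(\tau)}^s/\mm^\alpha$, we conclude $\xi^s\in\al/\mm^\alpha\al$, completing the proof. The main technical obstacle is the bookkeeping in this descent: one must verify the compatibility between the Nakayama-lifted splittings $\al_K^+\oplus\al_K^s$ at different levels and the $\Delta_\tau$-invariance of Kolyvagin-derivative reductions, which ultimately rests on the freeness statements of Proposition~\ref{prop:semilocalstructure} and the hypotheses $\hne$, $\hwd$, $\htam$.
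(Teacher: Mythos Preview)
Your argument follows the same overall strategy as the paper's: show that the singular projection of the class obtained from $D_\tau c_{k(\tau),\Psi_0}$ lands in $(\al_{k(\tau)}/\mm^\alpha)^{\Delta^\tau}$ using $\hli$ and Galois-equivariance, and then pull back along the restriction isomorphism. Your Steps~1 and~3 are fine, and in fact your Step~1 computation is slightly more detailed than the paper's (which simply invokes $\hli$).

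There is, however, a genuine gap in Step~2. The characterization
\[
\textup{res}_{k(\tau)/k}\bigl(\kappa_\tau^{\Psi_0}\bigr)=D_\tau c_{k(\tau),\Psi_0}\pmod{\mm^\alpha}
\]
describes the classes $\tilde{\kappa}_\tau^{\Psi_0}$ defined by Rubin in~\cite[Definition 4.4.10]{r00}, \emph{not} the Kolyvagin system classes $\kappa_\tau^{\Psi_0}$ produced by the Mazur--Rubin map of Theorem~\ref{thm:ESKSmain}. These are different objects: the Mazur--Rubin classes $\kappa_\tau'$ (in the notation of~\cite[Appendix~A]{mr02}) are obtained from the $\tilde{\kappa}_\eta$'s for $\eta\mid\tau$ via a weighted sum, namely Equation~(33) of loc.\ cit. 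Your argument as written therefore proves the desired containment for $\tilde{\kappa}_\tau^{\Psi_0}$, but does not yet say anything about $\kappa_\tau^{\Psi_0}$.

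The fix is exactly what the paper does: first prove $\textup{loc}_p^s(\tilde{\kappa}_\tau^{\Psi_0})\in\al/\mm^\alpha\al$ (this is your Steps~1--3, correctly attributed), and then observe that since Equation~(33) of~\cite[Appendix~A]{mr02} expresses $\kappa_\tau^{\Psi_0}$ as an $\oo$-linear combination of the classes $\tilde{\kappa}_\eta^{\Psi_0}$ for $\eta\mid\tau$, and since each of these has singular projection in $\al/\mm^\alpha\al$, the same holds for $\kappa_\tau^{\Psi_0}$.
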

\begin{proof}
We first note that  $H^1_f(k_p,T/\mm^\alpha T)$ is by definition the propagation (in the sense of~\cite[Example 1.1.2]{mr02}) of $H^1_f(k_p,T)$. Similarly,
$$H^1_f(k_p,T/\mm^\alpha T)\oplus \al/\mm^\alpha\al=H^1_{\FF_{\al}(\tau)}(k_p,T/\mm^\alpha T)$$
 is the propagation of the $\al$-modifed condition $H^1_{\FF_{\al}(\tau)}(k_p,T):=H^1_{f}(k_p,T)\oplus\al$ at $p$. Let
$$\left\{\tilde{\kappa}^{\Psi_0}_{\tau}(\alpha) \in H^1(k_p,T/\mm^\alpha T) \right\}_ {\tau \in \NN_{\alpha}}$$
 be the collection that \cite[Definition 4.4.10]{r00} associates to the Euler system $\left\{c_{k_n(\tau),\Psi_0}^{(r)}\right\}_{_{n,\tau}}$. Here we write $\tilde{\kappa}^{\Psi_0}_{\tau}(\alpha)$ for the class denoted by $\kappa_{[k,\tau,\alpha]}$ in loc.cit. Since we have fixed $\alpha$  until the end of this section, we will safely drop $\alpha$ from the notation and denote $\tilde{\kappa}^{\Psi_0}_{\tau}(\alpha)$ by $\tilde{\kappa}^{\Psi_0}_{\tau}$. 
\begin{claim}
\label{reduction-1}
If $\textup{loc}_p^s(\tilde{\kappa}^{\Psi_0}_{\tau}) \in \al/\mm^\alpha \al$ then $\textup{loc}_p^s(\kappa^{\Psi_0}_{\tau}) \in \al/\mm^\alpha \al$ as well.
\end{claim}
\begin{proof}
This follows from Equation (33) in \cite[Appendix A]{mr02}, which
relates $\kappa^{\Psi_0}_{\tau}$ to
$\tilde{\kappa}^{\Psi_0}_{\tau}$. We note that what we call
$\kappa^{\Psi_0}_{\tau}$ here corresponds to $\kappa_n^\prime$ in
loc.cit.
\end{proof}
To prove Proposition, it therefore suffices to check that
$\textup{loc}_p^s(\tilde{\kappa}^{\Psi_0}_{\tau}) \in \al/\mm^\alpha
\al$, which we prove in Lemma~\ref{lined} below. Let $D_{\tau}$
denote the derivative operator of Kolyvagin, defined as in~\cite[
Definition 4.4.1]{r00}. Rubin~\cite[Definition 4.4.10]{r00} defines
$\tilde{\kappa}^{\Psi_0}_{\tau}$ as a canonical inverse image of
$D_{\tau}c_{k(\tau),\Psi_0}^{(r)}$ (mod~$\mm^\alpha$) under the
restriction map\footnote{This restriction map is an isomorphism if
we assume (H.3): First of all, it follows from hypothesis (H.3) that
$(T/\mm^\alpha T)^{G_k}=0$ (c.f.,~\cite[Lemma 3.5.2]{mr02}).
Furthermore, $\#(T/\mm^\alpha T)^{G_{k(\tau)}}\equiv \#(T/\mm^\alpha
T)^{G_{k}}=1 \mod p$ since $\Gal(k(\tau)/k)$ is a non-trivial $p$-group.
Finally, note that the kernel and cokernel of the restriction map in
question are both annihilated by $\# (T/\mm^\alpha
T)^{G_{k(\tau)}}$.}
$$H^1(k,T/\mm^\alpha T) \lra H^1(k(\tau),T/\mm^\alpha T)^{\Delta^{\tau}}.$$
Therefore, $\textup{loc}_p^s(\tilde{\kappa}^{\Psi_0}_{\tau})$ maps to $\textup{loc}_p^s\left(D_{\tau}c_{k(\tau),\Psi_0}^{(r)}\right)$ (mod~$\mm^\alpha$) under the map\footnote{This map is also an isomorphism thanks to $\hne$.}
$$H^1_s(k_p,T/\mm^\alpha T) \lra H^1_s(k(\tau)_p,T/\mm^\alpha T)^{\Delta^{\tau}}(:=\left(\al^s_{k(\tau)}/\mm^\alpha \al^s_{k(\tau)}\right)^{\Delta^{\tau}}).$$
Under this map, $\al/\mm^\alpha\al \subset H^1(k_p,T/\mm^\alpha T)$ is mapped isomorphically onto the rank one $\oo/\mm^\alpha\oo$-module  $\left(\al_{k(\tau)}/\mm^\alpha\al_{k(\tau)}\right)^{\Delta_{\tau}}$, by the definition of $\al_{k(\tau)}$ and by the fact that $\al_{k(\tau)}$ is a free $\oo[\Delta^{\tau}]$-module. The diagram below summarizes the discussion in this paragraph:
$$\xymatrix{
H^1(k_p,T/\mm^\alpha T) \ar[r]^(.45){\sim}& H^1(k(\tau)_p,T/\mm^\alpha T)^{\Delta_{\tau}}\\
\al/\mm^\alpha \al\ar[r]^(.4){\sim} \ar@{^{(}->}[u] &\left(\al_{k(\tau)}/\mm^\alpha \al_{k(\tau)}\right)^{\Delta^{\tau}}\ar@{^{(}->}[u]
}$$

\begin{lemma}
\label{lined}
$\textup{loc}_p(\tilde{\kappa}^{\Psi_0}_{\tau}) \in \al/\mm^\alpha \al.$
\end{lemma}
\begin{proof}[Proof of Lemma~\ref{lined}]
Since $\textup{loc}_p$ is Galois equivariant $\textup{loc}_p(D_{\tau}c_{k(\tau),\Psi_0}^{(r)})=D_{\tau}\textup{loc}_p(c_{k(\tau),\Psi_0}^{(r)}).$ Furthermore,
$$\textup{loc}_p\left(c_{k(\tau),\Psi_0}^{(r)}\right) \in \al_{k(\tau)},$$
 since $\Psi_0$ satisfies $\hli$. On the other hand, by \cite[Lemma 4.4.2]{r00}, the class $D_{\tau}c_{k(\tau),\Psi_0}^{(r)}$ (mod~$\mm^\alpha$) is fixed by $\Delta^{\tau}$, which in turn implies that
$$\textup{loc}_p\left(c_{k(\tau),\Psi_0}^{(r)}\right) \,(\textup{mod\,} \mm^\alpha) \in \left(\al_{k(\tau)}/\mm^\alpha\al_{k(\tau)}\right)^{\Delta^{\tau}}.$$
This shows that $\textup{loc}_p(\tilde{\kappa}^{\Psi_0}_{\tau})$ maps into $\al/\mm^\alpha \al$ by the discussion in the paragraph preceding the statement of this Proposition.
\end{proof}
This also finishes the proof of the Proposition.
\end{proof}
By the discussion preceding the statement of Proposition~\ref{loc-p}, this completes the proof of Theorem~\ref{mainkolsys}.

    \subsubsection{Kolyvagin systems over $k_\infty$ \textup{(bis)}}
  \label{subsubsec:KSoverkinftybis}
  For $\mathbb{F}=\FF_{\LL}$ or $\FF_{\all}$, recall
   $$\overline{\textup{\textbf{KS}}}(\mathbb{T},\mathbb{F},\PP):=\varprojlim_{\alpha,n}(\varinjlim_{j} \textup{\textbf{KS}}(\mathbb{T}/(\mm^\alpha,\gamma_{n}-1)\mathbb{T},\mathbb{F},\PP \cap \PP_{j})),$$
    the (generalized) module of $\LL$-adic Kolyvagin systems for the Selmer structure\footnote{As usual, we write $\mathbb{F}$ also for the \emph{propagation} of $\mathbb{T}$ to the quotients $\mathbb{T}/(\mm^\alpha,\gamma_{n}-1)\mathbb{T}$.} $\mathbb{F}$ on $\mathbb{T}$. Our definition slightly differs from that of Mazur an Rubin~\cite[Definition 3.1.6]{mr02}, however, as noted in Remark~\ref{1-kolsys}, it is possible to identify their generalized module Kolyvagin systems with ours using the fact that both $\{(\mm^\alpha,\gamma_n-1)\}_{\alpha,n}$ and $\{\mm_\LL^\beta\}_{\beta}$ (where $\mm_\LL$ is the maximal ideal of $\LL$) forms a base of neighborhoods at $0$.

  Suppose that $\Psi_0 \in \hhh$ is as in~\S\ref{subsubsec:ordcase}; in particular $\Psi_0$ satisfies $\hLi$. Let $\textbf{c}^{(r)}=\{c^{(r)}_K\}_{K\in \mathfrak{C}}$ be any Euler system of rank $r$ and let $\textbf{c}_{\Psi_0}=\{c_{k_n(\tau),\Psi_0}\}$ be the Euler system of rank one obtained from $\textbf{c}^{(r)}$ via Proposition~\ref{def: es} applied with $\Psi_0$. As before, let
  $$\pmb{\kappa}^{\Psi_0,\textup{Iw}} \in \overline{\textup{\textbf{KS}}}(\mathbb{T},\FF_\LL,\PP)$$
   be the image of $\textbf{c}_{\Psi_0}$ under the Euler system to Kolyvagin system map of  Theorem~\ref{thm:ESKSmain}. The proof of the following Theorem is very similar to the proof of Theorem~\ref{mainkolsys} above and will be skipped; see also the proofs of~\cite[Theorem 3.23]{kbbiwasawa} and~\cite[Theorem 2.19]{kbbstark}.
 \begin{thm}
  \label{thm:mainkolysys2}
$\pmb{\kappa}^{\Psi_0,\textup{Iw}} \in \overline{\textup{\textbf{KS}}}(\mathbb{T},\FF_{\all},\PP).$
  \end{thm}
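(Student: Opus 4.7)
The plan is to follow the argument of Theorem~\ref{mainkolsys} essentially verbatim at each finite level in the inverse system defining $\overline{\KS}(\mathbb{T},\FF_{\all},\PP)$, and then pass to the limit. Since the Mazur--Rubin map of Theorem~\ref{thm:ESKSmain} already places $\pmb{\kappa}^{\Psi_0,\textup{Iw}}$ in $\overline{\KS}(\mathbb{T},\FF_\LL,\PP)$, and since the structures $\FF_\LL$ and $\FF_{\all}$ agree at every place outside $p$, the only thing to verify is that at each level $(\alpha,n)$ and for every $\tau \in \NN(\PP)$, the image of $\kappa_\tau^{\Psi_0,\textup{Iw}}(\alpha,n)$ under the projection onto the singular quotient at $p$ lands in the propagated image of $\all$, namely in $\al_n/\mm^\alpha \al_n$ (under the identification of Remark~\ref{interchange}).

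Fix $(\alpha,n)$ and $\tau$. As in Claim~\ref{reduction-1}, the relation from~\cite[Appendix A, Eq.~(33)]{mr02} between $\kappa_\tau^{\Psi_0,\textup{Iw}}(\alpha,n)$ and Rubin's derivative class $\tilde{\kappa}_\tau^{\Psi_0,\textup{Iw}}(\alpha,n)$ reduces the claim to showing that $\textup{loc}_p^s(\tilde{\kappa}_\tau^{\Psi_0,\textup{Iw}}(\alpha,n)) \in \al_n/\mm^\alpha \al_n$. By~\cite[Definition 4.4.10]{r00} (applied with base field $k_n$ and coefficients $T/\mm^\alpha T$), this class is the unique preimage of $D_\tau c_{k_n(\tau),\Psi_0} \pmod{\mm^\alpha}$ under the restriction map
\[ H^1(k_n, T/\mm^\alpha T) \lra H^1(k_n(\tau), T/\mm^\alpha T)^{\Delta^\tau}, \]
which is an isomorphism by the argument given in the footnote to Lemma~\ref{lined} (using $\hthree$). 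Now $\Psi_0$ satisfies $\hLi$ thanks to Proposition~\ref{prop:homshLi}, so $\textup{loc}_p^s(c_{k_n(\tau),\Psi_0}) \in \al_{k_n(\tau)}$; by Galois equivariance of localization together with~\cite[Lemma 4.4.2]{r00}, the class $D_\tau \textup{loc}_p^s(c_{k_n(\tau),\Psi_0})$ lies in $(\al_{k_n(\tau)}/\mm^\alpha \al_{k_n(\tau)})^{\Delta^\tau}$. Since $\al_{k_n(\tau)}$ is a free $\oo[\Delta^\tau \times \Gamma_n]$-module of rank one by Proposition~\ref{prop:linearalgebra2} and Definition~\ref{lines}, the restriction map identifies $\al_n/\mm^\alpha \al_n$ isomorphically with $(\al_{k_n(\tau)}/\mm^\alpha \al_{k_n(\tau)})^{\Delta^\tau}$, which yields the desired containment.

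Passing to the inverse limit over $(\alpha, n)$ and invoking Remark~\ref{interchange} (to identify $\varprojlim_n \al_n$ with $\all$) then concludes the proof. The main obstacle is simply the verification that the finite-level argument of Theorem~\ref{mainkolsys} transposes to every finite coefficient ring $\LL/(\mm^\alpha,\gamma_n-1)$ arising in the inverse system; apart from replacing the base field $k$ by $k_n$ throughout (which is harmless thanks to the freeness results of~\S\ref{subsec:localstructure}, valid uniformly over all $K \in \mathfrak{C}$), and replacing $\al$ and $\al_K$ by $\al_n$ and $\al_{k_n(\tau)}$ respectively, the argument goes through without change.
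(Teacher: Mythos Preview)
Your proposal is correct and follows precisely the approach the paper indicates: the paper's own proof is omitted with the remark that it is ``very similar to the proof of Theorem~\ref{mainkolsys},'' and you have carried out exactly that adaptation, replacing $k$ by $k_n$, $\al$ by $\al_{k_n}$, and $\al_{k(\tau)}$ by $\al_{k_n(\tau)}$, and then passing to the inverse limit. The key inputs you invoke---the reduction via~\cite[Appendix~A, (33)]{mr02}, the property $\hLi$ for $\Psi_0$ from Proposition~\ref{prop:homshLi}, and the freeness of $\al_{k_n(\tau)}$ over $\oo[\Delta_n^\tau]$---are exactly the analogues of those used in Theorem~\ref{mainkolsys}, so your argument is a faithful (and correctly executed) expansion of what the paper leaves implicit.
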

  \begin{rem}
  \label{rem:KS1equalsES1}
  We know (by the definition of the Euler systems to Kolyvagin systems map) that
$$
\xymatrix@R=.2cm @C=.2cm{ {\kappa}^{\Psi_0,\textup{Iw}}_1\ar@{=}[d]  & \ar@{=}[l]_(.87){\textup{def}} \varprojlim_{\alpha,n} \kappa_{1}^{\Psi_0,\textup{Iw}}(\alpha,n) \in \varprojlim_{\alpha,n}H^1(k,\mathbb{T}/(\mm^\alpha,\gamma_n-1)\mathbb{T})=H^1(k,\mathbb{T})\\
 \{c_{k_n,\Psi_0}\}_{_n}&\ar@{=}[l]_(.72){\textup{def}}\left\{\varphi_{k_n}(c^{(r)}_{k_n})\right\}_{_n} \in \varprojlim_n H^1(k_n,T)=H^1(k,\mathbb{T}).
}$$
\end{rem}


 \section{Applications}
 \label{sec:applications}
 Throughout this section, the hypotheses $\hone$-$\hfive$, $\hne,\hwd$ are in effect.

 \subsection{Applications over $k$}
 \label{subsec:applicationk}
Aside from the hypotheses we assumed above, suppose in \S\ref{subsec:applicationk} that the hypothesis $\hpss$\, holds as well.

We start with proving a bound on the size of the dual Selmer group $H^1_{\FF_{\al}^*}(k,T^*)$, which we will use, together with the comparison result from~\S\ref{subsubsec:compareselmer1}, to obtain a bound on the classical  Selmer group.

\begin{thm}
\label{thm:mainmodifiedk-1}
Under the running hypotheses,
$$|H^1_{\FF_{\al}^*}(k,T^*)| \leq |H^1_{\FF_\al}(k,T)/\oo\cdot\kappa_1^{\Psi_0}|,$$ with equality if and only if the Kolyvagin system $\pmb{\kappa}^{\Psi_0} \in  \overline{\KS}(T,\FF_{\al},\PP)$ is primitive \textup{(}in the sense of~\cite[Definition 4.5.5]{mr02}\textup{)}.
\end{thm}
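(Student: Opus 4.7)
The plan is to directly invoke the main result of Mazur and Rubin, \cite[Theorem 5.2.2]{mr02}, applied to the Selmer triple $(T,\FF_{\al},\PP)$. That theorem, stated for a Selmer triple of core Selmer rank $one$ satisfying the hypotheses $\mathbf{H.1}$-$\mathbf{H.6}$ of \cite[\S3.5]{mr02}, asserts exactly an inequality of the shape in our statement—with equality precisely when the input Kolyvagin system is primitive. So the task reduces to verifying the hypotheses for $(T,\FF_\al,\PP)$ (with base field $k$ in place of $\QQ$) and identifying our $\pmb{\kappa}^{\Psi_0}$ as a valid input.

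First I would collect the structural hypotheses. The assumptions $\mathbf{H.1}$-$\mathbf{H.5}$ are part of the running hypotheses of this section, while $\mathbf{H.6}$ was verified for the three Selmer triples $(T,\FF_{\BK},\PP)$, $(T,\FF_{\Gr},\PP)$ and $(T,\FF_\al,\PP)$ at the very beginning of \S\ref{subsec:KS1} by an application of \cite[Lemma 3.7.1]{mr02}. The crucial remaining input, the core Selmer rank equalling one, is exactly Proposition~\ref{modifiedcorerank}, which uses $\hne$ and $\hpss$. Since all the arguments of \cite[\S5]{mr02} are local-global and depend only on the formalism of Selmer structures, their replacement of the base field $\QQ$ by our totally real $k$ is a purely cosmetic change; no new hypothesis is introduced.

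Next I would invoke Theorem~\ref{mainkolsys} to produce the Kolyvagin system $\pmb{\kappa}^{\Psi_0}\in \overline{\KS}(T,\FF_\al,\PP)$ from the rank-$r$ Euler system via the homomorphism $\Psi_0$ constructed in \S\ref{subsubsec:pss}, and recall from \eqref{eqn:kappa1es1} that its first class is $\kappa_1^{\Psi_0}=\varphi_k(c_k^{(r)})\in H^1_{\FF_\al}(k,T)$. Feeding this element into the Mazur–Rubin bound yields
$$|H^1_{\FF_\al^*}(k,T^*)|\leq |H^1_{\FF_\al}(k,T)/\oo\cdot\kappa_1^{\Psi_0}|,$$
which is the desired inequality. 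The equality statement is the primitivity clause of \cite[Theorem 5.2.2]{mr02}: since by Proposition~\ref{prop:modifiedKSrank1} the $\oo$-module $\KS(T,\FF_\al,\PP)$ is free of rank one, primitivity of $\pmb{\kappa}^{\Psi_0}$ is equivalent to its being an $\oo$-module generator, and the Mazur-Rubin framework forces the inequality to be an equality precisely in that case.

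The main (minor) obstacle is to be confident that the proofs of \cite[\S5]{mr02}, stated over $\QQ$, go through verbatim for our totally real base field $k$; the only ingredients needed are the global duality formalism and the Chebotarev density arguments selecting Kolyvagin primes, both of which are insensitive to whether the base field is $\QQ$ or $k$.
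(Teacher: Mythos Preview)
Your proposal is correct and is essentially the same as the paper's proof, which simply invokes \cite[Corollary 5.2.13]{mr02} applied to the Kolyvagin system $\pmb{\kappa}^{\Psi_0}\in\overline{\KS}(T,\FF_{\al},\PP)$ (you cite Theorem 5.2.2, but the relevant statement with the primitivity clause is packaged as Corollary 5.2.13 in loc.\ cit.). Your added verification of the hypotheses---$\mathbf{H.1}$--$\mathbf{H.6}$ and the core Selmer rank one via Proposition~\ref{modifiedcorerank}---is exactly what is needed and is implicit in the paper's one-line proof.
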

\begin{proof}
This is the standard application of $\pmb{\kappa}^{\Psi_0} \in \overline{\KS}(T,\FF_{\al},\PP)$, see~\cite[Corollary 5.2.13]{mr02}.
\end{proof}
Consider the following condition on the Euler system $\textbf{c}^{(r)}$ of rank $r$:
\begin{itemize}
\item[\textbf{(H.nV)}] 
$\textup{loc}_p^{s}\left(c_k^{(r)}\right) \neq 0.$
\end{itemize}
\begin{rem}
\label{rem:nonvanishingfirstterm}
\begin{enumerate}
\item[(i)] We give a conjectural example of an Euler system of rank $r$ in~\S\ref{subsec:PRpadic} based on Perrin-Riou's conjectures on $p$-adic $L$-functions. We will see then that the hypothesis $\textbf{H.nV}$ above amounts to saying that an associated $L$-value does not vanish. See the proof of Theorem~\ref{thm:mainconjappl} for details.
\item[(ii)] A similar hypothesis also appears in~\cite{kbbstark,kbbiwasawa} (as \textbf{H-F} in~\cite[\S3]{kbbstark}; see also ~\cite[Remark 4.1]{kbbiwasawa}). Roughly speaking, Leopoldt's conjecture ensures that \textbf{(H.nV)} holds for the Euler system of rank $r$ obtained from the Rubin-Stark elements in~\cite{kbbstark,kbbiwasawa}.
\end{enumerate}
\end{rem}
\begin{lemma}
\label{lem:nonvanishingKS}
Suppose \textbf{\textup{(H.nV)}} holds. Then $\textup{loc}_p^{s}(\kappa_1^{\Psi_0}) \neq 0$, in particular, $\kappa_1^{\Psi_0} \neq 0$.
\end{lemma}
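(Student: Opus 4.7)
The plan is to unwind the construction of $\kappa_1^{\Psi_0}$ and track the image of $c_k^{(r)}$ through it. By equation~(\ref{eqn:kappa1es1}),
$$\kappa_1^{\Psi_0} \,=\, c_{k,\Psi_0}^{(r)} \,=\, \varphi_k(c_k^{(r)}) \,\in\, H^1(k,T),$$
where, following the construction in \S\ref{subsubsec:pss}, the element $\varphi_k = \psi_k^{(1)} \wedge \cdots \wedge \psi_k^{(r-1)} \in \bigwedge^{r-1}\textup{Hom}_{\oo}(\al_k^s,\oo)$ is regarded as a member of $\bigwedge^{r-1}\textup{Hom}_{\oo}(H^1(k,T),\oo)$ via precomposition with $\textup{loc}_p^s : H^1(k,T) \to \al_k^s$.

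Because each $\psi_k^{(i)}$ on $H^1(k,T)$ factors through $\textup{loc}_p^s$ by construction, the contraction formula for the wedge product immediately yields the compatibility
$$\textup{loc}_p^s\bigl(\varphi_k(c_k^{(r)})\bigr) \,=\, \varphi_k\bigl(\textup{loc}_p^s(c_k^{(r)})\bigr),$$
where on the right hand side $\textup{loc}_p^s(c_k^{(r)})$ is viewed as an element of $\bigwedge^r \al_k^s$ (via Remark~\ref{rem:locallyintegral} and the identification $\al_k^s \cong H^1_s(k_p,T)$ furnished by Proposition~\ref{prop:linearalgebra1}(2)) and $\varphi_k$ is here the contraction $\bigwedge^r \al_k^s \to \al_k^s$.

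To conclude, recall from Proposition~\ref{homs-hli-1} that this latter contraction $\varphi_k$ identifies the rank-one $\oo$-module $\bigwedge^r \al_k^s$ isomorphically with $\al_k = \al$. The hypothesis $(\mathbf{H.nV})$ is precisely the assertion that $\textup{loc}_p^s(c_k^{(r)}) \neq 0$ in $\bigwedge^r H^1_s(k_p,T) = \bigwedge^r \al_k^s$, so applying the isomorphism $\varphi_k$ forces $\textup{loc}_p^s(\kappa_1^{\Psi_0}) \neq 0$ in $\al$, whence in particular $\kappa_1^{\Psi_0} \neq 0$. The only mildly delicate point is the commutation of $\textup{loc}_p^s$ with $\varphi_k$, but this is dictated by the very definition of the homomorphisms $\psi_k^{(i)}$, and so no substantive obstacle arises.
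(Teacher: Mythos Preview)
Your argument is correct and follows the same route as the paper's proof: both establish the chain $\textup{loc}_p^s(\kappa_1^{\Psi_0})=\textup{loc}_p^s(\varphi_k(c_k^{(r)}))=\varphi_k(\textup{loc}_p^s(c_k^{(r)}))$ and then invoke that $\varphi_k:\wedge^r H^1_s(k_p,T)\to\al$ is an isomorphism (Proposition~\ref{homs-hli-1}) together with \textbf{(H.nV)}. Your version simply unpacks the commutation step and the relevant references more explicitly than the paper's terse one-line justification.
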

\begin{proof}
The following equalities follow from the definitions:
\be\label{eqn:longone}\textup{loc}_p^s(\kappa_1^{\Psi_0})=\textup{loc}_p^s\left(c_{k,\Psi_0}\right)=\textup{loc}_p^s\left(\varphi_k\left(c_k^{(r)}\right)\right)=\varphi_k \left(\textup{loc}_p^s\left(c_k^{(r)}\right)\right).
\ee
Since $\varphi_k:\wedge^r H^1_s(k_p,T) \ra \al$ is an isomorphism and since we assumed  \textbf{\textup{(H.nV)}}, Lemma follows.
\end{proof}
\begin{cor}
\label{cor:mainNV}
If  \textbf{\textup{(H.nV)}} holds, then
\begin{itemize}
\item[(i)] $H^1_{\FF_{\al}^*}(k,T^*)$ is finite,
\item[(ii)] $H^1_{\FF_\al}(k,T)$ is a free $\oo$-module of rank one.
\end{itemize}
\end{cor}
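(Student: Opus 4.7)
The plan is to combine the non-vanishing produced by Lemma~\ref{lem:nonvanishingKS} with the Kolyvagin bound of Theorem~\ref{thm:mainmodifiedk-1} and the Mazur--Rubin structure theory for Selmer structures of core Selmer rank one.

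First, under hypotheses $\hone$ and $\hthree$ a standard d\'evissage along the filtration $0\subset \mm^{n-1}T/\mm^n T\subset\cdots\subset T/\mm^n T$ (whose graded pieces are all isomorphic to $T/\mm T$) yields $H^0(k,T/\mm^n T)=0$ for every $n\geq 1$, so $H^1(k,T)$ is $\oo$-torsion free; a fortiori, $H^1_{\FF_\al}(k,T)$ is $\oo$-torsion free. Next, assuming $\mathbf{(H.nV)}$, Lemma~\ref{lem:nonvanishingKS} gives $\textup{loc}_p^{s}(\kappa_1^{\Psi_0})\neq 0$; and since $\al \subset H^1(k_p,T)$ is a free $\oo$-module of rank one (cf.\ Lemma~\ref{lemma:free for k} and Proposition~\ref{prop:finiteconditionhalfrank}), the class $\kappa_1^{\Psi_0}$ has infinite $\oo$-order in $H^1_{\FF_\al}(k,T)$. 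Hence $\oo\cdot\kappa_1^{\Psi_0}$ is a free rank-one $\oo$-submodule of $H^1_{\FF_\al}(k,T)$ and $\textup{rank}_{\oo}H^1_{\FF_\al}(k,T)\geq 1$.

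For the reverse inequality, Proposition~\ref{modifiedcorerank} asserts that the core Selmer rank of $\FF_\al$ equals one, so the core-rank-one Kolyvagin system machinery of Mazur and Rubin (\cite[Theorem~5.2.10 and Corollary~5.2.13]{mr02}), which also underlies the bound of Theorem~\ref{thm:mainmodifiedk-1}, forces $\textup{rank}_{\oo}H^1_{\FF_\al}(k,T)\leq 1$. Combined with the lower bound just produced, this yields (ii): $H^1_{\FF_\al}(k,T)$ is a free $\oo$-module of rank exactly one. The quotient $H^1_{\FF_\al}(k,T)/\oo\cdot\kappa_1^{\Psi_0}$ is then a quotient of two free rank-one $\oo$-modules, hence a finite $\oo$-module, and Theorem~\ref{thm:mainmodifiedk-1} delivers (i).

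The principal delicate point is the upper bound $\textup{rank}_{\oo}H^1_{\FF_\al}(k,T)\leq 1$: it is precisely in order to access such an upper bound that we passed from the canonical Selmer structure $\FFc$ (whose core rank is $r$ by Proposition~\ref{rem:canonicalSelmerrank}) to the $\al$-modified structure $\FF_\al$ (of core rank one). Once that upper bound is in hand, the Kolyvagin bound of Theorem~\ref{thm:mainmodifiedk-1} collapses into the honest finiteness and freeness statements of (i) and (ii) simultaneously.
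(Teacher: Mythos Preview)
Your proof is correct and uses the same ingredients as the paper, but with the order reversed in a way that introduces a slight circularity worth unwinding. The paper proves (i) \emph{first}: Lemma~\ref{lem:nonvanishingKS} gives $\kappa_1^{\Psi_0}\neq 0$, and \cite[Corollary~5.2.13(i)]{mr02} then directly yields finiteness of $H^1_{\FF_\al^*}(k,T^*)$. For (ii) the paper combines torsion-freeness (via $H^0(k,T\otimes\Phi/\oo)=0$, from $\hthree$ and \cite[Lemma~3.5.2]{mr02}) with the rank--corank formula \cite[Corollary~5.2.6]{mr02}, namely $\textup{rank}_{\oo}H^1_{\FF_\al}(k,T)-\textup{corank}_{\oo}H^1_{\FF_\al^*}(k,T^*)=\XX(T,\FF_\al)=1$, so that (i) feeds into (ii). In your argument, the step ``core rank one forces $\textup{rank}_{\oo}H^1_{\FF_\al}(k,T)\leq 1$'' is not actually delivered by the references you cite: Theorem~5.2.10 concerns the module of Kolyvagin systems, not the Selmer group, and Corollary~5.2.13 bounds the dual Selmer group. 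What is really needed is Corollary~5.2.6 together with the vanishing of the corank, i.e.\ (i) itself; so by the time you invoke Theorem~\ref{thm:mainmodifiedk-1} at the end to conclude (i), you have tacitly already used it. Reordering as the paper does removes the redundancy.
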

\begin{proof}
By Lemma~\ref{lem:nonvanishingKS} and~\cite[Corollary 5.2.13(i)]{mr02} applied to $\pmb{\kappa}^{\Psi_0}\in \overline{\KS}(T,\FF_{\al},\PP)$, it follows that  $H^1_{\FF_{\al}^*}(k,T^*)$ is finite. We note that the theorem of Mazur and Rubin applies thanks to Proposition~\ref{modifiedcorerank}.

We have $H^1(k,T)_{\textup{tors}}\cong H^0(k,T\otimes\Phi/\oo)$ for the $\oo$-torsion submodule $H^1(k,T)_{\textup{tors}}$. As explained in~\cite[Lemma 3.5.2]{mr02},  it follows from our hypothesis $\hthree$ that $H^0(k,T\otimes\Phi/\oo)=0$. We therefore conclude that $H^1_{\FF_{\al}}(k,T)\subset H^1(k,T)$ is $\oo$-torsion free, hence it is a free $\oo$-module. Using~\cite[Corollary 5.2.6]{mr02}, we conclude that
\begin{align*}
\textup{rank}_{\oo}\left(H^1_{\FF_{\al}}(k,T)\right)&=\textup{rank}_{\oo}\left(H^1_{\FF_{\al}}(k,T)\right)-\textup{corank}_{\oo}\left(H^1_{\FF_{\al}^*}(k,T^*)\right)\\
&=\XX(T,\FF_\al)-\XX(T^*,\FF_{\al}^*)=1,
\end{align*}
where $\XX(T,\FF_\al)$ and $\XX(T^*,\FF_\al^*)$ denote the core Selmer rank, see~\S\ref{subsec:KS1}.
\end{proof}
\begin{cor}
\label{cor:mainvanishingselmer}
If \textbf{\textup{(H.nV)}} holds, then $H^1_{\FF_{\textup{BK}}}(k,T)=0$.
\end{cor}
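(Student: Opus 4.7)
The plan is to combine the structural information we already have on $H^1_{\FF_\al}(k,T)$ with the nonvanishing statement of Lemma~\ref{lem:nonvanishingKS}, using the fact that $\FF_{\BK}$ and $\FF_\al$ differ only by the rank-one summand $\al$ at $p$.

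First I would invoke the exact sequence recalled at the beginning of \S\ref{subsubsec:compareselmer1},
$$0\lra H^1_{\FF_{\BK}}(k,T)\lra H^1_{\FF_\al}(k,T)\stackrel{\textup{loc}_p^s}{\lra}\al,$$
which is immediate from the definitions since $H^1_{\FF_{\BK}}(k_\lambda,T)=H^1_{\FF_\al}(k_\lambda,T)$ for all $\lambda\nmid p$, while at $p$ the $\al$-modified condition $H^1_f(k_p,T)\oplus\al$ sits inside $H^1(k_p,T)$ with quotient (restricted to $H^1_{\FF_\al}$) landing in $\al$.

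Next, under the hypothesis $(\mathbf{H.nV})$, Corollary~\ref{cor:mainNV}(ii) tells me that $H^1_{\FF_\al}(k,T)$ is a free $\oo$-module of rank one, and Lemma~\ref{lem:nonvanishingKS} tells me that its distinguished element $\kappa_1^{\Psi_0}$ satisfies $\textup{loc}_p^s(\kappa_1^{\Psi_0})\neq 0$. In particular, the map $\textup{loc}_p^s$ in the above sequence is a nonzero $\oo$-linear homomorphism from the free rank-one module $H^1_{\FF_\al}(k,T)$ to the free rank-one module $\al$; since $\oo$ is an integral domain, any nonzero such map is injective. Therefore the kernel $H^1_{\FF_{\BK}}(k,T)$ vanishes, which is the claim.

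I do not expect any real obstacle: the only input beyond formal manipulation is the rank-one freeness of $H^1_{\FF_\al}(k,T)$ (which is Corollary~\ref{cor:mainNV}(ii), and in turn rests on the core rank computation $\XX(T,\FF_\al)=1$ of Proposition~\ref{modifiedcorerank} together with $\hthree$) and the fact, already recorded in the proof of Lemma~\ref{lem:nonvanishingKS}, that $\varphi_k$ maps $\wedge^rH^1_s(k_p,T)$ isomorphically onto $\al$ so that $(\mathbf{H.nV})$ translates directly into $\textup{loc}_p^s(\kappa_1^{\Psi_0})\neq 0$.
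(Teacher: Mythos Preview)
Your argument is correct and is essentially identical to the paper's own proof: both use Lemma~\ref{lem:nonvanishingKS} to see that $\textup{loc}_p^s$ is a nonzero map between the free rank-one $\oo$-modules $H^1_{\FF_\al}(k,T)$ and $\al$, hence injective, and identify its kernel with $H^1_{\FF_{\BK}}(k,T)$. The only cosmetic difference is that you cite Corollary~\ref{cor:mainNV}(ii) explicitly for the rank-one freeness, whereas the paper simply asserts it.
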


\begin{proof}
By Lemma~\ref{lem:nonvanishingKS}, we have $\textup{loc}_p^s(\kappa_1^{\Psi_0}) \neq 0$, in particular, the map $\textup{loc}_p^s: H^1_{\FF_{\al}}(k,T)\ra \al$ is non-trivial. Since both $H^1_{\FF_\al}(k,T)$ and $\al$ are free $\oo$-modules of rank one, it follows that $\textup{loc}_p^s$ is injective, i.e.,
$$H^1_{\FF_{\textup{BK}}}(k,T)=\ker\left(H^1_{\FF_{\al}}(k,T)\stackrel{\textup{loc}_p^s}{\lra} \al \right)=0.$$
\end{proof}
\begin{thm}
\label{thm:maink}
Under the hypothesis \textbf{\textup{(H.nV)}},
$$|H^1_{\FF_{\textup{BK}}^*}(k,T^*)|\leq |\al/\oo\cdot\textup{loc}_p^s(\kappa_1^{\Psi_0})|,$$
and we have equality if and only if $\pmb{\kappa}^{\Psi_0}\in \overline{\KS}(T,\FF_{\al},\PP)$ is primitive.
\end{thm}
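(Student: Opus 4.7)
The plan is to combine the comparison result of Corollary~\ref{cor:compareoverkwithclass} (which expresses the Bloch--Kato Selmer group in terms of the $\al$-modified one and the global-to-local map at $p$) with the Kolyvagin system bound of Theorem~\ref{thm:mainmodifiedk-1} for the modified Selmer group. The hypothesis \textbf{(H.nV)} is precisely what unlocks both tools: via Corollary~\ref{cor:mainvanishingselmer} it forces $H^1_{\FF_{\textup{BK}}}(k,T)=0$, so Corollary~\ref{cor:compareoverkwithclass} applies, and via Lemma~\ref{lem:nonvanishingKS} it guarantees that $\textup{loc}_p^s(\kappa_1^{\Psi_0})\neq 0$.

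First, I would apply Corollary~\ref{cor:compareoverkwithclass} with the class $c=\kappa_1^{\Psi_0}\in H^1_{\FF_{\al}}(k,T)$, which is legal by Corollary~\ref{cor:mainvanishingselmer}. This yields the four-term exact sequence
\begin{equation*}
0\lra \frac{H^1_{\FF_{\al}}(k,T)}{\oo\cdot\kappa_1^{\Psi_0}}\stackrel{\textup{loc}_p^s}{\lra}\frac{\al}{\oo\cdot\textup{loc}_p^s(\kappa_1^{\Psi_0})}\lra \left(H^1_{\FF_{\textup{BK}}^*}(k,T^*)\right)^{\vee}\lra \left(H^1_{\FF_{\al}^*}(k,T^*)\right)^{\vee}\lra 0.
\end{equation*}
By Corollary~\ref{cor:mainNV}, $H^1_{\FF_{\al}}(k,T)$ is a free $\oo$-module of rank one and $H^1_{\FF_{\al}^*}(k,T^*)$ is finite; by Lemma~\ref{lem:nonvanishingKS}, both $\kappa_1^{\Psi_0}$ and $\textup{loc}_p^s(\kappa_1^{\Psi_0})$ are nonzero, so all four terms above are finite. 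Pontryagin duality preserves cardinalities of finite modules, so the multiplicativity of the order in exact sequences gives
\begin{equation*}
\left|H^1_{\FF_{\textup{BK}}^*}(k,T^*)\right| \;=\; \left|H^1_{\FF_{\al}^*}(k,T^*)\right| \cdot \frac{|\al/\oo\cdot\textup{loc}_p^s(\kappa_1^{\Psi_0})|}{|H^1_{\FF_{\al}}(k,T)/\oo\cdot\kappa_1^{\Psi_0}|}.
\end{equation*}

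Next, I would insert the Kolyvagin system bound from Theorem~\ref{thm:mainmodifiedk-1}, namely
$$\left|H^1_{\FF_{\al}^*}(k,T^*)\right| \;\leq\; \left|H^1_{\FF_{\al}}(k,T)/\oo\cdot\kappa_1^{\Psi_0}\right|,$$
with equality precisely when $\pmb{\kappa}^{\Psi_0}$ is primitive. Substituting this into the identity above, the quotient of the two denominators cancels, leaving
$$\left|H^1_{\FF_{\textup{BK}}^*}(k,T^*)\right| \;\leq\; \left|\al/\oo\cdot\textup{loc}_p^s(\kappa_1^{\Psi_0})\right|,$$
which is the claimed bound, and the condition for equality transfers directly from Theorem~\ref{thm:mainmodifiedk-1}.

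The steps are essentially bookkeeping: the real content has already been deposited in Corollary~\ref{cor:compareoverkwithclass} (global duality), Theorem~\ref{thm:mainmodifiedk-1} (Kolyvagin system machinery applied to the $\al$-modified structure), and Corollaries~\ref{cor:mainNV}--\ref{cor:mainvanishingselmer} (consequences of \textbf{(H.nV)}). The only subtlety to check is that all the modules in the four-term sequence are finite, so that the cardinality identity is meaningful; this I have already ensured above using the rank-one freeness of $H^1_{\FF_{\al}}(k,T)$ and the nonvanishing of $\textup{loc}_p^s(\kappa_1^{\Psi_0})$, so there is no serious obstacle.
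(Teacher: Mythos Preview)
Your proof is correct and follows exactly the same approach as the paper: combine Theorem~\ref{thm:mainmodifiedk-1} with Corollary~\ref{cor:compareoverkwithclass} applied to $c=\kappa_1^{\Psi_0}$, using Corollary~\ref{cor:mainvanishingselmer} to justify the hypothesis $H^1_{\FF_{\BK}}(k,T)=0$. Your write-up is in fact more explicit than the paper's two-line proof, spelling out the finiteness checks and the cardinality bookkeeping that the paper leaves to the reader.
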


\begin{proof}
This follows from Theorem~\ref{thm:mainmodifiedk-1} and Corollary~\ref{cor:compareoverkwithclass} applied with the class $c=\kappa_1^{\Psi_0} \in H^1_{\FF_\al}(k,T)$. Note that Corollary~\ref{cor:compareoverkwithclass} applies thanks to Corollary~\ref{cor:mainvanishingselmer}.
\end{proof}

\begin{cor}
\label{cor:maink}
\begin{itemize}
\item[(i)] $| H^1_{\FF_{\textup{BK}}^*}(k,T^*)|\,\leq\, |\wedge^r H^1_s(k,T)/\oo\cdot\textup{loc}_p^s(c_k^{(r)})\mid.$
\item[(ii)] Suppose  \textbf{\textup{(H.nV)}} holds. We then have equality in \textup{(i)} if and only if the inequality of Theorem~\ref{thm:maink} is an equality.
\end{itemize}
\end{cor}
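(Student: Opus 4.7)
The plan is to deduce Corollary~\ref{cor:maink} directly from Theorem~\ref{thm:maink} by translating the bound expressed in terms of the Kolyvagin class $\kappa_1^{\Psi_0}$ into a bound expressed in terms of the original Euler system class $c_k^{(r)}$.

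First I would recall that under the running hypotheses $H^1_s(k_p,T)$ is a free $\oo$-module of rank $r$ by Proposition~\ref{prop:finiteconditionhalfrank}, so $\wedge^r H^1_s(k_p,T)$ is a free $\oo$-module of rank one, and $\al$ is a free $\oo$-module of rank one by construction. By Proposition~\ref{homs-hli-1} applied at $K=k$, the homomorphism $\varphi_k$ (the $k$-component of $\Psi_0$) induces an isomorphism of rank-one $\oo$-modules
\begin{equation*}
\varphi_k:\wedge^r H^1_s(k_p,T)\stackrel{\sim}{\lra}\al.
\end{equation*}
Combined with the chain of equalities~(\ref{eqn:longone}), namely $\textup{loc}_p^s(\kappa_1^{\Psi_0})=\varphi_k(\textup{loc}_p^s(c_k^{(r)}))$, this $\oo$-module isomorphism identifies $\oo\cdot\textup{loc}_p^s(\kappa_1^{\Psi_0})\subset\al$ with $\oo\cdot\textup{loc}_p^s(c_k^{(r)})\subset\wedge^r H^1_s(k_p,T)$. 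In particular we obtain the equality of indices
\begin{equation*}
|\al/\oo\cdot\textup{loc}_p^s(\kappa_1^{\Psi_0})|=|\wedge^r H^1_s(k_p,T)/\oo\cdot\textup{loc}_p^s(c_k^{(r)})|.
\end{equation*}

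For part (i), I would split on whether $\textbf{(H.nV)}$ holds. If it does, Theorem~\ref{thm:maink} gives $|H^1_{\FF_{\textup{BK}}^*}(k,T^*)|\leq|\al/\oo\cdot\textup{loc}_p^s(\kappa_1^{\Psi_0})|$, and the claimed bound follows from the displayed identity. If $\textbf{(H.nV)}$ fails, then $\textup{loc}_p^s(c_k^{(r)})=0$, so the quotient on the right is all of $\wedge^r H^1_s(k_p,T)$, which is infinite (as $\oo$ is infinite), and the inequality is vacuous. For part (ii), under $\textbf{(H.nV)}$ the two indices are literally equal, so equality in Corollary~\ref{cor:maink}(i) is equivalent to equality in Theorem~\ref{thm:maink}, which by loc.\ cit.\ is equivalent to primitivity of $\pmb{\kappa}^{\Psi_0}$.

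There is essentially no obstacle beyond bookkeeping: all the nontrivial input has already been packaged into Theorem~\ref{thm:maink} and Proposition~\ref{homs-hli-1}. The only subtlety to mention is that the identification $\wedge^r H^1_s(k_p,T)\cong \wedge^r\al_k^s$ (implicit in applying Proposition~\ref{homs-hli-1} at $K=k$, where $\al_k^s=H^1_s(k_p,T)$ by our choice in Proposition~\ref{prop:linearalgebra1}) is what makes the isomorphism $\varphi_k$ act on the right module; beyond that, the proof is a one-line deduction.
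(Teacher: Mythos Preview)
Your proof is correct and follows essentially the same approach as the paper's own proof: both use the isomorphism $\varphi_k:\wedge^r H^1_s(k_p,T)\stackrel{\sim}{\to}\al$ (from Proposition~\ref{homs-hli-1}) together with $\textup{loc}_p^s(\kappa_1^{\Psi_0})=\varphi_k(\textup{loc}_p^s(c_k^{(r)}))$ to identify the two quotients, then split on whether \textbf{(H.nV)} holds and invoke Theorem~\ref{thm:maink} in the non-vacuous case. Your version is simply more explicit about the bookkeeping (e.g., noting $\al_k^s=H^1_s(k_p,T)$), but the substance is identical.
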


\begin{proof}
By construction,
\be
\label{eqn:nodiagram}
\xymatrix @C=.4cm @R=.2cm{ \varphi_k:\wedge^r H^1_s(k_p,T) \ar[r]^(.73){\sim}& \al\\
\textup{loc}_p^s(c_k^{(r)})\ar@{{|}->}[r]&\textup{loc}_p^s(\kappa_1^{\Psi_0})
}
\ee
If \textbf{\textup{(H.nV)}} fails, then there is nothing to prove, hence we may assume without loss of generality that \textbf{\textup{(H.nV)}} holds. In this case, Corollary follows from Theorem~\ref{thm:maink} and (\ref{eqn:nodiagram}) above.

\end{proof}
\subsection{Applications over $k_\infty$}
\label{subsec:mainthmkinfty}
Along with the hypotheses we set at the beginning of~\S\ref{sec:applications}, suppose also that $\htam,\,\hord$\, and $\hntz$\, hold. Recall that we write $\textup{char}(M)$ for the characteristic ideal of a finitely generated $\LL$-module $M$, with the convention that $\textup{char}(M)=0$ unless $M$ is $\LL$-torsion.

We proceed as in the previous section: We first prove a bound for the characteristic ideal of the dual Selmer group $H^1_{\FF_{\all}^*}(k,\mathbb{T}^*)^{\vee}$, which we use, together with Proposition~\ref{prop:compare selmer over k_infty}, to obtain a bound on the characteristic ideal of the (Pontryagin dual of the) classical Selmer group.

Let $\pmb{\kappa}^{\Psi_0,\textup{Iw}} \in \overline{\KS}(\TT,\FF_{\all},\PP)$ be the $\LL$-adic Kolyvagin system obtained from an Euler system of rank $r$ as in \S\ref{subsubsec:KSoverkinftybis}. Note that $\pmb{\kappa}^{\Psi_0,\textup{Iw}}$ maps to $\pmb{\kappa}^{\Psi_0} \in \overline{\KS}(T,\FF_\al,\PP)$ under the map
$$\overline{\KS}(\TT,\FF_{\all},\PP) \lra  \overline{\KS}(T,\FF_{\al},\PP).$$
We note that  $\FF_\al$ in this section is defined using the Greenberg local condition (see Remark~\ref{rem:BKvsGr}), whereas $\FF_{\al}$ that we used in the previous section is defined by relaxing Bloch-Kato local conditions (see \S\ref{subsub:lock}).
\begin{thm}
\label{thm:mainmodifiedkinfty-2}
Under the running hypotheses:
\begin{itemize}
\item[(i)] $\textup{char}\left(H^1_{\FF_{\all}^*}(k,\TT^*)^\vee\right) \Big{|} \,\textup{char}\left( H^1_{\FF_{\all}}(k,\TT)/\LL\cdot\kappa_1^{\Psi_0,\textup{Iw}}\right).$\\
\item[(ii)] The divisibility in \textup{(i)} is an equality if $\pmb{\kappa}^{\Psi_0} \in \overline{\KS}(T,\FF_\al,\PP)$ is primitive.
\end{itemize}
\end{thm}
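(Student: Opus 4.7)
The proof is the Iwasawa-theoretic analogue of Theorem~\ref{thm:mainmodifiedk-1}, and my plan is to deduce it from the bound on dual Selmer groups attached to a free rank-one module of $\LL$-adic Kolyvagin systems, via the general machinery developed by the author in~\cite{kbb} (Theorem 3.23 there, and its refinement in~\cite{kbbiwasawa}) -- exactly as Theorem~\ref{thm:mainmodifiedk-1} was deduced from the standard bound \cite[Corollary 5.2.13]{mr02}.

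First, I would verify that the Selmer triple $(\TT,\FF_{\all},\PP)$ satisfies the hypotheses needed to run the $\LL$-adic Kolyvagin system argument. The hypotheses $\mathbf{H.1}$--$\mathbf{H.5}$, $\htam$, $\hne$, $\hord$, $\hntz$, $\hwd$ are all in effect by assumption, and an argument identical to the one given in \S\ref{subsec:KS1} using~\cite[Lemma 3.7.1]{mr02} shows that $(T,\FF_{\al},\PP)$ (where here $\FF_\al$ is defined starting from the Greenberg condition, as in Remark~\ref{rem:BKvsGr}) satisfies an Iwasawa-theoretic analogue of the hypothesis $\mathbf{H.6}$ of~\cite[\S3.5]{mr02}. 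By the proof of Proposition~\ref{modifiedcorerank} (which, as remarked there, works verbatim when $\FF_{\Gr}$ replaces $\FF_{\BK}$), the core Selmer rank $\XX(T,\FF_\al)$ equals one. This is precisely the setup to which~\cite[Theorem 3.23]{kbb} applies, and Proposition~\ref{prop:modifedKSinftyrk1} already records that $\overline{\KS}(\TT,\FF_{\all},\PP)$ is a free $\LL$-module of rank one.

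Next, I would apply the divisibility statement of~\cite[Theorem 3.23]{kbb} (or its analogue in~\cite[Theorem 2.19]{kbbstark}, and~\cite[Theorem 3.23]{kbbiwasawa}) to the $\LL$-adic Kolyvagin system $\pmb{\kappa}^{\Psi_0,\textup{Iw}}\in\overline{\KS}(\TT,\FF_{\all},\PP)$. This yields
$$\textup{char}\left(H^1_{\FF_{\all}^*}(k,\TT^*)^{\vee}\right)\,\Big{|}\,\textup{char}\left(H^1_{\FF_{\all}}(k,\TT)/\LL\cdot\kappa_1^{\Psi_0,\textup{Iw}}\right),$$
which is exactly (i). For (ii), the same theorem asserts that this divisibility is an equality whenever the underlying $\LL$-adic Kolyvagin system is $\LL$-primitive, and the primitivity of $\pmb{\kappa}^{\Psi_0,\textup{Iw}}$ at the level of $\LL$-adic Kolyvagin systems may be detected on its image in $\overline{\KS}(T,\FF_\al,\PP)$ via the surjection of Proposition~\ref{prop:modifedKSinftyrk1}, since both modules are free of rank one over $\LL$ and $\oo$ respectively; hence the primitivity of $\pmb{\kappa}^{\Psi_0}$ forces the equality.

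The main obstacle in this proof is strictly bookkeeping: making sure the freeness and surjectivity statements of Proposition~\ref{prop:modifedKSinftyrk1} transfer the hypothesis ``$\pmb{\kappa}^{\Psi_0}$ is primitive'' to ``$\pmb{\kappa}^{\Psi_0,\textup{Iw}}$ is $\LL$-primitive'' in the sense required by~\cite[Theorem 3.23]{kbb}. The substantive Iwasawa-theoretic work -- constructing the $\LL$-adic Kolyvagin system for the modified Selmer structure $\FF_{\all}$, bounding characteristic ideals by core Kolyvagin classes, and identifying the correct notion of primitivity -- has already been carried out in~\cite{kbb, kbbiwasawa}, so no new ideas are needed here beyond checking that the running hypotheses put us squarely in that framework.
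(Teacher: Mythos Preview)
Your approach is essentially the same as the paper's: apply the $\LL$-adic Kolyvagin system bound to $\pmb{\kappa}^{\Psi_0,\textup{Iw}}$, and for (ii) transfer primitivity from $\pmb{\kappa}^{\Psi_0}$ up to $\LL$-primitivity of $\pmb{\kappa}^{\Psi_0,\textup{Iw}}$. The only notable differences are in the citations and in how the primitivity transfer is executed.

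The paper invokes \cite[Theorem 5.3.10(i) and (iii)]{mr02} directly rather than \cite[Theorem 3.23]{kbb}; these are closely related results, so this is a matter of attribution rather than mathematics. For the primitivity transfer, the paper does not argue via the rank-one freeness and the surjection of Proposition~\ref{prop:modifedKSinftyrk1} as you do. Instead it passes to the residual representation $\overline{T}=\TT/\mm_{\LL}\TT=T/pT$, notes that the images of $\pmb{\kappa}^{\Psi_0,\textup{Iw}}$ and $\pmb{\kappa}^{\Psi_0}$ in $\KS(\overline{T})$ coincide (and are nonzero since $\pmb{\kappa}^{\Psi_0}$ is primitive), and then uses the factorization $\overline{\KS}(\TT)\to\overline{\KS}(\TT/\frak{p}\TT)\to\KS(\overline{T})$ to conclude that the image of $\pmb{\kappa}^{\Psi_0,\textup{Iw}}$ is nonzero at every height-one prime $\frak{p}\subset\LL$, which is exactly $\LL$-primitivity in the sense of \cite[Definition 5.3.9]{mr02}. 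Your Nakayama-style argument reaches the same conclusion (a $\LL$-generator of a free rank-one module certainly has nonzero image modulo every height-one prime, provided the target is nonzero), but the paper's route makes the verification of $\LL$-primitivity against its actual definition more transparent.
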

\begin{proof}
(i) is \cite[Theorem 5.3.10(i)]{mr02}, and the assertion (ii) follows from~\cite[Theorem 5.3.10(iii)]{mr02}, once we check that $\pmb{\kappa}^{\Psi_0,\textup{Iw}}$ is $\LL$-primitive (in the sense of~\cite[Definition 5.3.9]{mr02}), provided that  $\pmb{\kappa}^{\Psi_0} \in \overline{\KS}(T,\FF_\al,\PP)$ is primitive. This is what we verify now.

Let $\overline{T}$ be the residual representation $\mathbb{T}/\mm_\LL\mathbb{T}=T/pT$. For a Kolyvagin system $\pmb{\kappa} \in \overline{\KS}(\mathbb{T})$ (resp., $\kappa \in \overline{\KS}({T})$), let $\overline{\pmb{\kappa}}$ (resp., $\overline{\kappa}$) denote the image of $\pmb{\kappa}$ (resp., $\kappa$) under the map $\overline{\KS}(\mathbb{T})\ra{\KS}(\overline{T})$ (resp., under the map $\overline{\KS}({T})\ra{\KS}(\overline{T})$). Since ${\pmb\kappa}^{\Psi_0, \textup{Iw}}$ maps to the element ${\pmb\kappa}^{\Psi_0}$ under the map $\overline{\KS}(\mathbb{T})\ra \overline{\KS}({T}),$ it is clear that $\overline{\pmb\kappa}^{\Psi_0,\textup{Iw}}=\overline{\pmb{\kappa}}^{\Psi_0}$, and we henceforth write $\overline{\kappa}$ for both. By our assumption that $\pmb{\kappa}^{\Psi_0}$ is primitive, it follows that $\overline{\kappa}\neq 0$. This proves that the image of ${\pmb\kappa}^{\Psi_0,\textup{Iw}}$  under the map
$\overline{\KS}(\mathbb{T}) \ra \overline{\KS}(\mathbb{T}/\frak{p}\mathbb{T})$
 is non-zero for any height-one prime $\frak{p} \subset \LL$; since we have a commutative diagram
 $$\xymatrix@C=.05pt@R=.5pt{\pmb{\kappa}^{\Psi_0,\textup{Iw}}\ar@{{|}->}[dd] &\in& \overline{\KS}(\mathbb{T})\ar[dd]\ar[rrrrd]&&&&\\
 &&&&&&\overline{\KS}(\mathbb{T}/\frak{p}\mathbb{T})\ar[lllld]\\
 \overline{\kappa}&\in&{\KS}(\overline{T})&&&&
 }$$ and $\overline{\kappa}\neq0$.
\end{proof}
\begin{cor}
\label{cor:mainkinfty-2}
Suppose the hypothesis \textup{\textbf{(H.nV)}} holds.
\begin{itemize}
\item[(i)] $\textup{char}\left(H^1_{\FF_{\textup{Gr}}^*}(k,\TT^*)^\vee\right) \Big{|}\, \textup{char}\left( \all/\LL\cdot\textup{loc}_p^s(\kappa_1^{\Psi_0,\textup{Iw}})\right).$
\item[(ii)] The inequality of \textup{(i)} is an equality if and only if $\pmb{\kappa}^{\Psi_0}$ is primitive.
\end{itemize}
\end{cor}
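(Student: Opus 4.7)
The plan is to mirror the argument for Corollary~\ref{cor:maink}, replacing cardinality inequalities with divisibilities of characteristic ideals, and using Proposition~\ref{prop:compare selmer over k_infty}(ii) in place of Corollary~\ref{cor:compareoverkwithclass}. The main inputs will be Theorem~\ref{thm:mainmodifiedkinfty-2} and the comparison exact sequence, glued together via multiplicativity of characteristic ideals on exact sequences of torsion $\LL$-modules.

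First, I would verify that \textbf{(H.nV)} forces $H^1_{\FF_{\Gr}}(k,T)=0$, which is the hypothesis required to invoke Proposition~\ref{prop:compare selmer over k_infty}(ii). By Lemma~\ref{lem:nonvanishingKS} we have $\textup{loc}_p^s(\kappa_1^{\Psi_0})\neq 0$. The argument of Corollary~\ref{cor:mainNV}(ii) carries over \emph{verbatim} for the present $\FF_\al$ (defined now via the Greenberg local condition rather than the Bloch–Kato one) to show that $H^1_{\FF_\al}(k,T)$ is a free $\oo$-module of rank one. The nonzero map $\textup{loc}_p^s\colon H^1_{\FF_\al}(k,T)\to \al$ between free $\oo$-modules of rank one is therefore injective, and its kernel is by construction $H^1_{\FF_\Gr}(k,T)$.

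Next, I would check that $\textup{loc}_p^s(\kappa_1^{\Psi_0,\textup{Iw}})\neq 0$, so that $\all/\LL\cdot\textup{loc}_p^s(\kappa_1^{\Psi_0,\textup{Iw}})$ is $\LL$-torsion. By Remark~\ref{rem:KS1equalsES1}, reducing $\kappa_1^{\Psi_0,\textup{Iw}}$ modulo $\gamma-1$ recovers $\kappa_1^{\Psi_0}$; vanishing of the Iwasawa-level localization would thus contradict the previous step.

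Applying Proposition~\ref{prop:compare selmer over k_infty}(ii) with $c=\kappa_1^{\Psi_0,\textup{Iw}}$ yields a four-term exact sequence of finitely generated $\LL$-modules. The outer terms are torsion by the previous step together with Theorem~\ref{thm:mainmodifiedkinfty-2}(i), so all four terms are torsion. Multiplicativity of characteristic ideals along the sequence gives
\begin{equation*}
\textup{char}\!\left(\tfrac{\all}{\LL\cdot\textup{loc}_p^s(\kappa_1^{\Psi_0,\textup{Iw}})}\right)\cdot\textup{char}\!\left(H^1_{\FF_{\all}^*}(k,\TT^*)^\vee\right)=\textup{char}\!\left(\tfrac{H^1_{\FF_{\all}}(k,\TT)}{\LL\cdot\kappa_1^{\Psi_0,\textup{Iw}}}\right)\cdot\textup{char}\!\left(H^1_{\FF_\Gr^*}(k,\TT^*)^\vee\right).
\end{equation*}
Combining with the divisibility $\textup{char}(H^1_{\FF_{\all}^*}(k,\TT^*)^\vee)\mid\textup{char}(H^1_{\FF_{\all}}(k,\TT)/\LL\cdot\kappa_1^{\Psi_0,\textup{Iw}})$ from Theorem~\ref{thm:mainmodifiedkinfty-2}(i) (and cancelling the nonzero factor $\textup{char}(H^1_{\FF_{\all}^*}(k,\TT^*)^\vee)$) yields (i), and equality in (i) holds if and only if equality holds in Theorem~\ref{thm:mainmodifiedkinfty-2}(i), which by part (ii) of that theorem is equivalent to primitivity of $\pmb{\kappa}^{\Psi_0}$, proving (ii).

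The chief technical step is the first one: extracting from \textbf{(H.nV)} the vanishing of the Greenberg-flavored Selmer group $H^1_{\FF_\Gr}(k,T)$, which is exactly what unlocks Proposition~\ref{prop:compare selmer over k_infty}(ii) and lets us convert the modified-Selmer bound of Theorem~\ref{thm:mainmodifiedkinfty-2} into a statement about the Greenberg Selmer group that one actually wants to relate to $L$-values.
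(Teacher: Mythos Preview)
Your proposal is correct and follows essentially the same route as the paper: deduce $H^1_{\FF_{\Gr}}(k,T)=0$ from \textbf{(H.nV)} (as in Corollary~\ref{cor:mainvanishingselmer}), apply Proposition~\ref{prop:compare selmer over k_infty}(ii) with $c=\kappa_1^{\Psi_0,\textup{Iw}}$, and combine with Theorem~\ref{thm:mainmodifiedkinfty-2}. You have simply made explicit the multiplicativity-of-characteristic-ideals step and the torsion checks that the paper leaves to the reader.
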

\begin{proof}
As in Corollary~\ref{cor:mainvanishingselmer}, \textup{\textbf{(H.nV)}} implies that $H^1_{\FF_{\textup{Gr}}}(k,T)$ vanishes. (i) now follows from Theorem~\ref{thm:mainmodifiedkinfty-2}(i) and Proposition~\ref{prop:compare selmer over k_infty}(ii) applied with the class $c=\kappa_1^{\Psi_0,\infty} \in H^1_{\FF_{\all}}(k,\TT)$. The assertion (ii) is immediate from Theorem~\ref{thm:mainmodifiedkinfty-2}(ii).
\end{proof}

Define $c_{k_\infty}^{(r)}:=\{c_{k_n}^{(r)}\}_{_n} \in \varprojlim_n \wedge^r_0 H^1(k_n,T)$. Recall that the subscript `0' here is to remind us that the elements $\{c_{k_n}^{(r)}\}$ are allowed to have \emph{denominators}. As explained in Remark~\ref{rem:locallyintegral}, the singular projections of these elements have no \emph{denominators}: $\textup{loc}_p^s(c_{k_n}^{(r)}) \in \wedge^r H^1_s((k_n)_p,T)$. Hence,
$$\textup{loc}_p^s(c_{k_\infty}^{(r)}):=\{c_{k_n}^{(r)}\} \in \varprojlim_{n} \wedge^r H^1_s((k_n)_p,T)=\wedge^r H^1_s(k_p,\TT),$$
where the last equality is because each $H^1_s((k_n)_p,T)$ is a free $\oo[\Gamma_n]$-module of rank $r$ and the maps $H^1_s(k_p,\TT) \ra H^1_s((k_n)_p,T)$ are all surjective.

\begin{thm}
\label{thm:mainkinfty}
Under the hypotheses of Corollary~\ref{cor:mainkinfty-2},
\begin{itemize}
\item[(i)] $\textup{char}\left(H^1_{\FF_{\textup{Gr}}^*}(k,\TT^*)^\vee\right) \Big{|}\, \textup{char}\left( \wedge^r H^1_s(k_p,\TT)/\LL\cdot\textup{loc}_p^s(c_{k_{\infty}}^{(r)})\right),$
\item[(ii)] the divisibility in (i) is an equality if and only if  $\pmb{\kappa}^{\Psi_0}$ is primitive.
\end{itemize}
\end{thm}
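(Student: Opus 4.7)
The plan is to deduce Theorem~\ref{thm:mainkinfty} from Corollary~\ref{cor:mainkinfty-2} by identifying the $\LL$-modules
$$\all\big/\LL\cdot\textup{loc}_p^s(\kappa_1^{\Psi_0,\textup{Iw}}) \quad \textrm{and} \quad \wedge^r H^1_s(k_p,\TT)\big/\LL\cdot\textup{loc}_p^s(c_{k_\infty}^{(r)}).$$
The bridge between these two quotients is the isomorphism $\varphi_\infty = \{\varphi_{k_n}\}_n$ of Remark~\ref{interchange}, which identifies $\wedge^r_{\LL} H^1_s(k_p,\TT) \stackrel{\sim}{\lra} \all$ as $\LL$-modules. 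This isomorphism exists because at each finite level Proposition~\ref{prop:homshLi} guarantees $\varphi_{k_n}$ restricts to an isomorphism $\wedge^r_{\oo[\Gamma_n]}\al_{k_n}^s \stackrel{\sim}{\to}\al_{k_n}$, and because the freeness of $H^1_s(k_p,\TT)$ (Proposition~\ref{prop:linearalgebra2}) allows the inverse limit to commute with the $r$-th exterior power.

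Next, I would verify that under $\varphi_\infty$ the distinguished element $\textup{loc}_p^s(c_{k_\infty}^{(r)})$ is sent to $\textup{loc}_p^s(\kappa_1^{\Psi_0,\textup{Iw}})$. By Remark~\ref{rem:KS1equalsES1} we have $\kappa_1^{\Psi_0,\textup{Iw}} = \{\varphi_{k_n}(c_{k_n}^{(r)})\}_n$ as an element of $\varprojlim_n H^1(k_n,T) = H^1(k,\TT)$. Applying the semi-local singular projection $\textup{loc}_p^s$ term by term and using that each $\varphi_{k_n}$ is $G_k$-equivariant and factors through the singular quotient by construction (it is built from $\pmb{\al_s}$ in Proposition~\ref{prop:linearalgebra2}), we obtain the identity $\textup{loc}_p^s(\kappa_1^{\Psi_0,\textup{Iw}}) = \varphi_\infty(\textup{loc}_p^s(c_{k_\infty}^{(r)}))$ inside $\all$. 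This is the Iwasawa-theoretic analogue of the equality chain~(\ref{eqn:longone}) proved in Lemma~\ref{lem:nonvanishingKS}.

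The identification of the two quotient $\LL$-modules, hence of their characteristic ideals, is then immediate. Substituting into Corollary~\ref{cor:mainkinfty-2}(i) proves assertion (i), and the equivalence statement in Corollary~\ref{cor:mainkinfty-2}(ii) translates directly into the primitivity criterion asserted in (ii).

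The main obstacle is the careful bookkeeping required to pass the equality $\textup{loc}_p^s(\kappa_1^{\Psi_0,\textup{Iw}}) = \varphi_\infty(\textup{loc}_p^s(c_{k_\infty}^{(r)}))$ through the inverse limit and to verify that $\varphi_\infty$ is genuinely an isomorphism of $\LL$-modules (rather than only at each finite level). Both rely essentially on the freeness results of \S\ref{subsec:localstructure}, which themselves depend on the standing hypotheses $\hne$, $\hwd$, $\hord$ and $\hntz$; once these are in place, the remainder of the argument is essentially formal.
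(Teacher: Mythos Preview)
Your proposal is correct and follows essentially the same approach as the paper: invoke the isomorphism $\varphi_\infty:\wedge^r H^1_s(k_p,\TT)\stackrel{\sim}{\to}\all$ from Remark~\ref{interchange}, check that it carries $\textup{loc}_p^s(c_{k_\infty}^{(r)})$ to $\textup{loc}_p^s(\kappa_1^{\Psi_0,\textup{Iw}})$ (via Remark~\ref{rem:KS1equalsES1}), and then read off both assertions from Corollary~\ref{cor:mainkinfty-2}. Your write-up is in fact more explicit than the paper's, which simply records the relevant commutative diagram and cites the corollary.
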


\begin{proof}
Recall $\varphi_{\infty}=\{\varphi_{k_n}\}_{_n}$, which we defined in Remark~\ref{interchange}. By definition, we have the following diagram:
$$
\xymatrix @C=.4cm @R=.2cm{ \varphi_\infty:\wedge^r H^1_s(k_p,\TT) \ar[r]^(.71){\sim}& \all\\
\textup{loc}_p^s(c_{k_\infty}^{(r)})\ar@{{|}->}[r]&\textup{loc}_p^s(\kappa_1^{\Psi_0,\infty})
}$$
(i) now follows from Corollary~\ref{cor:mainkinfty-2}(i) and the diagram above, and (ii) is immediate after Corollary~\ref{cor:mainkinfty-2}(ii).
\end{proof}
\subsection{Perrin-Riou's (conjectural) $p$-adic $L$-functions}
\label{subsec:PRpadic}
Rubin~ \cite[\S VIII]{r00} sets up a connection between Euler systems of rank $r$ and $p$-adic $L$-functions via the work of Perrin-Riou \cite{pr, pr-ast}. We will apply the results of~\S\ref{subsec:applicationk} and \S\ref{subsec:mainthmkinfty} with the (conjectural) Euler system of Perrin-Riou and Rubin. Since these Euler systems arise from $p$-adic $L$-functions, Corollary~\ref{cor:maink} and Theorem~\ref{thm:mainkinfty} will relate Selmer groups to $L$-values.
\subsubsection{The setting}
\label{subsubsec:settingPR}
For notational convenience, we restrict ourselves to the case $\Phi=\QQ_p$ and $\oo=\ZZ_p$. Let $\QQ_p(1)=\QQ_p\otimes\ZZ_p(1)$ and  $\QQ(j)=\QQ_p(1)^{\otimes j}$ for every $j \in \ZZ$. We also write $V(j)=V\otimes\QQ_p(j)$ for a Galois representation $V$, and $V^*=\Hom(V,\QQ_p(1))$. Throughout this section, we assume that the $G_k$-representation $V=T\otimes\QQ_p$ is the $p$-adic realization $M_p$ of a (pure) motive $M_{/k}$  in the sense of~\cite[\S III.2.1.1]{FPR91}.  Write $w=w(M)$ for the weight of $M$ and let $L(M,s)$ denote the $L$-function of $M$. This is defined as an  Euler product $$L(M,s)=\prod_{\ell} L_\ell(M,s)$$ which is absolutely convergent in the half-plane $\Re(s)>1+\frac{w}{2}$. We will assume without loss that $k=\QQ$; as in general one could consider the induced representation $\textup{Ind}_{k/\QQ}T$ in place of $T$. We will suppose further that the representation $V=M_p$ is crystalline at $p$.

Write $\check{M}$ for the dual motive. For the sake of simplicity, we shall be interested in the case of a \emph{self-dual motive} $M \stackrel{\sim}{\ra} \check{M}(1)$. In this case,  we have $w=-1$, and $s=0$ is the center of symmetry of the conjectural functional equation that the associated complex $L$-function $L(M,s)$ satisfies.  Serre's~\cite[\S3]{serre} general recipe implies that the Archimedean factor $L_\infty(M,s)$ at infinity is non-vanishing at $s=0$, hence the central point $s=0$ is critical in the sense of Deligne~\cite{deligne3}.

\begin{example} In the examples below, suppose $k$ is an arbitrary totally real field.
\begin{itemize}
\item[1.] Let $A$ be an abelian variety over $k$. Let $M$ be the motive $h^1(A)(1)$. The $p$-adic realization of $M$ is given by $M_p=\QQ_p\otimes T_p(A)$. Falting's~\cite{faltingstate} proof of the Tate conjecture implies that the motive $M$ determines the abelian variety $A$ up to an isogeny over $k$. Let $A^{\vee}$ denote the dual abelian variety, and fix a polarization $f:A\ra A^{\vee}$. This isogeny induces an isomorphism of motives $h^1(A) \stackrel{\sim}{\ra}h^1(A^{\vee})$ and  the Weil pairing  shows that $M \stackrel{\sim}{\lra} \check{M}(1)$, i.e., $M$ is self-dual. One has $L(M,s)=L(A_{/k},s+1)$, where $L(A_{/k},s)$ is the Hasse-Weil $L$-function attached to $A$. The study of $L(M,s)$ at the central critical point $s=0$ therefore amounts to the study of $L(A_{/k},s)$ at $s=1$. The representation $V$ is crystalline at $p$ if and only if $A$ has good reduction at $p$~(by the work of Fontaine \cite{fontaine1} for the ``if" part of this statement; and the ``only if" part by Coleman and Iovita~\cite{colemaniovita}, see also~\cite{mokrane} for the case when $A_{/\QQ_p}$ is potentially a product of Jacobians).
\\
\item[2.] Suppose that $f$ is a cuspidal Hilbert eigenform of even parallel weight $(w,w,\dots,w)$ (for brevity, we say of weight $w \in 2\ZZ^+$), of level $\frak{n}\subset \oo_k$ and central character $\varphi$. Thanks to~\cite[Proposition 1.3]{shimura}, there exists a number field $L_f$ such that its ring of integers $\oo_f:=\oo_{L_f}$ contains the values of $\varphi$ and all Hecke eigenvalues $\theta_f(\frak{a})$ for $(\frak{a},\frak{n})=1$. Let $\frak{p}$ be any prime of $L_f$ above $p$. The work of Carayol~\cite{carayol86}, Wiles~\cite{wileslambda}, Taylor~\cite{taylor89} and Blasius and Rogawski~\cite{blasiushilbert} attaches to $f$ a motive $M$ such that the $\frak{p}$-adic realization $M_{\frak{p}}=V_{\frak{p}}(f)$ is an irreducible~\cite{taylor95} two dimensional representation of $G_k$ over $L_{f,\frak{p}}$. (When $k=\QQ$, this construction is due to Eichler, Shimura, Deligne~\cite{deligne69} and Scholl~\cite{scholl90}.) The works of D. Blasius and J. Rogawski, C. Breuil, M. Kisin and T. Saito (under certain conditions) and T. Liu (in general) show that $V_{\frak{p}}(f)$ is crystalline at $\frak{p}$ if $(\frak{p},\frak{n})=1$. Let $\mathbb{A}_k$ denote the id\'eles of $k$, and suppose $\chi:\mathbb{A}_k/k^{\times} \ra L_f^{\times}$ is a character such that $\varphi=\chi^{-2}$. As Nekov\'a\v{r} explains in~\cite[\S12.5.5]{nekovar06}, the $G_k$-representation $V=V_{\frak{p}}(f)(w/2)\otimes\chi$ is self-dual in the sense that $V\stackrel{\sim}{\ra} \Hom(V,\QQ_p(1))$.
\end{itemize}

\end{example}

Let $B_{\textup{dR}}$ denote Fontaine's~\cite{fontaine82} field of $p$-adic periods; it is a discretely valued field whose valuation ring contains $\overline{\QQ}_p$. There is a natural descending filtration
$$\dots\supset B^i_{\textup{dR}}\supset B^{i+1}_{\textup{dR}} \supset \dots,$$
 which is obtained by letting $B^i_{\textup{dR}} \subset B_{\textup{dR}}$ to be the set of elements whose valuation is at least $i$. For an arbitrary Galois representation $W$ (which is finite dimensional over $\QQ_p$) and a finite extension $\frak{L}$ of $\QQ_p$, write $D_{\textup{dR}}(\frak{L},W)=H^0(\frak{L},B_{\textup{dR}}\otimes W)$, and $D_{\textup{dR}}(\QQ_p,W)=D_{\textup{dR}}(W)$. The filtration on $B_{\textup{dR}}$ induces a decreasing filtration $\{D^i_{\textup{dR}}(W)\}_{i\in \ZZ}$ on $D_{\textup{dR}}(W)$. One always has
$$\textup{dim}_{\frak{L}}\,D_{\textup{dR}}(\frak{L},W)\leq \textup{dim}_{\QQ_p}W$$ by \cite[\S 5.1]{fontaine82} and the $G_{\frak{L}}$-representation $W$ is called De Rham if $\textup{dim}_{\frak{L}}(D_{\textup{dR}}(\frak{L},W))=\textup{dim}_{\QQ_p}(W)$. A $G_{\QQ_p}$-representation $W$ is De Rham if and only if  it is De Rham as a $G_{\frak{L}}$-representation; and one has
$$\frak{L}\otimes_{\QQ_p} D_{\textup{dR}}(W) \stackrel{\sim}{\lra}D_{\textup{dR}}(\frak{L},W),$$
if $W$ is De Rham.

For any De Rham representation $W$ of $G_{\frak{L}}$ as above, Bloch and Kato~\cite{bk} construct a canonical homomorphism
$$\textup{exp}^*: H^1(\frak{L},W) \lra D_{\textup{dR}}^0(\frak{L},W)$$ called the dual exponential map. By its construction, it factors through the singular quotient $H^1_s(\frak{L},W)$. In Section~\ref{subsubsec:PRlog} below, we will explain Perrin-Riou's~\cite{pr} interpolation of the dual exponential maps for \emph{crystalline}\footnote{Kato claims in~\cite[Remark 16.5]{ka1} that this assumption is not necessary and refers to his preprint with Kurihara and Tsuji~\cite{kkt}.} representations (which we define next), as one climbs up the cyclotomic tower.

Let $B_{\textup{cris}}$ be Fontaine's crystalline period ring, see~\cite{fontaine94} for its construction and other properties we note here. For a $G_{\QQ_p}$-representation $W$ as above, let $D_{\textup{cris}}(W)=H^0(\QQ_p, B_{\textup{cris}}\otimes W)$ be Fontaine's filtered vector space associated to $W$ which is endowed by a Frobenius action. If $W$ is also a $G_{\QQ}$-representation, we set
 $$D_{\textup{cris}}(F,W)=D_{\textup{cris}}(\textup{Ind}_{F/\QQ}W)$$
 for a finite abelian extension $F$ of $\QQ$ which is unramified above $p$.

 For any $G_{\QQ_p}$-representation $W$, it is known that $D_{\textup{cris}}(W) \subset D_{\textup{dR}}(W)$, and hence
 $$\textup{dim}_{\QQ_p}\,D_{\textup{cris}}(W) \leq\textup{dim}_{\QQ_p}\,D_{\textup{dR}}(W)\leq \textup{dim}_{\QQ_p}W,$$ and we say that $W$ is crystalline if $\textup{dim}_{\QQ_p}\,D_{\textup{cris}}(W)= \textup{dim}_{\QQ_p}W.$ Hence, if $W$ is crystalline, then $W$ is De Rham as well, and one has $D_{\textup{cris}}(W)=D_{\textup{dR}}(W)$.

 We define one final ring which plays an important role in what follows. Define $G_\infty:=\Gal(\QQ(\pmb{\mu}_{p^\infty})/\QQ)=\Delta\times\Gamma$ where $\Delta=\Gal(\QQ(\pmb{\mu}_p)/\QQ)$ is a finite group of order prime to $p$, and $\Gamma$ is defined as before. Set $G_n=\Gal(\QQ(\pmb{\mu}_{p^n})/\QQ)$. Fixing a topological generator $\gamma$ of $\Gamma$, we may identify $\ZZ_p[[G_\infty]]$ with the power series ring $\ZZ_p[\Delta][[\gamma-1]]$ over the group ring $\ZZ_p[\Delta]$. For any integer $h\geq 1$, set

\begin{align*}
\mathcal{H}_{h}=\Bigg\{\sum_{\substack{ n\geq 0,\\ \delta \in \Delta}} a_{n,\delta}\cdot\delta\cdot(\gamma-1)^n &\in \QQ_p[\Delta][[\gamma-1]]: \\
& \lim_{n\ra\infty} |a_{n,\delta}|_p\cdot n^{-h}=0,\hbox{ for every } \delta \in \Delta \Bigg\},
\end{align*}
 where $|\cdot|_p$ is the $p$-adic norm on $\QQ_p$, normalized by setting $|p|_p=\frac{1}{p}$. Define $\mathcal{H}_\infty=\varinjlim_{h} \mathcal{H}_h$. Any continuous character $\chi: G_\infty \ra \overline{\QQ}_p^\times$ induces a homomorphism $\mathcal{H}_\infty \ra \overline{\QQ}_p^\times$, which we still denote by $\chi$. We write $\rho_{\textup{cyc}}$ for the cyclotomic character
 $$\rho_{\textup{cyc}}: G_{\infty} \stackrel{\sim}{\lra} \ZZ_p^\times$$
 and following~\cite[\S 4.1.5]{pr},  we say that $\rho$ is a geometric character of $G_\infty$ if there is an integer $j=j_\rho$ such that $\rho_{\textup{cyc}}^{-j}\cdot\rho=\chi_\rho$ is a character of finite order.

 Finally, for every field $F$ and a $G_F$-module $T$ which is free of finite rank over $\ZZ_p$, write $$H^1_\infty(F,T)=\varprojlim_{n} H^1(F(\pmb{\mu}_{p^n}),T),$$ and if $W=T\otimes\QQ_p$, write $H^1_\infty(F,W)=\QQ_p\otimes H^1_\infty(F,T)$.
\subsubsection{Perrin-Riou's extended logarithm and conjectures}
\label{subsubsec:PRlog} We are now ready to state a theorem due to
Perrin-Riou~\cite{pr}. The statement below follows ~\cite[Theorem \S
16.4]{ka1}, which is Perrin-Riou's Theorem enhanced by Kato,
Kurihara and Tsuji~\cite{kkt} to cover any De Rham representation at
$p$ that satisfies $D_{\textup{cris}}(W^*) \subset
D^0_{\textup{dR}}(W^*)$.
\begin{thm}[Perrin-Riou, Kato-Kurihara-Tsuji]
\label{thm:PRslog}
Suppose $W$ is a $G_{\QQ}$-representation which is finite dimensional as a $\QQ_p$-vector space.
Assume $W$ is De Rham at $p$ and  $D_{\textup{cris}}(W^*) \subset D^0_{\textup{dR}}(W^*)$. Then for every finite extension $F/\QQ$ in which $p$ is unramified, there is a unique homomorphism
$$\frak{Log}^F: H^1_\infty(F,W) \lra \mathcal{H}_\infty \otimes_{\QQ_p}D_{\textup{cris}}(F,W)$$
 which  satisfies the following properties \textup{(i)-(ii)}, for every $\eta \in D_{\textup{cris}}(W^*)$ and for every integer $j\geq 1$:
 \begin{itemize}
 \item[(i)] Let $\frak{Log}^F_\eta$ be the composite map
 $$\frak{Log}^F_\eta: H^1_\infty(F,W) \stackrel{\frak{Log}^F}{\lra}  \mathcal{H}_\infty \otimes_{\QQ_p}D_{\textup{cris}}(F,W) \stackrel{\eta}{\lra}  \mathcal{H}_\infty \otimes_{\QQ_p} F,$$
  where the second map is induced from the canonical pairing
  $$D_{\textup{dR}}(W) \times D_{\textup{cris}}(W^*) \lra \QQ_p$$
  and from
  $$D_{\textup{cris}}(F,W) \subset D_{\textup{dR}}(F,W)\cong F\otimes D_{\textup{dR}}(W).$$
  Then for $n\geq 1$, for every character $\chi:G_n\ra \overline{\QQ}_p^\times$ which does not factor through $G_{n-1}$ and for any $x \in  H^1_\infty(F,W)$, we have
  $$\rho_{\textup{cyc}}^j\chi^{-1}\left(\frak{Log}^F_\eta \right) \approx  \sum_{\sigma \in G_n}\chi(\sigma)\langle \sigma(\textup{exp}^*(x_{-j,n})), (p^{-j}\varphi)^{-n}(\eta)\rangle.$$
 Here:
 \begin{itemize}
  \item\textup{`}$\approx$\textup{'} means equality up to simple non-zero factors which are omitted for brevity,
  \item$\varphi$ is the crystalline Frobenius,
  \item  $x_{-j,n}$ is the image of $x$ under the composite
 $$H^1_{\infty}(F_p,W) \stackrel{\sim}{\lra} H^1_{\infty}(F_p,W(-j)) \stackrel{\textup{proj}}{\lra} H^1(F(\pmb{\mu}_{p^n})_p, W(-j)), $$
 \item $\textup{exp}^*$ is the semi-local Bloch-Kato dual exponential
 \begin{align*}\textup{exp}^*: H^1(F(\pmb{\mu}_{p^n}),W(-j)) &\lra  D^0_{\textup{dR}}(F(\pmb{\mu}_{p^n}),W(-j))\\
 &\,\,\,\,\subset D_{\textup{dR}}(F(\pmb{\mu}_{p^n}),W(-j))\\
 &\,\,\,\,=D_{\textup{dR}}(F(\pmb{\mu}_{p^n}),W)\\
 &\,\,\,\, =F(\pmb{\mu}_{p^n})\otimes D_{\textup{dR}}(W)
 \end{align*}
 \item $\langle\,,\,\rangle$ is the pairing
 $$F(\pmb{\mu}_{p^n})\otimes D_{\textup{dR}}(W) \times D_{\textup{cris}}(W^*) \lra F(\pmb{\mu}_{p^n})\otimes\QQ_p$$
 induced from the pairing $D_{\textup{dR}}(W) \times D_{\textup{cris}}(W^*) \lra \QQ_p$.
 \end{itemize}
 \medskip
 \item[(ii)] Suppose $\eta=(1-p^{-j}\varphi)\eta^{\prime}$,
  with $\eta^{\prime} \in  D_{\textup{cris}}(W^*)$, and let $\frak{Log}^F_\eta$ be as in \textup{(i)}. Then for any $x \in H^1_{\infty}(F_p,W)$ we have
 $$\rho_{cyc}^{j}(\frak{Log}^F_\eta(x))=(j-1)! \cdot \langle\textup{exp}^*(x_{-j,0}), (1-p^{j-1}\varphi^{-1})\eta^{\prime} \rangle.$$
 \end{itemize}
\end{thm}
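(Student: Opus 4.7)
The plan is to establish uniqueness first and then construct $\mathfrak{Log}^F$, following the strategy of Perrin-Riou~\cite{pr} with Kato--Kurihara--Tsuji's extension~\cite{kkt} to cover the general de Rham case. The two stages are largely independent and the uniqueness argument is essentially formal, while the construction is where the serious $p$-adic Hodge theory enters.

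For uniqueness, I would exploit the fact that an element of $\mathcal{H}_\infty\otimes_{\QQ_p} D_{\textup{cris}}(F,W)$ is determined by its values at sufficiently many geometric characters. Concretely, if $\mathfrak{Log}^F$ and $\mathfrak{Log}^{F,\prime}$ both satisfy property (i), then for every $\eta\in D_{\textup{cris}}(W^*)$ the difference $\mathfrak{Log}^F_\eta-\mathfrak{Log}^{F,\prime}_\eta$ has the property that its evaluation $\rho_{\textup{cyc}}^j\chi^{-1}$ vanishes for all $j\geq 1$ and all finite order $\chi$ of $\Gamma$. Since the set of such characters is Zariski-dense among continuous characters of $G_\infty$, and since a nonzero element of $\mathcal{H}_h\otimes_{\QQ_p} L$ (for a finite extension $L/\QQ_p$) can vanish on only finitely many characters of any fixed weight, this forces $\mathfrak{Log}^F_\eta=\mathfrak{Log}^{F,\prime}_\eta$. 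Letting $\eta$ run through a $\QQ_p$-basis of $D_{\textup{cris}}(W^*)$ and using the perfectness of the $D_{\textup{dR}}(W)\times D_{\textup{cris}}(W^*)\to \QQ_p$ pairing (note that the hypothesis $D_{\textup{cris}}(W^*)\subset D^0_{\textup{dR}}(W^*)$ is compatible with this pairing), we recover $\mathfrak{Log}^F$ itself.

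For existence, I would first reduce to the base case $F=\QQ$ by means of Shapiro's lemma: put $W^\prime=\textup{Ind}_{F/\QQ}W$, so that $H^1_\infty(F,W)\cong H^1_\infty(\QQ,W^\prime)$ and $D_{\textup{cris}}(F,W)\cong D_{\textup{cris}}(W^\prime)$ canonically (these identifications are compatible with the Frobenius action and dual exponentials because $p$ is unramified in $F$). Then one constructs $\mathfrak{Log}^{\QQ}$ for $W^\prime$: the original route due to Perrin-Riou proceeds by fixing an integer $h$ large enough so that $\textup{Fil}^{-h}D_{\textup{dR}}(W^\prime)=D_{\textup{dR}}(W^\prime)$, choosing a topological generator $\gamma$ of $\Gamma$, and solving the inhomogeneous equation $(1-\varphi)\mathcal{F}=\eta$ in the ring $B_{\textup{rig}}^+\otimes D_{\textup{cris}}(W^\prime)$; the resulting power series, transported through Fontaine's comparison isomorphism and differentiated, produces a Coleman-like map $D_{\textup{cris}}(W^\prime)\otimes\mathcal{H}_\infty\to H^1_\infty(\QQ,W^\prime)$, whose dual (under local Tate duality pairing, which pairs $H^1_\infty$ to itself along the cyclotomic tower) yields $\mathfrak{Log}^\QQ$. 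The Kato--Kurihara--Tsuji variant~\cite{kkt} bypasses the crystalline assumption on $W$ by working directly with $(\varphi,\Gamma)$-modules and the de Rham data, producing the same map under the weaker hypothesis $D_{\textup{cris}}(W^*)\subset D^0_{\textup{dR}}(W^*)$ which is what the statement records.

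The interpolation property (i) is then verified by tracing through the construction: the evaluation of $\mathfrak{Log}^F_\eta$ at $\rho_{\textup{cyc}}^j\chi^{-1}$ picks out, up to the explicit factor $(p^{-j}\varphi)^{-n}$, the classical Bloch-Kato dual exponential applied to the twisted-and-projected image $x_{-j,n}$ of $x$, summed against $\chi$. Property (ii) is deduced from (i) by a limiting argument as $n\to 0$ (i.e., evaluating at characters of $G_0$ and using the factor $(1-p^{-j}\varphi)$ which accounts for the Euler factor appearing when the conductor drops), combined with the fact that $(j-1)!$ is the normalization factor introduced by the shift $W\leadsto W(-j)$ in the de Rham/crystalline comparison. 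The main obstacle in this program is the existence statement over $\QQ$ --- specifically, solving $(1-\varphi)\mathcal{F}=\eta$ in the appropriate ring of overconvergent power series with the right growth condition to land in $\mathcal{H}_\infty$, and proving continuity of the resulting map. Uniqueness, Shapiro reduction, and deriving (ii) from (i) are by comparison routine.
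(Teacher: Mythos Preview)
The paper does not give a proof of this theorem at all: it is stated as a black-box result attributed to Perrin-Riou~\cite{pr} and Kato--Kurihara--Tsuji~\cite{kkt} (via \cite[Theorem~\S16.4]{ka1}), and is then used as input for the applications in~\S\ref{subsec:applications}. There is therefore no ``paper's own proof'' to compare your proposal against.

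Your outline is a reasonable high-level sketch of how the cited references construct $\mathfrak{Log}^F$ --- the Shapiro reduction to $F=\QQ$, the Coleman-type construction via solving $(1-\varphi)\mathcal{F}=\eta$, and the density argument for uniqueness are all standard ingredients in this circle of ideas. But since the paper treats the theorem as a citation rather than proving it, the appropriate response here is not to supply a proof but simply to cite the original sources as the paper does.
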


Let $M_{/\QQ}$ be a pure motive. For a geometric character $\rho$ of $G_\infty$, set $M(\rho)=M(j_\rho)(\chi_\rho)$. For every positive integer $\frak{f}$, one can then attach $M(\rho)$ a complex $L$-function with Euler factors at primes dividing $\frak{f}$ removed:
$$L_{\frak{f}}(M(\rho),s)=\prod_{\ell \nmid \frak{f}} L_{\ell} (M(\rho),s)^{-1}.$$
Here, for a prime $\ell\neq p$ at which the $p$-adic realization $M(\rho)_p$ is unramified, the Euler factor at $\ell$ is given by
$$L_\ell(M,s)=\textup{det}\left(1-\textup{Fr}_{\ell}^{-1}x\mid M(\rho)_p\right)\Big{|}_{s=\ell^{-s}}.$$

Let $\mathfrak{K}=\textup{Frac}(\mathcal{H}_\infty)$, the fraction field of $\mathcal{H}_\infty$. Write $d_{-}=\textup{dim}\,M_p^{-}$ for the dimension of the $(-1)$-eigenspace of a complex conjugation acting on the $p$-adic realization $M_p$ which we henceforth assume to be crystalline.

\begin{conj}[Perrin-Riou~\cite{pr-ast} \S4.2.2]
\label{conj:prpadicL}
For every positive integer $\frak{f}$ which is prime to $p$ and to every prime at which $M_p$ is ramified, there exists an element $\frak{l}_{\frak{f}}(M) \in \frak{K}\otimes\wedge^{d_{-}}D_{\textup{cris}}(M_p)$ and $\overline{\eta}=\eta_1\wedge \dots \wedge \eta_{d_{-}} \in \wedge^{d_{-}}D_{\textup{cris}}(M_p^*)$ such that
$$\mathbf{L}_{\frak{f}}^{(p)}(M)=\overline{\eta}(\frak{l}_{\frak{f}}(M)) \in \frak{K}$$
 is the \textup{`}$p$-adic $L$-function\textup{'} attached to $M$, which interpolates the special values of the complex $L$-functions attached to twists of $M$ by geometric characters, with their Euler factors at primes dividing $\frak{f}$ removed.
\end{conj}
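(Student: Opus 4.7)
The plan is to construct $\frak{l}_{\frak{f}}(M)$ by applying Perrin-Riou's extended logarithm (Theorem~\ref{thm:PRslog}) to a suitable wedge element in the Iwasawa cohomology of $M_p$. Concretely, I would first seek a ``motivic'' class $z_{\frak{f}} \in \wedge^{d_-} H^1_\infty(\QQ, M_p)$ whose projections to finite layers $\QQ(\pmb{\mu}_{p^n})$ lie in the Bloch--Kato finite parts, and which satisfies norm-compatibility relations incorporating Euler factors at primes dividing $\frak{f}$. The existence of such a system is essentially the assertion that $M_p$ admits an Euler system of rank $d_-$ in the sense of Definition~\ref{def:weaklyintegralESofrankr}; in all known cases (cyclotomic units, elliptic units, Beilinson--Kato elements, Rubin--Stark elements) the $p$-adic $L$-function has been built in exactly this fashion.

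Granted $z_{\frak{f}}$, I would set $\frak{l}_{\frak{f}}(M) := \wedge^{d_-} \frak{Log}^{\QQ}(z_{\frak{f}}) \in \frak{K} \otimes \wedge^{d_-} D_{\textup{cris}}(M_p)$, where $\frak{Log}^{\QQ}$ is the extended logarithm from Theorem~\ref{thm:PRslog}. Pairing with a fixed $\overline{\eta} = \eta_1 \wedge \cdots \wedge \eta_{d_-} \in \wedge^{d_-} D_{\textup{cris}}(M_p^*)$ then produces the candidate $\mathbf{L}_{\frak{f}}^{(p)}(M) := \overline{\eta}(\frak{l}_{\frak{f}}(M)) \in \frak{K}$. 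The interpolation property would be verified character by character: for a geometric character $\rho = \rho_{\textup{cyc}}^{j_\rho} \chi_\rho$, parts (i)--(ii) of Theorem~\ref{thm:PRslog} translate $\rho(\overline{\eta}(\frak{l}_{\frak{f}}(M)))$ into a product of a Bloch--Kato dual exponential applied to the $\rho$-component of $z_{\frak{f}}$ and of explicit Euler-like factors of the form $(1 - p^{j-1}\varphi^{-1})$ and $(p^{-j}\varphi)^{-n}$. Matching this with $L_{\frak{f}}(M(\rho), 0)$ requires a reciprocity law identifying $\textup{exp}^*(z_{\frak{f},-j,n})$ with motivic cohomology classes whose regulator is governed, via Bloch--Kato, by the special $L$-value.

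The main obstacle is step one: the unconditional construction of $z_{\frak{f}}$. This is the central open problem of Perrin-Riou's program and is the reason the statement is formulated as a conjecture rather than a theorem; outside the handful of cases mentioned above, neither the Euler system of rank $d_-$ nor any direct analytic construction of the $p$-adic measure is currently available. A secondary difficulty is the integrality of $\overline{\eta}(\frak{l}_{\frak{f}}(M))$ in $\mathcal{H}_\infty$ versus merely $\frak{K}$: this is tied to the absence of trivial zeros (cf.\ $\hntz$) and to the local structure of $D_{\textup{cris}}(M_p)$ under Frobenius. In view of these obstacles I would not attempt a direct proof, but rather (as the paper proceeds to do in \S\ref{sec:applications}) take Conjecture~\ref{conj:prpadicL} as input and combine it with Corollary~\ref{cor:maink} and Theorem~\ref{thm:mainkinfty} to extract arithmetic consequences conditional on its truth.
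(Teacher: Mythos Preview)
The statement you were asked to address is a \emph{conjecture}, not a theorem, and the paper provides no proof of it. You correctly recognized this: your proposal is not a proof but an informed sketch of the strategy underlying Perrin-Riou's program, together with an accurate identification of the obstruction (the unconditional construction of the rank-$d_-$ Euler system $z_{\frak{f}}$) and of how the paper proceeds (assuming Conjectures~\ref{conj:prpadicL} and~\ref{conj:specialelts} to derive Theorem~\ref{thm:mainconjappl}). There is nothing to compare against, and your assessment of the status and role of the conjecture is correct.

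One minor comment on your sketch: you propose to build $\frak{l}_{\frak{f}}(M)$ from an Euler system via the extended logarithm. This is essentially the content of Conjecture~\ref{conj:specialelts} rather than of Conjecture~\ref{conj:prpadicL} itself; Perrin-Riou's original formulation of Conjecture~\ref{conj:prpadicL} posits the existence of the $p$-adic $L$-function directly (characterized by its interpolation property~(\ref{eq:interpolatiponprop})), without necessarily passing through an Euler system. The two conjectures are logically distinct in the paper, though of course intimately linked in practice.
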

See~\cite[\S 4.2]{pr-ast} for a detailed description of the properties which characterize this $p$-adic $L$-function. The statement above is Rubin's extrapolation~\cite[Conjecture VIII.2.1]{r00} of Perrin-Riou's conjecture by introducing the level $\frak{f}$. The interpolation property alluded to above (roughly) reads as follows:

For every geometric character $\rho$ of $G_\infty$ such that $\chi_\rho(p)\cdot p^{j_\rho}$ and $\overline{\chi}_{\rho}(p)\cdot p^{-j_\rho-1}$ are not eigenvalues of $\varphi$ on $D_{\textup{cris}}(M_p)$,
\be\label{eq:interpolatiponprop}
\rho^{-1}(\mathbf{L}_{\frak{f}}^{(p)}(M))=\mathcal{E}_p (M(\rho)) \times \frac{L_{\frak{f}}(M(\rho),0)}{\textup{Per}_{\infty}(M(\rho))}\times \textup{Per}_p(M(\rho))
\ee
where $\mathcal{E}_p(M(\rho))$ is the Euler factor at $p$ and $\textup{Per}_{\infty}(M(\rho))$ (resp., $\textup{Per}_{p}(M(\rho))$) is the archimedean (resp., $p$-adic) period attached to $M(\rho)$, see~\cite[\S 3.1 and 4.1.4]{pr-ast}.
\subsubsection{Connection with Euler systems of rank $r$}
\label{subsubsec:PRpadicES}
Let $M_{/\QQ}$ be a pure motive as above, and let $M_p$ be its $p$-adic realization which is crystalline. Fix a $G_\QQ$-stable lattice $\mathcal{T} \subset M_p$ and an integer $\mathcal{B}=\mathcal{B}(\mathcal{T})$ which is divisible by $p$ and all bad primes for $M_p$.
Until the end of this section we assume the following conditions hold for $\mathcal{T}$:
\begin{itemize}
\item[(A)] $H^0(\QQ_p(\pmb{\mu}_p),\mathcal{T}^*)=0$,
\item[(B)] $H^0(\QQ_{p}(\pmb{\mu}_{p^{\infty}}),\mathcal{T})=0$.
\end{itemize}
where $\mathcal{T}^*=\Hom(\mathcal{T},\pmb{\mu}_{p^{\infty}})$ is as before. The conditions above are the hypotheses $\hne$ and $\hwd$ with $k=\QQ(\pmb{\mu}_p)$, and as in \S\ref{subsub:lock}, one may prove under these conditions that:
\begin{enumerate}
\item[(i)] $H^1_{\infty}(\QQ_p,\mathcal{T})$ is a free $\ZZ_p[[G_\infty]]$-module of rank $d=\dim M_p$,
\item[(ii)] the canonical projection $H^1_{\infty}(\QQ_p,\mathcal{T}) \ra H^1(\QQ_p(\pmb{\mu}_{p^n}),\mathcal{T})$ is surjective,
\item[(iii)] $H^1(\QQ_p(\pmb{\mu}_{p^n}),\mathcal{T})$ is a free $\ZZ_p[G_n]$-module of rank $d$.
\end{enumerate}
Furthermore, as noted in Remark~\ref{rem:locallyintegral}, these together with~\cite[Example (1), page 38]{ru96} show that
\begin{itemize}
\item[(1)] $\wedge_0^r H^1(\QQ_p(\pmb{\mu}_{p^n}),\mathcal{T})=\wedge^r H^1(\QQ_p(\pmb{\mu}_{p^n}),\mathcal{T})$,
\item[(2)] $\wedge^r H^1_\infty(\QQ_p,\mathcal{T})=\varprojlim_n \wedge^r H^1(\QQ_p(\pmb{\mu}_{p^n}),\mathcal{T})$,
\end{itemize}
where the exterior products in (1) is taken in the category of $\ZZ_p[G_n]$-modules, whereas in (2), the exterior products are taken in the category of $\ZZ_p[[G_\infty]]$-modules.

 Finally, assume that the weak Leopoldt  conjecture (see~\cite{pr-ast} \S1.3) holds for the representation $\Hom_{\ZZ_p}(\mathcal{T},\ZZ_p(1))$.

For any integer $\frak{f}$, write $R_{\frak{f}}=\QQ(\pmb{\mu}_{\frak{f}})^+$ for the maximal real field of $\QQ(\pmb{\mu}_{\frak{f}})$ and define
$$\mathcal{C}=\bigcup_{\substack{(\frak{f},\mathcal{B})=1\\ n\geq 1}} R_{\frak{f}}(\pmb{\mu}_{p^n}).$$
For notational consistency, we write $r=d_{-}=d_{-}(M_p)$. Recall that an Euler system of rank $r$ for the pair $(\mathcal{T},\mathcal{C})$ is a collection $\textbf{c}^{(r)}=\left\{c^{(r)}_K\right\}_{K\subset \mathcal{C}}$ with the properties that
\begin{itemize}
\item $c^{(r)}_K \in \wedge_0^r H^1(K,\mathcal{T})$,
\item  for $K\subset K^{\prime} \subset \mathcal{C}$ such that $K^{\prime}/\QQ$ is a finite extension,
$$\textup{Cor}^r_{K^{\prime}/K}\left(c_{K^{\prime}}\right)=\left(\prod_{\qq} P_{\qq}(\textup{Fr}_{\qq}^{-1})\right)c_{K},$$
where the product is over all rational primes $\qq \nmid \mathcal{B}$ which does not ramify in $K/\QQ$, but does ramify in $K^{\prime}/\QQ$.
\end{itemize}
See \S\ref{sec:ES} above for further details.

For any number field $K$, write as usual
$$\textup{loc}_p: H^1(K,\mathcal{T}) \lra H^1(K_p,\mathcal{T})$$
 for the localization map at $p$.  If $\mathbf{c}^{(r)}=\left\{c^{(r)}_K\right\}_{K\in \kk}$ is an Euler system of rank $r$ for $(\mathcal{T},\mathcal{C})$, we may regard $\textup{loc}_p\left(c^{(r)}_\infty\right):=\left\{\textup{loc}_p\left(c_{\QQ(\pmb{\mu}_{p^n})}^{(r)}\right)\right\}_n$ as an element of $\wedge^r H^1_\infty(\QQ_p,\mathcal{T})$, and apply Perrin-Riou's extended logarithm
 $$\frak{Log}^{\otimes r}: \wedge^r H^1_\infty(\QQ_p,\mathcal{T}) \lra \frak{K} \otimes \wedge^r D_{\textup{cris}}(\mathcal{M}_p)$$
  on it. Here we write $\frak{Log}$ for $\frak{Log}^{\QQ_p}$ above.

\begin{conj}[\cite{pr-ast} \S 4.4 and \cite{r00} Lemma VIII.5.1]
\label{conj:specialelts}
Assuming the hypotheses above, there exists an Euler system $\mathbf{c}^{(r)}=\left\{c^{(r)}_K\right\}_{K\in \mathcal{C}}$ of rank $r$ for $(\mathcal{T},\mathcal{C})$ so that
$$\overline{\eta}\left(\frak{Log}^{\otimes r} \left(\textup{loc}_p\left(c^{(r)}_\infty\right)\right)\right)=\mathbf{L}^{(p)}(M),$$
where $\overline{\eta}=\eta_1\wedge \dots \wedge \eta_r \in \wedge^r D_{\textup{cris}}(M_p^*)$, and $\mathbf{L}^{(p)}(M)=\mathbf{L}^{(p)}_1(M)$ is as in Conjecture~\ref{conj:prpadicL}.
\end{conj}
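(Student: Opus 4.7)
The plan is to treat the two assertions separately: first, use Perrin-Riou's extended logarithm as the fundamental \emph{local} bridge from $p$-adic $L$-values to semi-local cohomology classes, and second, address the much harder task of globalizing these local classes into a genuinely global system of rank $r$ classes satisfying the Euler-system distribution relations. Since this statement is conjectural (it is precisely Perrin-Riou's conjecture on $p$-adic $L$-functions, in its Rubin-extrapolated form), what follows is really a strategy rather than a proof.

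First I would construct the local candidate. Under the hypotheses (A) and (B), by the discussion following them, $\wedge^r H^1_\infty(\QQ_p,\mathcal{T})$ is identified with $\varprojlim_n \wedge^r H^1(\QQ_p(\pmb{\mu}_{p^n}),\mathcal{T})$, and Theorem~\ref{thm:PRslog} supplies the map
\[
\mathfrak{Log}^{\otimes r}:\wedge^r H^1_\infty(\QQ_p,\mathcal{T}) \lra \mathfrak{K}\otimes\wedge^r D_{\textup{cris}}(M_p),
\]
which is known to be injective modulo explicit finite kernels coming from the weak Leopoldt conjecture applied to $\textup{Hom}_{\ZZ_p}(\mathcal{T},\ZZ_p(1))$ (cf.~\cite[\S3.2]{pr-ast}); in fact, after inverting finitely many elements, it becomes a pseudo-isomorphism of free $\mathcal{H}_\infty$-modules of the same rank $d$, so its $r$-th exterior power is controlled. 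Pairing with a chosen basis $\overline{\eta}\in\wedge^r D_{\textup{cris}}(M_p^*)$, I would invert $\mathfrak{Log}^{\otimes r}$ (up to the controlled error) applied to $\mathbf{L}^{(p)}(M)$ to produce a distinguished element $\xi_\infty\in\wedge^r H^1_\infty(\QQ_p,\mathcal{T})\otimes\mathfrak{K}$, which I would then have to show lies in the integral lattice $\wedge^r H^1_\infty(\QQ_p,\mathcal{T})$ itself; this integrality is a reflection of the integrality of $\mathbf{L}^{(p)}(M)$ predicted by Perrin-Riou.

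Next I would tackle the globalization. The goal is to construct, for every $K\in\mathcal{C}$, an element $c_K^{(r)}\in\wedge^r_0 H^1(K,\mathcal{T})$ such that $\textup{loc}_p(c_{\QQ(\pmb{\mu}_{p^n})}^{(r)})$ assembles into $\xi_\infty$ and such that the corestriction/Euler factor relations of Definition~\ref{def:weaklyintegralESofrankr} hold. The strategy is to use the interpolation property~(\ref{eq:interpolatiponprop}) to prescribe the expected images of $c_K^{(r)}$ under dual exponentials at each finite layer (via Theorem~\ref{thm:PRslog}(i)-(ii)): at each geometric specialization the recipe matches the $L$-value $L_\frak{f}(M(\rho),0)$ divided by archimedean periods. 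The Euler factors $P_{\qq}(\textup{Fr}_\qq^{-1})$ in the distribution relation are forced by the Euler product of $L_\frak{f}(M(\rho),s)$ and the way primes $\qq\nmid \frak{f}$ enter and leave as $\frak{f}$ varies; this part is essentially formal once the interpolation is in hand.

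The main obstacle, and the reason the statement is a conjecture, is the \emph{existence} of the global classes $c_K^{(r)}$ themselves: constructing non-trivial elements of $\wedge^r_0 H^1(K,\mathcal{T})$ with prescribed image in $\wedge^r H^1(K_p,\mathcal{T})$ requires global arithmetic input (motivic or automorphic cohomology classes, regulator/comparison maps, etc.) that is not available in this generality. In particular, the natural map
\[
\wedge^r_0 H^1(K,\mathcal{T}) \lra \wedge^r H^1(K_p,\mathcal{T})
\]
has no reason to be surjective, and without a source of motivic classes (such as Beilinson-Kato elements, Rubin-Stark elements, or the like adapted to $M$) one cannot even start. Thus the workable content of the conjecture is: \emph{assuming} one has candidate global classes (from whichever geometric construction is available for $M$), verify that their $p$-adic regulators recover $\mathbf{L}^{(p)}(M)$. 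Away from this, I would content myself with observing that the construction of $\xi_\infty$ from $\mathbf{L}^{(p)}(M)$ is unconditional under the weak Leopoldt conjecture, so the conjecture reduces to the assertion that $\xi_\infty$ lies in the image of $\textup{loc}_p$ on $\varprojlim_n \wedge^r_0 H^1(\QQ(\pmb{\mu}_{p^n}),\mathcal{T})$, and that this global lift satisfies the prescribed norm-compatibility up finite level.
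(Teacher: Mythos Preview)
The statement you were asked to prove is not a theorem but a \emph{conjecture} (Conjecture~\ref{conj:specialelts} in the paper, attributed to Perrin-Riou and Rubin), and the paper makes no attempt to prove it; it is stated as a hypothesis and then assumed in the applications of \S\ref{subsec:applications} (see the hypotheses preceding Theorem~\ref{thm:mainconjappl}). You correctly recognized this yourself when you wrote that ``this statement is conjectural\ldots what follows is really a strategy rather than a proof,'' and your discussion of the obstacles---in particular, that inverting $\frak{Log}^{\otimes r}$ produces only a local candidate and that the genuine difficulty is the existence of global classes lifting it---is an accurate summary of why the conjecture is open.

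So there is nothing to compare: the paper has no proof, and your proposal is not a proof either but an honest account of where the problem stands. That is the right thing to have written.
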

We will write $\frak{Log}^{\otimes r}_{\overline{\eta}}$ as a short-cut for the composite $$\overline{\eta}\circ\frak{Log}^{\otimes r}:\wedge^r H^1_\infty(\QQ_p, \mathcal{T}) \ra \mathcal{H}_\infty.$$

\subsubsection{Applications}
\label{subsec:applications} We apply the results of~\S\ref{sec:applications} together with the (conjectural) Euler system of rank $r$ given in~Conjecture~\ref{conj:specialelts}.

Suppose $V$ is the $p$-adic realization of a fixed self-dual pure motive $\MM\stackrel{\sim}{\ra}\check{\MM}(1)$ defined over $k=\QQ$ and with coefficients in $L=\QQ$. As remarked before, taking $k=\QQ$ is not too serious as one may always consider $\textup{Ind}_{k/\QQ}\MM$ in place of $\MM$; and the assumption that $L=\QQ$ is only made for notational convenience. The $p$-adic realization $V$ is then a finite dimensional $\QQ_p$-vector space endowed with a $G_\QQ$-action, which is unramified outside a finite set of places. We will also assume that $V$ is crystalline at $p$. Fix a $G_\QQ$-stable lattice $T \subset V$. We assume until the end of this paper that $T$ satisfies the hypotheses (A) and (B) from the previous section, as well as $\hone$-$\hfive$ from the introduction.

Along with the motive $\MM_{/\QQ}$, we will consider its Tate-twists $\MM(j)$ for very large integers $j$; the $p$-adic realization $\MM(j)_p$ of $\MM(j)$ is $V(j)=V\otimes \QQ_p(j)$. The $G_{\QQ}$-representation $V(j)$ is also unramified outside a finite set of places and is crystalline at $p$. We write $T(j)=T\otimes\ZZ_p(j)$ which is naturally a lattice inside $V(j)$.
\begin{lemma}
For any $j$, the hypotheses \textup{(A)} and \textup{(B)} hold for $T(j)$. 
\end{lemma}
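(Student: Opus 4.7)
The plan is to handle the two hypotheses separately, both relying on the observation that $G_p := G_{\QQ_p(\pmb{\mu}_{p^\infty})}$ acts trivially on $\ZZ_p(j)$ for every $j \in \ZZ$, since $\ZZ_p(1) = \varprojlim \pmb{\mu}_{p^n}$ sits inside $\QQ_p(\pmb{\mu}_{p^\infty})$. As a consequence, $T(j) \cong T$ and $T(j)^* = T^*(-j) \cong T^*$ as $G_p$-modules.

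For (B) this is all I need: $H^0(\QQ_p(\pmb{\mu}_{p^\infty}),T(j)) = H^0(G_p,T(j)) \cong H^0(G_p,T) = 0$ by (B) for $T$.

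For (A) I would first reduce to the stronger statement $H^0(G_p,T^*) = 0$. Indeed, the inclusion $\QQ_p(\pmb{\mu}_p) \subset \QQ_p(\pmb{\mu}_{p^\infty})$ gives
\[
H^0(\QQ_p(\pmb{\mu}_p),T^*(-j)) \hookrightarrow H^0(G_p,T^*(-j)) \cong H^0(G_p,T^*),
\]
so the vanishing of the right-hand side handles (A) uniformly in $j$. Next I would further reduce to $p$-torsion: since $T^* \cong (\QQ_p/\ZZ_p)^d$ as a $\ZZ_p$-module and any nonzero submodule therein has nontrivial intersection with the socle $T^*[p]$, one has $H^0(G_p,T^*) = 0$ if and only if $H^0(G_p,T^*)[p] = H^0(G_p,T^*[p]) = 0$. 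The latter is a submodule of the \emph{finite} $p$-group $T^*[p]$, hence itself a finite $p$-group.

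To conclude, let $\Gamma_p := \Gal(\QQ_p(\pmb{\mu}_{p^\infty})/\QQ_p(\pmb{\mu}_p))$; this is a pro-$p$ group acting continuously on the finite $p$-group $H^0(G_p,T^*[p])$. By (A) for $T$,
\[
H^0(G_p,T^*[p])^{\Gamma_p} = H^0(\QQ_p(\pmb{\mu}_p),T^*[p]) \subset H^0(\QQ_p(\pmb{\mu}_p),T^*) = 0.
\]
The standard orbit-counting lemma---any continuous action of a pro-$p$ group on a nonzero finite $p$-group has nonzero fixed points---then forces $H^0(G_p,T^*[p]) = 0$, completing the proof. There is no serious obstacle here: the entire argument rests on trivializing the Tate twist over the deeply ramified field $\QQ_p(\pmb{\mu}_{p^\infty})$ and then exploiting the pro-$p$ nature of $\Gamma_p$; in particular, neither self-duality, crystallinity, nor the Pan\v{c}i\v{s}kin condition on $V$ is needed for this lemma.
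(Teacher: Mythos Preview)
Your proof is correct. For (B), your argument and the paper's coincide (the paper simply says ``obvious''). For (A), you and the paper share the same underlying observation---that the Tate twist trivializes over $\QQ_p(\pmb{\mu}_{p^\infty})$---but package it differently. The paper reformulates (A) as the vanishing of $H^2(\QQ_p(\pmb{\mu}_p),T\otimes\LL)$ (via local duality and Nakayama, as in the proof of Lemma~\ref{lemma:free for k}) and then invokes the standard Iwasawa-theoretic twisting isomorphism $T\otimes\LL\cong T(j)\otimes\LL$ of $G_{\QQ_p(\pmb{\mu}_p)}$-modules. You instead stay on the $H^0$ side, pass to $p$-torsion, and use the pro-$p$ fixed-point lemma for $\Gamma_p$ acting on the finite $p$-group $H^0(G_p,T^*[p])$. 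Your route is more elementary and entirely self-contained, requiring nothing beyond the orbit-counting argument; the paper's route is terser but presupposes familiarity with the $\LL$-adic twisting formalism and the equivalence between (A) and the $H^2$-vanishing.
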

\begin{proof}
(B) obviously holds for $T(j)$ if it holds for $T$. Let $\LL=\ZZ_p[[\Gamma]]$ with $\Gamma=\Gal(\QQ(\pmb{\mu}_{p^\infty})/\QQ(\pmb{\mu}_{p}))$ as usual. The statement of (A) for $T$ is equivalent to the vanishing of $H^2(\QQ_p(\pmb{\mu}_p),T\otimes\LL)=0$ (see the proof of Lemma~\ref{lemma:free for k}), and the proof of Lemma follows using the natural isomorphism
$$H^2(\QQ_p(\pmb{\mu}_p),T\otimes\LL) \lra H^2(\QQ_p(\pmb{\mu}_p),T(j)\otimes\LL).$$
\end{proof}

Fix a large enough $j \in 2\ZZ$ so that $D^0_{\textup{dR}}(V(j)^*)=D_{\textup{dR}}(V(j)^*)$. Such an integer $j$ exists because
$$D^0_{\textup{dR}}(V(j)^*)=D^0_{\textup{dR}}(V^*(-j))=D^{-j}_{\textup{dR}}(V^*).$$
Since we insist that $j$ is even, it follows that $r=\dim(V^-)=\dim (V(j)^-)$.

Assume that the weak Leopoldt conjecture holds for the representation $\Hom(T(j),\ZZ_p(1))\cong T(-j)$, and suppose that the Conjecture~\ref{conj:specialelts} holds for $M=\MM(j)$.
\begin{thm}
\label{thm:mainconjappl}
Suppose $1$ is not an eigenvalue for the action of $\varphi$ on $D_{\textup{cris}}(V)$, and assume that $L(\mathcal{M},0)\neq 0$. Then the Bloch-Kato Selmer group $H^1_{\FF_{\textup{BK}}^*}(\QQ,T^*)$ is finite.
\end{thm}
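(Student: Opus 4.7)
My plan is to construct, from the conjectural Iwasawa-theoretic Euler system for $T(j)$, an Euler system of rank $r$ for $T$ whose singular localization at $p$ is nonzero, and then invoke Corollary~\ref{cor:maink}. First, Conjecture~\ref{conj:specialelts} applied to $\MM(j)$ supplies an Iwasawa-theoretic Euler system $\mathbf{c}^{(r,j)}=\{c^{(r,j)}_K\}$ of rank $r$ for $T(j)$ satisfying $\overline{\eta}\bigl(\mathfrak{Log}^{\otimes r}(\textup{loc}_p(c^{(r,j)}_\infty))\bigr) = \mathbf{L}^{(p)}(\MM(j))$. By the twisting isomorphism at the Iwasawa level (converting $\TT(j)$-valued cohomology into $T$-valued cohomology), specializing $\mathbf{c}^{(r,j)}$ at the character $\rho = \rho_{\textup{cyc}}^{-j}$ of $G_\infty$ — the unique geometric character for which $\MM(j)(\rho) \cong \MM$ — produces a distribution-compatible family $\mathbf{c}^{(r)} = \{c^{(r)}_K\}$ with $c^{(r)}_K \in \wedge^r_0 H^1(K,T)$, i.e., an Euler system of rank $r$ for $T$ in the sense of Definition~\ref{def:weaklyintegralESofrankr}.

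Next I would verify hypothesis \textbf{(H.nV)} for $\mathbf{c}^{(r)}$. Evaluating the interpolation formula \eqref{eq:interpolatiponprop} at $\rho$: the exceptional values $\chi_\rho(p)\, p^{j_\rho} = p^{-j}$ and $\overline{\chi}_\rho(p)\, p^{-j_\rho-1} = p^{j-1}$ must avoid $\textup{Spec}(\varphi\mid D_{\textup{cris}}(V(j))) = p^{-j}\cdot\textup{Spec}(\varphi\mid D_{\textup{cris}}(V))$; the first avoidance is exactly the hypothesis $1 \notin \textup{Spec}(\varphi\mid D_{\textup{cris}}(V))$, and the second is automatic for $j$ large by Weil bounds on Frobenius eigenvalues. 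Hence $\mathcal{E}_p(\MM) \neq 0$, and combined with $L(\MM,0)\neq 0$ and nonvanishing of Deligne's periods, the interpolation yields $\rho^{-1}(\mathbf{L}^{(p)}(\MM(j))) = \mathcal{E}_p(\MM) \cdot L(\MM,0)/\textup{Per}_\infty(\MM) \cdot \textup{Per}_p(\MM) \neq 0$. On the other hand, Theorem~\ref{thm:PRslog}(ii) applied to $W = V(j)$ with $\eta = (1-p^{-j}\varphi)\eta'$ (solvable for $j$ large since $(1-p^{-j}\varphi)$ is then invertible on $D_{\textup{cris}}(V(j)^*)$) identifies $\rho^{-1}\bigl(\overline{\eta}\circ \mathfrak{Log}^{\otimes r}\bigr)\bigl(\textup{loc}_p(c^{(r,j)}_\infty)\bigr)$, up to a nonzero constant, with $\exp^{*,\otimes r}(\textup{loc}_p(c^{(r)}_\QQ))$ paired against a suitable twisted covector. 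Since the Bloch-Kato dual exponential factors through $H^1_s(\QQ_p, V)$, this nonvanishing forces $\textup{loc}_p^s(c^{(r)}_\QQ) \neq 0$, which is exactly \textbf{(H.nV)}.

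Corollary~\ref{cor:maink} applied to $T$ with the Euler system $\mathbf{c}^{(r)}$ then gives
\[
\bigl|H^1_{\FF^*_{\textup{BK}}}(\QQ,T^*)\bigr| \;\leq\; \bigl|\wedge^r H^1_s(\QQ_p,T)/\ZZ_p\cdot\textup{loc}_p^s(c^{(r)}_\QQ)\bigr|,
\]
and the right-hand side is finite because $\wedge^r H^1_s(\QQ_p,T)$ is a free $\ZZ_p$-module of rank one (Proposition~\ref{prop:finiteconditionhalfrank}) and $\textup{loc}_p^s(c^{(r)}_\QQ)$ is nonzero. The main obstacle will be the descent and the identification used above: precisely matching the specialization of the Iwasawa class $c^{(r,j)}_\infty$ at $\rho$ with an honest Euler system class $c^{(r)}_\QQ$ for $T$ (including the distribution relations with the correct Euler factors at primes outside $p$), and showing via Theorem~\ref{thm:PRslog}(ii) that the specialization of $\overline{\eta}\circ \mathfrak{Log}^{\otimes r}$ at $\rho$ recovers the dual exponential of $\textup{loc}_p(c^{(r)}_\QQ)$ up to the factor $(j-1)!^{\,r}\cdot(1-p^{j-1}\varphi^{-1})$, which is itself invertible by the same Weil bounds.
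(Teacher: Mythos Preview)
Your proposal is correct and follows essentially the same route as the paper: twist Perrin-Riou's conjectural rank-$r$ Euler system for $T(j)$ down to one for $T$, use the interpolation property of $\mathbf{L}^{(p)}(\MM(j))$ at $\rho_{\textup{cyc}}^{j}$ together with $L(\MM,0)\neq 0$ to see the specialized value is nonzero, identify that value via Theorem~\ref{thm:PRslog}(ii) with $(\exp^*)^{\otimes r}$ applied to $\textup{loc}_p(c^{(r)}_{\QQ})$ up to the invertible factor $\alpha^{-1}\beta$, and then use that $\exp^*$ factors through the singular quotient to conclude \textbf{(H.nV)} and apply Corollary~\ref{cor:maink}. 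The only minor difference is that you verify the second Frobenius-eigenvalue avoidance via Weil bounds, whereas the paper uses self-duality (Remark~\ref{rem:froboncrisexceptional}) to reduce it to the hypothesis that $1$ is not an eigenvalue; both arguments are valid.
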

 \begin{rem}
 \label{rem:froboncrisexceptional} Since $V$ is self-dual, it follows that $1$ is an eigenvalue of $\varphi$ acting on $D_{\textup{cris}}(V)$  if and only if  $p^{-1}$ is an eigenvalue (as there is a perfect pairing $D_{\textup{cris}}(V)\otimes D_{\textup{cris}}(V^*) \ra \QQ_p(1)$ (c.f.,~\cite[Lemme 1.4.3]{benoisberger}) and $\varphi$ acts on $\QQ_p(1)$ by multiplication by $p^{-1}$). The assumption that $1$ is not an eigenvalue (therefore neither $p^{-1}$) is to rule out the possibility that the $p$-adic $L$-function $\mathbf{L}^{(p)}(\MM)$ may have an \emph{exceptional zero} at the trivial character of $G_\infty$ (c.f., \cite[Remark 0.4]{benoistrivial}).

 Note also that $1$ (resp., $p^{-1}$) is an eigenvalue of $\varphi$ acting on $D_{\textup{cris}}(V^*)=D_{\textup{cris}}(V)$ if and only if $p^{j}$ (resp., $p^{j-1}$) is an eigenvalue of $\varphi$ acting on $D_{\textup{cris}}(V(j)^*)$. 
 In particular, under the assumption that  $1$ is not an eigenvalue for $\varphi \big|_{ D_{\textup{cris}}(V)}$, the operators $1-p^{-j}\varphi$ and $1-p^{j-1}\varphi^{-1}$ acting on $D_{\textup{cris}}(V(j)^*)$ (which appear in the statement of Theorem~\ref{thm:PRslog}(ii)) are both invertible.
  \end{rem}

 \begin{proof}[Proof of Theorem~\ref{thm:mainconjappl}]
 Let $\mathbf{c}^{(r)}(j)$ denote the Euler system of rank $r$ for the pair $(T(j),\mathcal{C})$ predicted by Conjecture~\ref{conj:specialelts}, where $\mathcal{C}$ is as in the previous section. Applying Rubin's twisting formalism~\cite[\S VI]{r00}, we obtain an Euler system $\mathbf{c}^{(r)}=\{c^{(r)}_K\}_{K\in \mathcal{C}}$ of rank $r$ for $(T,\mathcal{C})$. Corollary~\ref{cor:maink} gives an inequality
 $$| H^1_{\FF_{\textup{BK}}^*}(\QQ,T^*)|\,\leq\, |\wedge^r H^1_s(\QQ,T)/\ZZ_p\cdot\textup{loc}_p^s(c_{\QQ}^{(r)})\mid,$$
 and the theorem is proved once we verify that $\textup{loc}_p^s(c_{\QQ}^{(r)}) \neq 0$.

 Let $c_\infty^{(r)}(j)=\{c_{\QQ(\pmb{\mu}_{p^n})}^{(r)}(j)\}_n \in H^1_\infty(\QQ,T(j))$, and consider
 \be\label{eqn:recalldef}\rho_{\textup{cyc}}^{j}\mathbf{L}^{(p)}(\MM(j))=\rho_{\textup{cyc}}^{j}\frak{Log}_{\overline{\eta}}^{\otimes r} \left(\textup{loc}_p\left(c^{(r)}_\infty(j)\right)\right)\ee
where the equality follows from the defining property of $\mathbf{c}^{(r)}(j)$. If we take $j$ large enough 
 and assume that $1$ is not an eigenvalue for $\varphi\big|_{D_{\textup{cris}}(V)}$, one may calculate  $\rho_{\textup{cyc}}^{j}\mathbf{L}^{(p)}(\MM(j))$ using the interpolation property of the (conjectural) $p$-adic $L$-function $\mathbf{L}^{(p)}(\MM(j))$ and conclude that
\be\label{eqn:nonvanishingpL}\rho_{\textup{cyc}}^{j}\mathbf{L}^{(p)}(\MM(j))\neq 0\ee 
by our assumption that  $L(\mathcal{M},0)\neq 0$. On the other hand, the interpolation property of Perrin-Riou's extended logarithm (see Theorem~\ref{thm:PRslog}(ii)) shows that the image of $\textup{loc}_p\left(c^{(r)}_\infty(j)\right)$ under
$$\wedge^r H^1_\infty(\QQ_p,T(j)) \stackrel{\frak{Log}^{\otimes r}_{\overline{\eta}}}{\lra} \mathcal{H_\infty}\stackrel{\rho_{\textup{cyc}}^j}{\lra} \QQ_p$$
coincides with the image of $\textup{loc}_p\left(c_\QQ^{(r)}\right)$ under
$$\xymatrix@C=1.2cm{\wedge^r H^1(\QQ_p,T)\ar[r]^(.55){\left(\textup{exp}^{*}\right)^{\otimes r}}& \wedge^r D_{\textup{dR}}(V) \ar[r]^(.6){\alpha^{-1}\beta \cdot\overline{\eta}} &\QQ_p},$$
and since the Bloch-Kato dual exponential $\textup{exp}^*$ factors through the singular quotient $H^1_\textup{s}(\QQ_p,T):=H^1(\QQ_p,T)/H^1_{f}(\QQ_p,T)$, this agrees with the image of $\textup{loc}_p^{\textup{s}}\left(c_\QQ^{(r)}\right)$ under the composite
\be\label{eqn:final}\xymatrix@C=1.2cm{\wedge^r H^1_{\textup{s}}(\QQ_p,T)\ar[r]^(.55){\left(\textup{exp}^{*}\right)^{\otimes r}}& \wedge^r D_{\textup{dR}}(V) \ar[r]^(.6){\alpha^{-1}\beta \cdot\overline{\eta}}&\QQ_p}.\ee
Here $$\alpha=\det(1-p^{-j}\varphi|D_{\textup{cris}}(V(j)^*))\,\, \hbox{ and }\,\,\beta=\det(1-p^{j-1}\varphi^{-1}|D_{\textup{cris}}(V(j)^*)).$$ Both $\alpha$ and $\beta$ are non-zero thanks to our assumption that $1$ is not an eigenvalue for $\varphi\big|_{D_{\textup{cris}}(V)}$ (see Remark~\ref{rem:froboncrisexceptional}).

It then follows from (\ref{eqn:recalldef}) and
(\ref{eqn:nonvanishingpL}) that the image of
$\textup{loc}_p^{\textup{s}}\left(c_\QQ^{(r)}\right)$ under the map
(\ref{eqn:final}) is non-zero, which in turn implies that
$\textup{loc}_p^{\textup{s}}\left(c_\QQ^{(r)}\right)\neq 0$ and the
Theorem is proved.
 \end{proof}

 \begin{rem}
 The proof of Theorem~\ref{thm:mainconjappl} gives a  bound on the Bloch-Kato Selmer group $H^1_{\FF_{\textup{BK}}^*}(\QQ,T^*)$ which is closely related to $L$-values. This lends further evidence to Bloch-Kato conjectures.
  \end{rem}
One may possibly prove an Iwasawa theoretic version of Theorem~\ref{thm:mainconjappl}. Let $\QQ_{p,\infty}$ be the cyclotomic $\ZZ_p$-extension of $\QQ_p$. For every finite sub-extension $\QQ_p\subset K \subset \QQ_{p,\infty}$, suppose that the assumptions of Remark~\ref{rem:greenbergvsBK} hold, as well as that $H^0(K,\textup{F}_p^-T\otimes \Phi/\oo)=0$, so as to ensure that 
\be
\label{eqn:greenbergvsBK}
H^1(K, \textup{F}_p^+T) \stackrel{\sim}{\lra} H^1_f(K,T).
\ee
Assume that Conjecture~\ref{conj:specialelts} holds for the motive $\mathcal{M}$, and that the weak Leopoldt conjecture holds for $T$.
\begin{thm}
\label{thm:cotorsion}
If $\mathbf{L}^{(p)}(\mathcal{M})\neq 0$, then the module $H^1_{\FF_{\textup{Gr}}^*}(\QQ,\mathbb{T}^*)$ is $\LL$-cotorsion. 
\end{thm}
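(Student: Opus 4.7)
The plan is to combine the Iwasawa-theoretic main inequality Theorem~\ref{thm:mainkinfty}(i) with Perrin--Riou's dual exponential interpolation formula (Theorem~\ref{thm:PRslog}(ii)) in order to deduce the cotorsion assertion from $\mathbf{L}^{(p)}(\mathcal{M})\neq 0$. Theorem~\ref{thm:mainkinfty}(i) provides the divisibility
\[
\textup{char}\!\left(H^1_{\FF_{\textup{Gr}}^*}(\QQ,\TT^*)^\vee\right)\ \Big|\ \textup{char}\!\left(\wedge^{r} H^1_s(\QQ_p,\TT)\Big/\LL\cdot\textup{loc}_p^s(c^{(r)}_{\QQ_\infty})\right).
\]
The $\LL$-module $H^1_s(\QQ_p,\TT)$ is free of rank $r$, being a direct complement of the free rank-$d_+$ module $H^1_{\textup{Gr}}(\QQ_p,\TT)$ inside the free rank-$d$ module $H^1(\QQ_p,\TT)$ by Proposition~\ref{prop:linearalgebra2}. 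Hence $\wedge^{r} H^1_s(\QQ_p,\TT)$ is free of rank one over $\LL$, and the right-hand characteristic ideal is non-zero precisely when $\textup{loc}_p^s(c^{(r)}_{\QQ_\infty})\neq 0$. The proof therefore reduces to establishing this non-vanishing.

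I will achieve this by specializing at a suitable cyclotomic character. By Conjecture~\ref{conj:specialelts}, $\overline{\eta}\circ\frak{Log}^{\otimes r}(\textup{loc}_p(c^{(r)}_\infty))=\mathbf{L}^{(p)}(\mathcal{M})\in\mathcal{H}_\infty$, which is non-zero by hypothesis. Elements of $\mathcal{H}_\infty$ are tempered power series in $\gamma-1$, so their zero sets are discrete in the open unit disk; since the sequence $\{\rho_{\textup{cyc}}^j(\gamma)-1\}_{j\geq 1}$ accumulates at $0\in p\ZZ_p$, $\mathbf{L}^{(p)}(\mathcal{M})$ must be non-vanishing at $\rho_{\textup{cyc}}^j$ for arbitrarily large $j$. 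Pick such a $j$ large enough that $p^j$ is not an eigenvalue of $\varphi$ on $D_{\textup{cris}}(V^*)$, which makes the operator $1-p^{-j}\varphi$ invertible. Theorem~\ref{thm:PRslog}(ii), extended $r$-multilinearly via the universal property of wedge powers, then computes $\rho_{\textup{cyc}}^j(\mathbf{L}^{(p)}(\mathcal{M}))$ as a non-zero scalar multiple of a pairing involving $(\textup{exp}^*)^{\otimes r}\bigl((\textup{loc}_p(c^{(r)}_\infty))_{-j,0}\bigr)\in\wedge^{r} D_{\textup{dR}}(V(-j))$.

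Because the Bloch--Kato dual exponential vanishes on $H^1_f(\QQ_p,T(-j))$, the map $(\textup{exp}^*)^{\otimes r}$ factors through $\wedge^{r} H^1_s(\QQ_p,T(-j))$; moreover, the twist-and-restrict map $H^1_\infty(\QQ_p,T)\to H^1(\QQ_p,T(-j))$ descends to a map $H^1_s(\QQ_p,\TT)\to H^1_s(\QQ_p,T(-j))$, using the identification (\ref{eqn:greenbergvsBK}) between Greenberg and Bloch--Kato local conditions. Combining these yields a commutative square expressing $\rho_{\textup{cyc}}^j(\mathbf{L}^{(p)}(\mathcal{M}))$ as a pairing computed from the image of $\textup{loc}_p^s(c^{(r)}_\infty)$ in $\wedge^{r} H^1_s(\QQ_p,T(-j))$; the non-vanishing of this pairing then forces $\textup{loc}_p^s(c^{(r)}_\infty)\neq 0$, completing the reduction above. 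The main technical obstacle is the justification of this final commutative square: one must verify carefully that the singular projection $H^1(\QQ_p,\TT)\to H^1_s(\QQ_p,\TT)$ is compatible with the twist-and-restrict specialization to the bottom layer, and that this compatibility survives passage to the $r$-th exterior powers — the latter being unproblematic since all the relevant modules are free of the correct rank by our semi-local analysis.
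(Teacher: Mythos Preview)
Your proposal is correct and follows essentially the same strategy as the paper: reduce via Theorem~\ref{thm:mainkinfty} and the freeness of $\wedge^r H^1_s(\QQ_p,\TT)$ to the non-vanishing of $\textup{loc}_p^s(c^{(r)}_\infty)$, and then deduce this from $\mathbf{L}^{(p)}(\mathcal{M})\neq 0$ using that the dual exponential factors through the singular quotient. The only difference is cosmetic: you specialize at a particular character $\rho_{\textup{cyc}}^j$ (mirroring the argument for Theorem~\ref{thm:mainconjappl}) and then appeal to Theorem~\ref{thm:PRslog}(ii), whereas the paper argues directly at the Iwasawa level that $\frak{Log}^{\otimes r}_{\overline{\eta}}$ itself factors through $\wedge^r H^1_s(\QQ_p,\TT)$ via the identification~(\ref{eqn:greenbergvsBK}), so that $\mathbf{L}^{(p)}(\mathcal{M})\neq 0$ immediately forces $\textup{loc}_p^s(c^{(r)}_\infty)\neq 0$ without any specialization.
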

\begin{proof}
Since the $\LL$-module $\wedge^r H^1_s(\QQ_p,\mathbb{T})$ is free of rank one, this will follow from Theorem~\ref{thm:mainkinfty}, once we verify that $\textup{loc}^s_p(c^{(r)}_{\infty})\neq 0$. This however follows from our assumption that $\mathbf{L}^{(p)}(\mathcal{M})\neq 0$, the conjectural description of $c^{(r)}_{\infty}$ via Conjecture~\ref{conj:specialelts} (which we assume) and the fact that the dual exponential map by definition factors through the singular quotient 
$$\varprojlim_{\QQ_p\subset K \subset \QQ_{p,\infty}} \frac{H^1(K,T)}{H^1_f(K,T)}\stackrel{\sim}{\lra}\varprojlim_{K \subset \QQ_{p,\infty}} \frac{H^1(K,T)}{H^1(K,\textup{F}_p^+T)}\stackrel{\sim}{\lra}H^1(\QQ_p,\textup{F}_p^-\mathbb{T})=H^1_s(\QQ_p,\mathbb{T}),$$
where the first isomorphism is the inverse of the isomorphism~(\ref{eqn:greenbergvsBK}) above.
\end{proof}

\subsection*{Acknowledgements.} The main idea of this work has occurred to the author during the period when he held a William Hodge post-doctoral fellowship at IH\'ES, and the paper was finalized during his stay at MPIM-Bonn and his post at Ko\c{c} University in Istanbul. The author thanks heartily all three institutes for their hospitality. The author also thanks Karl Rubin for sharing his insights with the author, which essentially led him to write this paper, and thanks Tadashi Ochiai for helpful correspondence. He finally thanks the anonymous referee, who has kindly pointed out and corrected several inaccuracies in an earlier version. He also acknowledges an EU-FP7 grant and a TUBITAK-Career grant which partially supported the author when this research was conducted.

{\scriptsize
\bibliographystyle{halpha}
\bibliography{references}
}
\end{document}